\documentclass[12pt]{amsart}
\title[Green's function of the Parabolic Anderson Model]{The Green's function of the parabolic Anderson model and the continuum directed polymer}

\input{SHE-macros}

\usepackage{datetime}
\date{\usdate\today}

\author[T.~Alberts]{Tom Alberts}
\address{Tom Alberts\\ University of Utah\\  Mathematics Department\\ 155 S 1400 E\\   Salt Lake City, UT 84112\\ USA.}
\email{alberts@math.utah.edu }
\urladdr{http://www.math.utah.edu/~alberts}
\thanks{T.~Alberts was partially supported by National Science Foundation grant DMS-1811087}

\author[C.~Janjigian]{Christopher Janjigian}
\address{Christopher Janjigian\\ Purdue University\\  Department of Mathematics \\ 150 N University St\\ West Lafayette, IN 47901\\ USA.}
\email{cjanjigi@purdue.edu}
\urladdr{http://www.math.purdue.edu/~cjanjigi}
\thanks{C~Janjigian was partially supported by National Science Foundation grant DMS-2125961.}

\author[F.~Rassoul-Agha]{Firas Rassoul-Agha}
\address{Firas Rassoul-Agha\\ University of Utah\\  Mathematics Department\\ 155S 1400E\\   Salt Lake City, UT 84112\\ USA.}
\email{firas@math.utah.edu}
\urladdr{http://www.math.utah.edu/~firas}
\thanks{F.\ Rassoul-Agha was partially supported by National Science Foundation grants DMS-1811090 and DMS-2054630.}

\author[T.~Sepp\"al\"ainen]{Timo Sepp\"al\"ainen}
\address{Timo Sepp\"al\"ainen\\ University of Wisconsin--Madison\\  Department of Mathematics\\ 480 Lincoln Drive \\  Madison, WI 53706\\ USA.}
\email{seppalai@math.wisc.edu}
\urladdr{http://www.math.wisc.edu/~seppalai}
\thanks{T.~Sepp\"al\"ainen was partially supported by National Science Foundation grants  DMS-1854619 and DMS-2152362 and by the Wisconsin Alumni Research Foundation.}

\keywords{continuum directed polymer,  fundamental solution, Green's function, Kardar-Parisi-Zhang equation, Karlin-McGregor, KPZ, parabolic Anderson model, stochastic heat equation, stochastic partial differential equation,  total positivity}
\subjclass[2020]{60K35, 60K37}

\begin{document}
\begin{abstract} We build a regular version of the field $\Sheb(t,x\viiva s,y)$ which describes the Green's function, or fundamental solution, of the parabolic Anderson model (PAM) with white noise forcing on $\bbR^{1+1}$: $\partial_t \Sheb(t,x \viiva s,y) =$ $\frac{1}{2}\partial_{xx} \Sheb(t,x\viiva s,y) + \beta\Sheb(t,x\viiva s,y)W(t,x)$, $\Sheb(s,x\viiva s,y) = \delta(x-y)$ for all $-\infty < s \leq t < \infty$, all $x,y \in \bbR$, and all $\beta \in \bbR$ simultaneously. Through the superposition principle, this gives a jointly continuous coupling of all solutions to the PAM with initial or terminal conditions satisfying sharp growth assumptions, for all initial and terminal times. We show that the PAM with a (sub-)exponentially growing initial condition has conserved quantities given by the limits $\displaystyle \lim_{x\to \pm\infty} x^{-1}\log \Sheb(t,x)$, in addition to other new properties of solutions with general initial conditions. These are then connected to Hopf-Cole solutions to the KPZ equation and the existence, regularity, and continuity of the quenched continuum polymer measures. Through the polymer connection, we also show that the kernel  $(x,y) \mapsto \Sheb(t,x\viiva s,y)$ is strictly totally positive for all $t>s$ and $\beta\in \bbR$. \end{abstract}

\maketitle

\setcounter{tocdepth}{2}
\tableofcontents

\section{Introduction}
We study a regular realization of the five parameter field $\Sheb(t,x\viiva s,y)$ which, for all $(s,y,t,x)\in \varset = \{(s,y,t,x) \in \bbR^4 : s \leq t\}$ and $\beta \in \R$, solves the family of stochastic partial differential equations
\be\begin{aligned}\label{eq:SHE}
\partial_t \Sheb(t,x\viiva s,y) &= \frac{1}{2} \partial_{xx} \Sheb(t,x\viiva s,y) + \beta \Sheb(t,x\viiva s,y) W(t,x)  \\
\Sheb(s,x\viiva s,y) &= \delta(x-y),   
\end{aligned}\ee
where $W$ is space-time white noise and $\delta$ is the Dirac delta measure at $0$. This model is known variously as the parabolic Anderson model (PAM), as introduced in \cite{Car-Mol-94}, or as the (linear) stochastic heat equation with multiplicative space-time white noise forcing. For linear equations like this one, such a process is commonly known as the Green's function, or fundamental solution, of the equation. Our construction extends a previous construction of the Green's function for this model in \cite{Alb-Kha-Qua-14-jsp,Alb-Kha-Qua-14-aop} in a sense we elaborate on momentarily. 

By the superposition principle, the construction of a field of solutions to \eqref{eq:SHE} gives us a coupling of solutions to the PAM,
\be\label{eq:SHEf}\begin{aligned}
\partial_t \Sheb(t,x\viiva s;\mu) &= \frac{1}{2} \partial_{xx} \Sheb(t,x\viiva s;\mu) + \beta \Sheb(t,x\viiva s;\mu) W(t,x)  \\
\Sheb(s,\cdot|s;\mu) &= \mu(\cdot),    
\end{aligned}\ee
where $\mu$ is a positive Borel measure, for all initial times $s<t$ and inverse temperatures $\beta$ simultaneously through the identity
\begin{align}
\Sheb(t,x\viiva s;\mu) &= \int_{-\infty}^\infty \Sheb(t,x\viiva s,y)\mu(dy). \label{eq:superpos}
\end{align}
When $\mu(dx)= f(x)dx$ for a sufficiently regular Borel measurable function $f$, which corresponds to function-valued initial conditions, we will instead write $\Sheb(t,x\viiva s;f)$ as shorthand.  

We take \eqref{eq:superpos} as the definition of a \textit{physical solution} to \eqref{eq:SHEf}. This terminology is justified by analogy with the ordinary heat equation and by our results, which show that this definition is the unique jointly continuous process (as a function of all arguments including the measure or function valued initial condition) which agrees up to indistinguishability with the physically relevant mild formulation of \eqref{eq:SHEf} for fixed initial conditions. Note that even for the unforced heat equation with zero initial condition, there are non-physical classical solutions which are not given by the superposition principle \cite[p.~184]{Joh-71}. It is natural to expect similar non-physical weak solutions to exist here as well.

Using this construction, the main contributions of this work can be summarized as follows:
\begin{enumerate}[label={\rm(\roman*)}, ref={\rm\roman*}] 
\item We produce a coupling of all solutions to \eqref{eq:SHEf} with general positive measure valued initial conditions, prove that these processes satisfy their initial conditions in senses stronger than those appearing previously in the literature, and verify that our coupling agrees with the usual mild formulation up to indistinguishability under the minimal measurability and moment conditions which have previously appeared in the literature.
\item  We show that the coupling defined through \eqref{eq:superpos}, viewed as a process taking values in an appropriate Polish space of continuous functions (of $x$), is jointly continuous in the inverse temperature, the initial and terminal times, and the initial condition $\mu$, within the class of non-explosive initial conditions. 
This extends to a jointly continuous coupling of Hopf-Cole solutions to the KPZ equation (\eqref{eq:KPZ} below) started from the (sharp) class of non-explosive continuous initial conditions. By separability of the space of non-explosive initial conditions, such a coupling is necessarily unique up to indistinguishability. In particular, a corollary implies that the Hopf-Cole solution to the KPZ equation defines a Feller process on the space of non-explosive initial conditions.
\item We prove new uniform growth and path regularity estimates for solutions to \eqref{eq:SHE}, which imply new uniform growth and regularity estimates for solutions to \eqref{eq:SHEf}. These estimates play a key role in essentially all of the proofs in this paper and are likely of independent interest. 
\item Using our coupling and growth estimates, we prove a conservation law simultaneously for all times $s<t$ and all inverse temperatures $\beta$. Specifically, we show that if there exist $\lambda_+,\lambda_- \in \bbR$ such that 
\[
\lim_{x\to\infty} \frac{\log f(x)}{x} = \lambda_{\pm}, \qquad \text{ then } \qquad \lim_{x\to \pm \infty} \frac{\log \Sheb(t,x\viiva s;f)}{x} = \lambda_{\pm}
\]
 for all $s<t$ and $\beta \in \bbR$. This generalizes the classical conservation law satisfied by the $\beta=0$ case of the ordinary heat equation.
\item We construct a simultaneous coupling of the continuum directed polymer measures for all initial and terminal conditions and all inverse temperatures, as well as proving many basic properties including weak and total variation regularity in the initial and terminal conditions, variants of the Feller and strong Feller properties, simultaneous H\"older $1/2-$ path regularity, and stochastic monotonicity as the initial condition varies. Essentially all of these properties are new.
\item Through analysis of the polymer measures and an argument based on the Karlin-McGregor theorem, we prove that the map $(x,y) \mapsto \Sheb(t,x\viiva s,y)$ is strictly totally positive for all $t>s$ and $\beta\in \bbR$. Essentially the same strict total positivity appears in \cite{Lun-War-20} using a different argument.
\end{enumerate}
We finally note that the results mentioned above provide the setting for a companion paper by the last three authors, \cite{Jan-Ras-Sep-22-1F1S-}, where synchronization, a quenched one force -- one solution principle,  and a characterization of  ergodic stationary (modulo constants) distributions for the KPZ equation are proven.

Variants of the superposition principle for \eqref{eq:SHEf} have been observed previously in \cite{Alb-Kha-Qua-14-jsp} and \cite[Lemma 1.18]{Cor-Ham-16} in particular, though without including the details which would determine precisely how generally solutions to \eqref{eq:SHEf} can be given by \eqref{eq:superpos} and how generally these solutions can be coupled together. We show that the class of non-zero positive initial conditions for which \eqref{eq:superpos} defines a solution to \eqref{eq:SHEf} for all $s<t$ and all $\beta \in\bbR$ simultaneously is given by
\begin{align}
\ICM &= \bigg\{\text{non-zero positive Borel measures } \mu \text{ on } \bbR: \forall a >0, \int_{\bbR} e^{-a x^2}\mu(dx) <\infty \bigg\} \label{eq:ICM}
\end{align}
and that all such solutions can be coupled simultaneously through the field in \eqref{eq:SHE}. 

This condition matches the condition needed to define global solutions via the superposition principle for the case of $\beta = 0$, i.e., the unforced heat equation. It also matches the conditions for existence of solutions to \eqref{eq:SHEf} for fixed $s \in \bbR$, fixed $\mu \in \ICM$, and all $t>s$ in \cite{Che-Dal-14,Che-Dal-15} by Chen and Dalang, on an event of full probability depending on $s$ and $\mu$. We note also that there is now a rich literature of pathwise approaches to solving stochastic partial differential equations using regularity structures \cite{Hai-13,Hai-14} and paracontrolled distributions \cite{Gub-Per-17, Per-Ros-19}, though it is unclear to us whether these approaches have been shown to couple together all initial conditions from all initial times and for all values of the inverse temperature at this level of generality. See also the recent surveys \cite{Cor-She-20,Gub-18}.

Our estimates imply that for positive measures not in $\ICM$, \eqref{eq:superpos} explodes in finite time, with the critical time after which the solution is infinite being the same as for the $\beta = 0$ case. The behavior at that critical time is more subtle and our estimates are not refined enough to resolve exactly when the solution will be finite and when it will be infinite. This is sufficient, however, to show that if one is interested in global solutions, the restriction to $\ICM$ is sharp in the class of positive Borel measures. \cite{Che-Dal-14} and \cite{Che-Dal-15} allow for signed measures in their analogue of \eqref{eq:ICM}, with our condition on $\mu$ replaced by one for $|\mu|$. Again appealing to superposition and decomposing into positive and negative parts, considering only the positive case results in no loss of generality, but does simplify our statements and proofs. Similarly, we omit the zero measure from this class only to avoid dealing with trivialities in the statements of our results.  For non-zero positive initial data, the solution is strictly positive for all times. 

Because we work $\omega$-by-$\omega$ on a fixed event of full probability (the event on which the Green's function is well-behaved), which does not depend on the initial condition, and define our solutions through \eqref{eq:superpos}, our initial conditions may in principle be $\omega$-dependent and need not be measurable in this variable. They may also be functions of the future of the driving white noise. In particular, we do not impose conditions implying (partial) compatibility with the noise in the sense of \cite{Kur-14}. Such conditions are required in the usual mild formulations of uniqueness of solutions to \eqref{eq:SHEf}. This is the case in, for example, \cite{Ber-Can-95,Che-Dal-14,Che-Dal-15} where the initial conditions are $\omega$-independent or \cite{Ber-Can-95,Ber-Gia-97} for certain classes of random measure valued initial data. When such conditions hold and are sufficient to imply uniqueness of the mild solution to \eqref{eq:SHEf}, the formula in \eqref{eq:superpos} is, up to indistinguishability, the unique mild solution to the equation. This issue is discussed in more detail in Appendix \ref{app:mild}, with the statement that \eqref{eq:SHEf} is the unique mild solution appearing as Lemma \ref{lem:uniq} below. 

Formally, one expects that the solution to \eqref{eq:SHE} has a Feynman-Kac interpretation as
\be\label{eq:FK}\begin{aligned}
\Sheb(t,x\viiva s,y) &= \heat(t-s,x-y) \rnSheb(t,x\viiva s,y),  \ \text{ where }  
\\
\rnSheb(t,x\viiva s,y) &= \oE_{(s,y),(t,x)}^{BB}\bigg[:\!\exp\!:\bigg\{\beta \int_s^t W(u,X_u)du\bigg\}\bigg], 
\end{aligned}\ee 
and $\heat(t,x)$ is the heat kernel,
\begin{align*}
\heat(t,x) &= \frac{1}{\sqrt{2\pi t}} e^{- \frac{x^2}{2t}} \one_{(0,\infty)}(t).
\end{align*}
In the expression in \eqref{eq:FK}, $:\!\exp\!:$ denotes the Wick-ordered exponential, and the expectation is over Brownian bridge paths $X$ from $(s,y)$ to $(t,x)$. In this interpretation, $\rnSheb(t,x\viiva s,y)$ can naturally be viewed as the partition function of a point-to-point directed polymer measure which is intuitively, but not actually \cite[Theorem 4.5]{Alb-Kha-Qua-14-jsp}, a Gibbsian perturbation of the Brownian bridge measure. The interpretation of the solution to \eqref{eq:SHE} as \eqref{eq:FK} was made rigorous in \cite{Alb-Kha-Qua-14-aop, Alb-Kha-Qua-14-jsp} when the initial and terminal conditions are fixed. 

The key observation that enables our construction of a regular version of this solution (regular for all times $t\geq s$ and all $x, y \in \bbR$) is that while $\Sheb(t,x\viiva s,y)$ becomes singular as $t \searrow s$, this singularity is entirely contained in the heat kernel. The partition function $\rnSheb(t,x\viiva s,y)$, by contrast, admits a well-behaved continuous modification on $\{(s,y,t,x,\beta) \in \bbR^5 : s \leq t\}$. This can be seen heuristically from the expression in \eqref{eq:FK}, as one should expect that if $s$ is very close to $t$, then the path integral over the white noise should be close to zero, suggesting that we should expect a continuous extension to $t=s$ with a value of one for all $x,y$. We prove that this is indeed the case. Our estimates show that this modification grows sub-polynomially as a function of the space variables $x,y$ locally uniformly over times $s \leq t$ and inverse-temperatures $\beta$. This is shown in Corollary \ref{cor:growth} below. These uniform estimates are new and allow us to show many new simultaneous continuity and regularity properties of solutions to \eqref{eq:SHEf}. 

For example, we show that on a single event of full probability, for all $s \in \bbR$, all $\mu\in \ICM$, all $\varphi \in \sC(\bbR,[0,\infty))$ for which there exist  $a,A>0$ so that $0 \leq \varphi(x) \leq A e^{-ax^2},$ and all $\beta \in\bbR$,
\begin{align*}
\lim_{t\searrow s} \int_{\bbR} \varphi(x) \Sheb(t,x\viiva s;\mu)dx = \int_{\bbR}\varphi(x) \mu(dx)
\end{align*}
and for all $f \in \sC(\bbR,[0,\infty))$ for which for all $a>0$, $\int e^{-ax^2}f(x)dx<\infty$,
\begin{align*}
\lim_{t\searrow s}\int_{\bbR} \Sheb(t,x\viiva s,z)f(z) dz  = f(x)
\end{align*}
locally uniformly in $s,\beta,$ and $x$. These considerably improve on previous results we are aware of in the literature showing that mild solutions to \eqref{eq:SHEf} satisfy the initial condition, where limits of this type were shown to hold under slightly more restrictive hypotheses for each $s$, each $\varphi$, each $\mu$, and each $f$ on full probability events depending on $s,\varphi,\mu,f,$ and $\beta$. Our estimates also allow us to prove the optimal $(1/4-,1/2-)$ time-space H\"older regularity of $(t,x) \mapsto \Sheb(t,x\viiva s;\mu)$ on $\{(t,x) \in \bbR^2 : t>s\}$ simultaneously for all $s \in \bbR$, all $\mu \in \ICM$, and all $\beta \in\bbR$, again generalizing previous results which were proven on events depending on the initial condition and $\beta$. Our estimates become somewhat suboptimal near the boundary of $\{(t,x) \in \bbR^2 : t\geq s\}$, so we do not treat the precise H\"older regularity on that set.

We prove regularity of the solution map viewed as measure- and function-valued processes in natural Polish topologies on $\ICM$ and on
\begin{align}
\CICM &= \biggl\{f \in \sC(\R,(0,\infty)) : \forall a>0, \int_{\bbR} e^{-ax^2}f(x)dx <\infty \biggr\}. \label{eq:CICM}
\end{align}
In this latter case, the topology is the one induced by locally uniform convergence of $f$ combined with convergence of all integrals of the form $\int_\R f(x) e^{-ax^2}dx$. These continuity statements are discussed in Theorem \ref{thm:jcont} and the relevant metrics are defined in Appendix \ref{app:top}. In particular, we show that on an event of full probability, the map
\begin{align}
(\beta, \mu,s,t) \mapsto \Sheb(t,\aabullet \viiva s; \mu)
\end{align}
is continuous from $\bbR \times \ICM \times \{(s,t) : s< t\}$ to $\CICM$. We also show continuity of the analogous map from $\bbR \times \CICM \times \{(s,t) : s\leq t\}$ to $\CICM$. Because $\bbR \times \ICM \times \{(s,t) : s< t\}$ is separable, the field $\Sheb(t,x \viiva s;\mu)$ is, up to indistinguishability, the unique jointly continuous modification of the field of mild solutions constructed in \cite{Che-Dal-14,Che-Dal-15}.

The PAM is connected to the Kardar-Parisi-Zhang (KPZ) \cite{Kar-Par-Zha-86} and viscous stochastic Burgers equations (vSBE) through the following formal computations, which define the physically relevant ``Hopf-Cole" solutions to these equations.  Ignoring the distributional structure of $W$, a formal computation suggests that if $\Sheb(t,x|s;e^f)$ solves \eqref{eq:SHEf} for a Borel measurable $f$, then 
\begin{align}
\KPZb(t,x\viiva s;f)= \log \Sheb(t,x\viiva s; e^f)\label{eq:KPZdef}
\end{align}
solves the KPZ equation
\be\label{eq:KPZ}\begin{aligned}
\partial_t \KPZb(t,x\viiva s;f) &= \frac{1}{2} \partial_{xx}\KPZb(t,x\viiva s;f) + \frac{1}{2}(\partial_x \KPZb(t,x\viiva s;f))^2 + \beta\tspb W(t,x)  \\
\KPZb(s,x \viiva s;f) &= f(x). 
\end{aligned}\ee
The Hopf-Cole solution takes this argument in reverse: $\KPZb$ is defined through \eqref{eq:KPZdef}. 

The vSBE is given by the distributional derivative $\SBEb(t,y) = \partial_y \KPZb(t,y)$, where $\KPZb$ is the solution to the KPZ equation \eqref{eq:KPZ}. Formally, this describes the solution to the equation
\be\label{eq:SBE}\begin{aligned}
\partial_t \SBEb(t,y \viiva s; \partial f) &= \frac{1}{2}\partial_{yy}\SBEb(t,y\viiva s; \partial f) + \frac{1}{2}\partial_y [\SBEb(t,y\viiva s; \partial f)^2] + \beta\tspb \partial_y W(t,y)  \\
\SBEb(s,y\viiva s; \partial f) &= \partial_y f(y). 
\end{aligned}\ee
The Hopf-Cole solutions arises as a limit of lattice and continuum models which lie in the KPZ class, see e.g., \cite{Ber-Gia-97,Alb-Kha-Qua-14-aop,Hai-Qua-18}, and is the standard notion of solution to \eqref{eq:KPZ} used in both the physics and mathematics literature. See the surveys \cite{Cor-16,Cor-12,Hal-Tak-15,Qua-Spo-15,Qua-12}, the references therein, and indeed \cite[(2)]{Kar-Par-Zha-86}. 

Direct definitions of what it means to solve these equations and proofs that these definitions agree with the Hopf-Cole solution under suitable hypotheses (up to a non-random pre-factor in the case of energy solutions)  have been completed by several groups for the KPZ equation \cite{Hai-13,Hai-14,Gub-Per-17, Per-Ros-19} both on the torus and on the line. The problem of directly solving the vSBE has also seen significant recent progress.  See for example, \cite{Gon-Jar-14,Gor-Occ-21,Gub-Per-18}.

The estimates for $\Sheb$ described above imply that in the Polish topology on
\begin{align}
\CKPZ &= \biggl\{f \in \sC(\R, \R) : \forall a>0, \int_{\bbR} e^{f(x)-ax^2}dx <\infty \biggr\}. \label{eq:CKPZ}
\end{align}
inherited by taking logs of functions in $\CICM$, the map
\[
(\beta,f,s,t) \mapsto  \KPZb(t, \aabullet \viiva s;f)
\]
is jointly continuous. In particular, our coupling gives the unique (up to indistinguishability) almost surely jointly continuous coupling of Hopf-Cole solutions to the KPZ equation started from non-explosive continuous initial conditions.

Heuristic computations similar to the usual computation showing that the Burgers equation describes a conservation law make it plausible that if $f$ is a non-negative Borel function and $ f$ grows approximately linearly at infinity, i.e.,
\begin{align}
\lim_{y\to\infty} \frac{ f(y)}{y} = \lambda_+ \qquad  \text{ and } \qquad \lim_{y\to-\infty} \frac{ f(y)}{y} = \lambda_-, \label{eq:ICslope}
\end{align}
then the evolution of \eqref{eq:SBE} admits conserved quantities
\begin{align}
\lim_{x\to\infty}\frac{\KPZb(t,x \viiva s;f)}{x}=\lambda_+ \qquad \text{ and } \qquad \lim_{x\to-\infty} \frac{\KPZb(t,x\viiva s;f)}{x}=\lambda_-. \label{eq:CCslope}
\end{align}
With this in mind, we introduce the following function spaces  for $\lambda_+,\lambda_- \in \bbR$,
\be\label{eq:HOA}\begin{aligned}
H(\lambda_-,\lambda_+) &= \bigg\{f:\bbR\to\R \text{ Borel measurable, locally bounded, }\lim_{x\to \pm\infty} \frac{f(x)}{x} = \lambda_\pm  \bigg\}. 
\end{aligned}\ee
We prove in Proposition \ref{prop:cons} that for $\KPZb(t,x\viiva s;f) $ as in \eqref{eq:KPZdef}, if $f \in H(\lambda_-,\lambda_+)$, then $\KPZb(t,\aabullet \viiva s;f) \in H(\lambda_-,\lambda_+)$ for all $t>s$ and all $\beta \in\R$. Note that the solution semi-group defined by \eqref{eq:superpos} takes measures in $\ICM$ to functions in $\CICM$ (see Theorem \ref{prop:IC}), and so by restarting the process after $\epsilon$ time has elapsed, there is no loss of generality in considering initial conditions represented by locally bounded functions here.



The directed polymer with partition function given by \eqref{eq:FK} is known as the (\textit{quenched}) \textit{continuum directed polymer}. For $-\infty < s < t < \infty$ and $x,y \in \bbR$, the (\textit{quenched}) \textit{point-to-point polymer distribution} $\Polyb_{(s,y),(t,x)}$ is the probability measure on the space $\sC([s,t],\bbR)$ of continuous functions determined by the following finite-dimensional distributions,  for $s = t_0  < t_1 < \dots < t_k < t_{k+1} = t$ and with $x_0=x$, $x_{k+1}=y$: 
\be\label{eq:p2pdef}\begin{aligned}
&\Polyb_{(s,y),(t,x)}(X_{t_1} \in dx_1, \dots X_{t_k} \in dx_k) =  \frac{\prod_{i=0}^k\Sheb(t_{i+1},x_{i+1} | t_i , x_i) }{\Sheb(t,x\viiva s,y)}dx_{1:k} \\
&\qquad= \frac{\prod_{i=0}^k\rnSheb(t_{i+1},x_{i+1} | t_i , x_i) }{\rnSheb(t,x\viiva s,y)} \cdot \frac{ \prod_{i=0}^k \heat(t_{i+1}-t_i, x_{i+1}-x_i)}{\heat(t-s,x-y)}\tspb dx_{1:k}.  
\end{aligned}\ee
This polymer model was originally introduced in \cite{Alb-Kha-Qua-14-jsp} with   fixed initial and terminal conditions, defined  on an event of full probability that depends  on the initial and terminal conditions.

We construct all of the measures $\Polyb_{(s,y),(t,x)}$ simultaneously on a single event of full probability, together with extensions called (\textit{quenched}) \textit{measure-to-measure polymers}. Further, we prove many basic regularity properties of these measures, including easy-to-check conditions for weak and total variation convergence, H\"older $1/2-$ path regularity, and versions of the Feller and strong Feller properties. Most of these results are new and all are significant extensions of the construction in \cite{Alb-Kha-Qua-14-jsp}.

Our polymer measures satisfy a stochastic monotonicity property which is inherited from their continuous paths and planar structure. This property is a continuous space version of a familiar path-crossing identity frequently used in lattice directed last-passage percolation and directed polymer models. Our proof of this result comes as a consequence of an argument based on the Karlin-McGregor theorem. This argument proves the following far stronger strict total positivity: for all $(x_1,\dots,x_n),(y_1,\dots,y_n) \in \bbW_n = \{(x_1,\dots,x_n) \in\R^n: x_1 < \dots < x_n\}$ and all $\beta \in\bbR$,
\begin{align}
 \det\big[\Sheb(t,x_i\viiva s,y_j) \big]_{i,j=1}^n  > 0.\label{eq:totpos}
\end{align}
This was previously shown in \cite{Lun-War-20} with a different method. The previous inequality with $>$ replaced by $\geq$ is a direct consequence of the Karlin-McGregor theorem.  To obtain the strict inequality for all choices of $(x_1,\dots,x_n)$ and $(y_1,\dots,y_n)$ requires non-trivial additional work, but our proof readily generalizes to other planar directed polymer models.

\subsection{Organization of the paper}  
Nonstandard notation is defined when first encountered, and all 
notational conventions and some topological and measure-theoretic preliminaries are collected in Appendix \ref{app:top}. In Section \ref{sec:setting}, we discuss our setting and state our main results. Section \ref{sec:chaos} then constructs the field of solutions to \eqref{eq:SHE}, proves growth estimates, and then uses these to prove basic properties about solutions to \eqref{eq:SHEf}. In Section \ref{sec:poly}, we use these results to build and prove regularity properties of the continuum directed polymer measures, which lead in particular to the strict total positivity in \eqref{eq:totpos}. We study more refined regularity properties of solutions to \eqref{eq:SHEf} and of polymers in Section \ref{sec:reg}. Appendix \ref{app:mild} is devoted to the equivalence up to indistinguishability between the superposition solution to \eqref{eq:SHEf} and the mild solutions which have previously been studied. Appendix \ref{app:KC} includes a statement of the version of the Kolmogorov-Chentsov theorem that we use in our construction. Finally, Appendix \ref{app:comp} collects most of the purely computational aspects of the present work.





\subsection{Acknowledgements}
The authors are grateful to Le Chen, Davar Khoshnevisan, and Samy Tindel for helpful conversations and to Davar Khoshnevisan and Samy Tindel for their feedback on a previous draft of this manuscript.

\section{Setting and results}\label{sec:setting} 
We assume that $(\Omega,\sF,\bbP)$ is a complete probability space that supports a space-time white noise $W$ on $L^2(\bbR^2)$ and a group of measure-preserving automorphisms of $\Omega$ which we describe momentarily. A white noise $W$ is a mean zero Gaussian process indexed by $f \in L^2(\bbR^2)$ which satisfies $\bbP(W(a f+b g) = a W(f) + b W(g))=1$ and $\mathbb{E}[W(f)W(g)] = \int_{\bbR^2}f(x_{1:2})g(x_{1:2})dx_{1:2}$ for $a,b \in \bbR$ and $f,g \in L^2(\bbR^2)$.  

The  shift maps $\shiftf{s}{y}$,   time and space reflection maps  $\reff_1$ and $\reff_2$,  the shear  $\shearf{s}{\nu}$ by $\nu$ relative to temporal level $s$,  rescaled dilation maps $\dif{\alpha}{\lambda}$, and the negation map $\nef$ act on  $f\in L^2(\bbR^2)$ as follows: 
\be\label{S7}\begin{aligned}
\shiftf{s}{y}  f (t,x) &= f(t+s,x+y) \quad \text{for} \quad  s,y \in \bbR; \\
\reff_1 f (t,x) &= f(-t,x) \quad\text{and}\quad  \reff_2 f  (t,x) = f(t,-x); \\
\shearf{s}{\nu} f (t,x) &= f(t, x+ \nu(t-s)) \quad \text{for} \quad  s,\nu \in \bbR; \\
\dif{\alpha}{\lambda} f(t,x) &= \sqrt{\alpha \lambda\tspa} \tspb f(\alpha t, \lambda x) \quad \text{for} \quad \alpha,\lambda > 0;\\
\nef f (t,x) &= - f(t,x). 
\end{aligned}\ee
Their    inverses are $\shiftf{t}{x}^{-1} = \shiftf{-t}{-x}, \reff_1^{-1} = \reff_1, \reff_2^{-1}=\reff_2,$ $\shearf{s}{\nu}^{-1} = \shearf{s}{-\nu}$, $\dif{\alpha}{\lambda}^{-1} = \dif{\alpha^{-1}}{\lambda^{-1}}$, and $\nef^{-1} = \nef$. 

We assume that $(\Omega,\sF,\bbP)$ comes equipped with a group (under composition) of measure-preserving automorphisms generated by $\refd_1$, $\refd_2$ (reflection), $\{\shiftd{s}{y} : s,y \in \bbR\}$ (translation), $\{\sheard{s}{\nu} : s,\nu \in \bbR\}$ (shear), $\{\did{\alpha}{\lambda} : \alpha,\lambda >0\}$ (dilation), and $\ned$ (negation), which act on $W$ by $W \circ \shiftd{s}{y}(f) = W(\shiftf{-s}{-y} f )$, $W \circ \sheard{s}{\nu}(f) = W(\shearf{s}{-\nu} f)$, $W \circ \refd_1(f) = W(\reff_1 f)$, $W \circ \refd_2(f) = W(\reff_2 f)$,  $W \circ \did{\alpha}{\lambda}(f) = W(\dif{\alpha^{-1}}{\lambda^{-1}} f)$, and $W \circ \ned = W(\nef f)$. The identity is given by $\Id = \shiftd{0}{0} = \refd_1 \circ \refd_1 = \refd_2 \circ \refd_2 = \sheard{0}{0} = \did{1}{1}$. For completeness, we include a standard example of a space that satisfies these  hypotheses.

\begin{example}\label{ex:WN1}  Take $\Omega=\sS'(\bbR^2)$, the space of tempered distributions. This is the dual of the Schwartz space $\sS(\bbR^2)$ of infinitely differentiable functions on $\R^2$ with derivatives which all decay to zero at infinity faster than any power, equipped with its standard Fr\'echet topology. 
  $\sS'(\bbR^2)$ is endowed with its weak$^*$ topology.  See \cite{Fol-99, Ree-Sim-72} for details. 
By the Bochner-Minlos theorem \cite[Theorem A.6.1]{Gli-Jaf-87}, there exists a probability measure  $\mu$ on $(\sS'(\bbR^2),\sB(\sS'(\bbR^2)))$ satisfying $\int_{\sS'(\bbR^2)} e^{i(\varphi,f)}\mu(d\varphi) = e^{-\frac{1}{2}\|f\|_2^2}$ for $f \in \sS(\bbR^2)$. By the density of $\sS(\bbR^2)$ in $L^2(\bbR^2)$, the coordinate random variable $W$ under $\mu$ extends by taking limits to a white noise in the sense described above. For $\varphi \in \sS'(\bbR^2)$ and $f \in \sS(\bbR^2)$, set $\varphi \circ \shiftd{s}{y}(f) = \varphi(\shiftf{-s}{-y} f)$, $(\varphi \circ \refd_1)(f) = \varphi(\reff_1 f)$, $(\varphi \circ \refd_2)(f) = \varphi(\reff_2 f)$, $(\varphi \circ \sheard{s}{\nu})(f) = \varphi(\shearf{s}{-\nu} f)$, $(\varphi \circ \did{\alpha}{\lambda})f = \varphi(\dif{\alpha^{-1}}{\lambda^{-1}} f)$, and $\varphi \circ \ned(f) = \varphi(\nef f)$. Direct computation checks that each of these transformations preserves the covariance structure of $\mu$ and therefore each such map is measure preserving. These maps extend to $L^2(\bbR^2)$ by density. Completion of $\sB(\sS'(\bbR^2))$ with respect to $\mu$ then furnishes the desired space. A Polish example is described in greater detail in \cite[Appendix A]{Jan-Ras-Sep-22-1F1S-}.\end{example}
Denote by $\nulls$ the $\sigma$-algebra generated by the $\bbP$-null sets in $\sF$. For $-\infty \leq a < b \leq \infty$, let $\timefun{a}{b}$ denote the $\sB([a,b]\times\R)$ measurable functions in $L^2(\R^2)$. 
Let $\filt{W,0}_{s,t} = \sigma(W(f) : f \in \timefun{s}{t})\vee \nulls$ be the $\sigma$-algebra generated by the white noise evaluated at such functions and $\nulls$. For each $s \leq t$, we define $\fil_{s,t} = \filt{W,0}_{s-,t+} = \bigcap_{a < s \leq t < b} \filt{W,0}_{a,b}$ to be the associated natural augmented filtration of the white noise. 

We take as given the existing results in the literature on existence of solutions to \eqref{eq:SHE} for fixed initial space-time points. We recap the results that we use in Appendix \ref{app:mild}. We will understand \eqref{eq:SHE} for fixed $s,y \in \bbR$ and $\beta \in \bbR$ through the mild equation
\begin{align}
\Sheb(t,x\viiva s,y) &= \heat(t-s,x-y) + \beta \int_s^t \int_{-\infty}^\infty \heat(t-u,x-z) \Sheb(u,z\viiva s,y)W(du dz), \label{eq:Greens}
\end{align}
where  $t>s$, $x \in \bbR$, and the stochastic integral is understood in the sense of Walsh \cite{Wal-86}. In  the case of $\beta = 0$, $\She_{0}(t,x\viiva s,y) = \heat(t-s,x-y)$ is the usual heat kernel. For $\beta \in \R$, existence of an event $\Omega_{s,y,\beta}$ on which there exists a process $\Sheb(\aabullet,\aabullet\viiva s,y) \in \sC((s,\infty)\times \bbR,\bbR)$ which is adapted to $(\fil_{s,t} : s \leq t)$  and solves \eqref{eq:Greens} is originally due to \cite{Ber-Can-95} and is included as part of Lemma \ref{lem:fixGreen} in Appendix \ref{app:mild} below.

Solving \eqref{eq:Greens} by Picard iteration leads to a chaos expansion of the solution. For $k\in\bbN$, we define $\heat_k: (\bbR \times \bbR)^{k} \to \bbR$ by
\[
\heat_k(t_{1:k},x_{1:k}\viiva s,y;t,x) = \prod_{i=0}^k \heat(t_{i+1}-t_i, x_{i+1}-x_i),
\]
 with the conventions that $t_0 = s, x_0 = y, t_{k+1}=t, x_{k+1}=x$. It is shown in \cite{Alb-Kha-Qua-14-aop} (see also \cite[Theorem 2.2]{Cor-18}) that for fixed $s,y \in \bbR$ and $\beta \in \R$ the unique continuous and adapted solution to \eqref{eq:Greens}, $\Sheb(t,x\viiva s,y)$, admits a chaos series representation as
\be\label{eq:SHEchaos}\begin{aligned}
\Sheb(t,x\viiva s,y) &= \heat(t-s,y-x) \\
&+ \sum_{k=1}^\infty \beta^k  \int_{\bbR^k} \int_{\bbR^k} \heat_{k}(t_{1:k}, x_{1:k}| s,y; t,x) W(dt_1 dx_1)\dotsm W(dt_k dx_k).
\end{aligned}\ee

The equality above is understood to hold almost surely and in $L^2(\Omega,\sF,\bbP)$ for each fixed quadruple $(s,y,t,x) \in \varsets= \{(s,y,t,x)\in \bbR^4 : s<t\}$ and $\beta \in \R$. We include these properties of $\Sheb(t,x\viiva s,y)$ in Lemma \ref{lem:fixGreen} below as well and refer the reader to \cite{Jan-97,Nua-06} for technical details concerning chaos expansions.

It will be convenient for us to normalize \eqref{eq:SHEchaos} by dividing through by the heat kernel. Define for $s < t$ 
\be\label{eq:rnchaos}\begin{aligned}
\rnSheb(t,x\viiva s,y) &= \frac{\Sheb(t,x\viiva s,y)}{\heat(t-s,x-y)} 
\\
&= 1 + \sum_{k=1}^{\infty} \beta^k \int_{\bbR^k} \int_{\bbR^k} \frac{\heat_{k}(t_{1:k}, x_{1:k} | s,y; t,x)}{\heat(t-s,x-y)} W(dt_1 dx_1)\dotsm W(dt_k dx_k),
\end{aligned}\ee
again in $L^2(\Omega,\sF,\bbP)$. We take the conventions that for all $\beta \in \R$, $\rnSheb(t,x|t,y)=1$ for all $t,x,y \in \bbR$ and $\rnShe_{0}(t,x\viiva s,y)=1$ for all $(s,y,t,x)\in\varset= \{(s,y,t,x) \in \bbR^4 : s \leq t\}$. The expression in \eqref{eq:rnchaos} is a rigorous version of the Feynman-Kac interpretation \eqref{eq:FK}.


\subsection{Solutions to the PAM and KPZ}
Our first main result shows that the process $\rnShe$ admits a modification $\tspb\tspb\widetilde{\!\rnShe}$  that is   $(\fil_{s,t} : s \leq t)$-adapted, with paths as functions of $(s,y,t,x,\beta)$ taking values in $\sC(\varset \times \bbR,\bbR)$. Then we define our solution of \eqref{eq:SHE} through $\widetilde{\She}_{\beta}(t,x\viiva s,y) = \tspb\widetilde{\!\rnShe}_{\beta}(t,x\viiva s,y)\heat(t-s,x-y)$. 
 In the theorem below,  $\bbD^d = \{(\frac{k_1}{2^{n_1}}, \dots, \frac{k_d}{2^{n_d} }) : k_1,\dots,k_d \in \bbZ  ,n_1,\dots,n_d \in\bbN\}$ is the set of dyadic rational numbers.


\begin{theorem}\label{thm:rnreg}
There exists an event $\Omega_0$ with $\bbP(\Omega_0) = 1$ and a $\sigma(W(f) : f \in L^2(\R^2))$-measurable random variable $\tspb\tspb\widetilde{\!\rnShe}_{\aabullet}(\aabullet,\aabullet|\aabullet,\aabullet)$ taking values in $\sC(\varset \times \bbR,\bbR)$ such that
\begin{enumerate} [label={\rm(\roman*)}, ref={\rm\roman*}]   \itemsep=3pt  
\item For all $\omega \in \Omega_0$, all $(s,y,t,x,\beta) \in \bbD^5$ with $s<t$,
\begin{align*}
\rnSheb(t,x\viiva s,y) = \tspb\widetilde{\!\rnShe}_{\beta}(t,x\viiva s,y) \qquad  \text{ and } \qquad \Sheb(t,x\viiva s,y) = \heat(t,x\viiva s,y)\,\widetilde{\!\rnShe}_{\beta}(t,x\viiva s,y),
\end{align*}
where $\rnSheb(t,x\viiva s,y)$ is given by \eqref{eq:rnchaos} and $\Sheb(t,x\viiva s,y)$ is given by \eqref{eq:SHEchaos}.
\item For all $\omega \in \Omega_0$, $\tspb\widetilde{\!\rnShe}_{\beta}(t,x|t,y)=1$ for all $t,x,y \in \bbR$ and $\beta \in \bbR$.
\item For all $\omega \in \Omega_0$, $\tspb\widetilde{\!\rnShe}_{0}(t,x\viiva s,y)=1$ for all $(s,y,t,x)\in\varset$. 
\item For all $\omega \in \Omega_0$ and all $(s,y,t,x) \in \varset$ and all $\beta \in \bbR$, $0<\tspb\widetilde{\!\rnShe}_{\beta}(t,x\viiva s,y)<\infty$. 
\item For all $s<t$, $\tspb\widetilde{\!\rnShe}_{\aabullet}(t,\aabullet|s,\aabullet)$ is $\fil_{s,t}$-measurable.
\item\label{rn:vi}  Fix the initial time-space point  $(s,y) \in \bbR^2$ and the inverse  temperature $\beta \neq 0$. Define  the process $\widetilde{\She}_{\beta}(t,x\viiva s,y)$  on $\{(t,x) \in \bbR^2 : t>s\}$ by $\widetilde{\She}_{\beta}(t,x\viiva s,y) = \tspb\widetilde{\!\rnShe}_{\beta}(t,x\viiva s,y)\heat(t-s,x-y)$. Then  
\begin{align*}
\bbP\bigg(\forall t\in(s,\infty) \ \text{\rm and }  x\in \bbR,\, \widetilde{\She}_{\beta}(t,x\viiva s,y) = \Sheb(t,x\viiva s,y) \bigg) = 1,
\end{align*}
where $\Sheb(t,x\viiva s,y)$ is the unique {\rm(}up to indistinguishability{\rm)}  continuous and adapted mild solution to \eqref{eq:Greens} satisfying the moment hypotheses of Lemma \ref{lem:fixGreen}.
\end{enumerate}
\end{theorem}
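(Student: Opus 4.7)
The plan is to construct $\tspb\widetilde{\!\rnShe}$ via a Kolmogorov–Chentsov (KC) argument applied to the chaos expansion \eqref{eq:rnchaos}. The whole point of normalizing by the heat kernel is that the singularity at the diagonal $\{t=s\}$ is removed, so that $(s,y,t,x,\beta)\mapsto \rnSheb(t,x\viiva s,y)$ becomes a candidate for a continuous modification on the closed set $\varset\times\bbR$, with boundary value $1$ forced by properties (ii) and (iii). Concretely, I would first derive moment bounds of the form
\begin{equation*}
\bbE\bigl|\rnSheb(t,x\viiva s,y) - \rnShe_{\beta'}(t',x'\viiva s',y')\bigr|^{2p} \;\le\; C_{K,p}\,\bigl(|t-t'|+|s-s'|+|x-x'|+|y-y'|+|\beta-\beta'|\bigr)^{p\alpha}
\end{equation*}
uniform over quintuples lying in a compact set $K\subset\varset\times\bbR$, for some $\alpha>0$ and all integers $p\ge 1$. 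These bounds follow by plugging \eqref{eq:rnchaos} into each pairwise difference, organizing terms chaos-by-chaos, and invoking the Itô isometry together with Gaussian hypercontractivity to convert $L^2$ estimates into $L^{2p}$ estimates. The $\beta$ increments are handled by factoring $\beta^k-(\beta')^k$ and using uniformity of $\beta,\beta'$ on compacts; the time/space increments reduce to well-known estimates on iterated heat-kernel integrals after dividing out $\heat(t-s,x-y)$. I defer the (lengthy) computations to what the authors call Appendix \ref{app:comp}.

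Next I would cover $\varset\times\bbR$ by a countable collection of compact cubes, apply the KC theorem stated in Appendix \ref{app:KC} on each cube to get a locally H\"older modification, and verify that on overlaps the modifications agree almost surely and hence glue to a single $\sC(\varset\times\bbR,\bbR)$-valued random variable $\tspb\widetilde{\!\rnShe}$ on a single full-probability event $\Omega_0$. On $\Omega_0$, the identity $\tspb\widetilde{\!\rnShe} = \rnShe$ holds simultaneously for all $(s,y,t,x,\beta)\in\bbD^5$ with $s<t$, giving (i), and the corresponding equality for $\She$ then follows from the factorization $\Sheb=\heat\cdot\rnSheb$ on the event where the chaos series converges at each dyadic point. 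Properties (ii) and (iii) are read off as the continuous boundary limits: the bound above forces $\tspb\widetilde{\!\rnShe}_\beta(t,x|t,y)=\lim\tspb\widetilde{\!\rnShe}_\beta(t_n,x|s_n,y_n)=1$ along any dyadic approach, and the chaos series is identically $1$ when $\beta=0$. Measurability in (v) is automatic: for dyadic $s<t$ the random variable $\rnSheb(t,\aabullet|s,\aabullet)$ is $\filt{W,0}_{s,t}$-measurable (each multiple stochastic integral in \eqref{eq:rnchaos} is), and continuity extends this to all $s\le t$, with the $\fil_{s,t}$ filtration absorbing the left/right limits thanks to the augmentation.

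The delicate item is strict positivity in (iv): the KC argument only produces a continuous real-valued field, and positivity of the chaos series at a single point requires genuine input. I would invoke Mueller's strict positivity theorem (and its variants by Moreno Flores and Hu–L\^e) for the stochastic heat equation with non-negative initial condition, which gives $\bbP(\Sheb(t,x\viiva s,y)>0)=1$ for each fixed $(s,y,t,x,\beta)\in\varsets\times\bbR$. Combined with adaptedness and the dyadic identification, this yields strict positivity on a full-probability subset of $\Omega_0$ at all dyadic quintuples, and then continuity together with the boundary value $1$ at the diagonal and at $\beta=0$ propagates strict positivity to the whole of $\varset\times\bbR$ (any zero would have to be an interior zero of a continuous non-negative function equal to $1$ on a nonempty portion of the parameter space, and positivity of the individual dyadic values rules this out after a suitable connectedness/limit argument; alternatively one invokes the Mueller argument directly at one carefully chosen point in each connected component and uses the Markov property backward from that point). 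This step is where I expect the main technical friction, because the modification procedure is indifferent to positivity and one must import it from external PDE results.

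Finally, for (vi), fix $(s,y)$ and $\beta\ne 0$. By Lemma \ref{lem:fixGreen} there is a unique (up to indistinguishability) continuous adapted mild solution $\Sheb(\aabullet,\aabullet\viiva s,y)$ to \eqref{eq:Greens} on $(s,\infty)\times\bbR$ satisfying the stated moment hypotheses. The candidate $\widetilde{\She}_{\beta}(t,x\viiva s,y)=\tspb\widetilde{\!\rnShe}_{\beta}(t,x\viiva s,y)\heat(t-s,x-y)$ is continuous on $(s,\infty)\times\bbR$ by construction, adapted by (v), and agrees with $\Sheb(t,x\viiva s,y)$ almost surely at every dyadic $(t,x)$ with $t>s$ by (i). Since both processes are continuous in $(t,x)$ on $(s,\infty)\times\bbR$, the coincidence at a countable dense set promotes to a full almost sure equality on $(s,\infty)\times\bbR$, which is the claim. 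The moment bounds derived at the outset also verify the hypotheses of Lemma \ref{lem:fixGreen} for $\widetilde{\She}_{\beta}$, closing the circle.
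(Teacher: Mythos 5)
Your overall architecture (moment bounds on increments of the normalized field, Kolmogorov--Chentsov on compact cubes, gluing, identification on dyadics, and density-plus-continuity for (vi)) is the same as the paper's. The one genuinely broken step is item (iv). Strict positivity at every dyadic quintuple plus continuity of $\tspb\widetilde{\!\rnShe}$ does \emph{not} imply strict positivity on all of $\varset\times\bbR$: a continuous nonnegative function can be strictly positive on a dense set and still vanish at a non-dyadic point (e.g.\ $x\mapsto |x-\sqrt2\,|$). Your parenthetical ``connectedness/limit argument'' does not exist in the form you describe, and running ``the Mueller argument at one carefully chosen point in each connected component'' cannot work either, since the parameter space is connected and the issue is uncountably many exceptional points, not components. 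Pointwise strict positivity (Mueller, Moreno-Flores) only controls a null set per fixed quintuple, and these null sets cannot be unioned over the continuum.

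The fix, which is what the paper does, is quantitative: Moreno-Flores's estimates give finiteness of all \emph{negative} moments, $\bbE[\rnSheb(t,0\viiva 0,0)^{-p}]<\infty$ for every $p>0$ (Lemma \ref{lem:mombd}). Combining these with the increment bounds via H\"older's inequality yields moment bounds on increments of the \emph{reciprocal} field $\rnSheb(t,x\viiva s,y)^{-1}$ (Proposition \ref{prop:nogapbd}\eqref{prop:nogapbd:inverse}), so Kolmogorov--Chentsov applies to $\rnSheb^{-1}$ as well and produces a finite continuous modification of the reciprocal on all of $\varset\times\bbR$. Since the product of the two continuous modifications equals $1$ on the dyadics, it equals $1$ everywhere, and finiteness of the continuous reciprocal forces $\tspb\widetilde{\!\rnShe}>0$ on the whole domain. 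So you need negative moments as an analytic input, not just almost-sure positivity at fixed points. A second, smaller issue: in (vi) the identification ``$\widetilde\She_\beta$ agrees with $\Sheb$ at every dyadic $(t,x)$ by (i)'' is only licensed by (i) when $(s,y,\beta)$ is itself dyadic; for general fixed $(s,y,\beta)$ you must first show $\widetilde\She_\beta(t,x\viiva s,y)=\Sheb(t,x\viiva s,y)$ a.s.\ at fixed $(t,x)$ by approximating $(s,y)$ with dyadic points and using $L^p$-continuity of the increments (as in Lemma \ref{lem:version}), and only then upgrade via continuity in $(t,x)$. The remaining items are handled correctly and essentially as in the paper.
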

Except in the proofs leading up to the proof of Theorem \ref{thm:rnreg} in Section \ref{sec:chaos},
henceforth we work exclusively with these modifications and drop the tildes. That is, the process $\tspb\widetilde{\!\rnShe}_\beta$ given by the theorem is denoted simply by $\rnSheb$, and the process $\widetilde{\She}_{\beta}$ defined in part \eqref{rn:vi} of the theorem is denoted simply by   $\Sheb$. 

 
 The processes $\Sheb$ and $\rnSheb$ inherit certain distributional invariance properties from the action of the group of measure preserving automorphisms in our setting. These properties come from the joint symmetries of the Brownian transition probabilities appearing in the multiple stochastic integrals in \eqref{eq:SHEchaos} and \eqref{eq:rnchaos} and the symmetries of the white noise. The following properties are all reasonably well-known, but as far as we are aware, neither the proofs nor the statements have previously appeared at this level of generality. We include the details for completeness.
\begin{proposition}\label{prop:cov}
The processes $\rnSheb(t,x\viiva s,y)$ and $\Sheb(t,x\viiva s,y)$ satisfy the following properties:
\begin{enumerate}[label={\rm(\roman*)}, ref={\rm\roman*}]   \itemsep=3pt 
\item {\rm(Shift)} For each $u,z \in \bbR$, there is an event $\Omega_{\text{shift}(u,z)}$ with $\bbP(\Omega_{\text{shift}(u,z)}) =1$ so that on $\Omega_{\text{shift}(u,z)}$ for all $(s,y,t,x,\beta) \in \varset \times\bbR$,
\begin{align*}
\rnSheb(t+u,x+z|s+u,y+z)\circ \shiftd{-u}{-z} &= \rnSheb(t,x\viiva s,y)
\end{align*}
and, for all $(s,y,t,x,\beta) \in \varsets\times\bbR$,
\begin{align*}
\Sheb(t+u,x+z|s+u,y+z)\circ \shiftd{-u}{-z} &= \Sheb(t,x\viiva s,y).
\end{align*}
\item {\rm(Reflection)} There is an event $\Omega_{\refd}$ with $\bbP(\Omega_{\refd}) = 1$ so that on $\Omega_{\refd}$, for all $(s,y,t,x,\beta) \in \varset\times\bbR$,
\begin{align*}
\rnSheb(-s,y|-t,x) \circ \refd_1 = \rnSheb(t,x\viiva s,y) \text{ and }
\rnSheb(t,-x\viiva s,-y)\circ \refd_2 = \rnSheb(t,x\viiva s,y)
\end{align*}
and, for all $(s,y,t,x,\beta) \in \varsets\times\bbR$,
\begin{align*}
\Sheb(-s,y|-t,x) \circ \refd_1 = \Sheb(t,x\viiva s,y) \text{ and }
\Sheb(t,-x\viiva s,-y)\circ \refd_2 = \Sheb(t,x\viiva s,y).
\end{align*}
\item {\rm(Shear)} For each $(r,\nu) \in \bbR^2$, there exists an event $\Omega_{\text{\rm shear}(r,\nu)}$ with $\bbP(\Omega_{\text{\rm shear}(r,\nu)}) =1$ so that on $\Omega_{\text{\rm shear}(r,\nu)}$, 
 for all $(s,y,t,x,\beta) \in \varset\times\bbR$,
\begin{align*}
\rnSheb(t,x+\nu (t-r)|s, y + \nu(s-r))\circ \sheard{r}{-\nu} &= \rnSheb(t,x\viiva s, y),
\end{align*}
and, for all $(s,y,t,x,\beta) \in\varsets\times\bbR$,
\begin{align*}
e^{\nu(x-y) + \frac{\nu^2}{2}(t-s)}\Sheb(t,x+\nu (t-r)|s, y + \nu(s-r))\circ \sheard{r}{-\nu} &=  \Sheb(t,x\viiva s,y).
\end{align*}
\item {\rm(Scaling)} For each $\lambda >0$, there is an event $\Omega_{\text{\rm scale}(\lambda)}$ with $\bbP(\Omega_{\text{\rm scale}(\lambda)})=1$ so that on $\Omega_{\text{\rm scale}(\lambda)}$,  for all $(s,y,t,x,\beta) \in \varset\times\bbR$,
\begin{align*}
\rnShe_{\beta/\sqrt{\lambda}}(\lambda^2 t, \lambda x | \lambda^2 s, \lambda y) \circ \did{\lambda^{-2}}{\lambda^{-1}}&= \rnSheb(t,x\viiva s,y),
\end{align*}
and for all $(s,y,t,x,\beta) \in \varsets\times\bbR$,
\begin{align*}
\lambda \She_{\beta/\sqrt{\lambda}}(\lambda^2 t, \lambda x | \lambda^2 s, \lambda y) \circ \did{\lambda^{-2}}{\lambda^{-1}} &= \Sheb(t,x\viiva s,y).
\end{align*}
\item {\rm(Negation)} There is an event $\Omega_{\ned}$ with $\bbP(\Omega_{\ned}) =1$ so that on $\Omega_{\ned}$, for all $(s,y,t,x,\beta)\in \varset\times\bbR$,
\begin{align*}
\rnSheb(t,x\viiva s,y)\circ \ned &= \rnShe_{-\beta}(t,x\viiva s,y)
\end{align*}
and, for all $(s,y,t,x,\beta) \in \varsets\times\bbR$,
\begin{align*}
\Sheb(t,x\viiva s,y)\circ \ned &= \She_{-\beta}(t,x\viiva s,y).
\end{align*}

\end{enumerate}
\end{proposition}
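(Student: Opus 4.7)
The plan is to verify each identity first at fixed parameters $(s,y,t,x,\beta)$ in a countable dense subset of $\varset \times \bbR$ by working term by term with the chaos expansion \eqref{eq:rnchaos}, and then to extend every identity to all parameters simultaneously using the joint continuity from Theorem \ref{thm:rnreg}. The driving observation is that any of our measure-preserving automorphisms $T$ acts on $W$ through an induced linear isometry $\widetilde{T}$ of $L^2(\bbR^2)$ via $W(f) \circ T = W(\widetilde{T} f)$, and this lifts to multiple Wiener integrals as
\[
I_k(h) \circ T = I_k\bigl(\widetilde{T}^{\otimes k} h\bigr) \qquad \text{almost surely},
\]
an identity that is manifest for tensor products of simple functions and extends to arbitrary $h \in L^2(\bbR^{2k})$ by $L^2$-continuity of the integral.

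For each of the five transformations, the task is to identify $\widetilde{T}$, apply $\widetilde{T}^{\otimes k}$ to the normalized kernel $h_k(t_{1:k}, x_{1:k}) = \heat_k(t_{1:k}, x_{1:k}|s,y;t,x)/\heat(t-s,x-y)$, and match the result to the kernel appearing in the chaos expansion of the right-hand side. The required heat-kernel symmetries are routine: translation invariance $\heat_k(t_{1:k}+u, x_{1:k}+z|s+u, y+z; t+u, x+z) = \heat_k(t_{1:k}, x_{1:k}|s, y; t, x)$ for the shift; parity $\heat(t, -x) = \heat(t, x)$ together with an index-reversal relabeling (to which $I_k$ is insensitive, via symmetrization) for the two reflections; the Gaussian identity $\heat(\tau, u + \nu\tau) = \heat(\tau, u) e^{-\nu u - \nu^2 \tau/2}$, which telescopes along the chain from $(s, y + \nu(s-r))$ to $(t, x + \nu(t-r))$ to contribute the same exponential prefactor $e^{-\nu(x-y) - \nu^2(t-s)/2}$ to both the numerator and the denominator of $h_k$, leaving $\rnSheb$ invariant under shear; the scaling $\heat_k(\lambda^2 t_{1:k}, \lambda x_{1:k}|\lambda^2 s, \lambda y; \lambda^2 t, \lambda x) = \lambda^{-(k+1)}\heat_k(t_{1:k}, x_{1:k}|s, y; t, x)$, which combined with the factor $\lambda^{3k/2}$ coming from the isometry $\widetilde{\did{\lambda^{-2}}{\lambda^{-1}}}$, the denominator factor $\lambda^{-1}$, and the temperature rescaling $(\beta/\sqrt{\lambda})^k$ collapses to the expected coefficient $\beta^k$; and $\widetilde{\ned} f = -f$ for negation, whose contribution $(-1)^k$ combines with $\beta^k$ into $(-\beta)^k$.

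Once each transformation-level identity holds almost surely at every dense quintuple, a countable intersection yields a single event of full probability on which it holds simultaneously across that dense set, and the joint continuity from Theorem \ref{thm:rnreg} extends it to all of $\varset \times \bbR$. The $\She$ statements follow from the $\rnShe$ identities by multiplying through by the appropriate transformed heat-kernel prefactor: shift, reflection, and negation leave the heat kernel invariant; scaling contributes a factor $\lambda$ since $\heat(\lambda^2(t-s), \lambda(x-y)) = \lambda^{-1}\heat(t-s, x-y)$; and for the shear, $\heat(t-s, x-y + \nu(t-s)) = \heat(t-s, x-y)\, e^{-\nu(x-y) - \nu^2(t-s)/2}$, which, moved to the other side of the equation, produces the stated Gaussian factor $e^{\nu(x-y) + \nu^2(t-s)/2}$. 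The main technical care lies in the permutation bookkeeping for the reflection case and in tracking powers of $\lambda$ for the scaling case, but neither step presents a serious obstacle; the substantive work has already been done in Theorem \ref{thm:rnreg}, which provides the joint continuity needed to upgrade from a dense set to all parameters.
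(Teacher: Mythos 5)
Your proposal is correct and follows essentially the same route as the paper: the paper's Lemma \ref{lem:cov} carries out exactly the chaos-expansion computation you describe (using $I_m(f)\circ\sG = I_m(\oG_m f)$ from \cite[Theorem 4.5]{Jan-97} together with the same heat-kernel symmetries, including the symmetrization argument for the index reversal in the reflection case), and the upgrade from fixed parameter quintuples to all of $\varset\times\bbR$ is done, as you propose, by combining these pointwise identities with the continuous modification of Theorem \ref{thm:rnreg} (via Proposition \ref{prop:rncont} and Lemma \ref{lem:version}). Your power-of-$\lambda$ bookkeeping for the scaling case and the exponential prefactor for the shear match the paper's computations.
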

We note the following corollary, the distributional part of which has appeared previously as \cite[Proposition 1.4]{Ami-Cor-Qua-11}, with the same argument.
\begin{corollary}\label{cor:stat}
For each $s <t$ and $z \in \bbR$, let $r=s$ and set $\nu = z/(t-s)$. Then, on $\Omega_{\text{\rm shear}(s,\nu)}$, for all $x,y,\beta \in \bbR$,
\begin{align*}
\rnSheb(t,x+z\viiva s,y) \circ \sheard{s}{-\nu} &= \rnSheb(t,x\viiva s,y) .
\end{align*}
Consequently, for each $s,y,t,\beta \in \bbR$ with $s<t$, the process $x \mapsto \rnSheb(t,x\viiva s,y)$ is stationary.
\end{corollary}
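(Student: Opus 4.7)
The plan is to deduce the displayed identity directly from the shear-covariance statement in Proposition \ref{prop:cov}(iii) by substituting the particular choice $r = s$ and $\nu = z/(t-s)$. With $r = s$, the shift of the $y$-argument in (iii) becomes $\nu(s-r) = 0$, so $y$ is unchanged, while the shift of the $x$-argument becomes $\nu(t-r) = \nu(t-s) = z$. Thus the identity from (iii) collapses to exactly
\[
\rnSheb(t, x+z \viiva s, y) \circ \sheard{s}{-\nu} \;=\; \rnSheb(t, x \viiva s, y),
\]
holding simultaneously for all $x, y, \beta \in \bbR$ on the full-probability event $\Omega_{\text{shear}(s,\nu)}$ provided by Proposition \ref{prop:cov}(iii). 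No further computation is needed for the first assertion.

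For the stationarity claim, I would argue as follows. The key observation is that the single measure-preserving automorphism $\sheard{s}{-\nu}$ (which depends only on $s$, $t$, and $z$, not on $x$) simultaneously realizes the spatial translation by $z$ for \emph{every} $x \in \bbR$. Consequently, for any finite collection $x_1, \dots, x_k \in \bbR$, the displayed identity above gives, $\bbP$-almost surely,
\[
\bigl(\rnSheb(t, x_i + z \viiva s, y)\bigr)_{i=1}^k \circ \sheard{s}{-\nu} \;=\; \bigl(\rnSheb(t, x_i \viiva s, y)\bigr)_{i=1}^k.
\]
Because $\sheard{s}{-\nu}$ preserves $\bbP$, the two random vectors have identical joint laws, which is precisely the statement that $x \mapsto \rnSheb(t, x \viiva s, y)$ is stationary in $x$.

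There is no substantive obstacle here; the only subtle point is the joint-versus-marginal distinction just noted, namely that one and the same automorphism produces the spatial translation at all points $x$ simultaneously, so that stationarity of the entire process (not merely equality of one-dimensional marginals) follows from a single application of the shear covariance.
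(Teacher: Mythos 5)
Your proposal is correct and is exactly the intended argument: the paper derives the identity by specializing Proposition \ref{prop:cov}(iii) to $r=s$, $\nu=z/(t-s)$, whence $\nu(s-r)=0$ and $\nu(t-r)=z$, and stationarity follows because the single measure-preserving map $\sheard{s}{-\nu}$ realizes the translation for all $x$ simultaneously (the paper notes this is the same argument as in \cite[Proposition 1.4]{Ami-Cor-Qua-11}). Your remark on the joint-versus-marginal point is the right subtlety to flag.
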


We next turn to analysis of solutions to \eqref{eq:SHEf}. It will be convenient to extend the notation \eqref{eq:superpos} to allow for positive Borel measures $\zeta,\mu \in \sM_+(\bbR)$ as initial or terminal conditions by setting, for $s<t$ and $\beta \in \R$,
\begin{align}
\Sheb(t;\zeta|s;\mu) &= \int_{\bbR} \int_{\bbR} \Sheb(t,x|s,y) \mu(dy)\zeta(dx) \in [0,\infty] . \label{eq:Shem2m}
\end{align}
The special cases   $\Sheb(t,x\viiva s;\mu)=\Sheb(t;\delta_x | s;\mu)$ and $\Sheb(t;\zeta| s,y)$ $=\Sheb(t;\zeta | s;\delta_y)$ return   \eqref{eq:superpos}. Some of our results will rely on finiteness of certain joint moments of these measures. For $p\in[0,\infty)$, recalling \eqref{eq:ICM} we define
\begin{align}
\ICMM(p) &= \bigg\{(\mu,\zeta) \in \ICM^2 : \forall a>0, \int_{\bbR} \int_{\bbR} e^{-a(w-z)^2}(1+|w|^p+|z|^p)\mu(dw)\zeta(dz) <\infty \bigg\} \label{eq:ICMM}
\end{align}
and note that $\ICMM(q) \subset \ICMM(p)$ for $0 \leq p<q < \infty$. These spaces mostly serve as bookkeeping tools to simplify the statements of our results. We usually work with the case where at least one of the two measures is Dirac, which always results in a pair of measures in $\ICMM(p)$ for all $p$, as recorded in  the following  remark: 
\begin{remark}\label{rem:ICMMdelta}
For all $x,y \in \bbR$ and all $\mu,\zeta \in \ICM$, and all $p \in [0,\infty)$, $(\zeta,\delta_y),(\delta_x,\mu) \in \ICMM(p)$. 
\end{remark}
Our next result discusses some basic properties of $\She_{\aabullet}(\aabullet,\aabullet|\aabullet,\aabullet)$ viewed as the solution semi-group of \eqref{eq:SHEf}, including finiteness, sharpness of the restriction to $\ICM$, and regularity of the processes in \eqref{eq:Shem2m}. 
Before stating the result, we collect some notation which is also recorded in Appendix \ref{app:top}. In the statement of part  \eqref{prop:IC:Holder}, the local H\"older semi-norm
\be\label{H-56}
|f|_{\sC^{\alpha,\gamma,\eta}(\Gamma)}
\,=\!\!\!\!\sup_{\substack{(t_1,x_1,s_1,\beta_1) \\[1pt]  
\ne (t_2,x_2,s_2,\beta_2) \,\in\,   \Gamma}}  
\frac{|f(t_1,x_1,s_1,\beta_1) \,-\, f(t_2,x_2,s_2,\beta_2)|}{|t_2-t_1|^{\alpha}+ |s_2-s_1|^{\alpha} + |x_2-x_1|^{\gamma} +|\beta_2-\beta_1|^\eta}
\ee
 is defined on the space 
\begin{align*}
\Gamma&=\varsetthg{T}{K}{\delta}\times[-B,B] \\
&= \{(s,t,x,\beta) \in \R^4:  -T \leq s,t \leq T, \,-K \leq x \leq K,\, t-s\geq\delta, \,-B\le\beta\le B\}.
\end{align*}  
Recall also the spaces $\ICM$, $\CICM$ from \eqref{eq:ICM} and \eqref{eq:CICM}. In Appendix \ref{app:top}, we define Polish topologies on $\ICM$ and $\CICM$, with  explicit complete  metrics $d_{\ICM}$ and $d_{\CICM}$ in equations \eqref{eq:ICMm} and \eqref{eq:CICMm}. Convergence in $\ICM$ is characterized by vague convergence combined with convergence of integrals of the form $\int e^{-ax^2}\mu(dx)$ for each $a>0$. In $\CICM$, the convergence is uniform convergence of $f$ and $1/f$ on compact sets combined with convergence of the integrals $\int_{\R} e^{-ax^2}f(x)dx$ for $a>0$.

\begin{theorem}\label{prop:IC}
There is an event $\Omega_0$ with $\bbP(\Omega_0) = 1$ so that for all $\omega \in \Omega_0$, the following hold:
\begin{enumerate}[label={\rm(\roman*)}, ref={\rm\roman*}]   \itemsep=3pt 
\item \label{prop:IC:1} \textup{(Finiteness and positivity)}  For all $(\mu,\zeta) \in \ICMM(4)$ and all $s,t,\beta \in \bbR$ with $s<t$, \[0<\Sheb(t;\zeta|s;\mu) < \infty.\]
\item\label{prop:IC:ICMpres} \textup{(Preservation of $\ICM$)} If $\mu,\zeta \in \ICM$ then for all $s<t$ and all $\beta \in \bbR$, \[\Sheb(t,x\viiva s;\mu)dx,\quad \Sheb(t;\zeta|s;y)dy \in \ICM.\]
\item \label{prop:IC:ICMsharp} \textup{(Explosion off $\ICM$)} If $\mu,\zeta \in \sM_+(\bbR)$ are both not the zero measure, 
then
\begin{align*}
&\frac{1}{2(t-s)} < \sup\bigg\{a > 0 : \int e^{-ay^2}\mu(dy) = \infty \text{ or } \int e^{-ax^2}\zeta(dx) = \infty\bigg\}
\end{align*}
implies that $\Sheb(t;\zeta|s;\mu)  = \infty$ for all $\beta \in \bbR$.
\item \label{prop:IC:Holder} \textup{(Local ($1/4-$, $1/2-$, $1-$) H\"older continuity)} For all $\mu,\zeta \in \ICM$, all $\delta>0$, all $B,K,T>0$, and all $\alpha \in (0,1/4)$, $\gamma \in (0,1/2)$, and $\eta \in (0,1)$, $|\She_{\aabullet}(\aabullet,\aabullet|\aabullet;\mu)|_{\sC^{\alpha,\gamma,\eta}(\Gamma)} < \infty$ and $|\She_{\aabullet}(\aabullet;\zeta|\aabullet,\aabullet)|_{\sC^{\alpha,\gamma,\eta}(\Gamma)} < \infty$. 
\item \label{prop:IC:semi-group}\textup{(Semi-group property)} For all $\mu \in \sM_+(\R)$, all $(s,y,t,x,\beta)\in \varsets\times\R$, and all $r \in (s,t)$, \begin{align*}
\Sheb(t,x\viiva s;\mu) &= \int_{\bbR} \Sheb(t,x|r,z)\Sheb(r,z\viiva s;\mu)dz  \text{ and }\\
 \Sheb(t;\mu \viiva s,y) &= \int_{\bbR} \Sheb(t;\mu|r,z)\Sheb(r,z\viiva s,y)dz.
\end{align*}
\item \label{prop:IC:fcont} \textup{(Function-valued initial conditions)}
For  all $B,K,T>0$ and all $f \in \CICM$,
\begin{align*}
&\lim_{\delta \searrow0}\, \sup_{\substack{y\in[-K,K],\beta\in[-B,B] \\ s,t\in[-T,T], t-s\in(0,\delta ) \\ d_{\CICM}(f,g)<\delta}}\,\bigg| \int_{\bbR} \Sheb(t,x\viiva s,y)g(x)dx - f(y)\bigg|=0  \,\text{ and }\, \\ 
&\lim_{\delta \searrow0}\, \sup_{\substack{x\in[-K,K],\beta\in[-B,B] \\ s,t\in[-T,T], t-s\in(0,\delta )\\ d_{\CICM}(f,g)<\delta}}\,\bigg| \int_{\R}  \Sheb(t,x|s,y)g(y)dy - f(x)\bigg|=0.
\end{align*}
\item \label{prop:IC:musat} \textup{(Measure-valued initial conditions)}
For all $T,B>0$, all $\mu \in \ICM$, and all $f \in \sC(\bbR,\R_+)$ for which there exist $A,a>0$ such that $0 \leq f(x) \leq Ae^{-ax^2}$, 
\begin{align*}
\lim_{\delta \searrow 0} \sup_{\substack{\beta\in[-B,B], s,t\in[-T,T] \\d_{\ICM}(\mu,\zeta)<\delta, t-s\in(0,\delta )}} \,\bigg|\int_{\bbR}\int_{\bbR} f(x) \Sheb(t,x\viiva s,y)\tspa dx \tspa\zeta(dy) -\int_{\bbR} f(y) \tspa\mu(dy)\biggr| &= 0  \text{ and }\\
\lim_{\delta \searrow 0} \sup_{\substack{\beta\in[-B,B], s,t\in[-T,T] \\ d_{\ICM}(\mu,\zeta)<\delta, t-s\in(0,\delta )}} \,\bigg|\int_{\bbR}\int_{\bbR} f(y) \Sheb(t,x\viiva s,y)\tspa dy \tspa\zeta(dx) -\int_{\bbR} f(x) \tspa\mu(dx)\biggr| &= 0.
\end{align*}

\end{enumerate} 
\end{theorem}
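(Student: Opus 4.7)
The plan is to leverage three ingredients from earlier results: the factorization $\Sheb(t,x\viiva s,y)=\heat(t-s,x-y)\rnSheb(t,x\viiva s,y)$ from Theorem \ref{thm:rnreg}, the strict pointwise positivity and joint continuity of $\rnSheb$ on all of $\varset\times\R$ (including at the diagonal, where $\rnSheb(s,x\viiva s,y)=1$), and the sub-polynomial envelope bounds on $\rnSheb$ furnished by Corollary \ref{cor:growth} uniformly on compact ranges of $(s,t,\beta)$. With these in hand, each of the seven claims reduces, on the event of full probability, to a Gaussian convolution computation against the heat kernel.

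For (v), the kernel-level Chapman--Kolmogorov identity
\[
\Sheb(t,x\viiva s,y)=\int \Sheb(t,x\viiva r,z)\Sheb(r,z\viiva s,y)\,dz
\]
is obtained from the chaos expansion \eqref{eq:SHEchaos} by splitting iterated Walsh integrals across $r$ using independence of the noise on disjoint time strips; integrating against $\mu$ and invoking Fubini (justified by (i)) gives the semi-group identity. For finiteness (i), an upper envelope $\rnSheb(t,x\viiva s,y)\le C(\omega)(1+|x|^p+|y|^p)$ yields
\[
\Sheb(t;\zeta\viiva s;\mu)\le C(\omega)\int\!\!\int \heat(t-s,x-y)(1+|x|^p+|y|^p)\,\mu(dy)\,\zeta(dx),
\]
which is finite for $p\le 4$ by the $\ICMM(4)$ hypothesis after expanding $(x-y)^2$ and absorbing the polynomial weights into the Gaussian factors; positivity follows immediately from $\Sheb>0$ on $\Omega_0$. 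Part (ii) uses the identity $\int e^{-ax^2}\heat(t-s,x-y)\,dx=c_{a,t-s}\exp(-a'y^2)$ with $a'=a/(1+2a(t-s))>0$, which combined with the envelope bounds $\int e^{-ax^2}\Sheb(t,x\viiva s;\mu)\,dx$ by $\int e^{-a'y^2}\mu(dy)<\infty$; joint continuity and strict positivity of $\Sheb$ then transfer to the output. For explosion (iii), a complementary sub-polynomial \emph{lower} envelope reduces the problem to divergence of the pure heat convolution: if $a>1/(2(t-s))$ with $\int e^{-ay^2}\mu(dy)=\infty$, then for $x$ in any compact set of positive $\zeta$-mass one has $(x-y)^2/(2(t-s))\le ay^2$ for $|y|$ large, so $\int \heat(t-s,x-y)\mu(dy)=\infty$, and the lower envelope propagates the divergence.

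Parts (iv), (vi), and (vii) combine joint continuity with the same envelope bounds. The kernel-level local Hölder estimates on $\Sheb(t,x\viiva s,y)$ (from Kolmogorov--Chentsov applied to the chaos expansion in the proof of Theorem \ref{thm:rnreg}) lift to $\Sheb(t,x\viiva s;\mu)$ and $\Sheb(t;\zeta\viiva s,y)$ by dominated convergence against the Gaussian envelope uniformly on $\Gamma$. For (vi), the decomposition
\[
\int \Sheb(t,x\viiva s,y)g(x)\,dx-f(y)=\int\heat(t-s,x-y)\bigl[\rnSheb(t,x\viiva s,y)g(x)-f(y)\bigr]\,dx
\]
is split at $|x-y|\le\delta_1$, where uniform continuity of $g$ near $y$ combined with $\rnSheb(t,x\viiva s,y)\to 1$ as $t\searrow s$ controls the integrand, and $|x-y|>\delta_1$, where the Gaussian decay of $\heat$ beats the sub-polynomial envelope on $\rnSheb\, g$. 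Uniformity in $(\beta,s,y)$ on compacts and in $g$ within a $d_{\CICM}$-neighborhood of $f$ follows from joint continuity of $\rnSheb$ and the definition of the $\CICM$ topology; (vii) is the parallel statement for measure-valued initial data, using the $d_{\ICM}$ topology. The main obstacle is ensuring that the envelope bounds are strong enough to dominate all integrals simultaneously as $\mu,\zeta,g,f$ vary continuously in their respective Polish topologies; the complementary lower envelope needed for (iii) is the most delicate step and relies crucially on the full strength of the joint continuity and strict positivity provided by Theorem \ref{thm:rnreg} together with Corollary \ref{cor:growth}.
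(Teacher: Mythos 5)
Your proposal is correct in substance and, for parts (i)--(iv), (vi), and (vii), follows essentially the same route as the paper: everything is reduced, on the single full-probability event of Corollary \ref{cor:growth}, to the two-sided sub-polynomial envelope $C^{-1}(1+|x|^4+|y|^4)^{-1}\le \rnSheb(t,x\viiva s,y)\le C(1+|x|^4+|y|^4)$ (and, for (iv), the H\"older semi-norm growth bound \eqref{eq:cor:Hgrowth}) together with explicit Gaussian convolution computations; your near/far splitting in (vi)--(vii) is the paper's cutoff-function argument in different clothing, and your treatment of (iii) via the lower envelope and a limit comparison against the pure heat convolution is exactly the paper's.

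The one genuinely different step is (v). The paper does not split the chaos expansion across the intermediate time $r$; it instead verifies (Lemma \ref{lem:ChaKol}) that $\int \Sheb(t,x|r,z)\Sheb(r,z\viiva s,y)\,dz$ satisfies the mild equation started from $(s,y)$ -- using the stochastic Fubini theorem to recombine the two stochastic integrals -- and then concludes equality from Burkholder--Davis--Gundy plus Gronwall, i.e.\ from uniqueness of the mild solution. Your chaos-splitting derivation is a legitimate alternative: because the two factors are measurable with respect to the noise on the disjoint strips $[s,r]$ and $[r,t]$, the product of a $j$-th and a $k$-th order integral has no contraction terms and the deterministic Chapman--Kolmogorov identity for $\heat_k$ does the rest. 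But note that this route does not avoid the technical work: you still need an $L^2$/stochastic Fubini argument to interchange $\int dz$ with the infinite sum of multiple Wiener integrals, so the two proofs are of comparable length. In either approach you should also make explicit the final upgrade that the paper performs at the end of Lemma \ref{lem:ChaKol}: the identity is first obtained for each fixed $(s,y,t,x,\beta,r)$ on a parameter-dependent null set, and one must pass to all parameters simultaneously by restricting to dyadic rationals and then invoking joint continuity together with the growth bounds and dominated convergence. Your write-up asserts the conclusion ``on the event of full probability'' without flagging this step; it is routine given Corollary \ref{cor:growth}, but it is where the single event $\Omega_0$ actually gets built.
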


\begin{remark}
The case of $\zeta(dz)=\delta_x(dz)$ in part \eqref{prop:IC:1} and the claim in part \eqref{prop:IC:Holder} were shown for fixed $\mu \in \ICM$  and fixed $s, \beta \in \bbR$ on a full probability event depending on $\mu,s,$ and $\beta$ in Theorem 3.1 of \cite{Che-Dal-14}. Strict positivity for certain fixed function-valued initial conditions and fixed initial times $s$ is originally due to Mueller \cite{Mue-91}, though the proof generalizes to other initial conditions. We rely on the later estimates of Moreno-Flores in \cite{Mor-14}. Part \eqref{prop:IC:musat} improves on Proposition 3.4 in \cite{Che-Dal-14}, where the limit holds pointwise in $L^2(\Omega,\sF,\bbP)$ for fixed $\varphi, \mu=\zeta,\beta$, and $s$ and $\varphi$ is taken to be compactly supported. 
\end{remark}

\begin{remark}
A boundary continuity result similar to \eqref{prop:IC:fcont} appears in Theorem 3.1 of \cite{Che-Dal-14}, which includes a H\"older regularity estimate at the boundary for H\"older continuous initial data, again for fixed $\mu,s$.  Our methods can prove similar boundary regularity, but with suboptimal H\"older exponents. We leave this improvement to future work. The only result which needs to be improved to obtain optimal regularity at the boundary is Lemma \ref{lem:withgap}, where the bound needs to not depend on $\delta$ in order to obtain optimal regularity.
\end{remark}

Theorem \ref{prop:IC}\eqref{prop:IC:semi-group} says that \eqref{eq:superpos} defines a solution semi-group to \eqref{eq:SHEf}. We next turn to the regularity of this semi-group on natural spaces of measures and functions given by $\ICM$ and $\CICM$. We take the following notational conventions. If $\mu \in \ICM$ and $f \in \CICM$, then for all $\beta \in \R$ and $s\leq t$, we set
\be\label{eq:mbdydef}\begin{aligned}
\Sheb(t,dx\viiva s;\mu) &= \begin{cases}
\Sheb(t,x\viiva s;\mu)dx & s<t \\ 
\mu(dx) & s=t
\end{cases} \qquad \text{ and }\\
\Sheb(t;\mu \viiva s, dy) &= \begin{cases}
\Sheb(t;\mu \viiva s, y)dy & s<t \\
\mu(dy) & s=t
\end{cases}.
\end{aligned}\ee
Similarly, if $f \in \CICM$, then for all $\beta,t\in\R$, we set
\begin{align*}
\Sheb(t, \aabullet \viiva t;f) &= f(\aabullet) \qquad \text{ and } \qquad \Sheb(t;f \viiva t;\aabullet) = f(\aabullet)
\end{align*}
With these conventions, we have the following result showing regularity of the solution semi-group.
\begin{theorem}\label{thm:jcont}
There exists an event $\Omega_0$ with $\bbP(\Omega_0)=1$ on which the following hold.
\begin{enumerate}[label={\rm(\roman*)}, ref={\rm\roman*}]   \itemsep=3pt 
\item\label{thm:jcont:MM}  The maps from $\R\times \ICM \times \{(s,t) \in \R^2 : s \leq t\}$ to $\ICM$ given by
\[
(\beta, \mu,s,t) \mapsto \Sheb(t,dx \viiva s;\mu) \qquad \text{ and }\qquad (\beta, \mu,s,t) \mapsto \Sheb(t;\mu \viiva s, dy)
\]
are jointly continuous.
\item \label{thm:jcont:MC}  The maps from $\R\times \ICM \times \{(s,t) \in \R^2 : s < t\}$ to $\CICM$ given by
\[
(\beta, \mu,s,t) \mapsto \Sheb(t,\aabullet \viiva s;\mu) \qquad \text{ and }\qquad (\beta, \mu,s,t) \mapsto \Sheb(t;\mu \viiva s, \aabullet)
\]
are jointly continuous.
\item\label{thm:jcont:CC} The maps from $\R\times \CICM \times \{(s,t) \in \R^2 : s \leq t\}$ to $\CICM$ given by
\[
(\beta, f ,s,t) \mapsto \Sheb(t,\aabullet \viiva s;f) \qquad \text{ and }\qquad (\beta, f ,s,t) \mapsto \Sheb(t;f \viiva s, \aabullet)
\]
are jointly continuous.
\end{enumerate}
\end{theorem}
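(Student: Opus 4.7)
My plan is to reduce all three parts to the joint continuity of the normalized field $\rnSheb(t,x\viiva s,y)$ on $\varset\times\bbR$ from Theorem \ref{thm:rnreg}, the sub-polynomial growth estimates on $\rnSheb$, and the boundary-at-$s=t$ convergences in parts \eqref{prop:IC:fcont} and \eqref{prop:IC:musat} of Theorem \ref{prop:IC}. The characterizations of the target topologies suggest a common template: convergence in $\ICM$ means vague convergence together with convergence of $\int e^{-ax^2}\mu(dx)$ for every $a>0$, and convergence in $\CICM$ means locally uniform convergence of $f$ and $1/f$ together with the same integral convergence, so in each case it suffices to check locally uniform convergence of the solution along with convergence of its Gaussian moments.

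I would prove part \eqref{thm:jcont:MC} first. Fix $(\beta_n,\mu_n,s_n,t_n)\to(\beta,\mu,s,t)$ with $s<t$, so that eventually $t_n-s_n\geq \delta>0$, and write
\[
\Sheb(t_n,x\viiva s_n;\mu_n) = \int \heat(t_n-s_n,x-y)\rnShe_{\beta_n}(t_n,x\viiva s_n,y)\tspb\mu_n(dy).
\]
Over a compact set of $(x,\beta,s,t)$ with $t-s\geq \delta$, Theorem \ref{thm:rnreg} and the sub-polynomial growth estimates yield a bound of the form $\heat(t_n-s_n,x-y)\rnShe_{\beta_n}(t_n,x\viiva s_n,y)\leq Ce^{-cy^2}$ valid for large $n$ and large $|y|$. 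Since $\mu_n\to\mu$ in $\ICM$ forces uniform integrability of $\int e^{-cy^2}\mu_n(dy)$, dominated convergence produces pointwise convergence of $\Sheb(t_n,x\viiva s_n;\mu_n)$, which is upgraded to locally uniform convergence in $(x,\beta,s,t)$ via the H\"older bounds of Theorem \ref{prop:IC}\eqref{prop:IC:Holder} applied to the limit, together with a standard equicontinuity argument. For the Gaussian moment $\int e^{-ax^2}\Sheb(t_n,x\viiva s_n;\mu_n)dx$, Fubini rewrites this as $\int \mu_n(dy)\int e^{-ax^2}\Sheb(t_n,x\viiva s_n,y)dx$; a symmetric application of the same estimates (using the reflection symmetry in Proposition \ref{prop:cov} to swap the roles of $x$ and $y$) shows that the inner integral is continuous in $(y,\beta,s,t)$ and bounded in $y$ by another Gaussian tail, reducing the question once again to $\ICM$ convergence of $\mu_n$. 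Strict positivity and continuity of $\Sheb(t,\aabullet\viiva s;\mu)$ from Theorem \ref{prop:IC}\eqref{prop:IC:1} then give the locally uniform convergence of the reciprocal needed for $\CICM$.

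Part \eqref{thm:jcont:MM} follows from \eqref{thm:jcont:MC} when $s<t$, since $\CICM$ convergence of the densities implies both vague convergence and convergence of Gaussian integrals. The only remaining case is $s=t$, which is handled directly by Theorem \ref{prop:IC}\eqref{prop:IC:musat}: it supplies uniform convergence of $\int \varphi(x)\Sheb(t_n,dx\viiva s_n;\mu_n)\to\int\varphi\,d\mu$ for $\varphi(x)=e^{-ax^2}$ and, by a truncation/approximation argument, for $\varphi\in\sC_c(\bbR)$, which together characterize convergence in $\ICM$. The symmetric statement for $\Sheb(t;\mu\viiva s,dy)$ follows via the reflection symmetry in Proposition \ref{prop:cov}. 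Part \eqref{thm:jcont:CC} is proved by the same template with $f_n\to f$ in $\CICM$ replacing $\mu_n\to\mu$: $\CICM$ convergence gives locally uniform control on $f_n$ and uniform bounds on $\int e^{-ay^2}f_n(y)dy$, feeding the same dominated convergence when $s<t$, while Theorem \ref{prop:IC}\eqref{prop:IC:fcont} handles the $s=t$ boundary. The principal technical obstacle is ensuring the uniformity of the sub-polynomial growth bound on $\rnShe$ over compact parameter sets and of the tail Gaussian-moment bound along the sequence $\mu_n$ (or $f_n$); both are already built into the construction in Theorem \ref{thm:rnreg} and into the definitions of the Polish topologies on $\ICM$ and $\CICM$, so the argument is a routine (if somewhat lengthy) verification that the target topology is preserved under the solution semi-group.
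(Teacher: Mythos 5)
Your proposal is correct and follows essentially the same route as the paper: for $s<t$ the convergence is obtained from the sub-polynomial growth bounds on $\rnSheb$ together with the tightness/uniform Gaussian-moment control built into the $\ICM$ and $\CICM$ topologies (the paper packages this as Proposition \ref{prop:conv}\eqref{prop:conv:part}), while the $s=t$ boundary cases are delegated to Theorem \ref{prop:IC}\eqref{prop:IC:fcont} and \eqref{prop:IC:musat}. The only difference is cosmetic: you derive part \eqref{thm:jcont:MM} from \eqref{thm:jcont:MC}, whereas the paper proves \eqref{thm:jcont:MM} first and upgrades to locally uniform convergence by a subsequence--contradiction argument rather than your equicontinuity argument; both are valid.
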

Recall the space $\CKPZ$ from \eqref{eq:CKPZ}. Note that the bijection $g(x) = e^{f(x)}$ between $g \in \CICM$ and $f \in \CKPZ$ defines a Polish topology on $\CKPZ$ (i.e., the topology on $\CKPZ$  is the finest topology in which this identification is continuous). Convergence in this topology is local uniform convergence of $f$ combined with convergence of integrals of the form $\int_{\R}e^{f(x) - ax^2}dx$ for $a>0$. We record the following immediate corollary of Theorem \ref{thm:jcont}\eqref{thm:jcont:CC} for the KPZ equation
\begin{corollary}\label{cor:KPZ}
There exists an event $\Omega_0$ with $\bbP(\Omega_0)=1$ on which the map  from $\R\times \CKPZ \times \{(s,t) \in \R^2 : s \leq t\}$ to $\CKPZ$ given by
\[
(\beta,f,s,t) \mapsto \KPZb(t,\aabullet \viiva s;f),
\]
where $\KPZb$ is defined through \eqref{eq:KPZdef} if $s<t$ and $\KPZb(t, \aabullet \viiva s;f) = f(\aabullet)$ if $s=t$, is jointly continuous.
\end{corollary}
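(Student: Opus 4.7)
The plan is to deduce the corollary directly from Theorem \ref{thm:jcont}\eqref{thm:jcont:CC} by composing with the exponential/logarithm homeomorphism. The key observation is that by the definition of the Polish topology on $\CKPZ$ given in the paragraph preceding the corollary---namely, the unique topology making the bijection $f \mapsto e^f$ from $\CKPZ$ to $\CICM$ a homeomorphism---both the map $\Psi : \CKPZ \to \CICM$, $\Psi(f) = e^f$, and its inverse $\Phi : \CICM \to \CKPZ$, $\Phi(g) = \log g$, are continuous.

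I would work on the full-probability event $\Omega_0$ furnished by Theorem \ref{thm:jcont}\eqref{thm:jcont:CC}. Given $(\beta, f, s, t) \in \R \times \CKPZ \times \{(s,t) \in \R^2 : s \leq t\}$, we have $e^f \in \CICM$ by definition of $\CKPZ$. Applying Theorem \ref{thm:jcont}\eqref{thm:jcont:CC} with initial condition $g = \Psi(f) = e^f$, and combining with the continuity of $(\beta, f, s, t) \mapsto (\beta, \Psi(f), s, t)$, we find that
\begin{equation*}
(\beta, f, s, t) \longmapsto \Sheb\bigl(t, \aabullet \viiva s;\, e^f\bigr)
\end{equation*}
is jointly continuous from $\R \times \CKPZ \times \{s \leq t\}$ into $\CICM$. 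Postcomposing with $\Phi = \log$, which is continuous from $\CICM$ into $\CKPZ$, yields joint continuity of the map
\begin{equation*}
(\beta, f, s, t) \longmapsto \log \Sheb\bigl(t, \aabullet \viiva s;\, e^f\bigr) \,\in\, \CKPZ.
\end{equation*}

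Finally, I would verify that this composition coincides with $(\beta, f, s, t) \mapsto \KPZb(t, \aabullet \viiva s; f)$. When $s < t$ this is precisely the Hopf--Cole definition \eqref{eq:KPZdef}. When $s = t$, the convention \eqref{eq:mbdydef} gives $\Sheb(t, \aabullet \viiva t; e^f) = e^f$, so $\log \Sheb(t, \aabullet \viiva t; e^f) = f$, which matches the convention $\KPZb(t, \aabullet \viiva t; f) = f$ stipulated in the statement. This completes the argument. There is no genuine obstacle: the content of the corollary is entirely encoded in Theorem \ref{thm:jcont}\eqref{thm:jcont:CC}, and the proof is simply a translation across the $\log/\exp$ homeomorphism, together with a boundary check at $s = t$ to confirm compatibility of the two conventions.
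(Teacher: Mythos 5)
Your proposal is correct and matches the paper's treatment: the paper states the corollary as an immediate consequence of Theorem \ref{thm:jcont}\eqref{thm:jcont:CC}, the topology on $\CKPZ$ being defined precisely so that $f\mapsto e^f$ is a homeomorphism onto $\CICM$, which is exactly the translation you carry out. Your boundary check at $s=t$ is also the right compatibility verification (the relevant convention is the unnumbered display just below \eqref{eq:mbdydef}, but this is immaterial).
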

\begin{remark}\textup{(Uniqueness)} Because $\R\times \CKPZ \times \{(s,t) \in \R^2 : s \leq t\}$ is separable and Lemma \ref{lem:uniq} shows that for fixed $s,\beta \in \R$ and $f \in \CKPZ$, \eqref{eq:KPZdef} agrees with the Hopf-Cole mild formulation, Corollary \ref{cor:KPZ} shows that \eqref{eq:KPZdef} describes the unique jointly continuous modification of the field of Hopf-Cole solutions of the KPZ equation, up to indistinguishability.
\end{remark}
\begin{remark}
\textup{(Feller continuity of KPZ on $\CKPZ$)} An immediate consequence of Corollary \ref{cor:KPZ} is that if $F:\CKPZ \to \R$ is bounded and continuous and if $f \to g$ in the topology on $\CKPZ$ and $s<t$, then $\bbE[F(\KPZb(t,\aabullet\viiva s;f))] \to \bbE[F(\KPZb(t,\aabullet\viiva s;g))]$. This is Feller continuity of the solution semi-group to the KPZ equation, viewed as a $\CKPZ$-valued Markov process.
\end{remark}

Recall the definition of $H(\lambda_-,\lambda_+)$ in \eqref{eq:HOA}. We show that the solution $h=\log\Sheb$  to the KPZ equation   \eqref{eq:KPZ} preserves these spaces or, in other words, satisfies the same conservation law (of asymptotic slope) as is preserved by the unforced viscous Burgers equation. Indeed, as can be seen from the proof, the conservation law for the KPZ equation essentially follows from that classical result.
\begin{proposition}\label{prop:cons}
There is an event $\Omega_0$ with $\bbP(\Omega_0)=1$ so that on $\Omega_0$, the following holds: for all $\lambda_+,\lambda_-,\beta \in \bbR$, all $f \in H(\lambda_-,\lambda_+)$, and all $s<t$, $\KPZb(t, \aabullet \viiva s;f), \KPZb(t;f \viiva s, \aabullet) \in H(\lambda_-,\lambda_+)$, where $\KPZb$ is defined through \eqref{eq:KPZdef}.
\end{proposition}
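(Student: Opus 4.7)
My plan is to work with the representation
\[
\KPZb(t,x\viiva s;f) \,=\, \log \int_{\bbR} \heat(t-s, x-y)\, \rnSheb(t,x\viiva s,y)\, e^{f(y)}\, dy
\]
and reduce the conservation law to the classical fact for the deterministic heat equation, using as the key input the uniform sub-polynomial bound on $\log \rnSheb$ provided by Corollary \ref{cor:growth}. That bound furnishes, on a single event of full probability, some $\alpha \in (0,1)$ and, for each compact range of parameters $(s,t,\beta)$, a constant $C$ so that $|\log \rnSheb(t,x\viiva s,y)| \leq C(1+|x|^\alpha+|y|^\alpha)$ for all $x,y \in \bbR$. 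Measurability and local boundedness of $\KPZb(t,\aabullet\viiva s;f)$ follow from its joint continuity (Theorem \ref{prop:IC}), so the proposition reduces to verifying the two limits $\lim_{x \to \pm\infty} \KPZb(t,x\viiva s;f)/x = \lambda_\pm$.

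For the upper bound at $x \to +\infty$, I would use the asymptotic characterization of $H(\lambda_-,\lambda_+)$, which yields for any $\epsilon>0$ a constant $C_\epsilon$ with
\[
f(y) \,\leq\, C_\epsilon + (\lambda_+ + \epsilon) y^+ - (\lambda_- - \epsilon) y^-,
\]
and combine with the $\log \rnSheb$ bound to obtain
\[
\Sheb(t,x\viiva s;e^f) \,\leq\, e^{C_\epsilon + C(1+|x|^\alpha)} \int_{\bbR} \heat(t-s,x-y)\, e^{(\lambda_+ + \epsilon) y^+ - (\lambda_- - \epsilon) y^- + C|y|^\alpha}\, dy.
\]
The integral on the right is a straightforward Gaussian computation and is, up to a sub-linear correction coming from the $|y|^\alpha$ factor, asymptotic as $x \to +\infty$ to $\exp\bigl\{(\lambda_+ + \epsilon)x + \frac{1}{2}(\lambda_+ + \epsilon)^2(t-s)\bigr\}$ (the classical action of the heat semigroup on $e^{\lambda y}$). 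Dividing $\log \Sheb$ by $x$ and sending $\epsilon \downarrow 0$ yields $\limsup_{x \to \infty} \KPZb(t,x\viiva s;f)/x \leq \lambda_+$.

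For the matching lower bound I would use the opposite asymptotic $f(y) \geq (\lambda_+ - \epsilon) y - C_\epsilon$ for $y$ sufficiently large, restrict the integral to the unit window $y \in [x-1, x+1]$ on which $\heat(t-s,x-y)$ is bounded below by a positive constant, and apply Jensen's inequality in its arithmetic--geometric form to the integral of $\rnSheb$:
\[
\int_{x-1}^{x+1}\! \rnSheb(t,x\viiva s,y)\, dy \,\geq\, 2 \exp\!\Bigl(\tfrac{1}{2}\!\int_{x-1}^{x+1}\! \log \rnSheb(t,x\viiva s,y)\, dy\Bigr) \,\geq\, 2\, e^{-C'(1+|x|^\alpha)}.
\]
This yields $\log \Sheb(t,x\viiva s;e^f) \geq (\lambda_+ - \epsilon)x - O(|x|^\alpha)$, hence $\liminf_{x \to \infty} \KPZb(t,x\viiva s;f)/x \geq \lambda_+$. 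The analysis at $x \to -\infty$ is entirely symmetric, producing $\lambda_-$, and the companion statement for $\KPZb(t;f\viiva s,\aabullet)$ follows by the same argument after invoking the time-reflection symmetry from Proposition \ref{prop:cov}, which interchanges the roles of the initial and terminal variables. The main technical hurdle is ensuring the $\alpha$-power bound on $\log \rnSheb$ is locally uniform in $(s,t,\beta)$ so that a single event suffices for all parameters simultaneously; this is precisely the strength of Corollary \ref{cor:growth}.
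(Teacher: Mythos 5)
Your proposal is correct and follows essentially the same route as the paper: reduce to the classical Gaussian conservation law by sandwiching $\rnSheb$ via the uniform growth bounds of Corollary \ref{cor:growth} and sandwiching $f$ via the asymptotic-slope condition, then send $\epsilon\searrow 0$. The only cosmetic differences are that the paper absorbs the polynomial factors $(1+|x|^4+|y|^4)^{\pm 1}$ directly into $e^{\pm\epsilon(|x|+|y|)}$ rather than passing through a sub-polynomial bound on $\log\rnSheb$, and your Jensen step on the unit window is unnecessary since Corollary \ref{cor:growth} already provides a pointwise lower bound on $\rnSheb$ there.
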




\subsection{Quenched continuum directed polymers}
Next, we turn to the structure of polymer measures. We first show that there is an event of full probability on which the quenched point-to-point measures defined in \eqref{eq:p2pdef} all exist, are supported on H\"older $(1/2)-$ functions, are Feller, and satisfy basic continuity and measurability properties. 

In the statement of the following result, $\oE_{(s,y),(t,x)}^{\Polyb}$ is the expectation under  $\Polyb_{(s,y),(t,x)}$. The path space  $\sC_{[s,t]}=\sC([s,t],\R)$ is endowed with its uniform topology and Borel $\sigma$-algebra $\sB(\sC_{[s,t]})$,   and $|f|_{\sC^{\eta}_{[s,t]}}$ is the standard $\eta$-H\"older seminorm, which is defined in \eqref{app-H7} in Appendix \ref{app:top}.   $X=(X_u)_{u\in[s,t]}$ is the path variable on $\sC_{[s,t]}$ and $\sG_{u,v}=\sigma(X_r: r\in[u,v])$ is the natural filtration.  $\sB_b(\sC_{[s,t]})$ is the space of bounded Borel functions and $\sM_1(\sC_{[s,t]})$ the space of probability measures on $\sC_{[s,t]}$, equipped with its standard topology of weak convergence generated by $\sC_b(\sC_{[s,t]},\R)$ test functions.

\begin{theorem}\label{prop:polyexist}
There exists an event $\Omega_0$ with $\bbP(\Omega_0) = 1$ so that for all $\omega \in \Omega_0$, the following holds: 
\begin{enumerate}[label={\rm(\roman*)}, ref={\rm\roman*}]   \itemsep=3pt 
\item \textup{(Existence and uniqueness)} For each $(s,y,t,x,\beta) \in \varsets\times\bbR$, there exists a unique probability measure $\Polyb_{(s,y),(t,x)}$ on $(\sC_{[s,t]},\sB(\sC_{[s,t]}))$ with finite-dimensional marginals \eqref{eq:p2pdef}.
\item \textup{(H\"older $1/2-$ path regularity)} For each $(s,y,t,x,\beta)\in \varsets \times \bbR$ and $\eta \in (0,1/2)$,
\begin{align}
\Polyb_{(s,y),(t,x)}(|X|_{\sC^{\eta}_{[s,t]}} < \infty)=1 \label{eq:p2phold}
\end{align} 
and
\begin{align}
\Polyb_{(s,y),(t,x)}(X_s = y) = \Polyb_{(s,y),(t,x)}(X_t=x) = 1. \label{eq:p2pIC}
\end{align}
\item\label{prop:polyexist:Markov} \textup{(Markov property)} For each $(s,y,t,x,\beta)\in \varsets\times\bbR$, each $u,v$ satisfying $s < u < v < t$, and each $F \in \sB_b(\sC_{[u,v]})$
\begin{align*}
\oE^{\Polyb}_{(s,y),(t,x)}[F(X|_{[u,v]})|\sG_{s,u},\sG_{v,t}] &= \oE^{\Polyb}_{(u,X_u),(v,X_v)}[F(X)] \qquad \Polyb_{(s,y),(t,x)}\text{-a.s.}
\end{align*}
\item \label{prop:polyexist:cmeas} \textup{(Continuity)} For all $s<t$, the map $(x,y,\beta) \mapsto \Polyb_{(s,y),(t,x)}$ from $\R^3$ into $\sM_1(\sC_{[s,t]})$ is continuous. 
\end{enumerate}
\end{theorem}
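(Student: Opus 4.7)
The plan is to work throughout on the single full-probability event $\Omega_0$ from Theorem \ref{prop:IC}, on which $\Sheb(t,x|s,y)$ is strictly positive, jointly continuous and Hölder regular in its parameters, and satisfies the Chapman--Kolmogorov identity of Theorem \ref{prop:IC}\eqref{prop:IC:semi-group} simultaneously for every $(s,y,t,x,\beta) \in \varsets \times \R$. Granting this, part (i) is a direct application of the Kolmogorov extension theorem: the semi-group identity makes the finite-dimensional distributions \eqref{eq:p2pdef} consistent under marginalization of intermediate time points, strict positivity of $\Sheb$ makes them genuine probability kernels, and uniqueness on $\sC_{[s,t]}$ is standard once one knows the measure lives on continuous paths, since a Borel probability measure on $\sC_{[s,t]}$ is determined by its marginals at a countable dense set of times. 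The Markov property of part \eqref{prop:polyexist:Markov} is then immediate from the product factorization of \eqref{eq:p2pdef}, extended from cylinder functions to all bounded $\sG_{u,v}$-measurable $F$ by a monotone class argument.

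The substantive step is the Hölder regularity in part (ii). I would establish a Kolmogorov--Chentsov moment bound
\[
\oE^{\Polyb}_{(s,y),(t,x)}\!\bigl[|X_u - X_v|^{2m}\bigr] \,\leq\, C(\omega,s,t,x,y,\beta,m)\,|v-u|^m
\]
for each integer $m\geq 1$ and $s \le u < v \le t$, with $C$ locally bounded in all its continuous arguments. For $s < u < v < t$ the left side equals
\[
\frac{1}{\Sheb(t,x|s,y)} \int_{\R^2} |a-b|^{2m}\, \Sheb(t,x|v,b)\,\Sheb(v,b|u,a)\,\Sheb(u,a|s,y)\,da\,db,
\]
and after decomposing each factor as $\Sheb = \heat\cdot\rnSheb$ and using the uniform sub-polynomial growth bounds on $\rnSheb$ from Corollary \ref{cor:growth} to dominate the ratio $\rnSheb(t,x|v,b)\rnSheb(v,b|u,a)\rnSheb(u,a|s,y)/\rnSheb(t,x|s,y)$ by an $\omega$-dependent factor that is locally uniform in all parameters and harmless against the Gaussian tails of $\heat$, the integral is controlled by a constant times the Brownian bridge moment $\oE^{BB}_{(s,y),(t,x)}[|X_u-X_v|^{2m}] = O(|v-u|^m)$. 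Kolmogorov--Chentsov (Appendix \ref{app:KC}) then upgrades this to the Hölder estimate \eqref{eq:p2phold}, and the endpoint identities \eqref{eq:p2pIC} follow by letting $t_1\searrow s$ and $t_k\nearrow t$ in \eqref{eq:p2pdef} and invoking the boundary continuity of Theorem \ref{prop:IC}\eqref{prop:IC:fcont}.

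For part \eqref{prop:polyexist:cmeas}, joint continuity of $(x,y,\beta)\mapsto\Polyb_{(s,y),(t,x)}$ reduces to tightness plus convergence of the finite-dimensional distributions. On any compact $K\subset\R^3$ of endpoint and temperature parameters, the Kolmogorov--Chentsov constant in the previous step can be chosen uniformly (this again uses the local uniformity built into Corollary \ref{cor:growth} and the joint continuity of $\Sheb$), which gives uniform tightness of $\{\Polyb_{(s,y),(t,x)}\}_{(x,y,\beta)\in K}$ in $\sM_1(\sC_{[s,t]})$; convergence of \eqref{eq:p2pdef} along any sequence in $K$ is then immediate from joint continuity and strict positivity of $\Sheb$ via Theorem \ref{prop:IC}\eqref{prop:IC:1} and \eqref{prop:IC:Holder}. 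The main obstacle throughout is the moment bound, since the polymer measure is not absolutely continuous with respect to any Brownian bridge; everything hinges on the sub-polynomial, locally uniform growth estimates on $\rnSheb$ that let the $\rnSheb$ ratio be absorbed into an $(a,b)$-independent (and locally parameter-uniform) constant.
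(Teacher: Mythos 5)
Your proposal is correct and follows essentially the same route as the paper: the paper likewise reduces the increment moment bound to a Brownian bridge moment by bounding the ratio of $\rnSheb$ factors via the growth estimates of Corollary \ref{cor:growth} (Lemmas \ref{lem:rndbd} and \ref{lem:BBest}), applies Kolmogorov--Chentsov together with the standard Stroock--Varadhan argument to realize the measure on $\sC_{[s,t]}$, proves the Markov property by the same multiply-and-divide plus monotone class argument, and obtains part (iv) from tightness plus convergence of finite-dimensional distributions. The only cosmetic difference is that the paper verifies the endpoint conditions \eqref{eq:p2pIC} by showing $\oE^{\Polyb}_{(s,y),(t,x)}[|X_r-y|]\to 0$ as $r\searrow s$ via dominated convergence against the Brownian bridge, rather than through the boundary continuity of Theorem \ref{prop:IC}\eqref{prop:IC:fcont}, but both are valid.
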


We next use the point-to-point polymers to construct measure-to-measure polymers for  $(\mu,\zeta) \in \ICMM(4)$. By Theorem \ref{prop:IC}\eqref{prop:IC:1}, $0 < \Sheb(t;\zeta|s;\mu) <\infty$ for all such $(\mu,\zeta)$, $s<t$ and $\beta \in \bbR$.  Measure-to-measure polymer distributions on the path space  $\sC_{[s,t]}$ are defined  for $A \in \sB(\sC_{[s,t]})$ by 
\begin{align}
\Polyb_{(s;\mu),(t;\zeta)}(A) &= \frac1{\Sheb(t;\zeta|s;\mu)}{ \int_{\bbR}\int_{\bbR}\Sheb(t,x\viiva s,y)\tspc\Polyb_{(s,y),(t,x)}(A) \tspc\zeta(dx)\mu(dy) } .  
\label{eq:m2mdef}
\end{align}
Note that $\Polyb_{(s,y),(t,x)} = \Polyb_{(s;\delta_y),(t;\delta_x)}$  with this definition. We have the following basic properties of the measure-to-measure polymers, encompassing their existence, H\"older support, Markovian structure, and regularity properties. In part \eqref{prop:m2mboch:TV} below, if $\mu-\zeta$ is not a well-defined signed measure on $\R$, the total variation measure $|\mu-\zeta|$ is defined as a limit of the (well-defined) total variation measures restricted to compact sets. See Appendix \ref{app:top}. 
 
\begin{theorem}\label{prop:m2mbasic}
There is an event $\Omega_0$ with $\bbP(\Omega_0)=1$ so that on $\Omega_0$, the following hold:
\begin{enumerate}[label={\rm(\roman*)}, ref={\rm\roman*}]   \itemsep=3pt 
\item \label{prop:m2mbasic:def} \textup{(Existence and density)} For all $(s,y,t,x,\beta) \in \varsets\times\bbR$ and all $(\mu,\zeta) \in \ICMM(4)$, \eqref{eq:m2mdef} defines a probability measure on $(\sC_{[s,t]},\sB(\sC_{[s,t]}))$. For all  $t_{1:k}$ satisfying $s =t_0 < t_1 < \dots < t_k <  t_{k+1}=t$, the finite dimensional distributions of this measure are given by
\be\begin{aligned} \label{eq:m2mpoly}
&\Polyb_{(s;\mu),(t;\zeta)}(X_s \in dx_0, X_{t_1} \in dx_1, \dots X_{t_k} \in dx_k, X_t \in dx_{k+1})\\
&\qquad \qquad\qquad = \; \mu(dx_0) \zeta(dx_{k+1}) \frac{\prod_{i=0}^{k}\Sheb(t_{i+1},x_{i+1} | t_i , x_i)}{\Sheb(t;\zeta|s;\mu)}dx_{1:k}.
\end{aligned}\ee
\item \label{prop:m2mboch:cont} \textup{(H\"older $1/2-$ path regularity)} For each $s<t$, each $\beta \in \bbR$, each $(\mu,\zeta) \in \ICMM(4)$ and each $\eta \in (0,1/2)$,
\begin{align}
\Polyb_{(s;\mu),(t;\zeta)}(|X|_{\sC^{\eta}_{[s,t]}} < \infty)=1.
\end{align}
\item\label{prop:m2mboch:IC} \textup{(Initial condition)} For all $\beta \in \bbR$, all $s<t$, all $\mu,\zeta \in \ICM$ and all $x,y \in \bbR$,
\begin{align*}
\Polyb_{(s,y),(t;\zeta)}(X_s = y) = \Polyb_{(s;\mu),(t;x)}(X_t=x) = 1.
\end{align*}
\item\label{prop:m2mbasic:Markov} \textup{(Markov property)}  For all $s<t$, all $\beta \in \bbR$, all $u,v \in (s,t)$ satisfying $s < u < v < t$, all $(\mu,\zeta)\in \ICMM(4)$, the following holds: for each  $F \in \sB_b(\sC([u,v],\R))$,
\begin{align*}
\oE^{\Polyb}_{(s;\mu),(t;\zeta)}[F(X|_{[u,v]})|\sG_{s,u},\sG_{u,t}] &= \oE^{\Polyb}_{(u,X_u),(v,X_v)}[F(X)] \qquad \Polyb_{(s;\mu),(t;\zeta)}\text{-a.s.}
\end{align*}
\item\label{prop:m2mboch:TV} \textup{(Total variation norm comparison)} For all $(\mu_1,\zeta_1),(\mu_2,\zeta_2)\in \ICMM(4)$, all $s<t$, and all $\beta \in \R$, we have \begin{align*}
&\frac{1}{2}\|\Polyb_{(s;\mu_1),(t;\zeta_1)}-\Polyb_{(s;\mu_2),(t;\zeta_2)}\|_{TV}\\
&\leq \frac{\Sheb(t;|\zeta_2-\zeta_1||s;\mu_1)}{\Sheb(t;\zeta_1|s;\mu_1)} + \Sheb(t;\zeta_2|s;\mu_1)\tspb \bigl|\Sheb(t;\zeta_1|s;\mu_1)^{-1}-\Sheb(t;\zeta_2|s;\mu_1)^{-1}\bigr| \\
&\qquad +\frac{\Sheb(t;\zeta_2|s;|\mu_2-\mu_1|)}{\Sheb(t;\zeta_2|s;\mu_1)} + \Sheb(t;\zeta_2|s;\mu_2)\tspb \bigl|\Sheb(t;\zeta_2|s;\mu_1)^{-1}-\Sheb(t;\zeta_2|s;\mu_2)^{-1}\bigr|.
\end{align*}
\end{enumerate}
\end{theorem}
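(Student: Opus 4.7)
My plan is to work throughout on the intersection of the full-probability events from Theorem~\ref{prop:polyexist} (existence and joint continuity in endpoints of $\Polyb_{(s,y),(t,x)}$) and Theorem~\ref{prop:IC}\eqref{prop:IC:1} (strict positivity and finiteness of $\Sheb(t;\zeta|s;\mu)$ for $(\mu,\zeta)\in\ICMM(4)$, $s<t$, $\beta\in\R$), so that the right-hand side of~\eqref{eq:m2mdef} is well-defined simultaneously for every admissible $(\beta,s,t,\mu,\zeta)$. For part~\eqref{prop:m2mbasic:def} I would first use the joint continuity in Theorem~\ref{prop:polyexist}\eqref{prop:polyexist:cmeas} to get measurability of $(x,y)\mapsto\Polyb_{(s,y),(t,x)}(A)$ for open $A$ via lower semicontinuity, then extend this to all Borel $A$ by a monotone class argument. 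Normalization to total mass $1$ follows from $\Polyb_{(s,y),(t,x)}(\sC_{[s,t]})=1$ together with~\eqref{eq:Shem2m}; countable additivity follows from Fubini--Tonelli; and the finite-dimensional formula~\eqref{eq:m2mpoly} comes from substituting~\eqref{eq:p2pdef} into~\eqref{eq:m2mdef} on a cylinder event and cancelling the boundary $\Sheb$-factors. Parts~\eqref{prop:m2mboch:cont} and~\eqref{prop:m2mboch:IC} are then immediate from the mixture representation combined with~\eqref{eq:p2phold} and~\eqref{eq:p2pIC}: any event of full $\Polyb_{(s,y),(t,x)}$-measure for every $(x,y)$ is also of full measure under the mixture.

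For the Markov property~\eqref{prop:m2mbasic:Markov} (where I read $\sG_{u,t}$ as a typo for $\sG_{v,t}$, matching Theorem~\ref{prop:polyexist}\eqref{prop:polyexist:Markov}), I would verify the identity on cylinder events $F$ using~\eqref{eq:m2mpoly}: for a partition $s = t_0 < \dots < t_m = t$ containing $u$ and $v$ as two of the intermediate times, the joint density factorizes into a product of $\Sheb$-transitions whose $[u,v]$-slice is exactly $\prod\Sheb(t_{i+1},x_{i+1}|t_i,x_i)/\Sheb(v,X_v|u,X_u)$, which is the finite-dimensional law of $\Polyb_{(u,X_u),(v,X_v)}$ from~\eqref{eq:p2pdef}. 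A monotone class argument extends to all $F\in\sB_b(\sC_{[u,v]})$. Equivalently, since the m2m measure is a mixture of the endpoint-conditioned measures $\Polyb_{(s,y),(t,x)}$, the claim reduces to Theorem~\ref{prop:polyexist}\eqref{prop:polyexist:Markov} applied pointwise in $(y,x)$, followed by dominated convergence with the bound $|F|\leq\|F\|_\infty$ and weight $\Sheb(t,x|s,y)/\Sheb(t;\zeta|s;\mu)$.

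The substantive step is~\eqref{prop:m2mboch:TV}, and it is the main obstacle only insofar as it demands careful bookkeeping. I would insert the intermediate measure $\Polyb_{(s;\mu_1),(t;\zeta_2)}$ and bound the two resulting differences separately. Fixing $\mu=\mu_1$ and writing $N_i=\Sheb(t;\zeta_i|s;\mu_1)$, adding and subtracting $N_1^{-1}\int\int \Sheb(t,x|s,y)\Polyb_{(s,y),(t,x)}(A)\zeta_2(dx)\mu_1(dy)$ splits the first difference into a numerator-error term (controlled using $|\Polyb_{(s,y),(t,x)}(A)|\leq 1$ and the local total variation measure $|\zeta_2-\zeta_1|$ from Appendix~\ref{app:top} by $N_1^{-1}\Sheb(t;|\zeta_2-\zeta_1||s;\mu_1)$) and a normalization-error term (bounded by $\Sheb(t;\zeta_2|s;\mu_1)\bigl|N_1^{-1}-N_2^{-1}\bigr|$). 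Since these bounds are uniform in $A$, taking $\sup_A$ yields the first two terms of the stated inequality. The second difference, with $\zeta=\zeta_2$ fixed, is handled by the symmetric manipulation in $\mu$ to produce the remaining two terms. The only real technical subtlety is interpreting $|\zeta_2-\zeta_1|$ and $|\mu_2-\mu_1|$ when $\zeta_i,\mu_i\in\ICM$ are not globally signed Borel measures; this is handled by the compact-set decomposition of Appendix~\ref{app:top}, combined with the Gaussian integrability guaranteed by $\ICMM(4)$, which lets one pass to the limit from compactly supported approximations and recover the global bound.
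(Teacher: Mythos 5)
Your proposal is correct and follows essentially the same route as the paper: measurability of $(x,y)\mapsto\Polyb_{(s,y),(t,x)}(A)$ via lower semicontinuity on open sets plus a $\pi$-$\lambda$/monotone class extension, parts (ii)--(iv) read off from the mixture representation and the corresponding point-to-point statements, and the total variation bound obtained by inserting the intermediate measure $\Polyb_{(s;\mu_1),(t;\zeta_2)}$, splitting each difference into a numerator-error term controlled by $|\zeta_2-\zeta_1|$ (resp.\ $|\mu_2-\mu_1|$) and a normalization-error term, and taking $\sup_A$. Your reading of $\sG_{u,t}$ as $\sG_{v,t}$ is also consistent with Theorem \ref{prop:polyexist}\eqref{prop:polyexist:Markov}.
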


Our next result discusses continuity properties of polymer measures, including a version of the strong Feller property, with the caveat that in order to connect back to the usual formulation of a time inhomogeneous Markov process, one needs to view this as a chain which is  killed before time $s$ if run backward in time starting from $t$ and after time $t$ if run forward in time starting from $s$. 
\begin{theorem}\label{thm:polyreg}
There is an event $\Omega_0$ with $\bbP(\Omega_0)=1$ on which the following hold. 
\begin{enumerate}[label={\rm(\roman*)}, ref={\rm\roman*}]   \itemsep=3pt 
\item \label{thm:polyreg:weak} \textup{(Weak continuity)} For all $s<t$ in $\R^2$, the maps from $\R^2\times \ICM$ to $\sM_1(\sC_{[s,t]})$
\[(\beta,x,\mu) \mapsto \Polyb_{(s;\mu),(t,x)} \qquad \text{ and }  \qquad (\beta,y,\zeta) \mapsto \Polyb_{(s;y),(t;\zeta)}\]
are continuous with the weak topology on $\sM_1(\sC_{[s,t]})$.
\item \label{thm:polyreg:TV} \textup{(Total variation continuity)} If $\lim_{n\to\infty}d_{\ICM}(\mu_n,\mu) = 0$ and the total variation measure $|\mu_n-\mu|$ converges to the zero measure vaguely, then for all $(s,y,t,x,\beta)\in\varsets\times\R$,
\[
\lim_{n\to\infty}\|\Polyb_{(s;\mu_n),(t,x)} - \Polyb_{(s;\mu),(t,x)}\|_{TV} = 0 =\lim_{n\to\infty}\|\Polyb_{(s,y),(t;\mu_n)} - \Polyb_{(s,y),(t;\mu)}\|_{TV}
\]
\item \label{thm:polyreg:SF} \textup{(Strong Feller property)} For all $\mu,\zeta \in \ICM$, all $f \in \sB_b(\bbR)$, and all $r \in \R$, the maps
\begin{align*}
(\beta,s,t,y) \mapsto \oE_{(s,y),(t;\zeta)}^{\Polyb}[f(X_r)] \qquad \text{ and } \qquad (\beta,s,t,x) \mapsto \oE_{(s;\mu),(t,x)}^{\Polyb}[f(X_r)]
\end{align*}
are continuous on $\{(\beta,s,t,x)\in \bbR^4 : s < r < t\}$. 
\item \label{thm:polyreg:vbr} \textup{(Vague boundary regularity)} For all $\mu,\zeta \in \ICM$,  $f \in \sC_c(\R,\R)$, and $r \in \R$, the maps
\begin{align*}
(\beta,s,y) \mapsto \oE^{\Polyb}_{(s,y),(t;\zeta)}[f(X_r)] \qquad \text{ and }\qquad (\beta,t,x) \mapsto \oE^{\Polyb}_{(s;\mu),(t,x)}[f(X_r)]
\end{align*} 
are continuous on $\{(\beta,s,y) \in \bbR^3 : s \leq r\}$ and $\{(\beta,t,x) \in \bbR^3 : t \geq r\}$, respectively.
\end{enumerate}
\end{theorem}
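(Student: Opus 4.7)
The plan is to prove the four assertions in turn, all leaning on the joint continuity of the Green's function field from Theorem \ref{thm:jcont}, the structural properties of polymer measures from Theorems \ref{prop:polyexist} and \ref{prop:m2mbasic}, and the locally uniform growth and continuity bounds developed earlier in the paper.

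For part \eqref{thm:polyreg:weak}, my plan is to reduce weak convergence in $\sM_1(\sC_{[s,t]})$ to the standard combination of convergence of finite-dimensional distributions plus tightness. Convergence of fdds is immediate from the explicit product formula \eqref{eq:m2mpoly}, the joint continuity provided by Theorem \ref{thm:jcont}, and the strict positivity of $\Sheb(t;\zeta\viiva s;\mu)$ from Theorem \ref{prop:IC}\eqref{prop:IC:1}. For tightness I would use the Kolmogorov criterion: writing out the joint density of $(X_u,X_v)$ via \eqref{eq:m2mpoly}, I expect to bound
\[
\oE^{\Polyb}_{(s;\mu),(t,x)}\bigl[|X_v-X_u|^p\bigr] \;\leq\; C\,|v-u|^{p/2}
\]
for $p$ large enough, using the Gaussian factor $\heat(v-u,z_2-z_1)$ together with locally uniform upper bounds on the ratios $\rnSheb(v,z_2\viiva u,z_1)\Sheb(t,x\viiva v,z_2)/\Sheb(t,x\viiva s;\mu)$. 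The main obstacle is obtaining these bounds uniformly in $(\beta,x,\mu)$ ranging over a compact subset of $\R^2\times\ICM$; this is where the growth estimates underlying Corollary \ref{cor:growth} and the lower bounds on the normalizer should combine to yield a locally uniform constant $C$.

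For part \eqref{thm:polyreg:TV}, I would apply the total variation bound of Theorem \ref{prop:m2mbasic}\eqref{prop:m2mboch:TV} directly with $\zeta_1=\zeta_2=\delta_x$ (and symmetrically for the other assertion). By Theorem \ref{thm:jcont}\eqref{thm:jcont:MC} the quantity $\Sheb(t,x\viiva s;\mu_n)$ converges to $\Sheb(t,x\viiva s;\mu)>0$, so the only nontrivial point is to show $\Sheb(t,x\viiva s;|\mu_n-\mu|)\to 0$. Writing this as $\int \Sheb(t,x\viiva s,y)\,|\mu_n-\mu|(dy)$, the integrand in $y$ is continuous and (by the growth estimates of Section \ref{sec:chaos}) decays faster than any Gaussian, while $d_{\ICM}(\mu_n,\mu)\to 0$ bounds the Gaussian-weighted mass of $|\mu_n-\mu|$. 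A truncation argument, splitting the integral at $|y|\leq R$ and $|y|>R$, combined with vague convergence of $|\mu_n-\mu|$ on the compact part and the Gaussian tail bound on the remainder, then delivers the conclusion.

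For the strong Feller part \eqref{thm:polyreg:SF}, my plan is to observe that the marginal of $X_r$ under $\Polyb_{(s,y),(t;\zeta)}$ has the explicit density
\[
\rho_{\beta,s,t,y}(z) \;=\; \frac{\Sheb(r,z\viiva s,y)\,\Sheb(t;\zeta\viiva r,z)}{\Sheb(t;\zeta\viiva s,y)}
\]
on $\R$, which is positive and continuous in all parameters on $\{s<r<t\}$ by Theorem \ref{thm:jcont}. Under any convergent sequence of parameters with $s<r<t$, these densities converge pointwise; since each is a probability density on $\R$, Scheff\'e's lemma upgrades this to $L^1(\R)$ convergence, from which $\int f\,\rho_n\,dz\to \int f\,\rho\,dz$ follows for every $f\in\sB_b(\R)$. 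For the vague boundary regularity in \eqref{thm:polyreg:vbr}, the case $s_n=r$ is trivial since $X_r=y_n$, and for $s_n<r$ with $s_n\to r$ I would apply Theorem \ref{prop:IC}\eqref{prop:IC:fcont} to the integrand $h(z)=f(z)\Sheb(t;\zeta\viiva r,z)/\Sheb(t;\zeta\viiva s_n,y_n)$, which is bounded, continuous, and compactly supported (locally uniformly); the denominator is controlled by joint continuity of $\Sheb$, and the small-gap boundary continuity in Theorem \ref{prop:IC}\eqref{prop:IC:fcont} then gives $\int \Sheb(r,z\viiva s_n,y_n)\,h(z)\,dz\to f(y)$. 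The other map is symmetric. The hardest step overall is the uniform tightness in \eqref{thm:polyreg:weak}, since all the other parts reduce cleanly to the continuity and decay estimates already established.
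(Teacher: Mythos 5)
Your plan coincides with the paper's proof in all essentials: parts \eqref{thm:polyreg:weak} and \eqref{thm:polyreg:TV} are exactly what Proposition \ref{prop:conv} establishes (finite-dimensional convergence plus tightness via a two-point increment moment bound of the form you describe, and the total variation comparison of Theorem \ref{prop:m2mbasic}\eqref{prop:m2mboch:TV} combined with $\Sheb(t,x\viiva s;|\mu_n-\mu|)\to 0$ via the same truncation), and part \eqref{thm:polyreg:SF} is the same explicit one-point density argument, with your use of Scheff\'e's lemma being a perfectly acceptable substitute for the paper's dominated convergence step.

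The one point to tighten is in part \eqref{thm:polyreg:vbr}: Theorem \ref{prop:IC}\eqref{prop:IC:fcont} is stated for test functions in $\CICM$, which are required to be \emph{strictly positive}, whereas your integrand $h(z)=f(z)\Sheb(t;\zeta\viiva r,z)$ is signed and compactly supported, so the cited result does not apply as stated. This is easily repaired — either decompose $h$ into positive and negative parts and add a small Gaussian to each to land in $\CICM$, or argue directly as the paper does by noting that $\rnShe_{\beta_n}(r,z\viiva s_n,y_n)\to 1$ uniformly for $z$ in the (compact) support of $f$ while $\heat(r-s_n,z-y_n)\,dz$ converges vaguely to $\delta_y$ — but as written the invocation is not literally licensed. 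Everything else goes through.
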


The path spaces come with a natural partial order: $f\le g$ means that $f(u)\le g(u)$ for all $u$ in the domain of these functions.   
Because the paths of the polymer measures are continuous and we are in a planar setting, it is natural to expect that the polymer measures are stochastically ordered. 
One way to rigorize this intuition is through the Karlin-McGregor identity \cite{Kar-McG-59}, which we show implies a much stronger strict total positivity condition, recorded as Theorem \ref{thm:strpos} below.   The Weyl chamber $\bbW_n$ was defined above \eqref{eq:totpos}.
\begin{theorem}\label{thm:strpos}
There is an event $\Omega_0$ with $\bbP(\Omega_0)=1$ so that on $\Omega_0$, the following holds. For all $n\in\N$, $s<t$, all $\beta \in \bbR$, and all $(x_1,\dots,x_n)$, $(y_1,\dots,y_n)\in\bbW_n$,
\begin{align*}
\det\big[\Sheb(t,x_j\viiva s,y_i) \big]_{i,j=1}^n > 0.
\end{align*}
\end{theorem}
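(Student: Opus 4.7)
The proof combines a Karlin--McGregor non-intersection identity for polymer partition functions with a tube estimate for independent polymer bridges. The classical Karlin--McGregor argument yields $\det\ge 0$; the strict inequality requires the additional tube estimate. By continuity of the polymer finite-dimensional marginals (Theorem~\ref{prop:polyexist}(\ref{prop:polyexist:cmeas})) and strict positivity of $\She_\beta$ (Theorem~\ref{thm:rnreg}), it suffices to fix $n$, rational $\vec y,\vec x\in\bbW_n$, rational $s<t$, and rational $\beta$ on a single full-probability event, then take countable unions and extend by continuity in $\beta,s,t,\vec y,\vec x$.

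\textbf{Karlin--McGregor identity.} Let $\bfQ^{\otimes}_{\vec y,\vec x}=\bigotimes_{i=1}^n \Polyb_{(s,y_i),(t,x_i)}$ on $\sC_{[s,t]}^n$ and $\mathcal N=\{X^1_u<X^2_u<\cdots<X^n_u\ \text{for all }u\in[s,t]\}$. I would first prove
\[
\det\bigl[\She_\beta(t,x_j\viiva s,y_i)\bigr]_{i,j=1}^n \;=\; \bfQ^{\otimes}_{\vec y,\vec x}(\mathcal N)\cdot\prod_{i=1}^n \She_\beta(t,x_i\viiva s,y_i).
\]
Expanding the determinant as $\sum_\sigma \sgn(\sigma)\prod_i \She_\beta(t,x_{\sigma(i)}\viiva s,y_i)$, each term is the partition function of $n$ independent polymer paths with endpoint labelling $\sigma$. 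For any tuple of paths that cross, swap the tails of the first colliding pair at the first pairwise intersection time $\tau$ (ties broken lexicographically). The Markov property of polymer measures (Theorem~\ref{prop:polyexist}(\ref{prop:polyexist:Markov}), extended to the stopping time $\tau$ by dyadic approximation) ensures that this swap preserves the product of unnormalized weights; transposing entries of $\sigma$ flips the sign, so crossing tuples cancel pairwise. The surviving configurations are non-crossing, which forces $\sigma=\mathrm{id}$ because $\vec y,\vec x\in\bbW_n$ are strictly ordered.

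\textbf{Reduction to a tube estimate.} Since $\She_\beta>0$ on a full-probability event, it remains to show $\bfQ^{\otimes}_{\vec y,\vec x}(\mathcal N)>0$. Subdivide $[s,t]$ into $m$ equal pieces $t_0<\cdots<t_m$ and fix ordered waypoints $\vec z_k\in\bbW_n$ with $\vec z_0=\vec y$, $\vec z_m=\vec x$, and $\min_{i,k}(z^{i+1}_k-z^i_k)\ge 3\delta$ for some $\delta>0$. Using \eqref{eq:p2pdef} and the Markov property,
\[
\bfQ^{\otimes}_{\vec y,\vec x}(\mathcal N)\;\ge\;\int_{U_\eta}\Bigl(\prod_{i=1}^n \rho^i(\vec w^i)\Bigr)\,d\vec w\;\prod_{k=0}^{m-1}\bfQ^{\otimes}_{(t_k;\vec w_k),(t_{k+1};\vec w_{k+1})}(\mathcal N_k),
\]
where $U_\eta$ is an $\eta$-neighborhood (with $\eta\le \delta/2$) of $(\vec z_1,\dots,\vec z_{m-1})$ in $\bbW_n^{m-1}$, $\rho^i$ is the joint density of $(X^i_{t_1},\dots,X^i_{t_{m-1}})$ under $\Polyb_{(s,y_i),(t,x_i)}$, and $\mathcal N_k$ denotes non-intersection on $[t_k,t_{k+1}]$. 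Formula \eqref{eq:p2pdef} combined with $\She_\beta>0$ yields $\rho^i>0$ on $U_\eta$, so the outer integrand is strictly positive on a set of positive Lebesgue measure.

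\textbf{Main obstacle: single-path tube positivity.} The technical crux is: for $m$ large and $\vec w_k,\vec w_{k+1}$ close to $\vec z_k,\vec z_{k+1}$, each factor $\bfQ^{\otimes}_{(t_k;\vec w_k),(t_{k+1};\vec w_{k+1})}(\mathcal N_k)$ is bounded below by a positive constant. Since $\delta$-tubes around the linear interpolations $w^i_k\to w^i_{k+1}$ are pairwise disjoint (the centers are separated by $w^{i+1}_k-w^i_k\ge 3\delta-2\eta\ge 2\delta$), independence of the $n$ bridges reduces the problem to showing that a single polymer bridge from $(t_k,w^i_k)$ to $(t_{k+1},w^i_{k+1})$ stays within a $\delta$-tube of its linear interpolation with positive probability. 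I would prove this by inductive dyadic-midpoint refinement on $[t_k,t_{k+1}]$: at each level, the marginal density at the new midpoint is strictly positive (from \eqref{eq:p2pdef} and positivity of $\She_\beta$), so the bridge can be forced through a nested sequence of narrow windows; combined with the H\"older-$(1/2-)$ path regularity of polymer paths (Theorem~\ref{prop:polyexist}(ii)) applied to a sufficiently fine dyadic mesh, this controls the uniform fluctuation between the dyadic points. This tube estimate --- which uses strict positivity of $\She_\beta$, the Markov property, and path regularity of polymer measures together --- is the step that lifts Karlin--McGregor from $\ge 0$ to $>0$ and is the technical heart of the proof.
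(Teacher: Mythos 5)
Your overall strategy---a Karlin--McGregor determinantal identity to get $\det\ge 0$, plus a disjoint-tube estimate for independent polymer paths to upgrade to strict positivity---is exactly the route the paper takes (Proposition \ref{prop:KMG}), and your resolution of the ``main obstacle'' (partition the time interval, pin the path near waypoints, control the fluctuation between waypoints via the H\"older norm of the bridge part $\widetilde X$ and a Radon--Nikodym comparison with Brownian bridge) is essentially the paper's own tube argument. There are, however, two genuine gaps in the way you set it up.

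First, your opening reduction is logically invalid: proving $\det[\Sheb(t,x_j\viiva s,y_i)]>0$ for all \emph{rational} parameter tuples and then ``extending by continuity'' only yields $\det\ge 0$ at irrational parameters---a continuous function positive on a dense set can still vanish elsewhere. Strict inequalities do not pass to limits. The repair is to drop the rational reduction entirely and run the argument for arbitrary real parameters on the single full-probability event where the quantitative inputs (Corollary \ref{cor:growth}, Lemma \ref{lem:rndbd}, Lemma \ref{lem:Qbridge}) hold simultaneously for all parameters; nothing in the tube estimate actually requires countability. Second, your tail-swap derivation of the identity $\det=\bfQ^{\otimes}_{\vec y,\vec x}(\mathcal N)\prod_i\Sheb(t,x_i\viiva s,y_i)$ needs the \emph{strong} Markov property of the polymer measures at the first collision time $\tau$, which is not established in the paper (Theorem \ref{prop:polyexist}\eqref{prop:polyexist:Markov} is only for deterministic times), and ``dyadic approximation'' is not a proof---one must combine right-continuity of the relevant filtration with the Feller-type continuity of Theorem \ref{thm:polyreg} and justify passing to the limit in the conditional expectations. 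The paper deliberately sidesteps this by phrasing Karlin--McGregor as optional stopping of the bounded \emph{continuous} martingale $M(u)=\det[\oE^{\Polyb}_{(u,X^i_u),(t;\zeta)}[\varphi_j(X_r)]]$ at $\tau\wedge r$, which requires only the deterministic-time Markov property. Relatedly, the paper works at an intermediate time $r\in(s,t)$ with ordered target \emph{sets} $B_i$, deduces that $\det[\Sheb(r,z_j\viiva s,y_i)]>0$ for Lebesgue-a.e.\ $z\in\bbW_n$, and then upgrades to \emph{all} endpoint tuples via Chapman--Kolmogorov and the Cauchy--Binet--Andr\'eief identity; your direct point-to-point tube estimate with both endpoints pinned is a viable alternative that avoids Cauchy--Binet, but only once the two gaps above are filled.
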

The statement of the Karlin-McGregor identity for the polymer measures is Proposition \ref{prop:KMG} below. The resulting stochastic monotonicity is the content of our next result.
\begin{proposition}\label{prop:stochmon}
There is an event $\Omega_0$ with $\bbP(\Omega_0)=1$ so that on $\Omega_0$ the following holds: For all $s<t$, all $\beta \in \bbR$, all $x_1<x_2$, all $y_1<y_2$, and all $\mu,\zeta\in\ICM$,
\begin{align}
\Polyb_{(s,y_1),(t;\zeta)} \std \Polyb_{(s,y_2),(t;\zeta)} \qquad \text{ and }\qquad \Polyb_{(s;\mu),(t,x_1)} \std \Polyb_{(s;\mu),(t,x_2)}. \label{eq:stochmon:1}
\end{align}
\end{proposition}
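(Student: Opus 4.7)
My plan is to reduce to finite-dimensional distributions and then apply a monotone likelihood ratio plus FKG inequality argument, with all the required positivity coming from Theorem \ref{thm:strpos}.

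Because $\sC_{[s,t]}$ is Polish and the pointwise partial order on it is closed, Strassen's theorem reduces $\std$ to the existence of a monotone coupling. Such a coupling can be constructed consistently along a countable dense set of times from monotone couplings of finite-dimensional marginals, and then extended continuously using the H\"older regularity of Theorem \ref{prop:m2mbasic}\eqref{prop:m2mboch:cont}. So it suffices to show that for every $s = t_0 < t_1 < \dotsm < t_k < t_{k+1} = t$, the law of $(X_{t_1},\dots,X_{t_k})$ under $\Polyb_{(s,y_1),(t;\zeta)}$ is stochastically dominated in the coordinate-wise order on $\R^k$ by its law under $\Polyb_{(s,y_2),(t;\zeta)}$. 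By Theorem \ref{prop:m2mbasic}\eqref{prop:m2mbasic:def} the relevant density is
\[
\rho_y(x_{1:k}) = \frac{1}{\Sheb(t;\zeta|s,y)}\,\Sheb(t_1,x_1|s,y)\prod_{i=1}^{k-1}\Sheb(t_{i+1},x_{i+1}|t_i,x_i)\,\Sheb(t;\zeta|t_k,x_k),
\]
which is well-defined and strictly positive by Theorem \ref{prop:IC}\eqref{prop:IC:1} together with strict positivity of the Green's function.

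For $y_1 < y_2$, the likelihood ratio $h := \rho_{y_2}/\rho_{y_1}$ equals an $x$-independent constant times $\Sheb(t_1,x_1|s,y_2)/\Sheb(t_1,x_1|s,y_1)$, which by the $n=2$ case of Theorem \ref{thm:strpos} is strictly increasing in $x_1$; hence $h$ is coordinate-wise nondecreasing on $\R^k$ (the monotone likelihood ratio property). Simultaneously, the very same $n=2$ TP${}_2$ input, written in logarithmic form, says that each transition factor $\log \Sheb(t_{i+1},x_{i+1}|t_i,x_i)$ is supermodular in $(x_i,x_{i+1})$; the two boundary factors $\Sheb(t_1,x_1|s,y_1)$ and $\Sheb(t;\zeta|t_k,x_k)$ depend on a single coordinate each and so are lattice-trivial. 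Summing, $\log \rho_{y_1}$ is supermodular on $\R^k$, i.e., $\rho_{y_1}$ satisfies the FKG lattice condition.

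For any bounded coordinate-wise nondecreasing $F$, the FKG inequality under $\rho_{y_1}$ (applied after a standard truncation of $h$ at level $M$ and passing $M \to \infty$ via monotone/dominated convergence) yields
\[
\E_{\rho_{y_2}}[F] = \E_{\rho_{y_1}}[Fh] \ge \E_{\rho_{y_1}}[F]\,\E_{\rho_{y_1}}[h] = \E_{\rho_{y_1}}[F],
\]
which is the desired finite-dimensional dominance and completes the first inequality. The second inequality $\Polyb_{(s;\mu),(t,x_1)} \std \Polyb_{(s;\mu),(t,x_2)}$ follows by a fully symmetric argument: now the terminal factor $\Sheb(t,x_2|t_k,x_k)/\Sheb(t,x_1|t_k,x_k)$ supplies the MLR in $x_k$ via the $n=2$ case of Theorem \ref{thm:strpos} applied to the last time interval, while the log-supermodularity of the reference density is unaffected by exchanging the boundary roles. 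The main technical subtlety I anticipate is producing the conclusion on a single full-probability event simultaneously valid for all $s<t$, all $\beta\in\R$, all $\mu,\zeta\in\ICM$, and all ordered pairs; this is handled by intersecting the full-probability events from Theorems \ref{prop:polyexist}, \ref{prop:m2mbasic}, and \ref{thm:strpos}, and exploiting the joint weak continuity in $(\beta,\mu,\zeta,x,y)$ of the polymer measures (Theorem \ref{thm:polyreg}\eqref{thm:polyreg:weak}) to upgrade the dominance from a countable dense set of parameters to all of them.
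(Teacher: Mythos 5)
Your proof is correct, but it takes a genuinely different route from the paper's. The paper also reduces to finite-dimensional marginals, but then argues by induction on the number of time points: the base case integrates the strict $2\times 2$ positivity of Proposition \ref{prop:KMG}\eqref{prop:KMG:pos} over $z_1\in(-\infty,a)$ and $z_2\in(a,\infty)$ to get one-point dominance, and the inductive step uses the Markov property (conditioning on $X_{r_{n+1}}$ and invoking monotonicity of the bridge measures) to propagate dominance of upper-orthant probabilities. You instead observe that the $k$-point density is a product of consecutive transition kernels each of which is TP$_2$ by the $n=2$ case of Theorem \ref{thm:strpos}, hence MTP$_2$ (log-supermodular) on $\R^k$, and that changing the starting point $y_1\to y_2$ multiplies the density by a coordinate-wise nondecreasing likelihood ratio $\Sheb(t_1,x_1|s,y_2)/\Sheb(t_1,x_1|s,y_1)$; FKG then gives $\E_{\rho_{y_2}}[F]\ge\E_{\rho_{y_1}}[F]$ for every bounded increasing $F$ in one stroke, with no induction. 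Both arguments rest on exactly the same total-positivity input. What your route buys is the full increasing-functional dominance of each finite-dimensional marginal directly (rather than orthant dominance built up inductively); what it costs is the need to import the continuous-state FKG inequality for MTP$_2$ densities (Karlin--Rinott/Holley/Preston), which the paper's self-contained integration-plus-induction avoids. Two minor cautions: the passage from finite-dimensional dominance to dominance on $\sC_{[s,t]}$ via consistent monotone couplings deserves more care than Strassen alone provides (consistency of the couplings across refinements is not automatic; it is cleaner to approximate increasing continuous functionals on path space by increasing functions of finitely many coordinates using path continuity), though the paper is equally terse on this point; and the final weak-continuity upgrade you invoke is unnecessary, since once you intersect the full-probability events of Theorems \ref{prop:polyexist}, \ref{prop:m2mbasic}, and \ref{thm:strpos}, your finite-dimensional argument is deterministic and already valid for all parameters simultaneously.
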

In the statement of the previous result, $\std$ denotes stochastic dominance. See Appendix \ref{app:top} for a precise definition. 

With our main results stated, we turn to the proofs.

\section{Continuity, invariance, growth, and the conservation law}\label{sec:chaos}
In this section, we prove most of our results about the structure of our solutions to \eqref{eq:SHE} and \eqref{eq:SHEf}, beginning with the proofs of  Theorem \ref{thm:rnreg} and Proposition \ref{prop:cov}. We begin this section with a brief outline of our strategy in approaching those two foundational results. We take as given the previous results on the existence of mild solutions to \eqref{eq:SHE} and their chaos series representations. We quickly recap these results and the relevant references in Appendix \ref{app:mild}. The important points for now are that for a fixed $(s,y)$ and $\beta\in\R$, a unique continuous and adapted solution to \eqref{eq:Greens} exists and for fixed $s,y,t,x,\beta$, this process admits a representation as the chaos series \eqref{eq:SHEchaos}. We use Kolmogorov-Chentsov to glue these together and then verify that this process satisfies our assumptions. We include a version of Kolmogorov-Chentsov satisfying our needs as Theorem \ref{thm:KC}  in Appendix \ref{app:KC} below. The purely computational parts of the argument are deferred to Appendix \ref{app:comp}. 


We then turn to constructing the modification $\tspb\widetilde{\!\rnShe}_{\beta}(t,x\viiva s,y)$ in Theorem \ref{thm:rnreg}, which we obtain by gluing together the processes $\rnSheb(t,x\viiva s,y)$ defined through the chaos series \eqref{eq:rnchaos} at dyadic rational space-time points. Next, we verify that this process is consistent, i.e.~it defines a version of the processes we started with off  the dyadic rationals. This is essentially immediate from our previous  two point estimates. Our Kolmogorov-Chentsov estimates imply growth bounds, which then allow us to prove most of the remaining results in the paper. 

The first lemma is the  restricted version of Proposition \ref{prop:cov} for the chaos series in \eqref{eq:rnchaos}.  Recall the definitions of these transformations at and below  \eqref{S7}. 
\begin{lemma}\label{lem:cov}
The processes $\rnSheb(t,x\viiva s,y)$ and $\Sheb(t,x\viiva s,y)$ satisfy the following:
\begin{enumerate}[label={\rm(\roman*)}, ref={\rm\roman*}]   \itemsep=3pt 
\item \label{lem:cov:shift} {\rm(Shift)} For each $u,z \in \bbR$ and $(s,y,t,x,\beta) \in \varsets\times\bbR$, 
there exists an event $\Omega_1$$=\Omega_1(u,z,s,y,t,x,\beta)$ with $\bbP(\Omega_{1}) =1$ so that on $\Omega_1$,
\begin{align*}
\rnSheb(t+u,x+z|s+u,y+z)\circ \shiftd{-u}{-z} &= \rnSheb(t,x\viiva s,y) \qquad \text{ and} \\
\Sheb(t+u,x+z|s+u,y+z)\circ \shiftd{-u}{-z} &= \Sheb(t,x\viiva s,y).
\end{align*}
\item \label{lem:cov:ref} {\rm(Reflection)} For each  $(s,y,t,x,\beta) \in \varsets\times\bbR$ 
there is an event $\Omega_{1}=\Omega_1(s,y,t,x,\beta)$ with $\bbP(\Omega_1) = 1$ so that on $\Omega_{1}$,
\begin{align*}
\rnSheb(-s,y|-t,x)\circ \refd_1 &= \rnSheb(t,x\viiva s,y), \qquad
\Sheb(-s,y|-t,x)\circ \refd_1 = \Sheb(t,x\viiva s,y), \\
\rnSheb(t,-x\viiva s,-y)\circ \refd_2 &= \rnSheb(t,x\viiva s,y), \text{ and }
\Sheb(t,-x\viiva s,-y)\circ \refd_2 = \Sheb(t,x\viiva s,y).
\end{align*}
\item \label{lem:cov:shear} {\rm(Shear)}  For  each $(s,y,t,x,\beta)\in \varsets\times\bbR$ and each $r,\nu \in \bbR$, there exists an event $\Omega_{1}=$ $\Omega_1$$(s,y,t,x,\beta,r,\nu)$  with $\bbP(\Omega_{1})=1$ so that on $\Omega_1$,
\begin{align*}
\rnSheb(t,x+\nu (t-r)|s, y + \nu(s-r))\circ \sheard{r}{-\nu} &= \rnSheb(t,x\viiva s, y)
\end{align*}
and
\begin{align*}
e^{\nu(x-y) + \frac{\nu^2}{2}(t-s)} \Sheb(t,x+\nu (t-r)|s, y + \nu(s-r))\circ \sheard{r}{-\nu} &=  \Sheb(t,x\viiva s,y).
\end{align*}
\item\label{lem:cov:sca} {\rm(Scaling)} For each $(s,y,t,x,\beta)\in\varsets\times\bbR$ and each $\lambda>0$, there is an event $\Omega_1=\Omega_1(s,y,t,x,\beta,\lambda)$ with $\bbP(\Omega_1) =1$ so that on $\Omega_1$,
\begin{align*}
\rnShe_{\beta/\sqrt{\lambda}}(\lambda^2 t, \lambda x | \lambda^2 s, \lambda y)\circ \did{\lambda^{-2}}{\lambda^{-1}} &= \rnSheb(t,x\viiva s,y)\qquad \text{ and }\\ 
\lambda \She_{\beta/\sqrt{\lambda}}(\lambda^2 t, \lambda x | \lambda^2 s, \lambda y) \circ \did{\lambda^{-2}}{\lambda^{-1}} &= \Sheb(t,x\viiva s,y). 
\end{align*}
\item\label{lem:cov:neg}  {\rm(Negation)} For each $(s,y,t,x,\beta) \in \varsets\times\bbR$, there is an event $\Omega_1$$=\Omega_1(s,y,t,x,\beta)$ with $\bbP(\Omega_1) =1$ so that on $\Omega_1$,
\begin{align*}
\rnShe_{-\beta}(t,x\viiva s,y)\circ \ned&= \rnSheb(t,x\viiva s,y)\qquad \text{ and }
\She_{-\beta}(t,x\viiva s,y) \circ \ned = \Sheb(t,x\viiva s,y).
\end{align*}
 \end{enumerate}
\end{lemma}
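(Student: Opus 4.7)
The approach is to exploit the $L^2$-convergent chaos series representations \eqref{eq:SHEchaos} and \eqref{eq:rnchaos} at the fixed parameters $(s,y,t,x,\beta)$, together with the fact that each of the maps $\shiftd{u}{z}$, $\refd_i$, $\sheard{r}{\nu}$, $\did{\alpha}{\lambda}$, and $\ned$ is measure-preserving on $\Omega$ and acts on $W$ through an explicit linear $L^2$-isometry $S$ of $L^2(\bbR^2)$, namely $W\circ T(f)=W(Sf)$ with $S$ read off from \eqref{S7}. Consequently, $I_k(g)\circ T=I_k(S^{\otimes k} g)$ for every multiple Wiener--It\^o integral, and each asserted identity reduces to an algebraic identity at the level of chaos integrands after the change of variables; this identity in turn comes from elementary symmetries of the Brownian transition density $\heat$. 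Because the series converges in $L^2$ and both sides of each identity are manipulations of the same series term by term, the equalities hold almost surely on events of full probability depending on the fixed parameters.

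For the shift \eqref{lem:cov:shift}, substituting $(\tau_i,\xi_i)\mapsto(\tau_i+u,\xi_i+z)$ inside the $k$-fold integrals gives $\heat_k(\tau_{1:k}+u,\xi_{1:k}+z\viiva s+u,y+z;t+u,x+z)=\heat_k(\tau_{1:k},\xi_{1:k}\viiva s,y;t,x)$, and the white-noise Jacobian is trivial. For the reflections \eqref{lem:cov:ref}, one uses $\heat(r,w)=\heat(r,-w)$ together with the reversal $\prod_{i=0}^{k}\heat(t_{i+1}-t_i,x_{i+1}-x_i)=\prod_{i=0}^{k}\heat(-t_{k-i}-(-t_{k-i+1}),x_{k-i+1}-x_{k-i})$, both of which preserve the integrand under the induced rearrangement of the symmetric multiple integral.

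For the shear \eqref{lem:cov:shear}, the algebraic identity
\[
\heat(r,w+\nu r)=\heat(r,w)\exp\!\bigl\{-\nu w-\tfrac{\nu^2}{2}r\bigr\}
\]
applied to each heat kernel along the path through the $(t_i,x_i)$ produces a telescoping Girsanov-type exponential whose exponent depends only on the endpoints; this exponent is precisely $-\nu(x-y)-\frac{\nu^2}{2}(t-s)$, which cancels in the ratio $\rnSheb$ and supplies the prefactor $e^{\nu(x-y)+\frac{\nu^2}{2}(t-s)}$ in the $\Sheb$ version. For the scaling \eqref{lem:cov:sca}, Brownian scaling gives $\heat(\lambda^2 r,\lambda w)=\lambda^{-1}\heat(r,w)$, so the change of variables $(\tau_i,\xi_i)\mapsto(\lambda^2 r_i,\lambda w_i)$ in the chaos integrand emits a factor $\lambda^{-k}$ in the normalized ratio; the action of $\did{\lambda^{-2}}{\lambda^{-1}}$ on the $k$-fold integral, $I_k(g)\circ\did{\lambda^{-2}}{\lambda^{-1}}=I_k(\lambda^{3k/2}g(\lambda^2\cdot,\lambda\cdot))$, contributes a factor $\lambda^{k/2}$, so that comparing with the $k$-th chaos term of $\rnSheb$ forces exactly the renormalization $\beta\leadsto\beta/\sqrt{\lambda}$. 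The extra prefactor $\lambda$ in the $\Sheb$ identity comes from the denominator heat kernel $\heat(\lambda^2(t-s),\lambda(x-y))=\lambda^{-1}\heat(t-s,x-y)$. Finally, the negation \eqref{lem:cov:neg} is immediate from $W\circ\ned=-W$, which multiplies the $k$-th chaos term by $(-1)^k$ and hence replaces $\beta^k$ by $(-\beta)^k$.

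The only real technical obstacle is the bookkeeping in the shear and scaling cases: one must keep track of the Jacobian emitted by the change of variables inside the $k$-fold integral, the $L^2$-scaling of white noise under the induced isometry $S$, and the Gaussian/power factors produced by completing the square in the heat kernel, and then verify that these combine to leave exactly the claimed prefactors. Since all identities are proved term by term in a series that converges in $L^2(\Omega,\sF,\bbP)$, the full equalities then follow on null sets which may a priori depend on $(s,y,t,x,\beta)$ and on the parameters of the transformation, consistent with the statement.
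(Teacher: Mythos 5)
Your proposal follows the same route as the paper: it invokes the identity $I_k(g)\circ T = I_k(S^{\otimes k}g)$ for the measure-preserving transformations acting on multiple Wiener--It\^o integrals (the paper cites \cite[Theorem 4.5]{Jan-97} for this), and then verifies each covariance term by term in the chaos series via the corresponding symmetry of the heat kernel, including the symmetrization/reordering point in the reflection case and the Jacobian and $\beta\leadsto\beta/\sqrt{\lambda}$ bookkeeping in the scaling case. This matches the paper's proof essentially step for step, so no further comparison is needed.
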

\begin{proof}
%
%
We write the details of parts \eqref{lem:cov:ref}, \eqref{lem:cov:shear}, and \eqref{lem:cov:sca}. Parts \eqref{lem:cov:shift} and \eqref{lem:cov:neg} are similar, but easier. For $m \in \bbN$ and $f \in L^2((\bbR^2)^m)$, denote the multiple Wiener-It\^o stochastic integral by
\begin{align*}
I_m(f) &= \int_{(\bbR^2)^m}  f \, dW^m.
\end{align*}
Let $\sG \in \{\shiftd{-u}{-z},\refd_1, \refd_2,\sheard{r}{-\nu}, \did{\lambda^{-2}}{\lambda^{-1}}, \ned\}$ be one of the transformations as in the statement and let $\oG \in \{\shiftf{u}{z},\reff,\shearf{r}{\nu}, \dif{\lambda^{2}}{\lambda}, \nef\}$ be the associated dual transformation on functions in $L^2(\bbR^2)$. Then by \cite[Theorem 4.5]{Jan-97}, which we may apply by \cite[Theorems 7.25 and 7.26]{Jan-97},
\begin{align*}
I_m(f)\circ \sG &= I_m (\oG_m f), \qquad \bbP-\text{a.s}.
\end{align*}
where $\oG_m$ is the bounded linear operator mapping $L^2((\bbR^2)^m)$ to itself which acts on product form functions $f(\bfx_1,\dots,\bfx_m) = \prod_{i=1}^m f_i(\bfx_i)$ by $\oG_m f(\bfx_1,\dots,\bfx_m) = \prod_{i=1}^m (\oG f_i)(\bfx_i)$, where $\bfx_{1:m} \in (\bbR^2)^m$.

Take $\sG = \refd_1$. Call $t_{1:m}=(t_1,\dots,t_m)$ and $-t_{m:1}=(-t_m,\dots,-t_1)$. Note that
 \begin{align}
s<t_1<\dotsm<t_m< t \iff\    -t< -t_m < \dotsm<-t_1<-s. \label{eq:torder}
 \end{align}
Moreover, for $t_{1:m}$ as in \eqref{eq:torder}, and with the convention that $t_{m+1} = t$ and $t_0 = s$, we have
 \begin{align*}
\oG_m \heat_m(\aabullet\viiva  -t,x;-s,y) &(t_{m:1},x_{m:1}) = \heat_m(-t_{m:1}, x_{m:1} \viiva -t,x; -s,y ) \\
&=    \prod_{i=0}^{m} \rho\bigl(-t_i-(-t_{i+1}) , x_i - x_{i+1} )\bigr) \\
&= \prod_{i=0}^{m} \rho\bigl(t_{i+1}-t_i , x_{i+1}-x_i\bigr) =\heat_m(t_{1:m}, x_{1:m} \viiva s,y ;t,x).  
\end{align*} 
Note that if any non-identity permutation of $[m]$ is applied to the indices $i$ of the coordinates $(t_i,x_i)$, then all of the above expressions would be equal to zero. For $s<t$, the chaos series representation \eqref{eq:SHEchaos} gives
\begin{align*}
&\Sheb(-s,y\viiva-t,x) \circ \sG = \heat(-s+t,y-x) + \sum_{m=1}^\infty \beta^m  I_m[\tspa\heat_m(\aabullet\viiva -t,x \tspb;\tspb -s,y)\tspa]  \circ \sG\\
&\qquad = \heat(-s+t,y-x) + \sum_{m=1}^\infty \beta^m  I_m[\tspa \oG_m \heat_m(\aabullet \viiva -t,x\tspb;\tspb-s,y )\tspa]\\
&\qquad = \heat(t-s,x-y) + \sum_{m=1}^\infty \beta^m  I_m[\tspa\heat_m(\aabullet \viiva s,y\tspb;\tspb  t,x  )\tspa]  = \Sheb(t,x\viiva s,y). 
\end{align*}
In the above, we have used the symmetrization  in the definition of the multiple stochastic integral for general $L^2((\bbR^2)^m)$ functions (for example, item (ii) on p.~9 of \cite{Nua-06})  to re-order the coordinates into the unique order for which the integrand is non-zero. Dividing through by $\rho(t-s,x-y) = \rho(-s-(-t),y-x)$ gives the analogous identity for $\rnSheb$. The proof for $\sG = \refd_2$ is similar. This completes the proof of part \eqref{lem:cov:ref}.


\smallskip 

Next, we consider $\sG = \sheard{r}{-\nu}$. Once again, take $t_{1:m}$ satisfying the order in \eqref{eq:torder} and let $x_{1:m} \in \bbR^m$ be arbitrary. We maintain the convention that $t_{m+1}=t$, $t_0=s$ and $x_0 = y, x_{m+1}=x$. Introduce the shorthands $\Delta x_i = x_{i+1} -x_i$ and $\Delta t_i = t_{i+1}-t_i$. Write $x'_i = x_i + \nu (t_i-r)$ and set $\Delta x'_i = x'_{i+1} - x'_i.$ Then $(\Delta x'_i)^2 = (\Delta x_i)^2 + 2 \nu \Delta t_i \Delta x_i + \nu^2(\Delta t_i)^2$. We have for $m\in \bbN$,
\begin{align*}
&\oG_m \heat_m(\aabullet \viiva  s,y +\nu(s-r)  \tspb;\tspb t,x+\nu(t-r) ) (t_{1:m}, x_{1:m}) \\
&\qquad = \prod_{i=0}^m (2\pi \Delta t_i)^{-1/2} \exp\bigg\{-\sum_{i=0}^m \frac{(\Delta x_i)^2}{2 \Delta t_i} - \nu \sum_{i=0}^m \Delta x_i - \frac{\nu^2}{2} \sum_{i=0}^m \Delta t_i \bigg\},\\
&\qquad= e^{-\nu(x-y) - \frac{\nu^2}{2}(t-s)}\heat_m(\aabullet\viiva   s,y \tspb;\tspb t,x )(t_{1:m},x_{1:m})
\end{align*}
Similarly, 
\begin{align}
\heat(t-s,x-y +\nu(t-s)) &= e^{-\nu(x-y) - \frac{\nu^2}{2}(t-s)}\heat(t-s,x-y).\label{eq:heatshear}
\end{align}
Consequently,
\begin{align*}
&\Sheb(t,x+\nu(t-r)\viiva s,y+\nu(s-r)) \circ \sG \\
&= \heat(t-s,x-y +\nu(t-s)) + \sum_{m=1}^\infty \beta^m I_m\big[\heat_m(\aabullet\viiva s,y +\nu(s-r)  \tspb;\tspb t,x+\nu(t-r)) \big] \circ \sG\\
&= \heat(t-s,x-y +\nu(t-s)) + \sum_{m=1}^\infty \beta^m I_m\big[\oG_m \heat_m(\aabullet\viiva s,y +\nu(s-r)  \tspb;\tspb t,x+\nu(t-r)) \big] \\
&= e^{-\nu(x-y) - \frac{\nu^2}{2}(t-s)}\Big(\heat(t-s,x-y) +\sum_{m=1}^\infty \beta^m I_m\big[\heat_m(\aabullet\viiva   s,y \tspb;\tspb t,x) \big] \Big) \\
&= e^{-\nu(x-y) - \frac{\nu^2}{2}(t-s)}\Sheb(t,x\viiva s,y).
\end{align*}
Dividing by $\heat(t-s,x-y+\nu(t-s))$ and appealing to \eqref{eq:heatshear} gives the corresponding result in part \eqref{lem:cov:shear} for $\rnSheb$.

\smallskip 

Finally, turning to $\sG=\did{\lambda^{-2}}{\lambda^{-1}}$, we have for $s<t$, $x,y \in \bbR$, and $\lambda>0$,
\begin{align*}
\lambda \heat(\lambda^2(t-s),\lambda(x-y)) &= \frac{\lambda}{\sqrt{2\pi\lambda^2(t-s)}} e^{-\frac{(\lambda(x-y))^2}{2\lambda^2(t-s)}}= \heat(t-s,x-y).
\end{align*}
Similarly, for $t_{1:m}$ as in \eqref{eq:torder} and $x_{1:m} \in \bbR^m$, 
\begin{align*}
&\lambda \oG_m \heat_m(\aabullet\viiva   \lambda^2 s, \lambda y \tspb;\tspb  \lambda^2t, \lambda x  )  (t_{1:m},x_{1:m}) =\lambda^{\frac32m+1} \prod_{i=0}^m \heat(\lambda^2(t_{i+1}-t_i),\lambda(x_{i+1} - x_i)) \\
&\qquad= \lambda^{m/2} \prod_{i=0}^m  \heat(t_{i+1}-t_i,x_{i+1}-x_i) = \lambda^{m/2} \heat_m(\aabullet\viiva  s, y \tspb;\tspb t, x )(t_{1:m},x_{1:m}).
\end{align*}
Then
\begin{align*}
&\lambda \She_{\beta/\sqrt{\lambda}}(\lambda^2 t, \lambda x | \lambda^2 s, \lambda y)\circ \sG \\
&\qquad = \lambda\heat(\lambda^2(t-s),\lambda(x-y)) +\sum_{m=1}^\infty \bigg(\frac{\beta}{\sqrt{\lambda}}\bigg)^{\!m}   I_m\big[\heat_m(\aabullet\viiva \lambda^2 s, \lambda y ;  \lambda^2 t, \lambda x) \big] \circ \sG\\
&\qquad = \lambda\heat(\lambda^2(t-s),\lambda(x-y)) +\sum_{m=1}^\infty \bigg(\frac{\beta}{\sqrt{\lambda}}\bigg)^{\!m}  I_m\big[\oG_m\heat_m(\aabullet\viiva  \lambda^2 s, \lambda y ; \lambda^2 t, \lambda x)\big] \\
&\qquad= \heat(t-s,x-y) + \sum_{m=1}^\infty \beta^m I_m\big[\heat_m(\aabullet\viiva s, y ; t, x)\big] = \Sheb(t,x\viiva s,y).
\end{align*}
The second part of \eqref{lem:cov:sca} follows by dividing by $\lambda \heat(\lambda^2(t-s),\lambda(x-y))=\heat(t-s,x-y).$
\end{proof}

We next turn toward the moment estimates which we use in our application of Kolmogorov-Chentsov. 
\begin{lemma}\label{lem:mombd}
For all $p \in\bbR$, all $\beta \in \bbR$, and all $t\geq 0$,
\begin{align*}
\Mob{p}{t}{\beta}&:= \sup_{\substack{0 \leq s \leq t}} \bbE[\rnSheb(s,0\viiva 0,0)^p] < \infty.
\end{align*}
Moreover, calling
\begin{align}
\Mom{p}{t}:= \sup_{\substack{0 \leq s \leq t}} \bbE[\rnShe_{1}(s,0\viiva 0,0)^p], \qquad \text{we have} \qquad \Mob{p}{t}{\beta}= \Mom{p}{t\beta^4}.\label{eq:Mobsca}
\end{align}
\end{lemma}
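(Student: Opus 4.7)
The plan splits into two essentially independent pieces: the scaling reduction $\Mob{p}{t}{\beta} = \Mom{p}{t\beta^4}$, and the finiteness of $\Mom{p}{t}$ for the base case $\beta = 1$.

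For the scaling identity, I would apply Lemma \ref{lem:cov}\eqref{lem:cov:sca} at the base point $(s,y)=(0,0)$ and $(t,x)=(s,0)$. Since the dilation $\did{\lambda^{-2}}{\lambda^{-1}}$ is a measure-preserving automorphism of $(\Omega,\sF,\bbP)$, that lemma yields the distributional identity
\begin{equation*}
\rnShe_{\beta/\sqrt{\lambda}}(\lambda^2 s, 0\viiva 0, 0) \;\deq\; \rnSheb(s,0\viiva 0,0) \qquad\text{for every }\lambda>0.
\end{equation*}
For $\beta>0$, choosing $\lambda=\beta^2$ gives $\beta/\sqrt{\lambda}=1$ and $\lambda^2 s = \beta^4 s$, so $\rnSheb(s,0\viiva 0,0) \deq \rnShe_1(\beta^4 s,0\viiva 0,0)$; taking $p$-th moments and supremum over $s\in[0,t]$ produces $\Mob{p}{t}{\beta}=\Mom{p}{t\beta^4}$. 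For $\beta<0$, Lemma \ref{lem:cov}\eqref{lem:cov:neg} (negation) reduces to $|\beta|$, and since $|\beta|^4 = \beta^4$ the identity extends; $\beta=0$ is trivial because $\rnShe_0 \equiv 1$ by convention.

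It then remains to show that $\Mom{p}{t}<\infty$ for all $p\in\bbR$ and $t\ge0$. For positive even integer moments $p=2n$, I would compute directly from the chaos expansion \eqref{eq:rnchaos} using It\^o's isometry, which (after the standard identification with $n$ independent Brownian bridges and the associated pairwise intersection local times) gives
\begin{equation*}
\bbE[\rnShe_1(s,0\viiva 0,0)^{2n}] \;=\; \oE^{BB,\otimes 2n}_{(0,0),(s,0)}\!\left[\exp\Big(\sum_{1\le i<j\le 2n} L_s^{i,j}\Big)\right] \;\le\; C_n\tspa e^{C_n s},
\end{equation*}
see for example \cite{Ber-Can-95, Alb-Kha-Qua-14-aop}; this is uniformly bounded on compact $s$-intervals. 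Arbitrary positive $p$ follows by Jensen. For negative $p<0$, I would invoke the negative-moment estimates of Moreno-Flores \cite{Mor-14}, which give uniform bounds of the form $\sup_{0\le s\le t} \bbE[\rnShe_1(s,0\viiva 0,0)^p]<\infty$ on compact time intervals; this is the same reference that the paper cites for strict positivity in the remark following Theorem \ref{prop:IC}.

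The main obstacle in this plan is the negative-moment control: positive moments are a routine consequence of the chaos/Feynman-Kac structure, but negative moments require the fairly delicate small-ball lower bounds on $\rnShe_1$. Here I would import the bounds from \cite{Mor-14} wholesale rather than rederive them. A minor technical point is that the Moreno-Flores estimates are typically stated for $s$ bounded away from zero; uniformity down to $s=0$ is handled via the convention $\rnShe_1(0,0\viiva 0,0)=1$ together with continuity of moments as $s\searrow 0$ (which will follow from the Kolmogorov-Chentsov estimates carried out in the next lemmas), so no supremum issue arises on $[0,t]$.
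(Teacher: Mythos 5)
Your proposal is correct and follows essentially the same route as the paper: trivial cases aside, the identity \eqref{eq:Mobsca} comes from the scaling and negation invariances of Lemma \ref{lem:cov} (with the same choice $\lambda=\beta^2$), positive moments are imported from standard moment bounds in the literature (the paper cites \cite[Example 2.10]{Che-Dal-15} rather than the replica formula of \cite{Ber-Can-95,Alb-Kha-Qua-14-aop}, but this is the same estimate), and negative moments are imported from \cite[Theorem 1]{Mor-14}, exactly as in the paper. One small caution: your parenthetical appeal to the ``Kolmogorov-Chentsov estimates carried out in the next lemmas'' to handle $s\searrow 0$ would be circular, since those lemmas use $\Mob{p}{t}{\beta}$ as input; fortunately it is unnecessary, because the cited negative-moment bounds are uniform on compact time intervals and $\rnShe_{1}(0,0\viiva 0,0)=1$ by convention.
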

\begin{proof}
For $\beta =0$, $t=0$, or $p=0$, there is nothing to prove in either claim because $\rnSheb(t,0\viiva 0,0)^p=1$ in any of these cases. If $\beta,p \neq0$ and $t>0$, \eqref{eq:Mobsca} follows from Lemma \ref{lem:cov}\eqref{lem:cov:sca} and \eqref{lem:cov:neg}. 

Using stochastic analytic methods, it can be shown that for each $p>0$, there exists $C=C(p)>0$ so that 
\begin{align}
\Mob{p}{t}{\beta}\leq e^{t \beta^4 C} \label{eq:momexp}
\end{align}
for all $t>0$. See, for example, \cite[Example 2.10]{Che-Dal-15}.  

For $\beta \neq 0$, $t>0$, and $p<0$, finiteness follows from \cite[Theorem 1]{Mor-14}. See also \cite[Theorem 1.7]{Das-Gho-21-} for a more refined version of the same idea. 
\end{proof}
\begin{remark}
Using inputs from integrable probability, Das and Tsai showed in \cite[Theorem 1.2]{Das-Tsa-21} the sharp result that for all $p>0$, $\lim_{t\to\infty} t^{-1} \log \Mom{p}{t}  = \frac{p^3-p}{12}$.
\end{remark}

With the previous notation in mind, the first main goals in this section are the moment estimates on the increments of $\rnSheb(t,x\viiva s,y)$ and $\Sheb(t,x\viiva s,y)$.  We start with the spatial increments.
\begin{lemma}\label{lem:xymoment}
For $p > 2$, there exists $C=C(p)$ so that for all $t\geq0$, $x,y \in \bbR$, and $\beta \in \bbR$, 
\begin{align*}
\bbE[\,|\rnSheb(t,y\viiva 0,0) - \rnSheb(t,x\viiva 0,0)|^p] &\leq C \Mob{p}{t}{\beta} \tspa |\beta|^p \tspa |x-y|^{p/2} 
\end{align*}
\end{lemma}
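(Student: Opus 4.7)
I plan to proceed via the mild formulation \eqref{eq:Greens}. Dividing the mild equations for $\She_\beta(t,y\viiva 0,0)$ and $\She_\beta(t,x\viiva 0,0)$ by $\heat(t,y)$ and $\heat(t,x)$ respectively and subtracting will yield
\[
\rnSheb(t,y\viiva 0,0)-\rnSheb(t,x\viiva 0,0) = \beta\!\int_0^t\!\!\int_\R \biggl[\frac{\heat(t-u,y-z)}{\heat(t,y)}-\frac{\heat(t-u,x-z)}{\heat(t,x)}\biggr]\She_\beta(u,z\viiva 0,0)\,W(du\,dz).
\]
For $p>2$ I will apply the Burkholder--Davis--Gundy inequality for Walsh integrals followed by Minkowski's integral inequality in $L^{p/2}$ to move the $L^p$-norm inside the integral. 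Using $\She_\beta(u,z\viiva 0,0)=\heat(u,z)\rnSheb(u,z\viiva 0,0)$ together with the distributional identity $\rnSheb(u,z\viiva 0,0)\deq \rnSheb(u,0\viiva 0,0)$ (which is immediate from the shear invariance in Lemma \ref{lem:cov}(iii) applied with $r=s=0$, $y=0$, $\nu=-z/u$), I obtain $\|\She_\beta(u,z\viiva 0,0)\|_{L^p}^{2}\leq\heat(u,z)^{2}\Mob{p}{t}{\beta}^{2/p}$.

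The problem thereby reduces to a purely deterministic estimate
\[
J(x,y,t) := \int_0^t\!\!\int_\R \biggl[\frac{\heat(t-u,y-z)}{\heat(t,y)}-\frac{\heat(t-u,x-z)}{\heat(t,x)}\biggr]^{2}\heat(u,z)^{2}\,du\,dz \leq C|x-y|.
\]
The key algebraic observation is that $\heat(t-u,y-z)\heat(u,z)/\heat(t,y)$ is exactly the density at $(u,z)$ of the Brownian bridge from $(0,0)$ to $(t,y)$, which is a Gaussian in $z$ with mean $uy/t$ and variance $u(t-u)/t$. Substituting this and applying Plancherel's theorem to the $z$-integral reduces the inner integral to the explicit expression $\sqrt{t}\,[1-\exp(-(x-y)^{2}u/(4t(t-u)))]/\sqrt{\pi u(t-u)}$.

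The main obstacle I anticipate is bounding the remaining one-dimensional $u$-integral by a constant multiple of $|x-y|$: the naive estimates $1-e^{-w}\leq w$ and $1-e^{-w}\leq\sqrt{w}$ both produce integrands that diverge near $u=t$. My plan is to resolve this with the scale-invariant substitution $s=u/(t-u)$, which simultaneously rescales $u(t-u)$ to $st^{2}/(1+s)^{2}$ and the exponent argument to $(x-y)^{2}s/(4t)$. A further substitution $r=(x-y)^{2}s/(4t)$ will then extract the factor $|x-y|/2$ explicitly and leave a remaining integral uniformly dominated by $\int_0^\infty(1-e^{-r})\,r^{-3/2}\,dr<\infty$. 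Raising the resulting $L^{p}$-norm-squared bound to the $p/2$-th power will then produce the stated estimate with a constant $C(p)$ depending only on $p$.
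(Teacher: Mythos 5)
Your proposal is correct, and the overall strategy coincides with the paper's: normalized mild equation, Burkholder--Davis--Gundy, reduction of the interior moment to $\Mob{p}{t}{\beta}$ via stationarity in the spatial variable, and then a purely deterministic Gaussian estimate showing the double integral is at most $|x-y|$. Your use of Minkowski's integral inequality in $L^{p/2}$ is interchangeable with the paper's H\"older splitting with exponents $p/2$ and $p/(p-2)$; both yield $C|\beta|^p\Mob{p}{t}{\beta}\bigl(\int K^2\bigr)^{p/2}$. Where you genuinely diverge is the deterministic endgame. The paper expands the square, evaluates each diagonal term exactly as $\sqrt{\pi t}/2$ (Lemma \ref{lem:int1}), and lower-bounds the cross term by $\sqrt{\pi t}/2 - |x-y|/2$ via a derivative-in-$\alpha$ argument on $I(\alpha)=\frac{t}{2\sqrt{\pi}}\int_0^\infty \frac{e^{-\alpha s}}{\sqrt{s}(1+st)}\,ds$ (Lemma \ref{lem:xybd}). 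You instead evaluate the $z$-integral of the \emph{squared difference} in closed form as $\frac{\sqrt{t}}{\sqrt{\pi u(t-u)}}\bigl(1-e^{-u(x-y)^2/(4t(t-u))}\bigr)$ — which is exactly the combination of the paper's three terms — and then extract $|x-y|$ from the $u$-integral via $1/(1+s)\le 1/s$ and $\int_0^\infty(1-e^{-r})r^{-3/2}\,dr=2\sqrt{\pi}$. Your route is slightly more direct (one closed-form identity instead of three separate appendix computations plus a monotonicity argument), at the cost of being special to this particular difference; the paper's decomposition reuses Lemmas \ref{lem:int1}--\ref{lem:int2} in the time-increment estimates as well, which is presumably why it is organized that way.
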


\begin{proof}
The result is trivial when either $t=0$ or $\beta = 0$, since both terms in the absolute value are then equal to $1$, so we assume that $t>0$. By Lemma \ref{lem:fixGreen}, $\rnSheb(\aabullet,\aabullet\viiva 0,0)$ admits a modification solving the mild equation obtained from \eqref{eq:Greens} by dividing by the heat kernel: 
\begin{align*}
\rnSheb(t,x\viiva 0,0) &= 1 + \beta \int_0^t \int_{\bbR} \frac{\heat(t-r,x-z)\heat(r,z)}{\heat(t,x)}\rnSheb(r,z\viiva 0,0)W(dz\,dr).
\end{align*}
Using the Burkholder-Davis-Gundy inequality \cite[Theorem 4.2.12]{Bic-02} and then H\"older's inequality with conjugate exponents $p/2$ and $p/(p-2)$, there exists $C=C(p)$ so that 
\begin{align*}
&\bbE[|\rnSheb(t,y\viiva 0,0) - \rnSheb(t,x\viiva 0,0)|^p]   \\
&\leq C |\beta|^p \mathbb{E}\bigg[\bigg(\int_0^t \int_{\bbR}  \bigg( \frac{\heat(t-r,y-z )\heat(r,z)}{\heat(t,y)}-  \frac{\heat(t-r,x-z) \heat(r,z)}{\heat(t,x)} \bigg)^2\rnSheb(r,z\viiva 0,0)^2 dz dr\bigg)^{p/2}   \bigg] \\
&= C |\beta|^p \mathbb{E}\bigg[ \bigg(\int_0^t \int_{\bbR} \bigg( \frac{\heat(t-r,y-z)\heat(r,z)}{\heat(t,y)}-  \frac{\heat(t-r,x-z) \heat(r,z)}{\heat(t,x)}\bigg)^{2-4/p} \\ 
&\qquad\qquad \times\bigg( \frac{\heat(t-r,y-z )\heat(r,z)}{\heat(t,y)}-  \frac{\heat(t-r,x-z) \heat(r,z)}{\heat(t,x)}\bigg)^{4/p} \rnSheb(r,z\viiva 0,0)^2dz dr \bigg)^{p/2}   \bigg] \\
&\leq C |\beta|^p  \bigg[\int_0^t \int_{\bbR} \bigg( \frac{\heat(t-r,y-z )\heat(r,z)}{\heat(t,y)}-  \frac{\heat(t-r,x-z) \heat(r,z)}{\heat(t,x)}\bigg)^{2}dz dr \bigg]^{\frac{p-2}{2}}  \\
&\qquad \qquad \times\bigg[\int_0^t \int_{\bbR} \bigg( \frac{\heat(t-r,y-z)\heat(r,z)}{\heat(t,y)}-  \frac{\heat(t-r,x-z) \heat(r,z)}{\heat(t,x)}\bigg)^{2} \bbE[\rnSheb(r,u\viiva 0,0)^p] dz dr\bigg] \\
&\leq C |\beta|^p \Mob{p}{t}{\beta} \bigg[ \int_0^t  \int_{\bbR} \bigg( \frac{\heat(t-r,y-z)\heat(r,z)}{\heat(t,y)}-  \frac{\heat(t-r,x-z) \heat(r,z)}{\heat(t,x)}\bigg)^{2}dz dr \bigg]^{p/2}.
\end{align*}
To finish, 
\begin{align*}
&\int_0^t   \int_{\bbR} \left( \frac{\heat(t-r,y-z)\heat(r,z)}{\heat(t,y)} -  \frac{\heat(t-r,x-z) \heat(r,z)}{\heat(t,x)}\right)^{2} dz dr\\
&= \int_0^t  \int_{\bbR} \bigg[ \frac{\heat(t-r,y-z)^2\heat(r,z)^2}{\heat(t,y)^2} + \frac{\heat(t-r,x-z)^2 \heat(r,z)^2}{\heat(t,x)^2} \\
&\qquad\qquad\qquad\qquad - 2  \frac{\heat(t-r,y-z)\heat(r,z)}{\heat(t,y)} \frac{\heat(t-r,x-z) \heat(r,z)}{\heat(t,x)}\bigg]dz dr \leq |x-y|,
\end{align*}
where the last bound follows from Lemmas \ref{lem:int1} and  \ref{lem:xybd}.
\end{proof}
We now estimate the increments of the process in the inverse temperature $\beta$:
\begin{lemma}\label{lem:betamoment}
For $t \geq 0$, $x\in \bbR$, $\beta_1,\beta_2\in\bbR$, and $p>2$, 
\begin{align*}
\bbE[|\rnShe_{\beta_1}(t,x\viiva 0,0) - \rnShe_{\beta_2}(t,x\viiva 0,0)|^p] \leq C t^{p/4}|\beta_1-\beta_2|^p \Mob{p}{t}{\beta_2}e^{ C |\beta_1|^p t^{p/4}}
\end{align*}
\end{lemma}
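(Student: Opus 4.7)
The plan is to follow the strategy of Lemma \ref{lem:xymoment}, but with an additional Gronwall-type iteration to close the recursion between the two evolutions. First I would write the mild equations (from Lemma \ref{lem:fixGreen}) for $\rnShe_{\beta_1}$ and $\rnShe_{\beta_2}$, subtract, and use the identity $\beta_1 a - \beta_2 b = (\beta_1 - \beta_2) b + \beta_1(a-b)$ to obtain
\[
u(t,x) := \rnShe_{\beta_1}(t,x\viiva 0,0) - \rnShe_{\beta_2}(t,x\viiva 0,0) = (\beta_1-\beta_2)\, N(t,x) + \beta_1\, M(t,x),
\]
where $N(t,x)$ is the stochastic integral of $K(t,x,r,z)\,\rnShe_{\beta_2}(r,z\viiva 0,0)$ against $W$, $M(t,x)$ is the stochastic integral of $K(t,x,r,z)\, u(r,z)$ against $W$, and $K(t,x,r,z) = \heat(t-r,x-z)\heat(r,z)/\heat(t,x)$ is the same bridge-normalized kernel as in the proof of Lemma \ref{lem:xymoment}.

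Next I would apply the Burkholder--Davis--Gundy inequality to each stochastic integral, followed by the same H\"older split used in Lemma \ref{lem:xymoment} (with conjugate exponents $p/(p-2)$ and $p/2$). Combined with the bound $\bbE[\rnShe_{\beta_2}(r,z\viiva 0,0)^p] \leq \Mob{p}{t}{\beta_2}$ coming from Lemma \ref{lem:mombd} and the estimate $I(t) := \int_0^t\!\!\int_{\bbR} K^2\,dz\,dr \leq C\sqrt{t}$ (Lemma \ref{lem:int1}), this yields the Volterra-type inequality
\[
D(t) \leq C|\beta_1-\beta_2|^p\, \Mob{p}{t}{\beta_2}\, t^{p/4} + C|\beta_1|^p\, t^{(p-2)/4} \int_0^t D(r)\, g(r,t)\, dr,
\]
where $D(t) := \bbE[u(t,x)^p]$ and $g(r,t) := \int_{\bbR} K(t,0,r,z)^2\, dz = (2\sqrt{\pi r(t-r)/t}\,)^{-1}$. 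Crucially, $D(t)$ is independent of $x$: this follows from the joint spatial stationarity of the pair $(\rnShe_{\beta_1},\rnShe_{\beta_2})$, since the shear automorphism $\sheard{s}{-\nu}$ in Corollary \ref{cor:stat} is $\beta$-independent and so can be applied simultaneously to both processes.

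The main obstacle is converting this singular-kernel Volterra inequality into the claimed exponential estimate. I would iterate the inequality; after the substitution $r = tu$ the convolution operator becomes scale-homogeneous, and after $n$ iterations one encounters products of Beta function values $\prod_k B(\alpha_k+1/2,1/2)$ whose arguments $\alpha_k$ grow linearly in $k$. Using the Stirling asymptotic $B(a,1/2) \sim \sqrt{\pi/a}$, these products decay like $(n!)^{-1/2}$ modulo exponential-in-$n$ constants, and summing the resulting series produces the claimed exponential factor. Well-posedness of the iteration at each stage is guaranteed by the a priori bound $D(t) \leq 2^{p-1}(\Mob{p}{t}{\beta_1} + \Mob{p}{t}{\beta_2}) < \infty$ supplied by Lemma \ref{lem:mombd}, so the Gronwall closure is justified. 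The careful bookkeeping of the Beta product asymptotics and of the constants appearing in the summation is the most technical step.
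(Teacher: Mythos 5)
Your setup reproduces the paper's argument essentially verbatim: the same splitting $\beta_1 a-\beta_2 b=(\beta_1-\beta_2)b+\beta_1(a-b)$ of the mild equations, the same Burkholder--Davis--Gundy plus H\"older step with exponents $p/2$ and $p/(p-2)$, the same use of shear invariance to make $D(t)=\bbE[|u(t,x)|^p]$ independent of $x$ (this is exactly how the paper switches $0$ to $x$ inside the expectation), and the same Volterra inequality with kernel $g(r,t)=\tfrac12\sqrt{t/(\pi r(t-r))}$. The a priori finiteness of $D$ from Lemma \ref{lem:mombd} is indeed what justifies discarding the remainder in the iteration. The only divergence from the paper is the last step: the paper closes the Volterra inequality by citing a Gronwall lemma, whereas you propose to iterate explicitly.

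The gap is in your final summation. Carrying out the iteration you describe, the $n$-fold composition of the kernel applied to the constant function is $c_n t^{n/2}$ with $c_n=\prod_{k=1}^{n}B(k/2,1/2)=\pi^{(n+1)/2}/\Gamma((n+1)/2)$, so the $n$-th term of the Neumann series is of order $A\,z^n/\Gamma((n+1)/2)$ with $z=C|\beta_1|^p t^{p/4}$ (each iteration contributes $\Lambda=C|\beta_1|^p t^{p/4-1/2}$ times one factor of $t^{1/2}$ from the kernel). The resulting series $\sum_n z^n/\Gamma((n+1)/2)$ is the Mittag--Leffler function $E_{1/2,1/2}(z)$, which grows like $2z e^{z^2}$ as $z\to\infty$; your Cauchy--Schwarz-with-geometric-weights summation gives the same order. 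Hence the iteration yields the factor $\exp\{C|\beta_1|^{2p}t^{p/2}\}$ --- the \emph{square} of the claimed exponent --- not $\exp\{C|\beta_1|^p t^{p/4}\}$ as you assert. This is the generic outcome for a Gronwall closure with a square-root-singular kernel, and note that the naive non-singular Gronwall conclusion $A\exp\{\Lambda\int_0^t g(r,t)\,dr\}=A\exp\{C|\beta_1|^pt^{p/4}\}$ is not available from your argument because the kernel depends on the terminal time $t$ and blows up as $r\uparrow t$, so it admits no integrable $t$-uniform majorant. To recover the literal statement you would need the specific singular Gronwall lemma the paper invokes; otherwise you should either prove such a lemma or settle for the weaker exponent. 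I would add that the weaker factor $\exp\{C|\beta_1|^{2p}t^{p/2}\}$ is still locally bounded in $(t,\beta_1)$ and would suffice for every downstream use of this lemma (the Kolmogorov--Chentsov estimates in Proposition \ref{prop:nogapbd} only use the exponential factor as a constant depending on $T$ and $B$), so the defect is in matching the stated constant, not in the viability of the overall strategy.
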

\begin{proof}
Notice that the shear invariance in Lemma \ref{lem:cov}\eqref{lem:cov:shear} implies that for all $z \in \bbR$ and $r>0$,
\begin{align*}
\bbE\big[\big|\rnShe_{\beta_1}(r,z\viiva 0,0) - \rnShe_{\beta_2}(r,z\viiva 0,0)\big|^p\big] = \bbE\big[\big|\rnShe_{\beta_1}(r,0\viiva 0,0) - \rnShe_{\beta_2}(r,0\viiva 0,0)\big|^p\big]
\end{align*}
and similarly, that
\begin{align*}
\bbE[\rnSheb(t,x\viiva 0,0)^p]=\bbE[\rnSheb(t,0\viiva 0,0)^p].
\end{align*}
Abbreviate $\rnShe_{\beta}(r,z) =\rnShe_{\beta}(r,z\viiva 0,0)$. Appealing to the Burkholder-Davis-Gundy and H\"older inequalities (again with conjugate exponents $p/2$ and $p/(p-2)$) in the same way as in the proof of Lemma \ref{lem:xymoment}, there exist $C,C'$ depending only on $p$ so that
\begin{align*}
&\bbE[\tspb|\!\rnShe_{\beta_1}(t,x) - \rnShe_{\beta_2}(t,x)|^p]\\
&\leq|\beta_1-\beta_2|^p\bbE\bigg[\bigg| \int_0^t \int_{\bbR}\frac{\heat(t-r,x-z)\heat(r,z)}{\heat(t,x)} \rnShe_{\beta_2}(r,z)W(dz\,dr) \bigg|^p \bigg] \\
&\qquad+|\beta_1|^p\bbE\bigg[\bigg| \int_0^t \int_{\bbR}\frac{\heat(t-r,x-z)\heat(r,z)}{\heat(t,x)} (\rnShe_{\beta_2}(r,z) -\rnShe_{\beta_1}(r,z))W(dz\,dr) \bigg|^p \bigg] \\
&\leq C |\beta_1-\beta_2|^p \bigg(\int_0^t\int_{\bbR} \frac{\heat(t-r,x-z)^2\heat(r,z)^2}{\heat(t,x)^2}dzdr\bigg)^{\frac{p}{2}-1}  \\
&\qquad\qquad\times\int_0^t\int_{\bbR} \frac{\heat(t-r,x-z)^2\heat(r,z)^2}{\heat(t,x)^2}dz\bbE[\rnShe_{\beta_2}(r,0)^p]dr\\
&\qquad+ C |\beta_1|^p\bigg(\int_0^t\int_{\bbR} \frac{\heat(t-r,x-z)^2\heat(r,z)^2}{\heat(t,x)^2}dzdr\bigg)^{\frac{p}{2}-1}\\
&\qquad\qquad\qquad\times\int_0^t\int_{\bbR} \frac{\heat(t-r,x-z)^2\heat(r,z)^2}{\heat(t,x)^2}dz\bbE[|\rnShe_{\beta_2}(r,0) -\rnShe_{\beta_1}(r,0)|^p]dr\\
&\leq C' |\beta_1-\beta_2|^p t^{\frac{p}{4}}\Mob{p}{t}{\beta_2} + C'|\beta_1|^p t^{\frac{p}{4}-\frac{1}{2}} \int_0^t \sqrt{\frac{t}{(t-r)r}}\bbE[|\rnShe_{\beta_2}(r,x) -\rnShe_{\beta_1}(r,x)|^p]dr.
\end{align*}
In the third inequality, we appealed to the computations in Lemmas \ref{lem:int2} and \ref{lem:int1}. In the last step, we used shear invariance again to switch $0$ to $x$ in the expectation. It follows from Gronwall's inequality \cite[Lemma A.2.35]{Bic-02} and the computation in Lemma \ref{lem:int1} that there is $C'' =C''(p)>0$ so that
\begin{align*}
\bbE[|\rnShe_{\beta_1}(t,x\viiva 0,0) - \rnShe_{\beta_2}(t,x\viiva 0,0)|^p] &\leq C'' t^{p/4}|\beta_1-\beta_2|^p \Mob{p}{t}{\beta_2}e^{ C''|\beta_1|^p t^{p/4}}. \qedhere
\end{align*}
\end{proof}

We include two estimates for time differences.  The first one will result in a non-sharp H\"older exponent for all nonnegative times,  while the second one results in a sharp H\"older exponent at times  bounded away from zero.   The reason our bounds are not sharp at the boundary $t=0$ is that we use a crude bound in Lemma \ref{lem:withgap} to simplify the computation.
\begin{lemma}\label{lem:thmoment}
For $p>2$, there exists $C=C(p)>0$ so that for all $\beta \in \bbR$, all $T,K\geq 1$ and all $h \in (0,1)$, if $t,t+h \in [0,T]$ and $x \in [-K,K]$, then
\begin{align*}
\bbE[\tspb|\!\rnSheb(t+h,x\viiva 0,0) - \rnSheb(t,x\viiva 0,0)|^p] &\leq C \Mom{p}{T} |\beta|^p T^{3p/4} K^{p} h^{p/14}.
\end{align*}
Moreover, for each $\delta>0$, if in addition we have $t,t+h \in [\delta,T]$, then
\begin{align*}
\bbE[\tspb|\!\rnSheb(t+h,x\viiva 0,0) - \rnSheb(t,x\viiva 0,0)|^p] \leq C \Mom{p}{T} |\beta|^p \delta^{-3p/2}T^{3p/4} K^{p} h^{p/4}.
\end{align*}
\end{lemma}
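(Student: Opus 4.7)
The plan is to mimic the structure of the proofs of Lemmas \ref{lem:xymoment} and \ref{lem:betamoment}: write the time increment via the mild formulation, apply the Burkholder--Davis--Gundy and Hölder inequalities to replace the stochastic integral by a deterministic $L^2$ kernel integral times a moment of $\rnSheb$, and then estimate the resulting deterministic integrals. Using Lemma \ref{lem:fixGreen}, we have
\[
\rnSheb(t,x\viiva 0,0) \,=\, 1 + \beta \int_0^{t}\!\!\int_{\bbR} \frac{\heat(t-r,x-z)\heat(r,z)}{\heat(t,x)}\,\rnSheb(r,z\viiva 0,0)\,W(dz\,dr),
\]
so for $t<t+h$ the increment splits naturally as $A+B$, where
\begin{align*}
A &= \beta \int_t^{t+h}\!\!\int_{\bbR}\frac{\heat(t+h-r,x-z)\heat(r,z)}{\heat(t+h,x)}\,\rnSheb(r,z\viiva 0,0)\,W(dz\,dr),\\
B &= \beta \int_0^{t}\!\!\int_{\bbR}\left[\frac{\heat(t+h-r,x-z)\heat(r,z)}{\heat(t+h,x)}-\frac{\heat(t-r,x-z)\heat(r,z)}{\heat(t,x)}\right]\rnSheb(r,z\viiva 0,0)\,W(dz\,dr).
\end{align*}
The piece $A$ involves integration over a new time window of length $h$, while $B$ measures the change in the space-time kernel between times $t$ and $t+h$.

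After Burkholder--Davis--Gundy, Hölder with conjugate exponents $p/2$ and $p/(p-2)$, and the stationarity of $z\mapsto \rnSheb(r,z\viiva 0,0)$ in law (Corollary \ref{cor:stat}), the bound reduces to controlling kernel $L^2$ norms. For $A$, the relevant deterministic integral is
\[
\int_t^{t+h}\!\!\int_{\bbR}\left(\frac{\heat(t+h-r,x-z)\heat(r,z)}{\heat(t+h,x)}\right)^{\!2}dz\,dr,
\]
which by a standard computation (completing the square as in Lemma \ref{lem:int1}, recognizing the quotient as a Gaussian in $z$ centered at $rx/(t+h)$) is of order $\sqrt{h/(t+h)}$; combined with $t+h\le T$ this yields an $h^{p/4}$ contribution with constant proportional to $\Mob{p}{T}{\beta}$. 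For $B$, we need an $L^2$ bound on the difference of the two centered Gaussian kernels at times $t$ and $t+h$. Here the center shifts from $rx/t$ to $rx/(t+h)$ and the width changes, giving a bound with two regimes: when $t$ is bounded below by $\delta$ the standard Gaussian-difference computation gives an $O(K^2 h)$ bound in $L^2$ (after accounting for $x\in[-K,K]$), leading to $\delta^{-3p/2}T^{3p/4}K^p h^{p/4}$. When $t$ may approach $0$ the center-shift blowup forces one to apply the crude bound of the (referenced but unproven here) Lemma \ref{lem:withgap}, which costs an $h^{2/7}$ loss; I will simply quote that lemma. In both cases we then apply Hölder and pull out $\sup_{r\in[0,T]}\bbE[\rnSheb(r,z\viiva 0,0)^p] = \Mob{p}{T}{\beta}$, and invoke the scaling identity $\Mob{p}{T}{\beta}=\Mom{p}{T\beta^4}$ from Lemma \ref{lem:mombd} together with a reorganization using $|\beta|^p T^{3p/4}$ to obtain the stated form.

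Unlike in the proof of Lemma \ref{lem:betamoment}, no Grönwall step is required because $\rnSheb$ itself (rather than a difference) appears under the stochastic integral. The main technical obstacle is bounding the deterministic $L^2$ norm of the difference of the two Gaussian-in-$z$ kernels appearing in $B$, particularly near the boundary $t=0$: there, the center $rx/t$ becomes singular, so the sharp $h^{1/2}$ $L^2$ bound used for $t\ge\delta$ breaks down. This is exactly the reason one has the non-optimal Hölder exponent $p/14$ in the first estimate (coming from a crude interpolation between pointwise and $L^2$ bounds on the kernel difference) versus the sharp $p/4$ in the second estimate. A $K^p$ factor naturally appears through $x\in[-K,K]$ in the center of the Gaussian, and a $T^{3p/4}$ factor from bounding $\sqrt{t}$-type prefactors uniformly over $[0,T]$; these estimates are routine modulo the kernel-difference lemma and are deferred to Appendix \ref{app:comp} in the same spirit as the earlier proofs in this section.
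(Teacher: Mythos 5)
Your proposal is correct and follows essentially the same route as the paper: the same $A+B$ decomposition of the mild-form increment into the new time window $[t,t+h]$ and the kernel difference on $[0,t]$, the same Burkholder--Davis--Gundy plus H\"older reduction to deterministic kernel $L^2$ integrals, and the same deferral of those integrals to the appendix (the paper cites Lemma \ref{lem:int4bd} for the $[t,t+h]$ piece and Proposition \ref{prop:nogap}, which combines Lemmas \ref{lem:near0} and \ref{lem:withgap} via the cutoff $t\lessgtr h^{2/7}$, for the kernel difference — this cutoff is the precise source of the $h^{p/14}$ exponent you describe). Your observation that no Gr\"onwall step is needed is also exactly right.
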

\begin{proof}
Again for $r>0$ and $z \in \bbR$, abbreviate $\rnSheb(r,z) =\rnSheb(r,z\viiva 0,0)$. Appealing again to the Burkholder-Davis-Gundy and H\"older inequalities (again, with conjugate exponents $p/2$ and $p/(p-2)$), there exist $C,C',C''>0$ depending only on $p$ so that the following hold, with the convention that integrals below on $[0,t]$ are defined to be zero if $t=0$:
\begin{align*}
&\bbE[|\rnSheb(t+h,x) - \rnSheb(t,x)|^p] \leq  \\
&C|\beta|^p \mathbb{E}\bigg[ \bigg|\int_0^t  \int_{\bbR} \bigg( \frac{\heat(t-r,x-z)\heat(r,z)}{\heat(t,x)}-  \frac{\heat(t+h-r,x-z) \heat(r,z)}{\heat(t+h,x)}\bigg)\rnSheb(r,z) W(dz\,dr)\bigg|^{p}   \bigg] \\
&\qquad+ C'|\beta|^p \mathbb{E}\left[ \bigg|\int_t^{t+h} \int_{\bbR}  \frac{\heat(t+h-r,x-z) \heat(r,z)}{\heat(t+h,x)} \rnSheb(r,z)W(dz\, dr)\bigg|^{p}   \right] \\
&\leq C'|\beta|^p\mathbb{E}\bigg[ \bigg|\int_0^t  \int_{\bbR} \bigg( \frac{\heat(t-r,x-z)\heat(r,z)}{\heat(t,x)}-  \frac{\heat(t+h-r,x-z) \heat(r,z)}{\heat(t+h,x)}\bigg)^2\rnSheb(r,z)^2 dz\,dr\bigg|^{p/2}\bigg]\\
&\qquad+ C'|\beta|^p \mathbb{E}\left[ \bigg|\int_t^{t+h} \int_{\bbR}  \frac{\heat(t+h-r,x-z)^2 \heat(r,z)^2}{\heat(t+h,x)^2} \rnSheb(r,z)^2 dz\, dr\bigg|^{p/2}   \right] \\ 
&\leq C'|\beta|^p\Mob{p}{T}{\beta} \bigg[ \int_0^t \int_{\bbR} \left( \frac{\heat(t+h-r,x-z)\heat(r,z)}{\heat(t+h,x)}-  \frac{\heat(t-r,x-z) \heat(r,z)}{\heat(t,x)}\right)^{2}dz dr \bigg]^{p/2}\\
&\qquad+C'|\beta|^p \Mob{p}{T}{\beta} \bigg[ \int_t^{t+h} \int_{\bbR} \frac{\heat(t+h-r,x-z)^2 \heat(r,z)^2}{\heat(t+h,x)^2} dz dr \bigg]^{p/2} \\
&\leq C''|\beta|^p \Mob{p}{T}{\beta} T^{\frac{3p}{4}}K^{p}h^{\frac{p}{14}}.
\end{align*}
The last bound comes from Proposition \ref{prop:nogap} and Lemma \ref{lem:int4bd} in general. If instead we also require that $t,t+h\in[\delta,T]$, then the last bound becomes
\begin{align*}
C''|\beta|^p \Mob{p}{T}{\beta} \delta^{-\frac{3 p}{2}} T^{\frac{3p}{4}} K^{p} h^{\frac{p}{4}},
\end{align*}
again by Proposition \ref{prop:nogap} and Lemma \ref{lem:int4bd}.
\end{proof}
The previous estimates combine into the following bounds. Because of the different growth rates of our bounds and the different H\"older exponents that they imply, we estimate several H\"older semi-norms below. We restrict attention in the following results to the estimates which are required for the results of this paper and the companion \cite{Jan-Ras-Sep-22-1F1S-}.
\begin{proposition} \label{prop:nogapbd}
For $p>14$, there exists $C=C(p)$ so that for $(s_i,y_i,t_i,x_i) \in \varset$, $i \in \{1,2\}$, if we call $T = \max\{|t_1-t_2|,|t_1-s_1|,|t_2-s_2|\}\vee 1$ and $K = \max\{|x_i-x_j|,|y_i-y_j|,|x_i-y_j|: i,j \in \{1,2\}\}\vee 1$ and take $B>0$, then
\begin{enumerate}[label={\rm(\roman*)}, ref={\rm\roman*}]   \itemsep=3pt 
\item For $\beta \in [-B,B]$, \label{prop:nogapbd:main}  
\begin{align*}
&\bbE\left[\big|\rnSheb(t_1,y_1|s_1,x_1) - \rnSheb(t_2,y_2|s_2,x_2)\big|^p\right]\\
&\leq  C \Mob{p}{T}{B} B^p T^{2p} K^{p} \left(|y_1-y_2|^{p/2} +|x_1-x_2|^{p/2} +|t_1-t_2|^{p/14} + |s_1-s_2|^{p/14} \right)
\end{align*}
and for $\beta_1,\beta_2 \in [-B,B]$,
\begin{align*}
&\bbE\left[\big|\rnShe_{\beta_1}(t_1,y_1|s_1,x_1) - \rnShe_{\beta_2}(t_2,y_2|s_2,x_2)\big|^p\right]\\
&\leq  C e^{CB^p T^{p/4}}  \Mob{p}{T}{B} K^{p} \Big(|y_1-y_2|^{p/2} +|x_1-x_2|^{p/2} +|t_1-t_2|^{p/14} + |s_1-s_2|^{p/14} + |\beta_1-\beta_2|^p \Big).
\end{align*}
\item \label{prop:nogapbd:inverse} With the same notation as in part \eqref{prop:nogapbd:main}, for any $\theta_1,\theta_2 \in (0,1)$ which satisfy $\theta_1 + 2 \theta_2 = 1$, there is  $C'=C'(p,\theta_1,\theta_2)>0$ so that for all $\beta_1,\beta_2\in[-B,B]$,
\begin{align*}
&\bbE\left[\big|\rnShe_{\beta_1}(t_1,y_1|s_1,x_1)^{-1} - \rnShe_{\beta_2}(t_2,y_2|s_2,x_2)^{-1}\big|^p\right]\\
&\leq C' \exp\!\big\{C' B^{\frac{p}{\theta_1}} T^{\frac{p}{4\theta_1}} \big\}  \Mob{-2p/\theta_2}{T}{B}^{2\theta_2 }\Mob{p/\theta_1}{T}{B}^{\theta_1} K^{p} \\
&\qquad\times\bigg(|y_1-y_2|^{p/2} +|x_1-x_2|^{p/2} +|t_1-t_2|^{p/14} + |s_1-s_2|^{p/14} + |\beta_2-\beta_1|^p \bigg).
\end{align*}
\item \label{prop:nogapbd:delta} If, in addition, for some $\delta\in (0,1)$, we have $t_i - s_i>\delta$ for $i \in \{1,2\}$, we also have
\begin{align*}
&\bbE\left[\big|\rnShe_{\beta_1}(t_1,y_1|s_1,x_1) - \rnShe_{\beta_2}(t_2,y_2|s_2,x_2)\big|^p\right] \leq  C e^{CB^p T^{p/4}}  \Mob{p}{T}{B} \delta^{-3p/2} K^{p} \\
&\qquad\times\bigg(|y_1-y_2|^{p/2} +|x_1-x_2|^{p/2} +|t_1-t_2|^{p/4} + |s_1-s_2|^{p/4} + |\beta_1-\beta_2|^p \bigg)
\end{align*}
and
\begin{align*}
&\bbE\left[\big|\rnShe_{\beta_1}(t_1,y_1|s_1,x_1)^{-1} - \rnShe_{\beta_2}(t_2,y_2|s_2,x_2)^{-1}\big|^p\right]\\
&\quad\leq C' \exp\!\big\{C' B^{\frac{p}{\theta_1}} T^{\frac{p}{4\theta_1}} \big\}  \Mob{-2p/\theta_2}{T}{B}^{2\theta_2 }\Mob{p/\theta_1}{T}{B}^{\theta_1} \delta^{-3p/2} K^{p} \\
&\quad\qquad\times \Big(|y_1-y_2|^{p/2} +|x_1-x_2|^{p/2} +|t_1-t_2|^{p/4} + |s_1-s_2|^{p/4} + |\beta_2-\beta_1|^p \Big).
\end{align*}
\end{enumerate}
\end{proposition}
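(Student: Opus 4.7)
The plan is to combine the three increment estimates of Lemmas \ref{lem:xymoment}, \ref{lem:betamoment}, and \ref{lem:thmoment} with the shift invariance of Lemma \ref{lem:cov}\eqref{lem:cov:shift} and the scaling relation $\Mob{p}{t}{\beta}=\Mom{p}{t\beta^4}$ from Lemma \ref{lem:mombd}. The preceding lemmas are phrased only for the base point $(0,0)$ and only vary one of the four coordinates $(s,x,t,y)$ (or $\beta$) at a time, so the first task is to reduce the general difference to such a form by telescoping.

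For part \eqref{prop:nogapbd:main}, I introduce two intermediate points and write
\[
\rnShe_{\beta_1}(t_1,y_1|s_1,x_1)-\rnShe_{\beta_2}(t_2,y_2|s_2,x_2)
\]
as a sum of five increments: one that changes $s$, one that changes $x$ (at the starting point), one that changes $t$, one that changes $y$, and one that changes $\beta$. Using Lemma \ref{lem:cov}\eqref{lem:cov:shift} repeatedly translates each of these to a comparison of the form treated in Lemmas \ref{lem:xymoment}, \ref{lem:betamoment}, \ref{lem:thmoment}. Applying those lemmas with the substitutions $t\mapsto T$, $\beta\mapsto B$ and using monotonicity of $\Mob{p}{t}{\beta}$ in $t$ and $|\beta|$, together with the elementary inequality $(a_1+\dots+a_5)^p \le 5^{p-1}(a_1^p + \dots + a_5^p)$ inside the expectation, gives the stated bound. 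The polynomial prefactor $T^{2p}K^p$ arises by estimating the various moment factors against a uniform dominator, while the exponential $e^{CB^pT^{p/4}}$ is exactly the factor that appears in Lemma \ref{lem:betamoment}; the $K^p$ factor conveniently absorbs the case when $|x_i-y_j|$ is moderately large.

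Part \eqref{prop:nogapbd:inverse} requires the identity $A^{-1}-B^{-1}=(B-A)/(AB)$, applied with $A=\rnShe_{\beta_1}(t_1,y_1|s_1,x_1)$ and $B=\rnShe_{\beta_2}(t_2,y_2|s_2,x_2)$. Because both $A$ and $B$ are strictly positive (Lemma \ref{lem:mombd} already yields finite negative moments via \cite{Mor-14}), H\"older's inequality with exponents $1/\theta_1,1/\theta_2,1/\theta_2$ (where $\theta_1+2\theta_2=1$) gives
\[
\bbE\!\left[\left|\tfrac{B-A}{AB}\right|^p\right]
\le \bbE[|B-A|^{p/\theta_1}]^{\theta_1}\,\bbE[A^{-p/\theta_2}B^{-p/\theta_2}]^{\theta_2}.
\]
An application of Cauchy--Schwarz to the joint negative-moment term, followed by stationarity of $\rnShe$ in space from Corollary \ref{cor:stat}, produces the factor $\Mob{-2p/\theta_2}{T}{B}^{2\theta_2}$, while the bound from part \eqref{prop:nogapbd:main} applied at exponent $p/\theta_1$ yields the $\Mob{p/\theta_1}{T}{B}^{\theta_1}$ factor and replaces each power $|x_1-x_2|^{p/2}$ by $|x_1-x_2|^{(p/\theta_1)(1/2)\cdot \theta_1}=|x_1-x_2|^{p/2}$ after taking the $\theta_1$-power, and similarly for the other four increment contributions; the exponential prefactor inflates to $\exp\{C B^{p/\theta_1}T^{p/(4\theta_1)}\}$.

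Part \eqref{prop:nogapbd:delta} is proved in the identical way, except that whenever a time increment arises in the telescoping, one invokes the second (sharper) bound of Lemma \ref{lem:thmoment}, which is valid precisely because the hypothesis $t_i-s_i>\delta$ ensures that all intermediate increment-comparison configurations involve times separated by at least $\delta$; this is where the factor $\delta^{-3p/2}$ enters and the time exponent improves from $p/14$ to $p/4$. The main bookkeeping obstacle is verifying that the telescoping intermediate points really do preserve the $t-s\ge\delta$ condition at each step — one must be careful to choose the intermediate points so that neither shifting $s$ nor shifting $t$ first creates a configuration violating the gap — and that the constants $C,C'$ end up depending only on $p$ and $(\theta_1,\theta_2)$, not on $\delta$, $B$, $T$, or $K$. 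Everything else is a matter of combining the three previous lemmas with the triangle and H\"older inequalities.
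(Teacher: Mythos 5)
Your overall strategy coincides with the paper's: telescope the difference into five single-coordinate increments, use the shift and reflection invariance of Lemma \ref{lem:cov} to reduce each increment to the base-point form $\rnSheb(t,x\viiva 0,0)$, apply Lemmas \ref{lem:xymoment}, \ref{lem:betamoment}, and \ref{lem:thmoment}, and for the inverse combine the identity $A^{-1}-B^{-1}=(B-A)/(AB)$ with H\"older's inequality (your two-factor H\"older followed by Cauchy--Schwarz is an equivalent, slightly lossier, version of the paper's three-factor H\"older with exponents $p/\theta_2,\,p/\theta_2,\,p/\theta_1$). However, one step fails as written: Lemma \ref{lem:thmoment} is stated only for time increments $h\in(0,1)$, whereas in your telescoping the increments $|t_1-t_2|$ and $|s_1-s_2|$ can be as large as $2T$. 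You cannot simply ``substitute $t\mapsto T$''; the lemma gives no bound at all when $h\ge 1$. The paper's fix is to subdivide each time increment into $\ell\le 2T+1$ pieces of length at most $1$, write the increment as a telescoping sum over these pieces, and use $|\sum_{i=1}^\ell a_i|^p\le \ell^{p-1}\sum_{i=1}^\ell|a_i|^p$ before invoking Lemma \ref{lem:thmoment} on each piece. This subdivision, not a ``uniform dominator'' for the moment factors, is the actual source of the polynomial prefactor $T^{2p}$ in the statement.

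A second, smaller point: in part (iii) you correctly flag that the intermediate configurations must preserve the gap $t-s>\delta$, but you leave the issue unresolved. The paper resolves it by assuming without loss of generality that $s_1\le s_2$ and ordering the telescoping so that one first moves $t_1\to t_2$ at fixed initial time $s_1$ (both $t_1-s_1>\delta$ and $t_2-s_1\ge t_2-s_2>\delta$) and only afterwards moves $s_1\to s_2$ at fixed terminal time $t_2$; every intermediate pair then has gap at least $\delta$, so the sharper bound of Lemma \ref{lem:thmoment} applies throughout. With these two repairs your argument matches the paper's proof.
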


\begin{proof} 
Without loss of generality, we assume $s_1 \leq s_2$. Abbreviate again   $\rnShe(r,z) = \rnShe(r,z\viiva 0,0)$ and also $\beta=\beta_2$. 
\be\label{eq:H-1}
\begin{aligned}
&2^{-5p}\bbE\left[\big|\rnShe_{\beta_1}(t_1,y_1|s_1,x_1) -  \rnShe_{\beta_2}(t_2,y_2|s_2,x_2)\big|^p\right]  \\
&\qquad\leq \bbE\left[\big|\rnShe_{\beta_1}(t_1,y_1|s_1,x_1) - \rnShe_{\beta_2}(t_1,y_1|s_1,x_1)\big|^p\right] \\
&\qquad+ \bbE\left[\big|\rnShe_{\beta_2}(t_1,y_1|s_1,x_1) - \rnShe_{\beta_2}(t_2,y_1|s_1,x_1)\big|^p\right] \\
&\qquad+ \bbE\left[\big|\rnShe_{\beta_2}(t_2,y_1|s_1,x_1) - \rnShe_{\beta_2}(t_2,y_2|s_1,x_1)\big|^p\right] \\
&\qquad+ \bbE\left[\big|\rnShe_{\beta_2}(t_2,y_2|s_1,x_1) - \rnShe_{\beta_2}(t_2,y_2|s_1,x_2)\big|^p\right] \\
&\qquad+ \bbE\left[\big|\rnShe_{\beta_2}(t_2,y_2|s_1,x_2) - \rnShe_{\beta_2}(t_2,y_2|s_2,x_2)\big|^p\right]\\
&\qquad\qquad=\bbE\left[\big|\rnShe_{\beta_1}(t_1-s_1,x_1-y_1) - \rnShe_{\beta_2}(t_1-s_1,x_1-y_1)\big|^p\right] \\
&\qquad\qquad+\bbE\left[\big|\rnSheb(t_1-s_1,y_1-x_1) - \rnSheb(t_2-s_1,y_1-x_1)\big|^p\right] \\
&\qquad\qquad+ \bbE\left[\big|\rnSheb(t_2-s_1,y_1-x_1) - \rnSheb(t_2-s_1,y_2-x_1)\big|^p\right] \\
&\qquad\qquad+ \bbE\left[\big|\rnSheb(t_2-s_1,y_2-x_1) - \rnSheb(t_2-s_1,y_2-x_2)\big|^p\right] \\
&\qquad\qquad+ \bbE\left[\big|\rnSheb(t_2-s_1,y_2-x_2) - \rnSheb(t_2-s_2,y_2-x_2)\big|^p\right].
\end{aligned}\ee
The equalities follow from Lemma \ref{lem:cov}, either by shifting $(s_1,x_1)$ to the origin, or by shifting $(t_2,y_2)$ to the origin followed by a reflection:  
\begin{align*}
\rnSheb(t_2,y_2|s_1,x_1) - \rnSheb(t_2,y_2|s_1,x_2)   &\stackrel{\tiny{d}}{=} 
\rnSheb(0,0|s_1-t_2,x_1-y_2) - \rnSheb(0,0|s_1-t_2,x_2-y_2) \\
&\stackrel{\tiny{d}}{=} \rnSheb(t_2-s_1,y_2-x_1) - \rnSheb(t_2-s_1,y_2-x_2).
\end{align*}

By Lemmas \ref{lem:xymoment}, \ref{lem:betamoment}, and \ref{lem:thmoment}, in general, there exists $C=C(p)>0$ so that for any fixed $\beta>0$,
\begin{align*}
&\bbE\left[\big|\rnSheb(t_1,y_1|s_1,x_1) -  \rnSheb(t_2,y_2|s_2,x_2)\big|^p\right]  \\
&\leq C |\beta|^p \Mob{p}{T}{B} T^{p} K^{p}\left(|y_1-y_2|^{p/2} +|x_1-x_2|^{p/2}+|t_1-t_2|^{p/14} + |s_1-s_2|^{p/14} \right),
\end{align*}
and for $\beta_1,\beta_2 \in [-B,B]$,
\begin{align*}
&\bbE\left[\big|\rnShe_{\beta_1}(t_1,y_1|s_1,x_1) - \rnShe_{\beta_2}(t_2,y_2|s_2,x_2)\big|^p\right]\\
&\leq  C e^{CB^p T^{p/4}}  \Mob{p}{T}{B} B^p T^{2p}K^p \\
&\qquad\times\bigg(|y_1-y_2|^{p/2} +|x_1-x_2|^{p/2} +|t_1-t_2|^{p/14} + |s_1-s_2|^{p/14} + |\beta_1-\beta_2|^p \bigg).
\end{align*}
Note that to obtain these bounds using Lemma \ref{lem:thmoment}, we need to consider times separated by at most $1$ in Lemma \ref{lem:thmoment}. To achieve this, consider for example the last expectation in \eqref{eq:H-1}. Take $u_{1:\ell}$ such that $s_1 = u_0 < ... < u_\ell = s_2$  so that  $u_i - u_{i-1} = 1$ for each $i \in [\ell-1]$ and $u_\ell - u_{\ell-1}\le 1$. In particular, $\ell \leq 2T+1$. Write
\be
\begin{aligned}
&\big|\rnSheb(t_2-s_1,y_2-x_2) - \rnSheb(t_2-s_2,y_2-x_2)\big|^p\\
&\qquad=  \big|  \sum_{i=1}^\ell \bigl(  \rnSheb(t_2-u_{i-1},y_2-x_2) - \rnSheb(t_2-u_i, y_2-x_2) \bigr) \big|^p \\
&\qquad\le  \ell^{p-1}   \sum_{i=1}^\ell \big|  \rnSheb(t_2-u_{i-1},y_2-x_2) - \rnSheb(t_2-u_i, y_2-x_2)\big|^p
\end{aligned}
\ee
Now apply Lemma \ref{lem:thmoment} to each term above. The other time increment can be handled similarly.

Similarly, with the gap of $\delta>0$, the bound becomes
\begin{align*}
&\bbE\left[\big|\rnSheb(t_1,y_1|s_1,x_1) -  \rnSheb(t_2,y_2|s_2,x_2)\big|^p\right]  \\
&\qquad\leq C \Mob{p}{T}{B} B^p\delta^{-3p/2}T^{2p}K^p\left(|y_1-y_2|^{p/2} +|x_1-x_2|^{p/2}+|t_1-t_2|^{p/4} + |s_1-s_2|^{p/4} \right).
\end{align*}
By Lemma \ref{lem:mombd}, $\rnSheb(t,y|s,x)$ is almost surely non-zero for each $(s,x,t,y,\beta) \in \varset \times \bbR$, so we may divide by it for fixed space-time-inverse temperature quintuples. We have
\begin{align*}
\big|\rnShe_{\beta_1}(t_1,y_1|s_1,x_1)^{-1} -  \rnShe_{\beta_2}(t_2,y_2|s_2,x_2)^{-1}\big| = \frac{\big|\rnShe_{\beta_1}(t_1,y_1|s_1,x_1) -  \rnShe_{\beta_2}(t_2,y_2|s_2,x_2)\big|}{|\rnShe_{\beta_1}(t_1,y_1|s_1,x_1) \rnShe_{\beta_2}(t_2,y_2|s_2,x_2)|}.
\end{align*}
It then follows from H\"older's inequality that for any $\theta_1,\theta_2 \in (0,1)$ which satisfy $\theta_1 + 2 \theta_2 = 1$, letting $q= p/\theta_1$ and $r=p/\theta_2$, 
\begin{align*}
&\bbE\left[\big|\rnShe_{\beta_1}(t_1,y_1|s_1,x_1)^{-1} -  \rnShe_{\beta_2}(t_2,y_2|s_2,x_2)^{-1}\big|^p\right]^{1/p} \\
&\leq \bbE[|\rnShe_{\beta_1}(t_1,y_1|s_1,x_1)|^{-r}]^{1/r} \bbE[|\rnShe_{\beta_2}(t_2,y_2|s_2,x_2)|^{-r}]^{1/r} \\
&\qquad\times\bbE\left[\big|\rnShe_{\beta_1}(t_1,y_1|s_1,x_1) -  \rnShe_{\beta_2}(t_2,y_2|s_2,x_2)\big|^q\right]^{1/q} \\
&\leq C\Mob{-2r}{T}{B}^{2/r }\Mob{q}{T}{B}^{1/q} B e^{\frac{C}{q} B^q T^{q/4}}T^{2+ \frac{1}{4\theta_1}} K \\
&\qquad\times\bigg((|y_1-y_2|^{q/2} +|x_1-x_2|^{q/2}  +|t_1-t_2|^{q/14}+ |s_1-s_2|^{q/14} + |\beta_2-\beta_1|^q \bigg)^{1/q}.
\end{align*}
Recalling that $p/q = \theta_1$, $p/r=\theta_2$, and that $x^{\theta_1}$ is a subadditive function for $x>0$, we see that there is $C'=C'(\theta_1,\theta_2,p)$ so that
\begin{align*}
&\bbE\left[\big|\rnShe_{\beta_1}(t_1,y_1|s_1,x_1)^{-1} - \rnShe_{\beta_2}(t_2,y_2|s_2,x_2)^{-1}\big|^p\right]\\
&\leq C' \exp\bigg\{C' B^{\frac{p}{\theta_1}} T^{\frac{p}{4\theta_1}} \bigg\}  \Mob{-2p/\theta_2}{T}{B}^{2\theta_2 }\Mob{p/\theta_1}{T}{B}^{\theta_1}K^{p} \\
&\qquad\times \bigg(|y_1-y_2|^{p/2} +|x_1-x_2|^{p/2} +|t_1-t_2|^{p/14} + |s_1-s_2|^{p/14} + |\beta_2-\beta_1|^p \bigg).
\end{align*}
If, in addition, we have $t_i-s_i>\delta$ for $i \in \{1,2\}$, then by the same argument,
\begin{align*}
&\bbE\left[\big|\rnShe_{\beta_1}(t_1,y_1|s_1,x_1)^{-1} - \rnShe_{\beta_2}(t_2,y_2|s_2,x_2)^{-1}\big|^p\right]\\
&\leq C' \exp\bigg\{C' B^{\frac{p}{\theta_1}} T^{\frac{p}{4\theta_1}} \bigg\}  \Mob{-2p/\theta_2}{T}{B}^{2\theta_2 }\Mob{p/\theta_1}{T}{B}^{\theta_1} \delta^{-3p/2}K^{p}  \\
&\qquad\times\bigg(|y_1-y_2|^{p/2} + |x_1-x_2|^{p/2}+|t_1-t_2|^{p/4} + |s_1-s_2|^{p/4} +|\beta_2-\beta_1|^p\bigg). \qedhere
\end{align*}
\end{proof}
Proposition \ref{prop:nogapbd} implies the existence of a H\"older continuous modification of $\rnSheb(t,x\viiva s,y)$ and, away from the line $t=s$, of $\Sheb(t,x\viiva s,y)$.
\begin{proposition}\label{prop:rncont}
The process $(s,y,t,x,\beta) \mapsto \rnSheb(t,x\viiva s,y)$ defined on $(\varset\times \bbR)\cap \bbD^5$ by \eqref{eq:rnchaos} admits a unique (up to indistinguishability) modification $\tspb\widetilde{\!\rnShe}_{\aabullet}(\aabullet,\aabullet|\aabullet,\aabullet) \in \sC(\varset\times\bbR,\bbR)$. This modification satisfies the following conditions:
\begin{enumerate} [label={\rm(\roman*)}, ref={\rm\roman*}]   \itemsep=3pt  
\item \label{prop:rncont:Holderb} For each $T,K,B\geq 1$, $p>70$, 
$\alpha \in (0,1/14-5/p)$, $\gamma \in(0, 1/2-5/p)$, and $\eta\in(0,1-5/p)$,
we have
\begin{align}
\bbE\Big[\bigl\lvert \tspb\widetilde{\!\rnShe}_{\aabullet}(\aabullet,\aabullet|\aabullet,\aabullet)\bigr\rvert_{\sC^{\alpha,\gamma,\eta}(\varsett{T}{K} \times [-B,B])}^p\Big] \leq C e^{CB^p T^{p/4}} K^{p}, \label{eq:rnHolderb}
\end{align}
for some $C=C(p,\alpha,\gamma,\eta)>0$. Moreover, if $\delta \in (0,1)$, then for  $\alpha \in (0,1/4-5/p)$, $\gamma \in(0, 1/2-5/p)$, and $\eta\in(0,1-5/p)$,
\begin{align}
\bbE\Big[\bigl\lvert \tspb\widetilde{\!\rnShe}_{\aabullet}(\aabullet,\aabullet|\aabullet,\aabullet)\bigr\rvert_{\sC^{\alpha,\gamma,\eta}(\varsetsg{T}{K}{\delta}\times[-B,B])}^p\Big] \leq  C e^{CB^p T^{p/4}} K^{p} \delta^{-3p/2}. \label{eq:rnHoldergapb}
\end{align}
\item \label{prop:rncont:Holder} For each $\beta \in \bbR$, $T,K\geq 1$, $p>56$, 
$\alpha \in (0,1/14-4/p)$, $\gamma \in(0, 1/2-4/p)$, we have
\begin{align}
\bbE\Big[\bigl\lvert \tspb\widetilde{\!\rnShe}_{\beta}(\aabullet,\aabullet|\aabullet,\aabullet)\bigr\rvert_{\sC^{\alpha,\gamma}(\varsett{T}{K})}^p\Big] \leq  C |\beta|^p  \Mob{p}{T}{\beta} T^{2p}K^p, \label{eq:rnHolder}
\end{align}
for some $C=C(p,\alpha,\gamma)>0$. If $\delta \in (0,1)$, then moreover for 
$\alpha \in (0,1/4-4/p)$, $\gamma \in(0, 1/2-4/p)$,
\begin{align}
\bbE\Big[\bigl\lvert \tspb\widetilde{\!\rnShe}_{\beta}(\aabullet,\aabullet|\aabullet,\aabullet)\bigr\rvert_{\sC^{\alpha,\gamma}(\varsetsg{T}{K}{\delta})}^p\Big] \leq  C \Mob{p}{T}{\beta}\delta^{-3p/2} |\beta|^p T^{2p}K^p, \label{eq:rnHoldergap}
\end{align}
\item  \label{prop:rncont:invHolder} For each $T,K,B \geq 1$ and $\theta_1,\theta_2 \in (0,1)$ with $\theta_1+2\theta_2=1$, for all $p>70$, and all $\alpha \in (0,1/14-5/p)$, $\gamma \in(0, 1/2-5/p)$, and $\eta \in (1-5/p)$,
\begin{align*}
&\bbE\bigg[\bigl\lvert \tspb\widetilde{\!\rnShe}_{\aabullet}(\aabullet,\aabullet|\aabullet,\aabullet)^{-1}\bigr\rvert _{\sC^{\alpha,\gamma,\eta}(\varsett{T}{K}\times[-B,B])}^p\bigg] \leq C e^{C B^{\frac{p}{\theta_1}} T^{\frac{p}{4\theta_1}} } \Mob{-2p/\theta_2}{T}{B}^{2\theta_2 }K^{p} 
\end{align*} 
for some $C=C(p,\alpha,\gamma,\eta,\theta_1,\theta_2)>0$. In particular, $\rnSheb(t,x\viiva s,y)>0$ for all $(s,y,t,x,\beta)$ $\in \varset\times\bbR$. 
\end{enumerate}
\end{proposition}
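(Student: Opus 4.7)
The plan is to apply the multi-parameter Kolmogorov--Chentsov theorem from Appendix~\ref{app:KC} to the five-parameter process $\rnShe_{\aabullet}(\aabullet,\aabullet\viiva\aabullet,\aabullet)$ defined on the dyadic rationals by \eqref{eq:rnchaos}, invoking the increment bounds proved in Proposition~\ref{prop:nogapbd}. Fixing $T,K,B\geq1$ and working on the box $\varsett{T}{K}\times[-B,B]$, Proposition~\ref{prop:nogapbd}\eqref{prop:nogapbd:main} produces, for each $p>0$, a product-form upper bound on the $p$th moment of an increment, with exponents $p/2$ in the two space variables, $p/14$ in the two time variables, and $p$ in the inverse temperature. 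The version of Kolmogorov--Chentsov I would use loses a factor of $5/p$ in every coordinate H\"older exponent (five being the number of free parameters); taking $p>70$ makes the resulting exponents positive and produces a jointly H\"older modification on the box with exponents strictly less than $1/14-5/p$, $1/2-5/p$, and $1-5/p$ in the respective directions, together with the moment bound~\eqref{eq:rnHolderb} on its H\"older semi-norm. The bound~\eqref{eq:rnHoldergapb} on $\varsetsg{T}{K}{\delta}\times[-B,B]$ is obtained in the same way from Proposition~\ref{prop:nogapbd}\eqref{prop:nogapbd:delta}, which replaces $p/14$ by $p/4$ in the time increments. Part~\eqref{prop:rncont:Holder} is the same argument with $\beta$ fixed, so only four parameters vary and the threshold relaxes to $p>56$.

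Next I would glue these box-by-box modifications together. Taking an exhausting sequence $(T_n,K_n,B_n)\nearrow(\infty,\infty,\infty)$, the modification produced on box $n+1$ is, like the one from box $n$, a modification of the original process, so the two coincide almost surely at every dyadic rational point of their overlap; by countability of $\bbD^5$ and joint continuity, they then coincide everywhere on the overlap on a single event of full probability. A diagonal construction yields a single process $\tspb\widetilde{\!\rnShe}$ on all of $\varset\times\bbR$, jointly continuous and equal to $\rnShe$ at every dyadic rational point. Uniqueness up to indistinguishability is automatic, because any other jointly continuous modification agrees with $\rnShe$ on $\bbD^5\cap(\varset\times\bbR)$ on a single event of full probability and hence, by joint continuity, everywhere on that event.

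For part~\eqref{prop:rncont:invHolder}, I would rerun the Kolmogorov--Chentsov argument on the pointwise reciprocals $\rnShe_{\beta}(t,x\viiva s,y)^{-1}$ at dyadic rationals, using the increment bound of Proposition~\ref{prop:nogapbd}\eqref{prop:nogapbd:inverse}, which produces a jointly continuous modification $R$ with the stated H\"older moment bound. At each dyadic rational quintuple one has $\tspb\widetilde{\!\rnShe}_\beta\cdot R=1$ almost surely, so by countability of $\bbD^5$ this identity holds simultaneously on a single event of full probability and, by joint continuity of both factors, extends to all of $\varset\times\bbR$; in particular $R$ is nowhere zero, so $\tspb\widetilde{\!\rnShe}_\beta=R^{-1}$ is strictly positive and finite everywhere, yielding the ``in particular'' claim. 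The main substantive work already lives in Proposition~\ref{prop:nogapbd} (and through it in Lemmas~\ref{lem:xymoment}--\ref{lem:thmoment}), so what remains for this proposition is essentially bookkeeping of dimensions and H\"older exponents; the one aspect that stays visibly suboptimal is the time H\"older exponent $1/14$ at the boundary $t=s$, which is inherited from the crude estimate in Lemma~\ref{lem:withgap} and can only be improved by sharpening that lemma.
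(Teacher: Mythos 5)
Your proposal is correct and follows essentially the same route as the paper: apply the Kolmogorov--Chentsov theorem of Appendix~\ref{app:KC} in five (resp.\ four) parameters with the increment bounds of Proposition~\ref{prop:nogapbd}, losing $5/p$ (resp.\ $4/p$) per coordinate, glue the box-by-box modifications along dyadic rationals by consistency, and rerun the argument on reciprocals for part~\eqref{prop:rncont:invHolder}. The only detail you elide is the affine rescaling of each box $\varsett{T}{K}\times[-B,B]$ to a fixed reference domain before invoking Theorem~\ref{thm:KC} and the transfer of the H\"older semi-norm back, which is pure bookkeeping.
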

\begin{proof}
To apply Theorem \ref{thm:KC} as stated in Appendix \ref{app:KC},  we fix $T,K,B>1$ and map $\varsett{T}{K}\times[-B,B]$ to $\sK = \{(s,y,t,x,\beta) :0 \leq x, y,\beta \leq 1, 0 \leq s \leq t \leq 1\}$ by defining 
\begin{align}
f_{T,K,B}(s,y,t,x,\beta) &= 
\rnShe_{-B+2B\beta}(-T + 2T t, -K + 2K x| -T +  2T s, -K + 2Ky) \label{eq:fcases}
\end{align}
for $(s,y,t,x,\beta) \in \sK$, where $\rnSheb(t,x\viiva s,y)$ is defined to be equal to one if $\beta = 0$ or $s=t$ and is given by \eqref{eq:rnchaos} otherwise.  By Proposition \ref{prop:nogapbd}\eqref{prop:nogapbd:main},
 \begin{align}
&\bbE[\,|f_{T,K,B}(t_1,x_1,s_1,y_1,\beta_1) - f_{T,K,B}(t_2,x_2,s_2,y_2,\beta_2)|^p\,] \leq  C e^{CB^p T^{p/4}}  \Mob{p}{T}{B} K^{p}\times \label{489}\\
&\Big(|t_1-t_2|^{p/14} + |s_1-s_2|^{p/14} + |x_1-x_2|^{p/2} + |y_1-y_2|^{p/2} + |\beta_2-\beta_1|^p\Big),  \notag
\end{align}
for $(t_1,x_1,s_1,y_1,\beta_1),(t_2,x_2,s_2,y_2,\beta_2)\in \sK$. 

With $p>70$, \eqref{489} gives assumption \eqref{kc-hyp1} with $\nu=p$, $d=5$, $\alpha_i=\frac{p}{14}-5$ for the time increments, $\alpha_i=\frac{p}{2}-5$ for the space increments, and $\alpha_i = p-5$ for the inverse temperature increments.
By Theorem \ref{thm:KC},   there is an event $\Omega_{T,K}$ with $\bbP(\Omega_{T,K,B})=1$ such that $f_{T,K,B}$ admits an almost surely unique continuous modification of $f_{T,K,B}$ to $\sK$ which agrees with \eqref{eq:fcases}, defined through \eqref{eq:rnchaos}, on the dyadic rationals.  Then, the bound \eqref{kolm-c-4} and the estimate in \eqref{eq:momexp} implies that
\be\label{493} 
\bbE[|f_{T,K}|_{\sC^{\alpha,\gamma,\eta} (\sK)}^p] \leq  C e^{CB^p T^{p/4}} K^{p}
\ee
for some $C=C(p,\alpha,\gamma,\eta)>0$. To conclude the proof of \eqref{prop:rncont:Holder}, 
the extension of  $f_{T,K}$ to $\sK$ is, by definition, an almost surely unique continuous modification of $\rnSheb(t,x\viiva s,y)$ to $\varsett{T}{K} \times [-B,B]$  which agrees with $\rnShe_{-B + 2T \hat{\beta}}(-T + 2T \hat{t}, -K + 2K \hat{x}| -T +  2T \hat{s}, -K + 2K\hat{y})$ defined via \eqref{eq:rnchaos} for dyadic rational $(\hat{t},\hat{x},\hat{s},\hat{y},\hat{\beta})\in\sK$. On the intersection $\bigcap_{n \in \bbN} \Omega_{2^n,2^n,2^n}$, consistency then gives a unique extension of $\rnSheb(t,x\viiva s,y)$ to $\varset \times \R$ which agrees with \eqref{eq:rnchaos} on $(\varset \times \R) \cap \bbD^5$. 
By transferring the variables again as in \eqref{eq:fcases}, 
\begin{align*}
&|f_{T,K}|_{C^{\alpha,\gamma,\eta} (\sK)} \\
&=\sup_{\substack{ (t_1',x_1',s_1',y_1',\beta_1') \neq  \\
(t_2',x_2',s_2',y_2',\beta_2') \\ (t_i',x_i',s_i',y_i',\beta_i') \in \sK, \\i \in \{1,2\}} } \frac{|f_{T,K,B}(t_1',x_1',s_1',y_1',\beta_1') - f_{T,K,B}(t_2',x_2',s_2',y_2',\beta_2')|}{|t_1' - t_2'|^\alpha + |s_1'-s_2'|^\alpha + |x_1'-x_2'|^\gamma + |y_1'-y_2'|^\gamma + |\beta_1' -\beta_2'|^{\eta}}  \\
&=\sup_{\substack{(t_1,x_1,s_1,y_1,\beta_1)\neq \\
(t_2,x_2,s_2,y_2,\beta_2), \\ (t_i,x_i,s_i,y_i,\beta_i) \in\\
\varsett{T}{K}\times[-B,B],\\ i \in \{1,2\} } } \bigg\{ \frac{|\rnShe_{\beta_1}(t_1,x_1|s_1,y_1) - \rnShe_{\beta_2}(t_2,x_2|s_2,y_2)|}{\frac{|t_1-t_2|^\alpha + |s_1-s_2|^\alpha}{(2T)^\alpha}+ \frac{|x_1-x_2|^\gamma + |y_1-y_2|^{\gamma}}{(2K)^\gamma} + \frac{|\beta_1-\beta_2|^\eta}{(2B)^\eta}}\bigg\} \\
&\geq |\rnSheb(\aabullet,\aabullet|\aabullet,\aabullet)|_{C^{\alpha,\gamma,\eta}(\varsett{T}{K} \times[-B,B])}
\end{align*}
and \eqref{eq:rnHolderb} follows from \eqref{493}. The proofs of \eqref{eq:rnHolder} and \eqref{eq:rnHoldergap} are similar.

\medskip 

To prove \eqref{prop:rncont:invHolder}, we repeat the argument verbatim with \eqref{eq:fcases} replaced by
\begin{align*}
f_{T,K,B}(s,y,t,x,\beta) &= \rnShe_{-B + 2B\beta}(-T + 2T t, -K + 2K x| -T +  2T s, -K + 2Ky)^{-1}.
\end{align*}
That we are permitted to divide by $\rnSheb$ for dyadic rational $(s,y,t,x,\beta) \in \sK$ follows from Lemma \ref{lem:mombd}. With this definition, take $\theta_1,\theta_2$ as in Proposition \ref{prop:nogapbd}\eqref{prop:nogapbd:inverse}. That result and the estimate in \eqref{eq:momexp} gives
\begin{align*}   
&\bbE[\,|f_{T,K,B}(t_1,x_1,s_1,y_1,\beta_1) - f_{T,K,B}(t_2,x_2,s_2,y_2,\beta_2)|^p\,] \\
&\leq C \exp^{C B^{\frac{p}{\theta_1}} T^{\frac{p}{4\theta_1}}} K^{p} \Mob{-2p/\theta_2}{T}{B}^{2\theta_2 }  \\
&\qquad\times\big(|y_1-y_2|^{p/2} +|x_1-x_2|^{p/2}+|t_1-t_2|^{p/14} + |s_1-s_2|^{p/14} + |\beta_1-\beta_2|^p \big)
\end{align*}
for some $C=C(p,\theta_1,\theta_2)>0$. The remainder of the proof is now identical to the previous case.
\end{proof}
Next, we show that this construction is consistent with the processes that we started with, i.e., for each fixed $s,y,\beta \in \bbR$, it defines a version of the unique continuous and adapted solution to \eqref{eq:Greens}.
\begin{lemma}\label{lem:version}
Let $\tspb\widetilde{\!\rnShe}_{\aabullet}(\aabullet,\aabullet\viiva\aabullet,\aabullet)$ be the process constructed in in Proposition \ref{prop:rncont}. Then
\begin{enumerate} [label={\rm(\roman*)}, ref={\rm\roman*}]   \itemsep=3pt  
\item For all $(s,y,t,x,\beta)\in \varset \times \bbR$ $\tspb\widetilde{\!\rnShe}_{\beta}(s,y,t,x)$ is $\fil_{s,t}$ measurable. 
\item For any fixed $s,y,\beta \in \bbR$, the process $\widetilde{\She}_{\beta}(t,x\viiva s,y)$ defined on $\{(t,x) \in \bbR^2 : t>s\}$ by $\widetilde{\She}_{\beta}(t,x\viiva s,y) = \tspb\widetilde{\!\rnShe}_{\beta}(t,x\viiva s,y)\heat(t-s,x-y)$, satisfies
\begin{align*}
\bbP\bigg(\forall t>s, x\in \bbR,\, \widetilde{\She}_{\beta}(t,x\viiva s,y) = \Sheb(t,x\viiva s,y) \bigg) = 1,
\end{align*}
where $\Sheb(t,x\viiva s,y)$ is the mild solution to \eqref{eq:Greens} coming from Lemma \ref{lem:fixGreen}.
\end{enumerate} 
\end{lemma}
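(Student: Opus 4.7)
Let $\tspb\widetilde{\!\rnShe}$ be the continuous modification of Proposition \ref{prop:rncont} and $\Omega_0$ the full-probability event on which it agrees with the $L^2$ chaos series \eqref{eq:rnchaos} at every dyadic quintuple. The plan hinges on first establishing a bridge identity: for every fixed $(s,y,t,x,\beta)\in\varsets\times\R$,
\begin{align}\label{eq:plan-bridge}
\tspb\widetilde{\!\rnShe}_\beta(t,x\viiva s,y) = \rnShe_\beta(t,x\viiva s,y)\qquad \bbP\text{-a.s.,}
\end{align}
where the right-hand side is the chaos series defined by \eqref{eq:rnchaos}. Once \eqref{eq:plan-bridge} is in hand, both conclusions of the lemma follow quickly.

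To prove \eqref{eq:plan-bridge}, I would approximate from within: pick $(s_n,y_n,t_n,x_n,\beta_n)\in\bbD^5$ with $s<s_n<t_n<t$ converging to $(s,y,t,x,\beta)$. On $\Omega_0$ one has $\tspb\widetilde{\!\rnShe}_{\beta_n}(t_n,x_n\viiva s_n,y_n) = \rnShe_{\beta_n}(t_n,x_n\viiva s_n,y_n)$, and the left-hand side converges almost surely to $\tspb\widetilde{\!\rnShe}_\beta(t,x\viiva s,y)$ by the joint continuity from Proposition \ref{prop:rncont}. Meanwhile, fixing any $p>14$ and taking $T,K,B\ge 1$ large enough to contain the bounded sequence, Proposition \ref{prop:nogapbd}\eqref{prop:nogapbd:main} yields
\begin{align*}
\bbE\big[\,\big|\rnShe_{\beta_n}(t_n,x_n\viiva s_n,y_n) - \rnShe_\beta(t,x\viiva s,y)\big|^p\,\big] \longrightarrow 0,
\end{align*}
so the right-hand side converges in $L^p$, hence in probability, to $\rnShe_\beta(t,x\viiva s,y)$. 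A common sequence cannot have two different limits in probability, which gives \eqref{eq:plan-bridge}.

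For part (i), I would note that the chaos series $\rnShe_\beta(t,x\viiva s,y)$ is built from iterated Walsh integrals of kernels supported on $[s,t]\times\R$, so it is $\filt{W,0}_{s,t}$-measurable; since $[s,t]\subseteq[a,b]$ for every $a<s<t<b$, one has $\filt{W,0}_{s,t}\subseteq\bigcap_{a<s<t<b}\filt{W,0}_{a,b}=\fil_{s,t}$. Completeness of $\fil_{s,t}$ (which contains $\nulls$) combined with \eqref{eq:plan-bridge} then gives the $\fil_{s,t}$-measurability of $\tspb\widetilde{\!\rnShe}_\beta(t,x\viiva s,y)$. For part (ii), with $s,y,\beta$ fixed, combining \eqref{eq:plan-bridge} with the chaos representation \eqref{eq:SHEchaos} provided by Lemma \ref{lem:fixGreen} gives $\widetilde{\She}_\beta(t,x\viiva s,y) = \Sheb(t,x\viiva s,y)$ almost surely for each fixed $(t,x)$ with $t>s$. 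Both processes are continuous in $(t,x)$ on the open set $\{t>s\}$ ($\widetilde{\She}_\beta$ by continuity of $\tspb\widetilde{\!\rnShe}$ and of $\heat$ off the diagonal, $\Sheb$ by Lemma \ref{lem:fixGreen}), so intersecting the full-probability equality events over a countable dense subset of $\{(t,x):t>s\}$ and extending by continuity upgrades to almost sure equality on the entire open half-plane.

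The main obstacle is the bridge identity \eqref{eq:plan-bridge}: it is where one must reconcile the almost-sure limit supplied by the Kolmogorov--Chentsov modification at dyadic points with the $L^p$ limit supplied by the moment estimates, two a priori different constructions that coincide only at dyadic quintuples. Everything else reduces to filtration bookkeeping and the standard fact that two almost surely continuous processes that agree almost surely at each point of a countable dense set agree identically almost surely.
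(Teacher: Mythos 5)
Your proof is correct and follows essentially the same route as the paper: approximate by dyadic quintuples, identify the almost-sure limit supplied by the continuous modification with the $L^p$ (hence in-probability) limit supplied by Proposition \ref{prop:nogapbd}, and then upgrade pointwise a.s.\ equality of the two continuous processes to indistinguishability via a countable dense set. The only cosmetic differences are that the paper establishes measurability by approximating the time window from outside and using $\fil_{s,t}=\bigcap_{a<s\le t<b}\filt{W,0}_{a,b}$ rather than via the support of the chaos kernels, and you should add the one-line remark that for $s=t$ (or $\beta=0$) the process is identically $1$, so part (i) holds trivially there even though your bridge identity only covers $s<t$.
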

\begin{proof}
Take dyadic rational sequences $s_n,t_n,x_n,y_n,\beta_n$ with $ s_n<s \leq  t < t_n$ and  $t_n \to t$, $x_n \to x$, $s_n \to s$, $y_n \to y$, and $\beta_n\to \beta$. Then by construction, $\tspb\widetilde{\!\rnShe}_{\beta_n}(t_n,x_n,s_n,y_n)$ is $\fil_{s_n,t_n}$ measurable. Taking limits and appealing to continuity and the left- and right- continuity of $\fil_{s,t}$ as defined in Section \ref{sec:setting} gives the first claim.

Fix $s,y,\beta \in \bbR$ as in the second part of the statement. By Lemma \ref{lem:fixGreen}, there exists a unique (up to indistinguishability) continuous and adapted solution \eqref{eq:Greens} satisfying the moment assumptions of that result and for each fixed $t>s$ and $x,\beta \in \bbR$, this process agrees with the chaos expansion \eqref{eq:SHEchaos} with probability one. Because both $\Sheb(\aabullet,\aabullet\viiva s,y)$ and $\widetilde{\She}_{\beta}(\aabullet,\aabullet\viiva s,y)$ are continuous, it suffices to show that for fixed dyadic rationals $t$ and $x$ with $t>s$, we have $\bbP(\widetilde{\She}_{\beta}(t,x\viiva s,y) = \Sheb(t,x\viiva s,y))=1$. Now, for a sequence $s_n,y_n$ of dyadic rational points with $s_n \in (s,t)$, $s_n \to s$, $y_n \to y$, we have $\bbP\left(\widetilde{\She}_{\beta}(t,x\viiva s_n,y_n) = \Sheb(t,x\viiva s_n,y_n)\right) = 1$ by the construction of $\widetilde{\She}_{\beta}$ in Proposition \ref{prop:rncont} and the fact that $\Sheb(t,x\viiva s_n,y_n)$ agrees with \eqref{eq:SHEchaos} with probability one. Similarly, $\Sheb(t,x\viiva s,y)$ agrees with \eqref{eq:SHEchaos} with probability one for each fixed $t,s,x,y$ with $t>s$. By Proposition \ref{prop:nogapbd}, we have $\bbE[|\Sheb(t,x\viiva s_n,y_n)-\Sheb(t,x\viiva s,y)|^p]\to0$ for all $p>2$, which implies that $\widetilde{\She}_{\beta}(t,x\viiva s_n,y_n)=\Sheb(t,x\viiva s_n,y_n)$ converges to $\Sheb(t,x\viiva s,y)$ in probability. The result now follows from almost sure continuity.
\end{proof}
With Lemma \ref{lem:version} in hand, we now complete the proof of Theorem \ref{thm:rnreg} and Proposition \ref{prop:cov}.
\begin{proof}[Proof of Theorem \ref{thm:rnreg}] 
We verify that the process constructed in Proposition \ref{prop:rncont} satisfies all of the desired conditions. Proposition \ref{prop:rncont} and Lemma \ref{lem:version} show the first, fourth, fifth, and sixth parts of the claim. To see that the second and third hold, note that by definition, we have $\tspb\widetilde{\!\rnShe}_{\beta}(t,x|t,y) = 1$ and $\tspb\widetilde{\!\rnShe}_{0}(t,x\viiva s,y) = 1$ for $(s,y,t,x,\beta)\in(\varset \times \bbR)\cap \bbD^5$. Continuity extends to all $(s,y,t,x,\beta) \in \varset\times\bbR$.
\end{proof}

\begin{proof}[Proof of Proposition \ref{prop:cov}]
Proposition \ref{prop:cov} follows from Lemma \ref{lem:cov}, Proposition \ref{prop:rncont}, and Lemma \ref{lem:version}, except when either $\beta =0$ or $t=s$. In either of these cases, $\rnSheb(t,x\viiva s,y)=1$ and so the result is trivially true.
\end{proof}

Throughout the remainder of the paper, we write $\rnSheb(t,x\viiva s,y)$ to mean the unique continuous extension provided by Proposition \ref{prop:rncont} and set $\Sheb(t,x\viiva s,y) = \heat(t-s,x-y)\rnSheb(t,x\viiva s,y)$. Lemma \ref{lem:version} justifies this, because the mild solutions to \eqref{eq:Greens} are defined only up to indistinguishability in any case. An immediate corollary of Proposition \ref{prop:rncont} is almost sure sub-polynomial growth and decay of $\rnSheb(t,x\viiva s,y)$ as a function of the spatial coordinates $x,y$ for all times and inverse temperatures simultaneously.

\begin{corollary}\label{cor:growth}
For each $p>70$, there exists $C=C(p)$ so that for all $T,K,B>1$,
\begin{enumerate} [label={\rm(\roman*)}, ref={\rm\roman*}]   \itemsep=3pt 
\item We have
\begin{align*}
\bbE\bigg[\sup_{(s,y,t,x,\beta) \in \varsett{T}{K}\times[-B,B]}\rnSheb(t,x\viiva s,y)^p\bigg] &\leq C e^{CB^p T^{p/4}} K^{3p},\\
\bbE\bigg[\sup_{(s,y,t,x,\beta) \in \varsett{T}{K}\times[-B,B]} \rnSheb(t,x\viiva s,y)^{-p}\bigg] &\leq C e^{C B^{2p} T^{\frac{p}{2}}} \Mob{-8p}{2T}{2B}^{1/4 } K^{3p} .
\end{align*}
\item For each $\beta \in \bbR$, we have
\begin{align*}
\bbE\bigg[\sup_{(s,y,t,x) \in \varsett{T}{K}}\rnSheb(t,x\viiva s,y)^p\bigg] &\leq C \Mob{p}{2T}{\beta} |\beta|^p(TK)^{3p} , \\
\bbE\bigg[\sup_{(s,y,t,x) \in \varsett{T}{K}}\rnSheb(t,x\viiva s,y)^{-p}\bigg] &\leq C\Mob{-8p}{2T}{\beta}^{1/4 }\Mob{2p}{2T}{\beta}^{3/2} |\beta|^p(TK)^{3p}.
\end{align*}
\item We have the almost sure growth bounds 
\begin{align}
\bbP\bigg(\exists T>0 : \varlimsup_{K\to\infty} K^{-4}\sup_{(s,y,t,x,\beta) \in \varsett{T}{K}\times[-B,B]} \rnSheb(t,x\viiva s,y) > 0 \bigg)= 0, \label{eq:cor:growth} \\
\bbP\bigg(\exists T>0 : \varlimsup_{K\to\infty} K^{-4}\sup_{(s,y,t,x) \in \varsett{T}{K}\times[-B,B]} \rnSheb(t,x\viiva s,y)^{-1} > 0 \bigg)= 0. \label{eq:cor:growthinv}
\end{align}
\item We have the almost sure H\"older semi-norm growth bounds
\be\label{eq:cor:Hgrowth}\begin{aligned}
&\bbP\bigg(\exists T,B>0, \alpha \in (0,1/2), \gamma \in (0,1/4), \eta \in (0,1) :  \\
&\qquad\qquad \varlimsup_{K\to\infty}  K^{-7}|\rnShe_{\aabullet}(\aabullet,\aabullet|\aabullet,\aabullet)|_{\sC^{\alpha,\gamma,\eta}(\varsetsg{T}{K}{1/K} \times [-B,B])} > 0 \bigg)= 0.
\end{aligned}\ee
\end{enumerate}
\end{corollary}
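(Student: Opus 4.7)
All four parts follow a common template. The key idea is to use the anchor identity $\rnSheb(t_0,x_0|t_0,x_0) = 1$ from Theorem \ref{thm:rnreg} together with the H\"older moment bounds of Proposition \ref{prop:rncont}. For any reference point $(t_0,x_0,t_0,x_0,0) \in \Gamma := \varsett{T}{K}\times[-B,B]$, the elementary inequality
\begin{equation*}
\sup_\Gamma \rnSheb(t,x|s,y) \leq 1 + \bigl\lvert\rnShe_\aabullet(\aabullet,\aabullet|\aabullet,\aabullet)\bigr\rvert_{\sC^{\alpha,\gamma,\eta}(\Gamma)}\bigl(2T^\alpha+2K^\gamma+B^\eta\bigr)
\end{equation*}
and its analogue for $\rnSheb^{-1}$ reduce parts (i) and (ii) to applying the moment bounds \eqref{eq:rnHolderb}, \eqref{eq:rnHolder}, or \eqref{prop:rncont:invHolder} of Proposition \ref{prop:rncont}. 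Parts (iii) and (iv) will then follow from (i) and (ii) via Chebyshev's inequality and the Borel-Cantelli lemma along $K=2^n$.

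For part (i), raising the above display to the $p$-th power with $p>70$, applying \eqref{eq:rnHolderb}, and absorbing the polynomial factors $T^{\alpha p}$ and $B^{\eta p}$ into $\exp\{CB^pT^{p/4}\}$ will produce the first bound via $K^p\cdot K^{\gamma p}\leq K^{3p}$ for $\gamma<1/2$. The inverse bound will come from the same argument with Proposition \ref{prop:rncont}\eqref{prop:rncont:invHolder} applied at $\theta_1=1/2$, $\theta_2=1/4$, which gives exponential prefactor $e^{CB^{2p}T^{p/2}}$ together with a negative-moment factor $\Mob{-8p}{T}{B}^{1/2}$; the stated form $\Mob{-8p}{2T}{2B}^{1/4}$ will then be reached via the scaling identity \eqref{eq:Mobsca} and a Jensen interpolation using that both $\Mob{p}{t}{\beta}$ and $\Mob{-p}{t}{\beta}$ are monotone in $t$ and in $|\beta|$. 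The $\rnSheb$ bound in part (ii) is identical using \eqref{eq:rnHolder}, where the absence of the $\beta$-variable allows the polynomial prefactor to take the form $(TK)^{3p}$ after absorbing $T^{\alpha p}$ into $T^{3p}$.

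For the inverse bound in part (ii) at fixed $\beta$, Proposition \ref{prop:rncont}\eqref{prop:rncont:invHolder} provides only a varying-$\beta$ estimate with an exponential prefactor, so a separate fixed-$\beta$ H\"older semi-norm bound must be established. The plan is to apply the three-factor H\"older inequality with exponents $(\theta_1,\theta_2)=(3/4,1/8)$ (so $\theta_1+2\theta_2=1$) to the identity $|\rnSheb^{-1}(A)-\rnSheb^{-1}(B)|^p=|\rnSheb(A)-\rnSheb(B)|^p/(\rnSheb(A)\rnSheb(B))^p$, which gives
\begin{equation*}
\bbE\bigl[\lvert\rnSheb^{-1}(A)-\rnSheb^{-1}(B)\rvert^p\bigr] \leq \bbE\bigl[\lvert\rnSheb(A)-\rnSheb(B)\rvert^{4p/3}\bigr]^{3/4}\Mob{-8p}{2T}{\beta}^{1/4}.
\end{equation*}
Combining with Lemmas \ref{lem:xymoment} and \ref{lem:thmoment} applied at order $p'=4p/3$, and then feeding the result through Kolmogorov-Chentsov (Theorem \ref{thm:KC}) as in the proof of Proposition \ref{prop:rncont}, will produce a H\"older semi-norm $L^p$ bound with negative-moment factor $\Mob{-8p}{2T}{\beta}^{1/4}$ and positive-moment factor $\Mob{4p/3}{T}{\beta}^{3/4}$. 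The exponent $3/2$ on $\Mob{2p}{2T}{\beta}$ will appear via Jensen's inequality, which gives $\Mob{4p/3}{T}{\beta}^{3/4}\leq \Mob{2p}{T}{\beta}^{1/2}\leq \Mob{2p}{2T}{\beta}^{3/2}$; the last inequality holds because $\Mob{2p}{2T}{\beta}\geq \bbE[\rnSheb]^{2p}=1$ by Jensen.

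For part (iii), part (i) combined with Chebyshev gives $\bbP(K^{-4}\sup_\Gamma\rnSheb>1)\leq CK^{-4p}\cdot K^{3p}=CK^{-p}$, summable for $p>1$ along $K=2^n$; Borel-Cantelli then yields the limsup being zero on a full-probability event for each fixed integer $T$ and $B$, and monotonicity of the sup in $T$ and $B$ combined with a countable union over $T,B\in\bbN$ extends the claim to all $T,B>0$ simultaneously. Part (iv) is analogous using \eqref{eq:rnHoldergapb} with $\delta=1/K$: the resulting moment bound is $Ce^{CB^pT^{p/4}}K^p(1/K)^{-3p/2}=Ce^{CB^pT^{p/4}}K^{5p/2}$, so $K^{-7p}\cdot K^{5p/2}=K^{-9p/2}$ is summable, and the same Borel-Cantelli plus countable-union argument completes the proof. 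The main technical obstacle throughout is the bookkeeping of moment exponents in part (ii); once the H\"older decomposition $(\theta_1,\theta_2)=(3/4,1/8)$ is identified as producing exactly the factors in the claim, all remaining computations reduce to routine applications of Jensen's inequality and the scaling identity \eqref{eq:Mobsca}.
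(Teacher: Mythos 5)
Your overall strategy for parts (i)--(iii) is exactly the paper's: anchor at a point where $\rnSheb=1$, convert the sup into a H\"older quotient times a polynomial in $T,K,B$, invoke Proposition \ref{prop:rncont}, and then run Chebyshev plus Borel--Cantelli along a geometric sequence with monotonicity in $T,K,B$ and a countable union over $T,B\in\bbN$. Your treatment of the fixed-$\beta$ inverse bound in part (ii) is actually \emph{more} explicit than the paper's (which omits that step): the three-factor H\"older split with $(\theta_1,\theta_2)=(3/4,1/8)$ followed by Kolmogorov--Chentsov does produce the factors $\Mob{-8p}{2T}{\beta}^{1/4}\Mob{2p}{2T}{\beta}^{3/2}$ as you claim, and your Lyapunov/monotonicity interpolations there check out. (In part (i), your proposed bridge from $\Mob{-8p}{T}{B}^{1/2}$ to $\Mob{-8p}{2T}{2B}^{1/4}$ does not actually follow from monotonicity and Jensen --- since $\Mob{-8p}{\cdot}{\cdot}\ge 1$, lowering the exponent from $1/2$ to $1/4$ goes the wrong way --- but the paper's own proof cites the same choice $\theta_1=1/2,\theta_2=1/4$ and has the identical mismatch; since only finiteness of the prefactor is ever used downstream, this is immaterial.)

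The one genuine gap is in part (iv). The event in \eqref{eq:cor:Hgrowth} quantifies over \emph{all} exponent triples $(\alpha,\gamma,\eta)$ in an open box, an uncountable family. Your Chebyshev--Borel--Cantelli argument produces a null set for each \emph{fixed} triple, and your ``countable union'' is only over $T,B\in\bbN$; that does not cover the uncountably many exponents, and the semi-norms are not monotone in the exponent on a domain where increments can exceed $1$. The missing step is the comparison
\begin{equation*}
|\rnShe_{\aabullet}(\aabullet,\aabullet|\aabullet,\aabullet)|_{\sC^{\alpha,\gamma,\eta}(\varsetsg{T}{K}{1/K}\times[-B,B])}\;\le\; 5\sqrt{2K}\,\,|\rnShe_{\aabullet}(\aabullet,\aabullet|\aabullet,\aabullet)|_{\sC^{\alpha',\gamma',\eta'}(\varsetsg{T}{K}{1/K}\times[-B,B])}
\qquad\text{for }\alpha\le\alpha',\ \gamma\le\gamma',\ \eta\le\eta',
\end{equation*}
which follows from $|u|^{\alpha'}\le|u|^{\alpha}(2K)^{\alpha'-\alpha}\le|u|^{\alpha}\sqrt{2K}$ on increments bounded by $2K$ (or $2T\le 2K$, $2B\le 2K$ for $K$ large). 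This reduces the claim to a countable sequence of exponent triples increasing to the endpoints (e.g.\ $\alpha_n=1/2-4^{-n}$, etc.), and the extra factor $\sqrt{K}$ costs only $K^{p/2}$, which is absorbed by the $K^{-9p/2}$ slack you already have. With that comparison inserted, your part (iv) closes.
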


\begin{proof}
Recalling that $\rnSheb(0,0\viiva 0,0)=1$, we have for $p>70$ and $\gamma \in (0,1/14-5/p)$,
\begin{align*}
&\sup_{(s,y,t,x,\beta) \in \varsett{T}{K}\times[-B,B]}\rnSheb(t,x\viiva s,y)^p \\
&\leq 2^p + 2^p\sup_{(s,y,t,x,\beta)\in \varsett{T}{K}\times[-B,B]} \bigg(\frac{|\rnSheb(t,x\viiva s,y)-1|}{|t|^\gamma + |x|^\gamma + |s|^\gamma + |y|^\gamma + |\beta|^\gamma}\bigg)^p 5^p(TKB)^{\gamma p} \\
&\leq 2^p + 10^p(TKB)^{p} |\rnSheb(\aabullet,\aabullet|\aabullet,\aabullet)|^{p}_{C^{\gamma,\gamma,\gamma}(\varsett{T}{K}\times[-B,B])}.
\end{align*}
Proposition \ref{prop:rncont}\eqref{prop:rncont:Holderb} then gives the first claim. The second comes from the same argument and Proposition \ref{prop:rncont} \eqref{prop:rncont:invHolder} with $\theta_1 = 1/2$ and $\theta_2 = 1/4$. 

We now prove \eqref{eq:cor:growth}. Take $p>70$, $T,B>1$ and $\epsilon>0$. We have for $N \in \bbN$,
\begin{align*}
\bbP\big(\sup_{(s,y,t,x,\beta) \in \varsett{T}{N} \times[-B,B]} \rnSheb(t,x\viiva s,y) \geq \epsilon N^{4}\big) \leq C N^{-p} \epsilon^{-p}
\end{align*}
for some $C=C(p,T,B)$. The Borel-Cantelli lemma then gives, along the sequence $N\to\infty$, 
\begin{align*}
\bbP\bigg(\varlimsup_{N\to\infty} N^{-4}\sup_{(s,y,t,x,\beta) \in \varsett{T}{N} \times[-B,B]} \rnSheb(t,x\viiva s,y)> 0 \bigg)= 0.
\end{align*}
Note that $\sup_{(s,y,t,x,\beta) \in \varsett{T}{K}\times[-B,B]} \rnSheb(t,x\viiva s,y)$ is nondecreasing in $T$, $K$, and $B$. By considering $T,B \in \bbN$, \eqref{eq:cor:growth} follows. The proof of \eqref{eq:cor:growthinv} is similar.

To see \eqref{eq:cor:Hgrowth}, we again appeal to monotonicity of $|\rnSheb(\aabullet,\aabullet|\aabullet,\aabullet)|_{\sC^{\gamma,\eta}(\varsetsg{T}{K}{1/K} \times[-B,B])}$ in $T,K,$ and $B$. Take  $\alpha,\alpha' \in (0,1/2)$ satisfying $\alpha < \alpha'$, $\gamma,\gamma' \in (0,1/4)$ satisfying $\gamma<\gamma'$, and $\eta,\eta' \in (0,1)$ satisfying $\eta < \eta'$, and let $T,B>1$  and $K>\max\{T,B\}$. Then for $(t_1,x_1,s_1,y_1,\beta_1)$ and $(t_2,x_2,s_2,y_2,\beta_2) \in \varsetsg{T}{K}{1/K} \times[-B,B]$  with $(t_1,x_1,s_1,y_1,\beta_1)$ $\neq(t_2,x_2,s_2,y_2,\beta_2)$, observing for example that $|x_2-x_1|^{\alpha'}= |x_2-x_1|^{\alpha}|x_2-x_1|^{\alpha'-\alpha} \leq |x_2-x_1|^{\alpha}\sqrt{2K},$ we have
\begin{align*}
\frac{|x_1-x_2|^{\alpha'} + |y_2-y_1|^{\alpha'} + |t_1-t_2|^{\gamma'}+ |s_2-s_1|^{\gamma'} + |\beta_2-\beta_1|^{\eta'}}{|x_1-x_2|^\alpha + |y_2-y_1|^\alpha + |t_1-t_2|^\gamma+ |s_2-s_1|^\gamma+ |\beta_2-\beta_1|^{\eta}}  \leq 5 \sqrt{2K}.
\end{align*}
This implies the bound
\begin{align*}
|\rnSheb(\aabullet,\aabullet|\aabullet,\aabullet)|_{\sC^{\alpha,\gamma,\eta}(\varsetsg{T}{K}{1/K} \times[-B,B])}  \leq5 \sqrt{2K} |\rnSheb(\aabullet,\aabullet|\aabullet,\aabullet)|_{\sC^{\alpha',\gamma',\eta'}(\varsetsg{T}{K}{1/K} \times[-B,B])}.
\end{align*}
Therefore, it is sufficient to show that for each $n \in \N$, with $\alpha_n = 1/2 - 1/4^n$, $\gamma_n = 1/4-1/8^n$, and $\eta_n = 1-1/2^n$, we have
\begin{align*}
\bbP\bigg(\exists T,B>0, : \varlimsup_{K\to\infty}  K^{-7}|\rnSheb(\aabullet,\aabullet|\aabullet,\aabullet)|_{\sC^{\alpha_n,\gamma_n,\eta_n}(\varsetsg{T}{K}{1/K} \times[-B,B])} > 0 \bigg)= 0.
\end{align*} 
This follows from the estimates in Proposition \ref{prop:rncont} exactly as in the proof of \eqref{eq:cor:growth}.
\end{proof} 

\begin{remark}\label{rem:nogo}
One quick consequence of Corollary \ref{cor:growth}, which we use frequently, is that for any $T>1$, there exists $C=C(T,B,\omega)$ so that for all $x,y \in \bbR$, all $t,s \in [-T,T]$, and all $\beta \in [-B,B]$,
\begin{align*}
C^{-1} (1+|x|^4+|y|^4)^{-1} \leq \rnSheb(t,x\viiva s,y)\leq C(1+|x|^4+|y|^4).
\end{align*}
The power $4$ above is purely an artifact of our proof. The form of this estimate may look odd in view of the stationarity in Corollary \ref{cor:stat}. Notice, however, that while the distribution of $\rnSheb(t,x\viiva s,y)$ depends only on $t-s$ and $x-y$, the process $z \mapsto \rnSheb(t,x+z|s,z)$ is ergodic in any probability space on which the non-zero shear maps $\{\sheard{s}{\nu} : s,\nu \in \bbR \backslash \{0\}\}$ are all ergodic. The distribution of $\rnSheb(t,x+z|s,z)$ is unbounded and so one should not expect that a bound for $\rnSheb(t,x\viiva s,y)$ depending only on $(x-y)$ can hold almost surely for all $x,y \in \bbR$.
\end{remark}

We next turn to the proof of Theorem \ref{prop:IC}, which shows basic properties of our solution to \eqref{eq:SHEf}. We start with the semi-group property of the fundamental solutions, which is essentially the Chapman-Kolmogorov identity for the continuum polymer. This result is already contained in \cite[Theorem 3.1(vii)]{Alb-Kha-Qua-14-jsp}, though the proof there is light on details. For completeness, we include a more detailed proof here.
\begin{lemma}\label{lem:ChaKol}
There exists an event $\Omega_0$ with $\bbP(\Omega_0)=1$ so that for all $(s,y,t,x,\beta)\in \varsets\times\R$ and all $r \in (s,t)$,
\begin{align}
\Sheb(t,x\viiva s,y) &= \int_{\bbR} \Sheb(t,x|r,z)\Sheb(r,z\viiva s,y)dz.\label{eq:C-K}
\end{align}
\end{lemma}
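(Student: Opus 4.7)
My plan is to prove \eqref{eq:C-K} first at a single fixed parameter tuple, and then upgrade to a simultaneous-for-all-parameters statement by continuity. For the fixed-parameter step, I would fix $(s,y,\beta,r)$ with $r>s$ and consider the process $(t,x)\mapsto \Sheb(t,x\viiva s,y)$ on $[r,\infty)\times\R$. By the adaptedness and mild formulation gathered in Lemma \ref{lem:fixGreen}, this process is a continuous $(\fil_{r,t})$-adapted mild solution of \eqref{eq:Greens} on $[r,\infty)$ with initial datum $\mu(dz)=\Sheb(r,z\viiva s,y)\,dz$ at time $r$. By Theorem \ref{prop:IC}\eqref{prop:IC:ICMpres} applied with the point mass $\delta_y\in\ICM$, we have $\mu\in\ICM$, and the superposition formula \eqref{eq:superpos} produces a second continuous mild solution with the same initial datum, namely $(t,x)\mapsto \int_\R \Sheb(t,x\viiva r,z)\Sheb(r,z\viiva s,y)\,dz$. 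Uniqueness of mild solutions (Lemma \ref{lem:fixGreen}, also stated as Lemma \ref{lem:uniq}) then forces the two to coincide almost surely at this fixed parameter tuple. One could alternatively obtain the identity directly from the chaos expansion \eqref{eq:SHEchaos} by partitioning the time-ordered simplex at level $r$, using $\int\heat(t-r,x-z)\heat(r-s,z-y)\,dz=\heat(t-s,x-y)$ and the factorization of Wiener-It\^o integrals over the disjoint intervals $[s,r]$ and $[r,t]$.

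Next, I would pick a countable dense subset $\sD\subset\{(s,y,t,x,\beta,r)\in\R^6:s<r<t\}$, for instance $\bbD^6$ intersected with this open set, and take the countable intersection of the almost sure events on which \eqref{eq:C-K} holds at each element of $\sD$. Intersecting further with the full-probability events of Theorem \ref{thm:rnreg} and Corollary \ref{cor:growth} yields a single event $\Omega_0$ of full probability on which \eqref{eq:C-K} holds at every point of $\sD$, on which $\rnSheb$ is jointly continuous in its five arguments, and on which the locally uniform polynomial envelope from Remark \ref{rem:nogo} is available, so that for any fixed compact $\Gamma\subset\varsets\times\R$ there is $C=C(\Gamma,\omega)$ with $\rnSheb(u,v|u',v')\le C(1+|v|^4+|v'|^4)$ throughout $\Gamma$.

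Finally I would extend \eqref{eq:C-K} to all parameters by continuity. The left-hand side is jointly continuous by Theorem \ref{thm:rnreg}. For the right-hand side, writing $\Sheb=\heat\cdot\rnSheb$ and factoring the two heat kernels as $\heat(t-r,x-z)\heat(r-s,z-y)=\heat(t-s,x-y)\,q_{s,y,t,x,r}(z)$, where $q_{s,y,t,x,r}(z)$ is the Brownian-bridge density in $z$ (a Gaussian whose mean and variance depend continuously on $(s,y,t,x,r)$ with $s<r<t$), the integral becomes $\heat(t-s,x-y)\int_\R q_{s,y,t,x,r}(z)\,\rnSheb(t,x\viiva r,z)\,\rnSheb(r,z\viiva s,y)\,dz$. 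On any compact parameter neighborhood, $q_{s,y,t,x,r}$ is dominated by a single Gaussian of slightly larger variance, and the two $\rnSheb$ factors are jointly dominated by $C'(1+|z|^8+|x|^4+|y|^4)$. Dominated convergence then yields joint continuity of the right-hand side on $\Omega_0$, and agreement with the left-hand side on the dense set $\sD$ forces agreement everywhere on $\Omega_0$.

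The only real technical point is producing the uniform integrable envelope in the last step, and this is immediate from the locally uniform polynomial bounds on $\rnSheb$ supplied by Corollary \ref{cor:growth}. The fixed-parameter identity is essentially a uniqueness statement applied to a superposition built out of an initial condition that lies in $\ICM$, and the dyadic-rational-plus-continuity extension is the standard mechanism already used throughout the section.
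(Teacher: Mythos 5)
Your overall strategy is the same as the paper's: prove \eqref{eq:C-K} at a fixed parameter tuple by identifying both sides as mild solutions restarted at time $r$ and invoking uniqueness, then extend to all parameters by dyadic density, joint continuity of $\rnSheb$, and dominated convergence using the polynomial growth envelope of Corollary \ref{cor:growth}. The extension step in your proposal is essentially identical to the paper's and is fine.

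There is, however, a gap at the heart of the fixed-parameter step. You assert that ``by the adaptedness and mild formulation gathered in Lemma \ref{lem:fixGreen}, this process is a continuous $(\fil_{r,t})$-adapted mild solution of \eqref{eq:Greens} on $[r,\infty)$ with initial datum $\Sheb(r,z\viiva s,y)\,dz$ at time $r$.'' Lemma \ref{lem:fixGreen} only gives the mild equation from time $s$ with a delta initial condition; the restart property you need is the identity
\begin{align*}
\heat(t-s,x-y) + \beta\int_s^r\!\!\int_{\R}\heat(t-u,x-w)\Sheb(u,w\viiva s,y)W(du\,dw) = \int_{\R}\heat(t-r,x-z)\Sheb(r,z\viiva s,y)\,dz ,
\end{align*}
and establishing it is exactly the nontrivial content of the proof: one substitutes the mild equation for $\Sheb(r,z\viiva s,y)$ into the right-hand side and interchanges the $dz$ integral with the Walsh integral via the stochastic Fubini theorem, with the requisite moment bounds supplied by Lemma \ref{lem:mombd}. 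The paper carries out precisely this computation and then closes the argument with Burkholder--Davis--Gundy and Gronwall (which is the uniqueness step made explicit). Relatedly, to invoke uniqueness of mild solutions with the \emph{random} initial condition $f(z)=\Sheb(r,z\viiva s,y)$ you cannot appeal to pathwise membership in $\ICM$ (Theorem \ref{prop:IC}\eqref{prop:IC:ICMpres} gives condition \eqref{eq:CDcond}, which is for non-random data); the applicable hypothesis is the moment condition \eqref{eq:BGcond}, which the paper verifies from Lemma \ref{lem:mombd}. Your alternative sketch via splitting the chaos expansion at level $r$ is a legitimate different route, but as written it is only a sketch; making it rigorous requires justifying the factorization of the multiple Wiener--It\^o integrals over $[s,r]$ and $[r,t]$ together with the interchange of the $dz$ integral with the infinite chaos sum, which again amounts to the same moment estimates.
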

\begin{proof}
First, fix $s,y \in \bbR$, $\beta \in\R$, and $r>s$. Recall that the $\sC((s,\infty),\bbR)$-valued random variable $\Sheb(\aabullet,\aabullet\viiva s,y)$ is the unique mild solution to \eqref{eq:Greens} coming from Lemma \ref{lem:fixGreen}. Call $f(z) = \Sheb(r,z | s,y)$ and notice that Lemma \ref{lem:mombd} implies that \eqref{eq:BGcond} holds for $f$. It then follows from Lemma \ref{lem:uniq} that for $t>r$ and $x \in \bbR$,
\begin{align*}
\Sheb(t,x\viiva r;f) &= \int_{\bbR} \Sheb(t,x|r,z)\Sheb(r,z\viiva s,y)dz
\end{align*}
is the unique $\sC((r,\infty),\R)$-valued and adapted solution to the mild equation
\begin{align*}
U(t,x) &= \int_{\bbR} \heat(t-r,x-z)f(z)dz + \int_{\bbR} \int_r^t \heat(t-v,x-w)U(v,w)W(dv\,dw)
\end{align*}
satisfying the moment hypothesis in \eqref{eq:BGcond}. Recalling the mild formulation \eqref{eq:Greens}, we also have for all $z \in \R$,
\begin{align*}
f(z) &= \Sheb(r,z\viiva s,y) = \heat(r-s,z-y) +  \int_{\R}\int_s^r \heat(r-v,z-w)\Sheb(v,w\viiva s,y)W(dv\,dw)
\end{align*}
The moment estimates in  Lemma \ref{lem:mombd} imply that we may use the stochastic Fubini theorem, see  \cite[Theorem 4.33]{DaP-Zab-14} or \cite[Theorem 2.6]{Wal-86}, to write
\begin{align*}
\Sheb(t,x\viiva r;f) &= \int_{\bbR} \heat(t-r,x-z)f(z)dz + \int_{\bbR} \int_r^t \heat(t-v,x-w)\Sheb(v,w|r;f)W(dv\,dw)\\
&= \int_{\bbR} \heat(t-r,x-z)\heat(r-s,z-y)dz\\
&+\int_{\R} \heat(t-r,x-z) \int_{\R}\int_s^r \heat(r-v,z-w)\Sheb(v,w\viiva s,y)W(dv\,dw)dz \\
&+ \int_{\bbR} \int_r^t \heat(t-v,x-w)\Sheb(v,w|r;f)W(dv\,dw)\\
&= \heat(t-s,x-y) +\int_{\R}\int_s^r \heat(t-v,x-w)\Sheb(v,w\viiva s,y)W(dv\,dw) \\
&+ \int_{\bbR} \int_r^t \heat(t-v,x-w)\Sheb(v,w|r;f)W(dv\,dw).
\end{align*}
We then have for all $t>r$ and $x \in \bbR$,
\begin{align*}
&\Sheb(t,x\viiva r;f) - \Sheb(t,x\viiva s,y) \\
&\qquad= \int_{\R} \int_r^t \heat(t-v,x-w)(\Sheb(v,w\viiva r;f) - \Sheb(v,w \viiva s,y))W(dv\,dw).
\end{align*}
The Burkholder-Davis-Gundy and Gronwall inequalities now imply that
\[\Sheb(\aabullet ,\aabullet \viiva r;f) - \Sheb(\aabullet ,\aabullet\viiva s,y)= 0.\]
\eqref{eq:C-K} then holds for all $(s,x,t,y,\beta) \in \varsets\times\bbR \cap \bbD^5$ and $r \in \bbD$ with $r \in(s,t)$ on a single set of full probability. Continuity of the left hand side of the expression in \eqref{eq:C-K} and the integrand of the right hand side, combined with the growth estimates in Corollary \ref{cor:growth} and the dominated convergence theorem, imply that the result holds simultaneously for all $(s,x,t,y,\beta) \in \varsets\times\bbR$ and $r\in (s,t)$.
\end{proof}

\begin{proof}[Proof of Theorem \ref{prop:IC}]
On the complement of the event in equation \eqref{eq:cor:growth} of Corollary \ref{cor:growth}, for each $T,B>0$, there exists a constant $C=C(T,B,\omega)$ so that for all $s<t$ in $[-T,T]$, all $\beta \in [-B,B]$, and all $x,y \in \bbR$, $C^{-1}(1+ |x|^4+|y|^4)^{-1} \leq \rnSheb(t,x\viiva s,y) \leq C(1+ |x|^4+|y|^4)$. Then, for $\mu,\zeta \in \ICMM$, we have
\begin{align*}
&0 < C^{-1} \int_{\bbR}\int_{\bbR}(1+|z|^4+|w|^4)^{-1} \heat(t-s,z-w)\zeta(dz)\mu(dw) \\
&\leq \int_{\bbR}\int_{\bbR} \Sheb(t,z|s,w)\zeta(dz)\mu(dw) \leq  C \int_{\bbR}\int_{\bbR}(1+|z|^4+|w|^4)\heat(t-s,z-w) \zeta(dz)\mu(dw) <\infty,
\end{align*}
by the conditions defining $\ICMM(4)$. This implies \eqref{prop:IC:1}. 

We now turn to the first case of \eqref{prop:IC:ICMpres}. Fix $a>0$ and  $s,t \in \bbR$ with $s<t$. Call $b= \frac{1}{2(t-s)}$. Again, by Corollary \ref{cor:growth}, there exist $C,C'>0$ depending on $s,t,B,\omega$ so that whenever $\beta \in [-B,B]$,
\begin{align*}
&\int_{\bbR} e^{-a x^2}\Sheb(t,x\viiva s;\mu)dx = \int_{\bbR}\int_{\bbR} e^{-a x^2}\heat(t-s,x-y)\rnSheb(t,x\viiva s;y)\mu(dy)dx \\
&\leq C \int_{\bbR}\int_{\bbR} e^{-a x^2}e^{-b (x-y)^2 }(1+|x|^4+|y|^4)dx\mu(dy) \\
&\leq C' \int_{\bbR} \int_{\bbR }e^{-\frac{a}{2}x^2}e^{-b(x-y)^2}dx\mu(dy)  +  C' \int_{\bbR} \int_{\bbR}e^{-a x^2}e^{- b (x-y)^2}(1+|y|^4)dx\mu(dy).
\end{align*}
The inner $dx$ integrals can now be computed and result in Gaussian density functions in $y$, up to normalization. Therefore, both terms are finite by the condition that $\mu \in \ICM$. The remaining case of \eqref{prop:IC:ICMpres} is similar.

To prove \eqref{prop:IC:ICMsharp}, suppose that for some $K>0$, $\zeta[-K,K] >0$ and $\mu \in \sM_+(\bbR)$. Call $A>0$ the supremum in the statement of \eqref{prop:IC:ICMsharp} and, without loss of generality assume that $A$ is finite and consider the case that for all $a<A$, $\int e^{-a y^2}\mu(dy) = \infty$. We have
\begin{align*}
\int_{\bbR}\int_{\bbR} \Sheb(t,z|s,w)\zeta(dz)\mu(dw) \geq C\zeta([-K,K]) \int_{\bbR}\frac{\min_{z\in[-K,K]}\heat(t-s,z-w)}{1+|K|^4+|w|^4} \mu(dw).
\end{align*}
Limit comparison now shows that if $\frac{1}{2(t-s)}<A$, then the above integral will be infinite, implying \eqref{prop:IC:ICMsharp}.

Next, we turn to part \eqref{prop:IC:Holder}. Fix $K,B,T>0$ and $\epsilon \in (0,T/2)$ and restrict attention to $s < t-\epsilon$, $x \in [-K,K]$, and $\beta \in [-B,B]$. Then we have
\begin{align*}
\heat(t-s,x-y)\rnSheb(t,x\viiva s,y) &\leq C \frac{1}{\sqrt{2\pi(t-s)}}e^{-\frac{(x-y)^2}{2(t-s)}} (1+|x|^4+|y|^4) \leq C' e^{-  \frac{y^2}{8T}}
\end{align*}
for some $C'=C'(K,T,B,\epsilon,\omega)$. Pointwise continuity on $\varsetth\times\bbR = \{(s,t,x,\beta) \in \bbR^4 : s<t\}$ follows from the previous estimate and the dominated convergence theorem. Now, we turn to controlling the H\"older semi-norms. It follows from \eqref{eq:cor:Hgrowth} that for each $T,B>1$,  $\alpha \in (0,1/4)$, $\gamma \in (0,1/2)$, and $\eta \in (0,1)$, there exists $C=C(T,B,\alpha,\gamma,\eta,\omega)>0$ so that for all $K>1$,
\begin{align*}
|\rnShe_{\aabullet}(\aabullet,\aabullet|\aabullet,\aabullet)|_{\sC^{\alpha,\gamma,\eta}(\varsetsg{T}{K}{1/K}\times[-B,B])} \leq C(1+K^7).
\end{align*}
To see H\"older regularity, take any $\delta\in (0,1)$ and let $M>1$ be sufficiently large that $1/M<\delta$. Take $(t_1,x_1,s_1,\beta_1), (t_2,x_2,s_2,\beta_2) \in \varsetthg{T}{M}{\delta}\times[-B,B]$ with $(t_1,x_1,s_1,\beta_1) \neq (t_2,x_2,s_2,\beta_2)$. We have
\begin{align*}
&\frac{|\She_{\beta_1}(t_1,x_1|s_1;\mu) - \She_{\beta_2}(t_2,x_2|s_2;\mu)|}{|t_2-t_2|^{\alpha}+ |s_2-s_1|^{\alpha} + |x_2-x_2|^{\gamma} +|\beta_2-\beta_1|^\eta}\\ 
&\leq \frac{\int_{\bbR}|\She_{\beta_1}(t_1,x_1|s_1,y) - \She_{\beta_2}(t_2,x_2|s_2,y)|\mu(dy)}{|t_2-t_1|^{\alpha}+ |s_2-s_1|^{\alpha} + |x_2-x_1|^{\gamma} + |\beta_2-\beta_1|^\eta} \\
&\leq \frac{\int_{\bbR}|\rnShe_{\beta_1}(t_1,x_1|s_1,y) - \rnShe_{\beta_2}(t_2,x_2|s_2,y)|\heat(t_2-s_2,x_2-y)\mu(dy)}{|t_2-t_1|^{\alpha}+ |s_2-s_1|^{\alpha} + |x_2-x_1|^{\gamma} + |\beta_2-\beta_1|^\eta}\\
&\qquad+ \frac{\int_{\bbR}|\heat(t_1-s_1,x_1-y) - \heat(t_2-s_2,x_2-y)|\rnShe_{\beta_1}(t_1,x_1|s_1,y) \mu(dy)}{|t_2-t_1|^{\alpha}+ |s_2-s_1|^{\alpha} + |x_2-x_1|^{\gamma} }.
\end{align*}
 Then, by Corollary \ref{cor:growth}, there exists $C=C(T,B,M,\alpha,\gamma,\eta,\omega)$ so that
\begin{align*}
&\int_{\bbR}\frac{|\rnShe_{\beta_1}(t_1-s_1,x_1-y) - \rnShe_{\beta_2}(t_2-s_2,x_2-y)|}{|t_2-t_1|^{\alpha}+ |s_2-s_1|^{\alpha} + |x_2-x_1|^{\gamma} + |\beta_2-\beta_1|^\eta}\heat(t_2-s_2,x_2-y)\mu(dy) \\
&\leq |\rnShe_{\aabullet}(\aabullet,\aabullet|\aabullet,\aabullet)|_{\sC^{\alpha,\gamma,\eta}(\varsetsg{T}{M}{\delta}\times[-B,B])} \int_{-M}^M \max_{(s,t,x) \in \varsetthg{T}{M}{\delta}} \heat(t-s,x-y)\mu(dy) \\
&\qquad+ \int_{\bbR\backslash[-M,M]}  |\rnShe_{\aabullet}(\aabullet,\aabullet|\aabullet,\aabullet)|_{\sC^{\alpha,\gamma,\eta}(\varsetsg{T}{y}{1/y}\times[-B,B])}\max_{(s,t,x) \in \varsetthg{T}{M}{\delta}} \heat(t-s,x-y)\mu(dy) \\
&\leq C\bigg( \int_{-M}^M \max_{(s,t,x) \in \varsetthg{T}{M}{\delta}} \heat(t-s,x-y)\mu(dy) \\
&\qquad+  \int_{\bbR\backslash[-M,M]}(1+|y|^7) \max_{(s,t,x) \in \varsetthg{T}{M}{\delta}} \heat(t-s,x-y)\mu(dy)\bigg) < \infty.
\end{align*}
Recall that
\begin{align}
\partial_t \heat(t,x) &= \bigg(\frac{x^2}{2t^2} - \frac{1}{2t} \bigg)\heat(t,x) \label{eq:heatdiff}
\end{align}
and $1/M < \delta$. By Corollary \ref{cor:growth} and the previous observation,  there is a constant $C=C(T,B,M,\omega)$ so that we also have
\begin{align*}
&\int_{\bbR}\frac{|\heat(t_1-s_1,x_1-y) - \heat(t_2-s_2,x_2-y)|\rnShe_{\beta_1}(t_1,x_1|s_1,y) }{|t_2-t_2|^{\alpha}+ |s_2-s_1|^{\alpha} + |x_2-x_2|^{\gamma}}\mu(dy) \\
&\leq C\int_{\bbR}\max_{(s,t,x)\in \varsetsg{T}{M}{\delta}} \heat(t-s,x-y) (1+|y|^4)\mu(dy) < \infty.
\end{align*}

Part \eqref{prop:IC:semi-group} follows from Lemma \ref{lem:ChaKol}, the definition of of a physical solution via superposition in \eqref{eq:superpos}, and Tonelli's theorem.

Turning to \eqref{prop:IC:fcont}, let $B,K,T>0$ be as in the statement. Let  $\epsilon>0$ and $r \geq 4K$ be arbitrary and let $\varphi \in \sC_c(\R,[0,1])$ satisfy $\varphi(x) = 1$ on $[-r,r]$ and $\varphi(x) = 0$ on $\bbR\backslash [-2r,2r]$. Take any $\delta\in (0,1/2)$, any $g\in \CICM$ with $d_{\CICM}(f,g)<\delta$, any $x \in [-K,K]$, any $\beta \in [-B,B]$, and any $s< t$ in $[-T,T]$ with $t-s<\delta$. Use the triangle inequality to write
\begin{align}
&\bigg|\int_{\R} \Sheb(t,x\viiva s,y)g(y)dy -f(x)\bigg|  \notag\\
&\leq \int_{\R}\heat(t-s,x-y)g(y)|\rnSheb(t,x\viiva s,y)-1| \varphi(y)dy \label{eq:fcont-1} \\
&\qquad+ \int_{\R}\heat(t-s,x-y)g(y)|\rnSheb(t,x\viiva s,y)-1| (1-\varphi(y))dy \label{eq:fcont-2} \\
&\qquad + \int_{\R}\heat(t-s,x-y)|g(y)-f(y)|\varphi(y)dy \label{eq:fcont-3}\\
&\qquad + \int_{\R}\heat(t-s,x-y)|g(y)-f(y)|(1-\varphi(y))dy \label{eq:fcont-4}\\
&\qquad + \bigg|\int_{\R}\heat(t-s,x-y)f(y)\varphi(y)dy - f(x)\bigg| \label{eq:fcont-5} \\
&\qquad + \int_{\R}\heat(t-s,x-y)f(y)(1-\varphi(y))dy. \label{eq:fcont-6}
\end{align}
From the definition of $d_{\CICM}$, \eqref{eq:CICMm}, there exists $C=C(r,f)>0$ so that for all $\delta\in(0,1/2)$ and all $g \in \CICM$ with $d_{\CICM}(f,g)<\delta$,
\begin{align}
\sup_{-2r \leq x \leq 2r}|f(x)-g(x)|\leq C \delta, \quad \text{ and } \max_{-2r \leq x \leq 2r}g(x) \leq C. \label{eq:fcont-7}
\end{align}

We begin with the expression in \eqref{eq:fcont-1}. For each $r$ as above, there is a $\delta_0=\delta_0(r,K,\epsilon) \in(0,1/2)$ so that whenever $\delta<\delta_0$, $|\rnSheb(t,x\viiva s,y)-1| <\epsilon$ for all $x \in [-K,K]$, $y \in [-2r,2r]$, and $s \leq t$ in $[-T,T]$ with $t-s \in [0,\delta)$. Bounding $g$ by the constant $C$ from \eqref{eq:fcont-7} on the support of $\varphi$, then bounding $\varphi$ by $1$ and using the fact that the heat kernel integrates to $1$, it follows that applying
\begin{align}
\varlimsup_{r\to\infty}\varlimsup_{\epsilon\searrow 0}\varlimsup_{\delta \searrow0}\, \sup_{\substack{y\in[-K,K],\beta\in[-B,B] \\ s,t\in[-T,T], t-s\in(0,\delta ) \\ d_{\CICM}(f,g)<\delta}}\, \label{eq:fcont-8}
\end{align}
to the expression in \eqref{eq:fcont-1} results in a value of $0$. A similar argument controls the expression in \eqref{eq:fcont-3}: on the support of $\varphi$, use \eqref{eq:fcont-7} to bound $|g(y)-f(y)|$ by $C\delta$, then bound $\varphi$ by $1$ and use the fact that the heat kernel integrates to $1$ to see that applying \eqref{eq:fcont-8} to \eqref{eq:fcont-3} results in a value of $0$. Turning to \eqref{eq:fcont-5}, notice that $ f(x) = \varphi(x) f(x)$ for $x \in [-K,K]$. Since $\varphi(x)f(x)$ is compactly supported and continuous, convergence of \eqref{eq:fcont-5} to $0$ after applying \eqref{eq:fcont-8} is a standard fact about the heat kernel. 

By Corollary \ref{cor:growth}, there is a constant $C'=C'(\omega,B,T,K)>0$ so that for all $x \in [-K,K],y\in\R$, and $s\leq t$ in $[-T,T]$, $|\rnSheb(t,x\viiva s,y)-1| \leq C'(1+|y|^4)$. Then we may bound the sum of the terms in \eqref{eq:fcont-2}, \eqref{eq:fcont-4}, and \eqref{eq:fcont-6} by
\[
\int_{\R}\heat(t-s,x-y)\ind_{[-r,r]^c}(y)[C'g(y)(1+|y|^4) + g(y) + 2f(y)]dy
\]
For $|y| \geq 4K$ and $x \in [-K,K]$, we have \[-2(x-y)^2 \leq -2y^2 + 4 |y||x| - 2 x^2 \leq -2y^2 + 4K |y| \leq -y^2.\]

For $t-s < \delta < 1/8$, notice that $\frac{1}{2(t-s)}-2 \geq \frac{1}{4(t-s)}$. Since $r \geq 4K$, we have $|x-y|>K$ if $y \in [-r,r]^c$. For such values of $s,t,x,y$, this leads to
\[
\heat(t-s,x-y) = \frac{1}{\sqrt{2\pi(t-s)}}e^{-(x-y)^2[\frac{1}{2(t-s)}-2]}e^{-2(x-y)^2} \leq  \frac{1}{\sqrt{2\pi(t-s)}}e^{-\frac{K^2}{4(t-s)}} e^{-y^2}.
\]
It follows from the definition of $d_{\CICM}$ that there is a constant $C''=C''(f)$ so that if $d_{\CICM}(f,g)<1/8$, then
\[
\int_{\R} e^{-y^2}(1+|y|^4) g(y)dy < C''.
\]
Then there is $C'''=C'''(\omega,f,B,T,K)>0$ so that for all $g\in\CICM$ with $d_{\CICM}(f,g)<1/8$,  all $x \in [-K,K]$, all $s< t$ in $[-T,T]$ with $t-s<1/8$, and all $r \geq 4K$,
\[
\int_{\R}\heat(t-s,x-y)\ind_{[-r,r]^c}(y)[C'g(y)(1+|y|^4) + g(y) + 2f(y)]dy \leq \frac{C'''}{\sqrt{2\pi(t-s)}}e^{-\frac{K^2}{4(t-s)}}.
\]
It follows that applying \eqref{eq:fcont-8} to  the sum of the terms in \eqref{eq:fcont-2}, \eqref{eq:fcont-4}, and \eqref{eq:fcont-6} also results in a value of zero. The remaining case of \eqref{prop:IC:fcont} is similar.

Next, we turn to part \eqref{prop:IC:musat}, with the proof being similar to that of part \eqref{prop:IC:fcont}. Let $B,T>0$ be as in the statement. Let  $\epsilon>0$ and $r \geq 1$ be arbitrary and let $\varphi \in \sC_c(\R,[0,1])$ satisfy $\varphi(x) = 1$ on $[-r,r]$ and $\varphi(x) = 0$ on $\bbR\backslash [-2r,2r]$. Take any $\delta\in (0,1/2)$, any $\zeta\in \ICM$ with $d_{\ICM}(\zeta,\mu)<\delta$, any $\beta \in [-B,B]$, and any $s< t$ in $[-T,T]$ with $t-s<\delta$. Use the triangle inequality to write
\begin{align}
&\bigg|\int_{\R}\int_{\R}f(x) \Sheb(t,x\viiva s,y)dx\zeta(dy) -\int_{\R}f(y)\mu(dy)\bigg| \notag\\
&\qquad \leq\int_{\R} \bigg|\int_{\R}f(x) \Sheb(t,x\viiva s,y)dx - f(y)\bigg| \varphi(y)\zeta(dy) \label{eq:musat-1}\\
&\qquad\qquad +\int_{\R}\int_{\R}f(x) \Sheb(t,x\viiva s,y)dx (1-\varphi(y))\zeta(dy)+ \int_{\R} f(y)(1-\varphi(y))\zeta(dy) \label{eq:musat-2} \\
&\qquad\qquad + \bigg|\int_{\R} f(y)\zeta(dy) - \int_{\R}f(y)\mu(dy) \bigg|. \label{eq:musat-3}
\end{align}
The hypothesis implies that $f \in \CICM$ so by part \eqref{prop:IC:fcont}, there exists $\delta_0=\delta_0(\w,r,B,T)$ so that for all $\beta \in [-B,B]$, all $s < t$ in $[-T,T]$ with $t-s<\delta_0$, and all $y \in [-2r,2r]$,
\[
\bigg|\int_{\R}f(x) \Sheb(t,x\viiva s,y)dx - f(y)\bigg| < \epsilon.
\]
There exists $C=C(\mu,r)>0$ so that whenever $d_{\ICM}(\zeta,\mu)<1/2$, $\int_{\R}\varphi(y)\zeta(dy) \leq C$. It follows that applying
\begin{align}
\varlimsup_{r\to\infty}\varlimsup_{\epsilon\searrow0}\varlimsup_{\delta\searrow 0}  \sup_{\substack{\beta\in[-B,B], s,t\in[-T,T] \\d_{\ICM}(\mu,\zeta)<\delta, t-s\in(0,\delta )}} \label{eq:musat-4}
\end{align}
to the expression in \eqref{eq:musat-1} results in a value of $0$. 

Recall that we may bound $\heat(2/a,x)(1+|x|^4)$ by a constant depending only on $a$ times $\heat(1/a,x)$ for all $x \in \R$. By Corollary \ref{cor:growth} and the hypothesis on $f$ there exist $C',C'',C'''>0$, all depending on $\w,a,A,B,K$ and $T$, so that whenever $t-s < 1/2$, the expression in \eqref{eq:musat-2} is bounded by
\begin{align*}
&C'\int_{\R\backslash[-r,r]^c}\bigg(\int_{\R}\heat(2/a,x)\heat(t-s,y-x)(1+|x|^4+|y|^4)dx + e^{-a y^2}\bigg) \zeta(dy)\\
&\qquad \leq C'' \int_{\R\backslash[-r,r]^c} \heat(1/a+t-s,y) + \heat(1/a,y) + e^{-ay^2}\bigg)\zeta(dy) \leq C''' \int_{\R\backslash[-r,r]^c} e^{-\frac{a}{4}y^2}\zeta(dy) \\
&\qquad \leq C''' e^{-\frac{a}{8}r^2} \int_{\R} e^{-\frac{a}{8}y^2}\zeta(dy).
\end{align*}
From the definition of $d_{\ICM}$,  there exists $C''''=C''''(\mu,a)$ so that whenever $d_{\ICM}(\zeta,\mu)<1/2$,  $\int_{\R} e^{-\frac{a}{8}y^2}\zeta(dy) \leq C''''$. It follows that applying \eqref{eq:musat-4} to the expression in \eqref{eq:musat-2} results in a value of zero. Sending $d_{\ICM}(\zeta,\mu) \searrow 0$ sends the expression in \eqref{eq:musat-3} to zero, directly from the definition of $d_{\ICM}$ (the topology is generated by test functions satisfying the hypotheses satisfied by $f$). The remaining case of \eqref{prop:IC:musat} is similar and so the result follows.

\end{proof}

Next, we turn to the proof of the conservation of asymptotic slope, Proposition \ref{prop:cons}.

\begin{proof}[Proposition \ref{prop:cons}]
The condition defining $H(\lambda_-,\lambda_+)$ implies that $f(z)dz \in \ICM$. Local boundedness  of $\Sheb(t,\aabullet | s,f)$ follows from the continuity in Theorem \ref{prop:IC}.  Fix $T,B>0$ and restrict attention to $-T \leq s \leq t \leq T$ and $-B \leq \beta \leq B$. By Corollary \ref{cor:growth}, there exists $C=C(T,B,\omega)>0$  so that $C^{-1} (1+|x|^4+|z|^4)^{-1} \leq \rnSheb(t,x\viiva s,z) \leq C(1+|x|^4+|z|^4)$ for all $z,x \in \bbR$. Fix $\epsilon>0$ and $f \in H(\lambda_-,\lambda_+)$. By hypothesis, there exist $c',C'>0$ so that the following hold:
\begin{align*}
c'\bigg[\ind_{(-\infty,0]}(x)e^{(\lambda_-+\epsilon)x} + \ind_{(0,\infty)}(x) e^{(\lambda_+-\epsilon)x}\bigg] &\leq f(x) \leq C'\bigg[\ind_{(-\infty,0]}(x)e^{(\lambda_--\epsilon)x} + \ind_{(0,\infty)}(x) e^{(\lambda_++\epsilon)x}\bigg]\\
c' e^{-\epsilon(|x|+|z|)}&\leq  (1+|x|^4+|z|^4)^{-1} \leq 1, \qquad \text{ and }\\
1&\leq  (1+|x|^4+|z|^4) \leq C' e^{\epsilon(|x|+|z|)}.
\end{align*}
We prove one inequality, with the others being similar. We have
\begin{align*}
&\varliminf_{x\to\infty}\frac{1}{x}\log \int_{\bbR} \heat(t-s,x-z) \rnSheb(t,x\viiva s,z)f(z)dz  \\
&\qquad\geq \varliminf_{x\to\infty}\frac{1}{x}\log \int_{\bbR} \heat(t-s,x-z) (1+|x|^4+|z|^4)^{-1}  f(z)dz \\
&\qquad\geq \varliminf_{\epsilon \searrow 0} \varliminf_{x\to\infty}\frac{1}{x}\log \int_{0}^\infty \heat(t-s,x-z) e^{-\epsilon z}  e^{(\lambda_+- \epsilon)z}dz \\
&\qquad = \varliminf_{\epsilon \searrow 0} \varliminf_{x\to\infty}\frac{1}{x}\log \int_{\bbR} \heat(t-s,x-z) e^{(\lambda_+- 2\epsilon)z}dz = \lambda_+.
\end{align*}
To explain the last two steps, let $X$ be a Normal$(0,t-s)$ random variable on $(\Omega,\sF,\bbP)$. Then
\begin{align*}
\int_{-\infty}^0\heat(t-s,x-z)e^{(\lambda_+ -2\epsilon)z}dz &=   \int_{x}^\infty\heat(t-s,y)  e^{(\lambda_+ - 2\epsilon)(x-y)}dy =  \bbE\bigg[e^{(\lambda_+-2\epsilon)(x-X)}\ind_{\{X>x\}}\bigg],
\end{align*}
which has Gaussian decay as $x \to \infty$ and so is negligible compared to the full-space integral of the same function. The form of the Gaussian moment generating function then completes the proof.
\end{proof}

We defer the proof of Theorem \ref{thm:jcont} to Section \ref{sec:reg}, where we prove it along with Theorem \ref{thm:polyreg}.

\section{Continuum directed polymers}\label{sec:poly}
We next turn to the study of polymer measures. We initially view the measures $\Polyb_{(s,y),(t,x)}$ defined in \eqref{eq:p2pdef} as measures on the product space $(\bbR^{[s,t]},\sB(\bbR)^{[s,t]})$ and then show that each induces a unique measure on $(\sC_{[s,t]},\sB(\sC_{[s,t]}))$. That this is true for all $(s,y,t,x,\beta) \in \varsets\times\bbR = \{(s,y,t,x,\beta) \in \bbR^5 : s < t\}$ is the content of our first main result in this section, Theorem \ref{prop:polyexist}. This was previously shown for fixed $s,y,t,x,\beta \in \bbR$ on a event of full probability depending on all of these parameters using a different argument in \cite{Alb-Kha-Qua-14-jsp}. Before turning to our proof of Theorem \ref{prop:polyexist}, we begin with some preliminary observations.

For $s < t$ we define a random variable $\widetilde{X}$ on $\sC([s,t],\bbR)$, by
\begin{align*}
\widetilde{X}_u &= X_u - \bigg(\frac{t-u}{t-s} X_s + \frac{u-s}{t-s} X_t\bigg),
\end{align*}
where $X$ is the coordinate random variable. Note that the definition of $\widetilde{X}$ depends implicitly on $s$ and $t$, but we suppress this dependence. Denote by $\bfP_{(s,y),(t,x)}^{BB}$ and $\oE_{(s,y),(t,x)}^{BB}$ the law and expectation associated to a Brownian bridge from $(s,y)$ to $(t,x)$. We start this section with an easy lemma recording some well-known and basic properties of Brownian bridge.
\begin{lemma}\label{lem:BBest}
For $ -\infty < s < a < b < t < \infty$, $x,y \in \bbR$, and $p>1$, there exists $C=C(p)$ so that
\begin{align*}
\oE_{(s,y),(t,x)}^{BB}[|X_a - X_b|^p] &\leq C \bigg(|b-a|^{p/2} + \frac{|b-a|^p}{|t-s|^p}(|t-s|^{p/2} + |x-y|^p)\bigg), \\
\oE_{(s,y),(t,x)}^{BB}[\max_{s \leq a \leq t}|X|^p]^{1/p} &\leq C\bigg(|t-s|^{1/2}+ |y| + |x-y|\bigg).
\end{align*}
Moreover, the distribution of $\tilde{X}$ under $\bfP_{(s,y),(t,x)}^{BB}(\aabullet)$ is the same as the distribution of $X$ under $\bfP_{(s,0),(t,0)}^{BB}(\aabullet)$.
\end{lemma}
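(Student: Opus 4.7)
All three statements are classical structural facts about Brownian bridges, so I would organize the argument around the affine decomposition
\[
X_u = \tilde X_u + \frac{t-u}{t-s}y + \frac{u-s}{t-s}x,
\]
and dispose of the third (distributional) claim first. Under $\bfP^{BB}_{(s,y),(t,x)}$ the process $X$ is Gaussian with mean $\frac{t-u}{t-s}y+\frac{u-s}{t-s}x$ and covariance $\frac{(u-s)(t-v)}{t-s}$ for $s\le u\le v\le t$, so $\tilde X$ is centered Gaussian with the same covariance as a standard bridge from $(s,0)$ to $(t,0)$. Since centered Gaussian processes are determined by their covariance, this identifies the law.

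With the decomposition in hand, the first moment bound reduces to a Gaussian computation. Writing $X_b-X_a=(\tilde X_b-\tilde X_a)+\frac{b-a}{t-s}(x-y)$ and applying $(|A|+|B|)^p\le 2^{p-1}(|A|^p+|B|^p)$, it suffices to bound $\oE[|\tilde X_b-\tilde X_a|^p]$. A direct covariance calculation gives $\mathrm{Var}(\tilde X_b-\tilde X_a)=(b-a)(1-(b-a)/(t-s))\le b-a$, so the Gaussian $L^p$ norm is at most $C_p(b-a)^{1/2}$. Combined with the deterministic contribution $\frac{b-a}{t-s}|x-y|$, this yields the claimed inequality; the $|b-a|^p/|t-s|^{p/2}$ term in the statement is automatic via $(b-a)/(t-s)\le 1$.

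For the supremum bound I would use the decomposition once more: the affine part is bounded uniformly by $\max(|x|,|y|)\le|y|+|x-y|$, while the bridge part admits the representation $\tilde X_u=B_{u-s}-\frac{u-s}{t-s}B_{t-s}$ for a standard Brownian motion $B$, so $\max_{s\le u\le t}|\tilde X_u|\le 2\max_{0\le r\le t-s}|B_r|$. Doob's $L^p$ maximal inequality then supplies $\oE[\max_{0\le r\le t-s}|B_r|^p]^{1/p}\le C_p|t-s|^{1/2}$, and the triangle inequality in $L^p$ concludes the argument.

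Since all three pieces are textbook material, I do not anticipate any real obstacle; the only mild subtlety is noting the redundancy $|b-a|^p/|t-s|^{p/2}\le |b-a|^{p/2}$ under $s<a<b<t$, so that the apparently stronger statement reduces to the simpler Gaussian moment estimate.
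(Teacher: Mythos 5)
Your proposal is correct and follows essentially the same route as the paper: both rest on the representation of the bridge as a standard Brownian motion minus its linear interpolation plus the affine part, the distributional identity for $\tilde X$, and standard Gaussian $L^p$/maximal estimates. The only cosmetic differences are that you compute the bridge increment variance directly where the paper applies the $L^p$ triangle inequality to the BM representation, and you invoke Doob's inequality where the paper cites the reflection principle; both are equivalent in substance.
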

\begin{proof}
Let $\bfP^{BM}$ denote the two-sided Wiener measure $(\sC,\sB(\sC))$, i.e., the law of two-sided standard Brownian Motion, and let $\oE^{BM}$ denote the corresponding expectation. Computation of covariances shows that on the interval $u \in [s,t]$, under $\bfP^{BM}$,
\begin{align*}
u \mapsto X_{u-s} - \frac{u-s}{t-s}X_{t-s} + y + \frac{u-s}{t-s}(x-y)
\end{align*}
is a Brownian bridge between $(s,y)$ and $(t,x)$ and s. In particular, this representation also shows that $\oE_{(s,y)(t,x)}^{BB}[F(\widetilde{X})] =\oE_{(s,0)(t,0)}^{BB}[F(X)]$ for all $F \in \sB_b(\sC_{[s,t]})$. 
\begin{align*}
\oE_{(s,y),(t,x)}^{BB}[|X_b - X_a|^p]^{1/p} &\leq \oE^{BM}[|X_b - X_a|^p]^{1/p} + \frac{|b-a|}{|t-s|}(\oE^{BM}[|X_{t-s}|^p]^{1/p} + |x-y|) \\
&\leq C_p( |b-a|^{1/2} + \frac{|b-a|}{|t-s|}(|t-s|^{1/2} + |x-y|)),
\end{align*}
for some $C_p>0$ by standard estimates for Normal random variables. A similar argument and the reflection principle gives for $s < a < t,$
\begin{align*}
\oE_{(s,y),(t,x)}^{BB}[\max_{s \leq a\leq t}|X_a|^p]^{1/p} &\leq C_p\bigg(|a-s|^{1/2} + \frac{|a-s|}{|t-s|}|t-s|^{1/2} + |y| + \frac{|a-s|}{|t-s|}|x-y|\bigg).
\end{align*}
Recalling that $|a-s|\leq |t-s|$, the result follows.
\end{proof}

For  $s < t_1 < \dots < t_k < t$, we denote by $\Polyb_{(s,y),(t,x)}\big|_{t_1,\dots,t_k}$ the distribution of $(X_{t_1},\dots,X_{t_k})$ under $\Polyb_{(s,y),(t,x)}$ defined in \eqref{eq:p2pdef} and by $\bfP_{(s,y),(t,x)}\big|_{t_1,\dots,t_k}$ the law of $(X_{t_1},\dots,X_{t_k})$ under $\bfP_{(s,y),(t,x)}$. We have the following bound on Radon-Nikodym derivatives which will play a key role in most of what follows. 
\begin{lemma}\label{lem:rndbd}
There exists an event $\Omega_0$ with $\bbP(\Omega_0) = 1$ so that on $\Omega_0$, the following holds. For each $T,B>1$ and each $k \in \bbN$, there exists $C=C(T,B,k,\omega)$ so whenever $- T \leq t_0 = s < t_1 < \dots < t_k < t= t_{k+1} \leq T$ and $-B \leq \beta \leq B$, 
\begin{align*}
C^{-1} \prod_{i=0}^{k}(1+|X_{t_{i+1}}|^4 + |X_{t_{i}}|^4)^{-1} &\leq \frac{d \Polyb_{(s,y),(t,x)}  \big|_{t_1,\dots,t_k}}{d\bfP_{(s,y),(t,x)}^{BB}\big|_{t_1,\dots,t_k}}(X_{t_1},\dots,X_{t_k}) \\
&\leq C \prod_{i=0}^{k}(1+|X_{t_{i+1}}|^4 + |X_{t_{i}}|^4), \qquad \bfP_{(s,y),(t,x)}^{BB}\text{-a.s.}
\end{align*}
\end{lemma}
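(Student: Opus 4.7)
The plan is a direct computation combining the explicit formula \eqref{eq:p2pdef} for the polymer marginals with the uniform polynomial bounds on $\rnSheb$ supplied by Corollary \ref{cor:growth} (as recorded in Remark \ref{rem:nogo}). First I would identify the Brownian bridge finite-dimensional density: for $s=t_0<t_1<\dots<t_k<t_{k+1}=t$, with the convention $x_0=y$ and $x_{k+1}=x$,
\begin{equation*}
\bfP_{(s,y),(t,x)}^{BB}\big|_{t_1,\dots,t_k}(dx_{1:k}) \;=\; \frac{\prod_{i=0}^{k}\heat(t_{i+1}-t_i,\,x_{i+1}-x_i)}{\heat(t-s,\,x-y)}\,dx_{1:k}.
\end{equation*}
Dividing the second line of \eqref{eq:p2pdef} by the display above gives an exact cancellation of the heat kernel ratios, leaving
\begin{equation*}
\frac{d\Polyb_{(s,y),(t,x)}\big|_{t_1,\dots,t_k}}{d\bfP_{(s,y),(t,x)}^{BB}\big|_{t_1,\dots,t_k}}(X_{t_1},\dots,X_{t_k}) \;=\; \frac{\prod_{i=0}^{k}\rnSheb(t_{i+1},X_{t_{i+1}}\viiva t_i,X_{t_i})}{\rnSheb(t,x\viiva s,y)},
\end{equation*}
where $X_{t_0}=y$ and $X_{t_{k+1}}=x$ almost surely under the bridge.

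I would then take $\Omega_0$ to be the intersection of the full-probability events on which \eqref{eq:cor:growth} and \eqref{eq:cor:growthinv} of Corollary \ref{cor:growth} both hold. On $\Omega_0$, Remark \ref{rem:nogo} furnishes, for each fixed $T,B>1$, a constant $C'=C'(T,B,\omega)$ with
\begin{equation*}
C'^{-1}(1+|a|^4+|b|^4)^{-1} \;\leq\; \rnSheb(v,a\viiva u,b) \;\leq\; C'(1+|a|^4+|b|^4)
\end{equation*}
uniformly in $-T\leq u\leq v\leq T$, $\beta\in[-B,B]$, and $a,b\in\bbR$. Applying the upper bound to each of the $k+1$ numerator factors and the lower bound to the denominator, and symmetrically for the reverse inequality, produces the stated two-sided bound up to an extra multiplicative factor of $(1+|x|^4+|y|^4)^{\pm 1}$ coming from the endpoint term.

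Finally, I would absorb this extra endpoint factor into the product using the elementary inequality $1+|x|^4+|y|^4 \leq (1+|y|^4+|X_{t_1}|^4)(1+|X_{t_k}|^4+|x|^4)$, whose right-hand side consists of the $i=0$ and $i=k$ factors already present in the product (with a trivial modification when $k=0$). Adjusting $C$ by a fixed power of $C'$ depending on $k$ then gives the lemma. I do not anticipate any substantial obstacle: the entire argument is bookkeeping on top of Corollary \ref{cor:growth}, and the only mild subtlety is handling the denominator via the endpoint factors of the product.
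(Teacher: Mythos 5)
Your proposal is correct and is essentially the paper's proof: identify the Radon--Nikodym derivative as $\prod_{i=0}^{k}\rnSheb(t_{i+1},X_{t_{i+1}}\viiva t_i,X_{t_i})/\rnSheb(t,x\viiva s,y)$ and apply the two-sided polynomial bounds of Corollary \ref{cor:growth} (Remark \ref{rem:nogo}); you are in fact more explicit than the paper, whose proof is a two-line appeal to those growth bounds. One small caveat: your absorption of the endpoint factor $(1+|x|^4+|y|^4)$ squares the $i=0$ and $i=k$ factors rather than literally reproducing the displayed product, so strictly you prove the bound with those two exponents doubled --- a harmless discrepancy, since the exponent $4$ is itself an artifact of Corollary \ref{cor:growth} and every downstream use of the lemma only requires some fixed polynomial bound in the $|X_{t_i}|$, $|x|$, $|y|$.
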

\begin{proof}
Observe that by \eqref{eq:p2pdef}, we have $ \bfP_{(s,y),(t,x)}^{BB}$ almost surely,
\begin{align*}
\frac{d \Polyb_{(s,y),(t,x)}  \big|_{t_1,\dots,t_k}}{d\bfP_{(s,y),(t,x)}^{BB}\big|_{t_1,\dots,t_k}}(X_{t_1},\dots,X_{t_k}) &=  
 \frac{\prod_{i=0}^{k}\rnSheb(t_{i+1},X_{t_{i+1}}|t_i, X_{t_i})}{\rnSheb(t_{k+1},X_{t_{k+1}}|t_0,X_{t_0})} .
\end{align*}
The result follows from the growth bounds, \eqref{eq:cor:growth} and \eqref{eq:cor:growthinv}, in Corollary \ref{cor:growth}.
\end{proof}

Next, we turn to the proof of Theorem \ref{prop:polyexist}, which we prove along with the following estimate, which will play a role in the proof of Proposition \ref{prop:KMG} below. 
\begin{lemma}\label{lem:Qbridge}
There exists an event $\Omega_0$ with $\bbP(\Omega_0)=1$ so that on $\Omega_0$, the following holds. 

\begin{enumerate}[label={\rm(\roman*)}, ref={\rm\roman*}]   \itemsep=3pt 
\item\label{prop:polyexist:hold}   For each $T,B>1$, $\eta \in (0,1/2)$ and $\epsilon\in(0,1)$, there exists $C= C(T,B,\eta,\epsilon,\omega)>0$ so that for all $s < t$ in $[-T,T]$ and all $\beta \in [-B,B]$,
\begin{align}
\oE_{(s,y),(t,x)}^{\Polyb} [|X|_{\sC^{\eta}_{[s,t]}}] \leq C (1+|x|^{\frac{4+2\epsilon}{1-2\eta}+16} + |y|^{\frac{4+2\epsilon}{1-2\eta}+16}).\label{eq:PolyHolder}
\end{align}
\item For each $T,B>1$ and each $\eta \in (0,1/2)$, there exists $C= C(T,B,\eta,\omega)>0$, so that for all $s<t$ in $[-T,T]$ and all $\beta \in [-B,B]$, 
\begin{align*}
\oE_{(s,y),(t,x)}^{\Polyb}[|\widetilde{X}|_{\sC^{\eta}([s,t]}] \leq C(1+|x|^{16}+|y|^{16})
\end{align*}
\end{enumerate}
\end{lemma}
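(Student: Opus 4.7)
The plan for both parts is to apply a Kolmogorov-Chentsov argument (Theorem \ref{thm:KC}) to high-moment estimates of path increments under $\Polyb_{(s,y),(t,x)}$. These moment estimates come from combining the Radon-Nikodym bound of Lemma \ref{lem:rndbd} (with $k=2$ at times $s < a < b < t$) with the Brownian bridge moment estimates of Lemma \ref{lem:BBest}. The essential distinction between the two parts is that increments of $\widetilde X$ under $\bfP^{BB}_{(s,y),(t,x)}$ have moments independent of $x, y$, while increments of $X$ carry an additional contribution from the linear interpolation $\tfrac{a-s}{t-s}(x-y)$.

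For part (i), fix $T, B, \eta, \epsilon$ as in the statement and set $p = (4+2\epsilon)/(1-2\eta)$, which satisfies $p(1/2-\eta) > 2$. By Lemma \ref{lem:rndbd} with $k=2$,
\begin{align*}
\oE^{\Polyb}_{(s,y),(t,x)}\bigl[|X_a - X_b|^p\bigr] \leq C(\w, T, B) \tspb\oE^{BB}_{(s,y),(t,x)}\bigl[|X_a - X_b|^p\tspb P(X_a, X_b, x, y)\bigr],
\end{align*}
where $P$ is a polynomial of total degree at most $12$ in its arguments. Applying H\"older's inequality with conjugate exponents $q > 1$ close to $1$ and $q/(q-1)$ large separates the increment moment from $P$: Lemma \ref{lem:BBest} gives $\oE^{BB}[|X_a - X_b|^{pq}]^{1/q} \lesssim |b-a|^{p/2}$ (after absorbing the contribution $|b-a|^p|x-y|^p/|t-s|^p$ into the polynomial factor), and polynomial Brownian bridge moment bounds dominate $\oE^{BB}[P^{q/(q-1)}]^{(q-1)/q}$ by $C(1 + |x|^{12} + |y|^{12})$. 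This yields $\oE^{\Polyb}[|X_a - X_b|^p] \leq C(1+|x|^{12}+|y|^{12})|b-a|^{p/2}$. Feeding this bound into Theorem \ref{thm:KC} and taking a $p$-th root produces the H\"older seminorm bound; the stated exponent $(4+2\epsilon)/(1-2\eta)+16$ is a convenient upper bound that absorbs both $12/p$ (from $K^{1/p}$) and the $p$-dependent prefactor arising in Kolmogorov-Chentsov.

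For part (ii), exploit the identity from Lemma \ref{lem:BBest} that $\widetilde X$ under $\bfP^{BB}_{(s,y),(t,x)}$ has the same law as standard Brownian bridge from $(s,0)$ to $(t,0)$. Consequently $\oE^{BB}_{(s,y),(t,x)}[|\widetilde X_a - \widetilde X_b|^{pq}]^{1/q} \leq C|b-a|^{p/2}$ with a constant independent of $x, y$. Rewriting $X_a = \widetilde X_a + y + \tfrac{a-s}{t-s}(x-y)$ inside $P$, the same H\"older split produces a bound of the same form $C(1+|x|^{12}+|y|^{12})|b-a|^{p/2}$ for $\oE^{\Polyb}[|\widetilde X_a - \widetilde X_b|^p]$. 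Applying Theorem \ref{thm:KC} and using $12/p \leq 16$ delivers the stated bound. The main technical obstacle throughout is careful bookkeeping: one must absorb the polynomial $P$ into the right factor of H\"older's inequality, control the endpoint contributions to the H\"older seminorm via Lemma \ref{lem:BBest}, and verify that the Kolmogorov-Chentsov constants remain uniform across $s, t \in [-T,T]$ and $\beta \in [-B,B]$.
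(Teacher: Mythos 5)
Your proposal follows essentially the same route as the paper: bound the two-point increment moments of $X$ (resp.\ $\widetilde X$) under $\Polyb_{(s,y),(t,x)}$ by combining the Radon--Nikodym bound of Lemma \ref{lem:rndbd} with the Brownian bridge estimates of Lemma \ref{lem:BBest} via Cauchy--Schwarz/H\"older, then feed the resulting bound $C(1+|x|^{16+\gamma}+|y|^{16+\gamma})|b-a|^{\gamma/2}$ into Theorem \ref{thm:KC}, with the distributional identity for $\widetilde X$ removing the $(x,y)$-dependence of the increment factor in part (ii). The only quibble is bookkeeping: the linear-interpolation term $|b-a|^{\gamma}|x-y|^{\gamma}/|t-s|^{\gamma}$ cannot be absorbed into a degree-$12$ polynomial as you claim (it raises the degree to $12+\gamma$, which is exactly why the stated exponent carries the $+\gamma$ term), but this does not affect the validity of the final, generously stated bound.
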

\begin{remark}
The bound in \eqref{eq:PolyHolder} may look odd in view of the shift invariance implied by Proposition \ref{prop:cov}, from which one might expect a bound depending only on $|x-y|$. Because we work on a single full probability event for all initial and terminal points simultaneously, likely no such bound is possible: the process $x \mapsto \oE_{(s,x),(t,x)}^{\Polyb}[|X|_{\sC^{\eta}_{[s,t]}}]$ should be expected to mix as $x \to \infty$ and its distribution can be shown to have unbounded support.
\end{remark}
%

\begin{proof}[Proof of Theorem \ref{prop:polyexist} and Lemma \ref{lem:Qbridge}]
First, notice that Lemma \ref{lem:ChaKol} implies that the measures defined in \eqref{eq:p2pdef} define probabilities. Now, take $T,B>1$ as in the statement and $-T \leq s  < t \leq T$ and $\beta \in [-B,B]$. For $ s \leq a < b \leq t$ and $\gamma>0$, appealing to the identity in \eqref{eq:p2pdef}
and Lemma \ref{lem:rndbd}, there exists $C=C(T,B,\omega)$ so that we have
\begin{align*}
&\oE_{(s,y),(t,x)}^{\Polyb}[|X_b-X_a|^\gamma] =  \oE_{(s,y),(t,x)}^{BB}\bigg[|X_b-X_a|^\gamma \frac{\rnSheb(t,x|b,X_b)\rnSheb(b,X_b|a,X_a)\rnSheb(a,X_a\viiva s,y)}{\rnSheb(t,x\viiva s,y)} \bigg] \\
&\qquad\leq C \oE_{(s,y),(t,x)}^{BB}[|X_b-X_a|^\gamma(1+|x|^4+|y|^4+|X_b|^4+|X_a|^4)^4].
\end{align*}
By the Cauchy-Schwarz inequality and Lemma \ref{lem:BBest},
\begin{align}
&\oE_{(s,y),(t,x)}^{\Polyb}[|X_b-X_a|^\gamma] \leq C' \oE_{(s,y),(t,x)}^{BB}[|X_b-X_a|^{2\gamma}]^{1/2}(1+|x|^{16}+|y|^{16}) \notag\\ 
&\qquad\leq C''\bigg(|b-a|^{\gamma/2} + \frac{|b-a|^\gamma}{|t-s|^\gamma}(|t-s|^{\gamma/2} + |x|^\gamma + |y|^\gamma)\bigg) (1+|x|^{16}+|y|^{16}), \notag\\
&\qquad\leq  C'''|b-a|^{\gamma/2} (1 +|x|^{16+\gamma} + |y|^{16+\gamma})\label{eq:Q2pt}
\end{align}
for some $C',C'',C'''$ which depend on $T,B,\gamma,$ and $\omega$. To obtain the last line, it helps to observe that $|b-a|/|t-s| \leq 1$ and $|t-s| \leq 2T$. Similarly, we have
\begin{align*}
&\oE_{(s,y),(t,x)}^{\Polyb}[|\widetilde{X}_b-\widetilde{X}_a|^\gamma] = \oE_{(s,y),(t,x)}^{BB}\bigg[|\widetilde{X}_b-\widetilde{X}_a|^\gamma \frac{\rnSheb(t,x|b,X_b)\rnSheb(b,X_b|a,X_a)\rnSheb(a,X_a\viiva s,y)}{\rnSheb(t,x\viiva s,y)} \bigg]  \\
&\leq C\oE_{(s,y),(t,x)}^{BB}\bigg[|\widetilde{X}_b-\widetilde{X}_a|^\gamma(1+|x|^4+|y|^4+|X_b|^4+|X_a|^4)^4  \bigg] \\
&\leq C' \oE_{(s,y),(t,x)}^{BB}[|\widetilde{X}_b-\widetilde{X}_a|^{2\gamma}]^{1/2}(1+|x|^{16}+|y|^{16})  \leq C''|b-a|^{\gamma/2}(1+|x|^{16}+|y|^{16})
\end{align*}
for some $C,C',C''$ depending on $T,B,\gamma$ and $\omega$, where in the last step we appeal to the distributional identity in Lemma \ref{lem:BBest}.


The existence of a unique measure on $(\sC_{[s,t]}, \sB(\sC_{[s,t]}))$ with finite dimensional marginal distributions given by \eqref{eq:p2pdef} follows from the Kolmogorov-Chentsov theorem, recorded as Theorem \ref{thm:KC}, \eqref{eq:Q2pt}, and a standard measure theoretic argument as in \cite[Theorem 2.1.6]{Str-Var-97}. Now viewing $\Polyb_{(s,y),(t,x)}$ as this measure on $(\sC_{[s,t]}, \sB(\sC_{[s,t]}))$, the estimate in \eqref{eq:Q2pt}, with say $q=2$, combined with Kolmogorov-Chentsov where we choose $\gamma = (4+2\epsilon)/(1-2\eta)$, which satisfies $\eta = 1/2 - (2+\epsilon)/\gamma \in (0,1/2 - 2/\gamma)$, implies that \eqref{eq:PolyHolder} holds. Lemma \ref{lem:Qbridge} follows similarly.

To show \eqref{eq:p2pIC}, by path continuity, it suffices to show that
\begin{align*}
\lim_{r \searrow s}\oE^{Q}_{(s,y),(t,x)}[|X_r-y|]  = 0 \qquad \text{ and } \qquad \lim_{r \nearrow t}\oE^{Q}_{(s,y),(t,x)}[|X_r-x|] = 0.
\end{align*}
As above, we have
\begin{align*}
\oE^{Q}_{(s,y),(t,x)}[|X_r-y|] &= \oE_{(s,y),(t,x)}^{BB}\bigg[\big|X_r-x\big|\frac{\rnSheb(t,x|r,X_r) \rnSheb(r,X_r\viiva s,y)}{\rnSheb(t,x\viiva s,y)}  \bigg] \\
&\leq \frac{C}{\rnSheb(t,x\viiva s,y)}(1+|x|^2+|y|^2)  \oE_{(s,y),(t,x)}^{BB}\big[\big|X_r-x\big|\big] \to 0,
\end{align*}
as $r \searrow s$, by dominated convergence (dominating by $\|X_{\aabullet}\|_\infty + |x|$) and path continuity of Brownian bridge.

We now turn to \eqref{prop:polyexist:Markov}. For $f_{0:n+1} \in \sC_c(\R,\R)^{n+2}$ and $s=t_0 < t_1 < \dots t_n < t_{n+1}=t$, and calling $z_0 = y$ and $z_{n+1}=x$, whenever $1\leq i<j\leq n$, multiplying and dividing by $\Sheb(t_j,z_j|t_i,z_i)$ inside the integral, we have
\begin{align*}
&\oE_{(s,y),(t,x)}^{\Polyb}\bigg[\prod_{k=0}^{n+1} f_k(X_{t_k})\bigg] \\
&= \frac{\int\prod_{k=0}^{n+1} f_k(z_k)\prod_{i=0}^{n}\Sheb(t_{k+1},z_{k+1}|t_k,z_k) dz_{1:n}}{\Sheb(t,x\viiva s,y)}\\
&= \frac{f_0(x) f_{k+1}(y)}{\Sheb(t,x\viiva s,y)} \int\prod_{k=0}^{i-1}f(z_k)\Sheb(t_{k+1}, z_{k+1}|t_k,z_k) \oE_{(t_i,z_i),(t_j,z_j)}\bigg[\prod_{k=i}^j f(X_{t_k})\bigg]\times \\
&\qquad\qquad\qquad\Sheb(t_j,z_j|t_i,z_i) \prod_{k=j+1}^{n}f(z_k)\Sheb(t_{k+1}, z_{k+1}|t_k,z_k) dz_{1:i} dz_{j:n}\\
&= \oE_{(s,y),(t,x)}^{\Polyb}\bigg[\prod_{k=0}^{i-1} f_k(X_{t_k})\oE_{(t_i,X_{t_i}),(t_j,X_{t_j})}\bigg[\prod_{k=i}^j f_k(X_{t_k})\bigg] \prod_{k=j+1}^{n+1}f_k(X_{t_k})\bigg]
\end{align*}
By Urysohn's lemma and the monotone convergence theorem, this extends to functions $f_i = \ind_{O_i}$ where $O_1,\dots, O_n$ are open sets in $\R$. The Borel $\sigma$-algebras of $\sC([u,v],\R)$ and $\sC([s,t],\R)$ are both generated by the coordinate projection random variables and finite products of open sets sets forms a $\pi$ system which generates these $\sigma$-algebras. The monotone class theorem \cite[Appendix Theorem 4.3]{Eth-Kur-86} then implies the result.

To show part \eqref{prop:polyexist:cmeas}, note that by \cite[Theorem 8.2]{Bil-99}, \eqref{eq:p2pIC} and \eqref{eq:PolyHolder}  imply that the family $\{\Polyb_{(s,y),(t,x)(\aabullet) : -K\leq x \leq y \leq K, -B\leq \beta \leq B}\}$ is tight. It therefore suffices to show continuity of the finite dimensional distributions. This follows from \eqref{eq:p2pdef}, continuity of $\She_{\aabullet}(t,\aabullet|s,\aabullet)$ and Scheffe's lemma.
\end{proof}
Now, we turn to the study of measure-to-measure polymers and Theorem \ref{prop:m2mbasic}.
\begin{proof}[Proof of Theorem \ref{prop:m2mbasic}]
We begin by proving part \eqref{prop:m2mbasic:def}. Our first claim is that for any $A \in \sB(\sC_{[s,t]})$, the map $(x,y)\mapsto \Polyb_{(s,y),(t,x)}(A)$ is Borel measurable. Let $\sE = \{A \in \sB(\sC_{[s,t]})$ such that $(x,y) \mapsto \Polyb_{(t,x),(s,y)}(A)$ is Borel measurable$\}$. 
We claim that $\sE$ is a $\lambda$ system. Since $\Polyb_{(t,x),(s,y)}(\sC_{[s,t]})=1$ for all $x,y \in \bbR$, $\sC_{[s,t]} \in \sE$. If $A,B \in \sE$ and $A \subset B$, then $\Polyb_{(t,x),(s,y)}(B \backslash A) = \Polyb_{(t,x),(s,y)}(B)-\Polyb_{(t,x),(s,y)}(A)$ and so $B \backslash A \in \sE$. If $A_1,\dots,A_n,\dots \in \sE$ and $A_i \cap A_j = \emptyset$ for $i \neq j$, then $\Polyb_{(t,x),(s,y)}(\bigcup_{i=1}^\infty A_i)= \sum_{i=1}^\infty \Polyb_{(t,x),(s,y)}(A_i)$. Countable sums of measurable functions are measurable and so $\bigcup_{i=1}^\infty A_i\in \sE$. Finally by the Portmanteau theorem \cite[Theorem 3.3.1]{Eth-Kur-86} and Theorem \ref{prop:polyexist}, for open sets $O \in \sB(\sC_{[s,t]})$ the map $(x,y) \mapsto \Polyb_{(t,x),(s,y)}(O)$ is lower semicontinuous and therefore $O \in \sE$. Open sets form a $\pi$-system which generates $\sB(\sC_{[s,t]})$, so by the $\pi-\lambda$ theorem \cite[Appendix Theorem 4.2]{Eth-Kur-86}, $ \sE =\sB(\sC_{[s,t]}).$ 

Verifying the axioms of a probability measure follows immediately from the fact that each $\Polyb_{(t,x),(s,y)}$ is a probability measure and Tonelli's theorem. The formula for the density follows immediately from the definition of $\Polyb_{(s;\mu),(t;\zeta)}$ in \eqref{eq:m2mdef}  and considering expectations of Borel functions of the form $f(X_s,X_{t_1},\dots,X_{t_n},X_t)$.

Part \eqref{prop:m2mboch:cont} is an immediate consequence of \eqref{eq:p2phold} in Theorem \ref{prop:polyexist} and the definition of $\Polyb_{(s;\mu),(t;\zeta)}$ in \eqref{eq:m2mdef}

Part \eqref{prop:m2mbasic:Markov} follows from Theorem \ref{prop:polyexist} \eqref{prop:polyexist:Markov} and \eqref{eq:m2mdef}.

To verify part \eqref{prop:m2mboch:IC}, first recall that $(\delta_y,\zeta),(\mu,\delta_x) \in \ICMM(0) \subset \ICMM(4)$. The result follows immediately from \eqref{eq:p2pIC} in Theorem \ref{prop:polyexist} and \eqref{eq:m2mdef}. 


To see Part \eqref{prop:m2mboch:TV}, note that, for example, for any $A \in \sB(\sC_{[s,t]})$
\begin{align*}
&|\Polyb_{(s;\mu_1),(t;\zeta_1)}-\Polyb_{(s;\mu_1),(t;\zeta_2)}(A)|=\\ 
&\bigg|\frac{ \int_{\bbR^2}\Sheb(t,x\viiva s,y)\Polyb_{(s,y),(t,x)}(A)\zeta_1(dx)\mu_1(dy) }{\Sheb(t;\zeta_1|s;\mu_1)} - \frac{ \int_{\bbR^2}\Sheb(t,x\viiva s,y)\Polyb_{(s,y),(t,x)}(A)\zeta_2(dx)\mu_1(dy) }{\Sheb(t;\zeta_2|s;\mu_1)}\bigg| \\
&\leq \bigg|\frac{ \int_{\bbR^2}\Sheb(t,x\viiva s,y)\Polyb_{(s,y),(t,x)}(A)\zeta_1(dx)\mu_1(dy) -  \int_{\bbR^2}\Sheb(t,x\viiva s,y)\Polyb_{(s,y),(t,x)}(A)\zeta_2(dx)\mu_1(dy)}{\Sheb(t;\zeta_1|s;\mu_1)} \bigg| \\
&+ \bigg|\int_{\bbR^2}\Sheb(t,x\viiva s,y)\Polyb_{(s,y),(t,x)}(A)\zeta_2(dx)\mu_1(dy)(\Sheb(t;\zeta_1|s;\mu_1)^{-1} - \Sheb(t;\zeta_2|s;\mu_1)^{-1})\bigg|\\
&\leq \frac{\Sheb(t;|\zeta_2-\zeta_1||s;\mu_1)}{\Sheb(t;\zeta_1|s;\mu_1)} + \Sheb(t;\zeta_2|s;\mu_1)|\Sheb(t;\zeta_1|s;\mu_1)^{-1}-\Sheb(t;\zeta_2|s;\mu_1)^{-1}|
\end{align*}
where in the last step, we bounded the absolute value of an integral (of an arbitrary measurable set) against $\zeta_1-\zeta_2$ by the integral against $|\zeta_1-\zeta_2|$ and used $\Polyb_{(s,y),(t,x)}(A)\leq1$. A similar computation gives
\begin{align*}
&|\Polyb_{(s;\mu_1),(t;\zeta_2)}-\Polyb_{(s;\mu_2),(t;\zeta_2)}(A)|\leq\\
&\leq\frac{\Sheb(t;\zeta_2|s;|\mu_2-\mu_1|)}{\Sheb(t;\zeta_2|s;\mu_1)} + \Sheb(t;\zeta_2|s;\mu_2)|\Sheb(t;\zeta_2|s;\mu_1)^{-1}-\Sheb(t;\zeta_2|s;\mu_2)^{-1}|.
\end{align*}
The result follows from the triangle inequality and the fact that for any measure $m$, $\|m\|_{TV} \leq 2 \sup_{A} |m(A)|$.
\end{proof}

\section{Regularity of solutions and polymers}\label{sec:reg}
Having constructed the measure-to-measure polymers, we now prove the following technical result, which describes convergence properties of partition functions and quenched polymer measures.

\begin{proposition}\label{prop:conv}
There is an event $\Omega_0$ with $\bbP(\Omega_0)=1$ so that the following holds for all $\omega \in \Omega_0$, all $s<t$, and all $\beta \in \bbR$.
\begin{enumerate}[label={\rm(\roman*)}, ref={\rm\roman*}]   \itemsep=3pt 
\item\label{prop:conv:part}  Let $\beta_n \to \beta$, $s_n < t_n$ for all $n$, $s_n \to s$, and $t_n \to t$. Suppose that $(\mu_n,\zeta_n) \in \ICMM(4)$ is a sequence of measures satisfying for some $a < \frac{1}{2(t-s)}$
\begin{align*}
\sup_n\int_{\bbR^2}(1+|x|^4+|y|^4) e^{-a(x-y)^2}\zeta_n(dx)\mu_n(dy) < \infty
\end{align*}
and that there exist positive Borel measures $\zeta,\mu$ so that $\zeta_n \to \zeta$ vaguely and $\mu_n \to \mu$ vaguely. Then $\She_{\beta_n}(t_n;\zeta_n|s_n;\mu_n) \to \Sheb(t;\zeta|s;\mu)$.
\item\label{prop:conv:weak} Suppose that for $p>24$ and for $n \in \bbN$,  a sequence of measures $(\mu_n,\zeta_n)\in \ICMM(p)$ satisfy for some $a\leq \frac{1}{2(t-s)}$
\begin{align*}
\sup_{n} \int_{\bbR}\int_{\bbR} (1+|x|^p+|y|^p) e^{-a(x-y)^2} \zeta_n(dx)\mu_n(dy) < \infty
\end{align*}
and there exists $(\mu,\zeta)\in \ICMM(p)$ so that $\zeta_n \to \zeta$  and $\mu_n \to \mu$ vaguely. Let $\beta_n \to \beta$. Then $\Poly^{\beta_n}_{(s;\mu_n),(t;\zeta_n)}$ converges weakly to $\Polyb_{(s;\mu),(t;\zeta)}$ in $\sM_1(\sC_{[s,t]})$. 
\item \label{prop:conv:TV} If $\mu_n,\zeta_n \in \ICM$ are sequences of measures satisfying for some $a\leq\frac{1}{2(t-s)}$
\begin{align*}
\sup_n\int_{\bbR}(1+|x|^4)e^{-ax^2}\zeta_n(dx)< \infty \quad\text{and}\quad  \sup_n\int_{\bbR}(1+|y|^4)e^{-ay^2}\mu_n(dy)< \infty
\end{align*}
and there exist $\zeta,\mu \in \ICM$ so that the total variation measures $|\mu_n - \mu|$ and $|\zeta_n-\zeta|$ converge vaguely to zero, then for all $x,y \in \bbR$,
\begin{align*}
\|\Polyb_{(s;\mu_n),(t,x)}-\Polyb_{(s;\mu),(t,x)}\|_{TV} \to 0 \qquad\text{ and }\qquad \|\Polyb_{(s,y),(t;\zeta_n)} - \Polyb_{(s,y),(t;\zeta)}\|_{TV}\to 0.
\end{align*}
\end{enumerate}
\end{proposition}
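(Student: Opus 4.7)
The overarching strategy for all three parts is to leverage the factorization $\She_{\beta}(t,x|s,y) = \heat(t-s,x-y)\rnShe_{\beta}(t,x|s,y)$, exploiting both the joint continuity of $\rnShe_{\beta}$ on $\varset\times\R$ from Theorem~\ref{thm:rnreg} and the locally uniform two-sided growth bound $C^{-1}(1+|x|^4+|y|^4)^{-1} \leq \rnShe_{\beta}(t,x|s,y)\leq C(1+|x|^4+|y|^4)$ from Corollary~\ref{cor:growth}. All three parts are proven on the intersection of the respective full-probability events on which these properties hold.

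For Part~\eqref{prop:conv:part}, I would pick $a_0\in(a,\,1/(2(t-s)))$. Since $t_n-s_n\to t-s$ and $\beta_n\to\beta$, for all large $n$ the integrand $\She_{\beta_n}(t_n,x|s_n,y)$ is dominated by $C'(1+|x|^4+|y|^4)e^{-a_0(x-y)^2}$ and converges uniformly on compact subsets of $\R^2$ to $\She_{\beta}(t,x|s,y)$. I would introduce a cutoff $\chi_R\in\sC_c(\R^2)$ and split the integral against $\mu_n(dy)\zeta_n(dx)$ into a compact part, which converges via vague convergence of $\mu_n\otimes\zeta_n$ combined with uniform convergence of the integrand, and a tail part, controlled as follows. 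The tail is further decomposed along $\{|x-y|^2>M\}$ and $\{|x-y|^2\leq M\}$: on the first region, the factor $e^{-(a_0-a)M}$ extracted from the decomposition $e^{-a_0(x-y)^2}=e^{-a(x-y)^2}e^{-(a_0-a)(x-y)^2}$ combines with the hypothesized uniform bound to produce an arbitrarily small contribution as $M\to\infty$; on the second region, the constraints $|x|+|y|>R$ and $|x-y|\leq\sqrt{M}$ together force $|x|,|y|\geq R/3$ for $R\gg\sqrt{M}$, which localizes the mass in a thin diagonal strip whose weight, after trading a factor of $e^{aM}$ against the uniform moment bound, shrinks as $R\to\infty$.

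For Part~\eqref{prop:conv:weak}, I would combine convergence of finite-dimensional distributions with tightness in $\sC_{[s,t]}$. Convergence of finite-dimensional marginals reduces, via Theorem~\ref{prop:m2mbasic}\eqref{prop:m2mbasic:def}, to convergence of the expectations $\oE^{\Poly^{\beta_n}_{(s;\mu_n),(t;\zeta_n)}}[f(X_{t_1},\dots,X_{t_k})]$ for $f\in\sC_b(\R^k,\R)$; the denominator $\She_{\beta_n}(t;\zeta_n|s;\mu_n)\to \She_{\beta}(t;\zeta|s;\mu)>0$ by Part~\eqref{prop:conv:part}, and the numerator converges by the same mechanism after absorbing $f$ and the intermediate Gaussian kernels into an effective continuous integrand of $(x_0,x_{k+1})$ against $\mu_n\otimes\zeta_n$, using the semi-group identity of Theorem~\ref{prop:IC}\eqref{prop:IC:semi-group} to fold the interior Lebesgue integrals step by step with the domination established in the previous paragraph. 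For tightness, I would integrate the point-to-point Hölder estimate $\oE^{\Polyb}_{(s,y),(t,x)}[|X_b-X_a|^\gamma]\leq C|b-a|^{\gamma/2}(1+|x|^{16+\gamma}+|y|^{16+\gamma})$ from the proof of Theorem~\ref{prop:polyexist} against the normalized weight $\She_{\beta_n}(t,x|s,y)\mu_n(dy)\zeta_n(dx)/\She_{\beta_n}(t;\zeta_n|s;\mu_n)$. The restriction $p>24$ is chosen precisely so that for some $\gamma>2$ one has $16+\gamma<p$ and the resulting moment $\oE^{\Poly^{\beta_n}_{(s;\mu_n),(t;\zeta_n)}}[|X_b-X_a|^\gamma]\leq C''|b-a|^{\gamma/2}$ holds uniformly in $n$, enabling Kolmogorov-Chentsov tightness via \cite[Theorem~8.2]{Bil-99}.

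For Part~\eqref{prop:conv:TV}, I would apply the total variation bound of Theorem~\ref{prop:m2mbasic}\eqref{prop:m2mboch:TV}. Each of its four summands either contains $|\mu_n-\mu|$ or $|\zeta_n-\zeta|$ inside an integral of $\She_{\beta}$, or is a difference of reciprocal partition functions. The total variation measures inherit the hypothesized uniform moment bounds via the triangle inequality and are assumed to converge vaguely to zero, so Part~\eqref{prop:conv:part} applied with vague limit equal to the zero measure yields $\She_{\beta}(t,x|s;|\mu_n-\mu|)\to 0$ and the symmetric statement. The reciprocal differences vanish by continuity of the partition function from Part~\eqref{prop:conv:part} together with the strict positivity from Theorem~\ref{prop:IC}\eqref{prop:IC:1}. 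The hardest step throughout is the tail analysis in Part~\eqref{prop:conv:part}: the uniform bound $\sup_n\int(1+|x|^4+|y|^4)e^{-a(x-y)^2}\mu_n(dy)\zeta_n(dx)<\infty$ does not imply tightness of $\mu_n\otimes\zeta_n$, because the dominating function does not decay along the diagonal $x=y$, so the strict inequality $a<1/(2(t-s))$ must be used carefully to extract the decay needed to interchange the limits $R\to\infty$ and $n\to\infty$.
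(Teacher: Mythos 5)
Your overall architecture matches the paper's: compact part of the integral handled by vague convergence of $\mu_n\otimes\zeta_n$ together with locally uniform convergence of the kernel, part (\ref{prop:conv:weak}) by tightness plus finite-dimensional convergence, and part (\ref{prop:conv:TV}) by feeding part (\ref{prop:conv:part}) into the total variation bound of Theorem \ref{prop:m2mbasic}\eqref{prop:m2mboch:TV}. Parts (\ref{prop:conv:weak}) and (\ref{prop:conv:TV}) of your sketch are essentially the paper's proof (modulo bookkeeping: your exponent count $16+\gamma<p$ omits the extra factor $(1+|x|^4+|y|^4)$ coming from $\She_{\beta_n}(t,x\viiva s,y)$ in the measure-to-measure normalization, which is why the paper lands at $p>24$ rather than $p>18$; and Billingsley's tightness criterion also needs one-point tightness of $X_s$, which you should extract from the $p$-th moment hypothesis as the paper does).

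The genuine gap is in the tail estimate of part (\ref{prop:conv:part}), precisely the step you flag as hardest. On the region $\{|x-y|^2\le M,\ |x|,|y|\ge R/3\}$ your mechanism — trading $e^{aM}$ against the uniform moment bound — controls only the $\zeta_n\otimes\mu_n$-mass of the strip. What you must make small is $\int_{\text{strip}}(1+|x|^4+|y|^4)e^{-a_0(x-y)^2}\,\zeta_n(dx)\mu_n(dy)$, and on the strip this integrand is bounded below by a constant ($e^{-(a_0-a)M}$) times the exact quantity $(1+|x|^4+|y|^4)e^{-a(x-y)^2}$ whose full integral is only assumed \emph{uniformly bounded}, not small; the unbounded polynomial factor cannot be absorbed into the shrinking mass. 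The gap is not cosmetic: taking $\mu_n=\zeta_n=x_n^{-2}\delta_{x_n}$ with $x_n\to\infty$ satisfies your hypotheses with vague limit $\zeromeas$, yet the two-sided bound $C^{-1}(1+2x_n^4)^{-1}\le\rnShe_{\beta}(t,x_n\viiva s,x_n)\le C(1+2x_n^4)$ alone does not force $\She_{\beta}(t;\zeta_n|s;\mu_n)=x_n^{-4}\heat(t-s,0)\rnShe_{\beta}(t,x_n\viiva s,x_n)\to 0$. The missing ingredient is the little-$o$ refinement of the growth bound, \eqref{eq:cor:growth} in Corollary \ref{cor:growth}: almost surely $\varlimsup_{K\to\infty}K^{-4}\sup_{\varsett{T}{K}\times[-B,B]}\rnSheb=0$, hence $\rnShe_{\beta_n}(t_n,x\viiva s_n,y)\le\epsilon_M(1+|x|^4+|y|^4)$ whenever $\max(|x|,|y|)>M$, with $\epsilon_M\to0$. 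This bounds the entire far-field integral by $C\epsilon_M\sup_n\int(1+|x|^4+|y|^4)e^{-a(x-y)^2}\zeta_n(dx)\mu_n(dy)$ and renders your diagonal/off-diagonal split unnecessary; it is also why the exponent $4$ in the moment hypothesis is exactly the right one.
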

\begin{proof}
To see \eqref{prop:conv:part}, notice that vague convergence of the factors implies vague convergence of the product measure: $\mu_n\otimes \zeta_n \to \mu\otimes\zeta$. In particular, there exists $K>0$ so that $\mu_n\otimes \zeta_n([-K,K]^2) \to\mu\otimes \zeta ([-K,K]^2)>0$. Without loss of generality, we assume that the pre-limit terms are all non-zero as well. Let $T>1$ and $B>1$ be such that for all $n$, $-T \leq s_n < t_n \leq T$ and $-B \leq \beta_n \leq B$. By considering $n$ sufficiently large, assume that $\frac{1}{2(t_n-s_n)}>a$ for all $n$. By hypothesis and Corollary \ref{cor:growth}, there exists $C=C(T,B,a,\omega)$ so that we have for all $n$,
\begin{align}
&\sup_n\int_{\bbR^2} \She_{\beta_n}(t_n,x\viiva s_n,y)\mu_n(dy) \zeta_n(dx) \label{eq:tvtight}\\
&\qquad\qquad \leq C\sup_n \int_{\bbR^2}(1+|x|^4+|y|^4)e^{-a(x-y)^2}\mu_n(dy)\zeta_n(dx)< \infty,\notag\\
&\sup_n \int_{(\bbR \backslash [-M,M])^2} \She_{\beta_n}(t_n,x\viiva s_n,y)\mu_n(dy)\zeta_n(dx)\notag\\ 
&\qquad \qquad \leq \frac{C}{1+|M|^4}\sup_n \int_{\bbR^2}(1+|x|^4+|y|^4)e^{-a(x-y)^2}\mu_n(dy)\zeta_n(dx).\notag
\end{align}
The estimates above show that $\She_{\beta_n}(t_n,x\viiva s_n,y)\zeta_n(dx)\mu_n(dy)$ is tight and bounded in total variation norm and so every subsequence has a weakly convergent subsequence by Prohorov's theorem \cite[Theorem 8.6.2]{Bog-07}. Consider test functions $\varphi,\psi \in\sC_c(\bbR)$. Then because the convergence $\She_{\beta_n}(t_n,x\viiva s_n,y) \to \Sheb(t,x\viiva s,y)$ is uniform on the support of $\varphi(x)\psi(y)$ and everything is continuous in $(x,y)$, along any weakly convergent subsequence, we have
\begin{align*}
\int_{\bbR^2} \varphi(x)\psi(y) \She_{\beta_n}(t_n,x\viiva s_n,y)\mu_n(dy)\zeta_n(dx) \to \int_{\bbR^2} \varphi(x)\psi(y) \Sheb(t,x\viiva s,y)\mu(dy)\zeta(dx).
\end{align*}
Thus $\She_{\beta_n}(t_n,x\viiva s_n,y)\zeta_n(dx)\mu_n(dy) \to \Sheb(t,x\viiva s,y)\zeta(dx)\mu(dy)$ weakly. Weak convergence implies $\int_{\bbR^2}\Sheb(t,x\viiva s,y)\zeta_n(dx)\mu_n(dy)$ $\to \int_{\bbR^2}\Sheb(t,x\viiva s,y)\zeta(dx)\mu(dy)$ and so part \eqref{prop:conv:part} follows.

Turning to part \eqref{prop:conv:weak}, notice that by Proposition \ref{prop:rncont}, for any $B,K>1$ \[\min_{(x,y,\beta) \in [-K,K]^2\times[-B,B]} \{\Sheb(t,x\viiva s,y)\}>0.\] 
Take $B$ sufficiently large that $\beta_n \in [-B,B]$ for all $n$. We have
\begin{align*}
&\Poly^{\beta_n}_{(s;\zeta_n),(t;\mu_n)}(X_s \in [-M,M]^c) = \frac{\int_{\R}\int_{\bbR\backslash[-M,M]}\Sheb(t,x\viiva s,y)\zeta_n(dx)\mu_n(dy)}{\int_{\bbR^2} \Sheb(t,x\viiva s,y)\zeta_n(dx)\mu_n(dy)}\\
&\qquad\leq \frac{C \sup_n \int_{\bbR^2}(1+|x|^p+|y|^p)\heat(t-s,x-y)\zeta_n(dx)\mu_n(dy)}{(1+|M|^p)\zeta_n\otimes\mu_n([-K,K]^2) \min\limits_{(x,y,\beta) \in [-K,K]^2\times[-B,B]} \{\Sheb(t,x\viiva s,y)\}}.
\end{align*}
Therefore, the one-point distribution of $X_s$ under $\Poly^{\beta_n}_{(s;\zeta_n),(t;\mu_n)}$ is tight.

Now, pick $\eta \in (0,1/2)$ and $\epsilon \in (0,1)$ sufficiently small that $\frac{4+2\epsilon}{1-2\eta}\in (0,p-4).$ For such $\eta$,  \eqref{eq:m2mdef}, \eqref{eq:PolyHolder}, and Corollary \ref{cor:growth}, imply that there exists $C=C(s,t,B,\eta,\epsilon,\omega)>0$ so that for $\beta \in [-B,B]$ and $-T \leq s <t \leq T$,
\begin{align*}
\oE^{\Polyb}_{(s;\mu),(t;\zeta)}[|X|_{\sC^{\eta}_{[s,t]}}] \leq 
\int_{\bbR}\int_{\bbR}\frac{C(1+|x|^{\frac{4+2\epsilon}{1-2\eta} + 20}+|y|^{\frac{4+2\epsilon}{1-2\eta} + 20} )\heat(t-s,x-y)}{\Sheb(t;\zeta|s;\mu) }\zeta(dx)\mu(dy).
\end{align*}
Therefore
\begin{align*}
&\oE_{(s;\mu_n),(t;\zeta_n)}^{\Poly^{\beta_n}}[|X_{\aabullet}|_{\sC_{[s,t]}^\eta}]\leq\\ 
&C \frac{\sup_n \int_{\bbR^2}(1+|x|^p+|y|^p)\heat(t-s,x-y)\mu_n(dy)\zeta_n(dx)}{\min_{(x,y)\in[-K,K]}\{(1+|x|^4+|y|^4)^{-1} \heat(t-s,x-y) \}\zeta_n\otimes\mu_n([-K,K]^2)}.
\end{align*}
By  \cite[Theorem 8.2]{Bil-99}, it follows that $\{\Poly^{\beta_n}_{(s;\zeta_n),(t;\mu_n)}: n \in \bbN\}$ is tight in $\sM_1(\sC_{[s,t]})$ and so it suffices to show vague convergence of the finite dimensional marginals. Take $\varphi_0,\dots,\varphi_{k+1}\in \sC_c(\bbR,\bbR)$. Then for $s = t_0 < t_1 < \dots < t_k < t = t_{k+1}$, we have
\begin{align*}
&\oE_{(s;\mu_n),(t;\zeta_n)}^{\Poly^{\beta_n}}\bigg[\varphi_0(X_s)\prod_{i=1}^k\varphi_i(X_{t_i})\varphi_{k+1}(X_t)\bigg] \\
&= \frac{\int_{\bbR} \int_{\bbR}\varphi_0(y) \oE^{\Poly^{\beta_n}}_{(s,y),(t,x)}[\prod_{i=1}^k\varphi_i(X_{t_i})] \varphi_{k+1}(x) \She_{\beta_n}(t,x\viiva s,y) \mu_n(dy)\zeta_n(dx)}{\She_{\beta_n}(t;\zeta_n|s;\mu_n)}.
\end{align*}
The denominator was shown to converge to $\Sheb(t;\zeta|s;\mu)$ in part \eqref{prop:conv:part}.The convergence of continuous functions (in $(x,y)$) \[\oE^{\Poly^{\beta_n}}_{(s,y),(t,x)}\bigg[\prod_{i=1}^k\varphi_i(X_{t_i})\bigg] \to \oE^{\Poly^{\beta}}_{(s,y),(t,x)}\bigg[\prod_{i=1}^k\varphi_i(X_{t_i})\bigg]\] is uniform on the (compact) support of $\varphi_0(x)\varphi_{k+1}(y)$ by Theorem \ref{prop:polyexist} \eqref{prop:polyexist:cmeas}. Similarly, the convergence of the continuous functions (in $(x,y)$) $\She_{\beta_n}(t,x\viiva s,y) \to \Sheb(t,x\viiva s,y)$ is uniform on the support of $\varphi_0(x)\varphi_{k+1}(y)$. We may then conclude from vague convergence of $\mu_n\otimes\zeta_n \to \mu \otimes \zeta$ that
\begin{align*}
\oE_{(s;\mu_n),(t;\zeta_n)}^{\Poly^{\beta_n}}\bigg[\varphi(X_s)\prod_{i=1}^k\varphi_i(X_{t_i})\varphi(X_t)\bigg] \to \oE_{(s;\mu),(t;\zeta)}^{\Polyb}\bigg[\varphi(X_s)\prod_{i=1}^k\varphi_i(X_{t_i})\varphi(X_t)\bigg].
\end{align*}
The result follows.

To see that \eqref{prop:conv:TV} holds, we appeal to Theorem \ref{prop:m2mbasic} \eqref{prop:m2mboch:TV} and part \eqref{prop:conv:part} of this result.
\end{proof}

With the previous result in hand, we turn to the proof of Theorem \ref{thm:jcont}.
\begin{proof}[Proof of Theorem \ref{thm:jcont}]
We begin with part \eqref{thm:jcont:MM}. Take $\mu_n \to \mu$ in the metric on $\ICM$ defined in equation \eqref{eq:ICMm} and $\beta_n\to\beta, s_n \to s,$ $t_n \to t$, and $s_n \leq t_n$ for all $n$. Let $f \in \sC(\R,\R_+)$ be any function for which there exist $a,A>0$ such that $0 \leq f(z) \leq A e^{-az^2}$ for all $z \in \bbR$. The claim is that with the convention in \eqref{eq:mbdydef},
\[\int_{\R}f(x)\She_{\beta_n}(t_n, dx \viiva s_n; \mu_n) \to  \int_{\R} f(x) \Sheb(t, dx \viiva s; \mu).\]
We first consider the case where $s<t$.  Then, by hypothesis, we have for any $b>0$
\begin{align*}
\int_{\bbR^2}e^{-b(x-y)^2}f(y)dy\mu_n(dx) &\leq A\int_{\bbR^2}e^{-b(x-y)^2}e^{-a y^2}dy\mu_n(dx) \\
&= A' \int_{\R} e^{-c x^2}\mu_n(dx) \to A' \int_{\R}e^{-cx^2}\mu(dx).
\end{align*}
where $A'=A'(A,a,b)$ and $c=c(a,b)$ are explicit and come from standard computations involving Gaussian kernels. It follows that the integral on the left is bounded as a function of $n$.  The hypotheses of Proposition \ref{prop:conv}\eqref{prop:conv:part} are satisfied and so
\begin{align*}
\She_{\beta_n}(t_n;f | s_n;\mu_n) \to \Sheb(t;f | s;\mu).
\end{align*} 
Next, suppose that $s=t$. If for all sufficiently large $n$, $s_n = t = t_n$, then there is nothing to be shown, so we may assume without loss of generality that for all sufficiently large $n$, $t_n - s_n>0$ and $t_n-s_n \searrow 0$. This case follows from Theorem \ref{prop:IC}\eqref{prop:IC:musat}.

Next, we turn to part \eqref{thm:jcont:MC}. Again take $\mu_n \to \mu$ in $\ICM$ and  $\beta_n\to\beta, s_n \to s, t_n \to t$, and $y_n \to y$, where $s_n < t_n$ for all $n$ and $s<t$. We just showed that the integrals against $f \in \sC(\R,\R_+)$ for which there exist $a,A>0$ such that $0 \leq f(z) \leq A e^{-az^2}$ for all $z \in \bbR$ converge. To show convergence in $\CICM$, because the limit is strictly positive, it then suffices to show that for any $m \in \bbN$,
\begin{align*}
\lim_{n\to\infty}\sup_{y \in [-m,m]}\bigg|\She_{\beta_n}(t_n, y|s_n;\mu_n)- \Sheb(t,y|s;\mu)\bigg| = 0.
\end{align*}
If not, then for any $\epsilon>0$, there would exist $y_n \in [-m,m]$ such that $|\She_{\beta_n}(t_n, y_n|s_n;\mu_n)- \Sheb(t,y_n|s;\mu)|>\epsilon$ for all $n$. Passing to a subsequence, we may assume that $y_n \to y \in [-m,m]$. By continuity of $y \mapsto\Sheb(t,y|s;\mu)$, this would imply that for all sufficiently large $n$, $|\She_{\beta_n}(t_n, y_n|s_n;\mu_n)- \Sheb(t,y|s;\mu)|>\epsilon/2$, which contradicts  Proposition \ref{prop:conv}\eqref{prop:conv:part}. 

Finally, we turn to part \eqref{thm:jcont:CC}.  Take $f_n \to f$ in the metric on $\CICM$ defined in equation \eqref{eq:CICMm} and $\beta_n\to\beta, s_n \to s,$ $t_n \to t$, and $s_n \leq t_n$ for all $n$. Then because uniform convergence implies vague convergence of the represented measures, we have $f_n(x) dx\to f(x)dx$ in the topology of $\ICM$. Part \eqref{thm:jcont:MC} then implies the result if $s<t$. Consider now the case $s=t$. We may assume that $t_n-s_n>0$ for all sufficiently large $n$, else there is otherwise nothing to prove. In this case, convergence of the integrals appearing in the metric $d_{\CICM}$ in \eqref{eq:CICMm} follows from part \eqref{thm:jcont:MM}. As $f \in \CICM$ is strictly positive, it remains to show that for $m\in\N$,
\[
\lim_{n\to\infty}\max_{-m \leq x \leq m}\bigg|\She_{\beta_n}(t_n,x\viiva s_n; f_n) - f(x)\bigg| = 0.
\]
This follows from Theorem \ref{prop:IC}\eqref{prop:IC:fcont}.
\end{proof}



Next, we show the corresponding continuity results for the quenched polymer measures, recorded as Theorem \ref{thm:polyreg}.
\begin{proof}[Proof of Theorem \ref{thm:polyreg}]
Part \eqref{thm:polyreg:weak} is an immediate consequence of Proposition \ref{prop:conv}\eqref{prop:conv:weak}. Similarly, part \eqref{thm:polyreg:TV} follows from Proposition \ref{prop:conv}\eqref{prop:conv:TV}.

Turning to \eqref{thm:polyreg:SF}, take $s<r<t$ and $f \in \sB_b(\bbR)$ as in the statement and let $x,y \in \bbR$ be given. We have
\begin{align*}
\oE_{(s,y),(t,x)}^{\Polyb}[f(X_r)] &= \frac{\int_{\bbR}f(z)\rnSheb(t,x|r,z)\rnSheb(r,z\viiva s,y) \heat(t-r,x-z)\heat(r-s,z-y)dz }{\Sheb(t,x\viiva s,y)}.
\end{align*}
Continuity in $(s,y,t,x,\beta)$ now follows from continuity of the denominator and the integrand,  Corollary \ref{cor:growth}, and the dominated convergence theorem, whenever $s < r < t$. 

Next, we consider part \eqref{thm:polyreg:vbr} $f\in \sC_c(\R,\R)$. Take sequences $s_n \to r$ with $s_n \leq r$, $y_n \to y$, and $\beta_n\to\beta$. Passing to subsequences, we handle the cases of $s_n = r$ for all $n$ and $s_n < r$ for all $n$ separately. If $s_n = r$ for all $n$, then by Theorem \ref{prop:polyexist}, $\E^{\Poly^{\beta_n}}_{(r,y_n),(t,x)}[f(X_r)]= f(y_n)$ which converges to $f(y)$ by continuity. Now, consider the case of $s_n<r$ with $s_n \to r$.
\begin{align*}
\oE_{(s_n,y_n),(t,x)}^{\Poly^{\beta_n}}[f(X_r)] &= \frac{\int_{\bbR}f(z)\rnShe_{\beta_n}(t,x|r,z)\rnShe_{\beta_n}(r,z|s_n,y_n) \heat(t-r,x-z)\heat(r-s_n,z-y_n)dz }{\She_{\beta_n}(t,x\viiva s_n,y_n)}.
\end{align*}
The denominator converges to $\Sheb(t,x\viiva s,y)$ by continuity. Convergence of the integral follows from observing that $\rnShe_{\beta_n}(r,z|s_n,y_n) \to 1$ uniformly over $z\in \supp f$ and $\heat(r-s_n,z-y_n)dz \to \delta(y)$ vaguely. Continuity of $(t,x,\beta) \mapsto \oE^{\Polyb}_{(s,y),(t,x)}[f(X_r)]$ is similar.

Now, take $\zeta \in \ICM$. We have
\begin{align*}
\oE_{(s,y),(t;\zeta)}^{\Polyb}[f(X_r)] &= \frac{\int_{\bbR} \rnSheb(t,x\viiva s,y)\heat(t-s,x-y)\oE_{(s,y),(t,x)}^{\Polyb}[f(X_r)]\zeta(dx)}{\Sheb(t;\zeta\viiva s,y)}.
\end{align*}
Continuity of the partition function $\Sheb(t;\zeta\viiva s,y)$ in $(t,y,s,\beta)$ follows from Theorem \ref{prop:IC}. Continuity of $(t,y,s,\beta) \mapsto \oE_{(s,y),(t,x)}^{\Polyb}[f(X_r)] \rnSheb(t,x\viiva s,y)\heat(t-s,x-y)$, Corollary \ref{cor:growth}, and the dominated convergence theorem give continuity of the integral, which holds by the result shown above for the point-to-point measures. Continuity of $(s,t,x,\beta) \mapsto \oE_{(s;\mu),(t,x)}^{\Polyb}[f(X_r)]$ is similar. If $f \in \sC_c(\R,\R)$, continuity of $(s,y,\beta) \mapsto \oE_{(s,y),(t;\zeta)}^{\Polyb}[f(X_r)]$ and $(t,x,\beta) \mapsto$ $\oE_{(s;\mu),(t,x)}^{\Polyb}[f(X_r)]$ on $\{(s,y,\beta) \in \bbR^3 : s \leq r\}$ and $\{(t,x,\beta) \in \bbR^3 : t \geq r\}$ follow from the same argument.
\end{proof}

We now turn to our version of the Karlin-McGregor formula, recorded as Proposition \ref{prop:KMG} below.   Before stating the result, we introduce and recall some notation. We denote the Weyl chamber in $\R^n$ by $\bbW_n = \{(x_1,\dots,x_n) : x_1 < \dots < x_n\}$. For $(x_1,\dots,x_n),(y_1,\dots,y_n)\in\bbW_n$, $s,t,x,y \in \bbR$ with $s<t$, $\mu,\zeta \in \ICM$, and $B_1,\dots, B_n \in \sB(\sC_{[s,t]})$, and denoting the coordinate random variables by $(X^{1},\dots, X^{n})$, we set
\begin{align*}
\Polyb_{(s,y_1,\dots,y_n),(t;\zeta)}(X^{i} \in B_i, 1 \leq i \leq n) &= \prod_{i=1}^n \Polyb_{(s,y_i),(t;\zeta)}(X \in B_i), \qquad \text{ and } \\
\Polyb_{(s;\mu),(t,x_1,\dots,x_n)}(X^{i} \in B_i, 1 \leq i \leq n) &= \prod_{i=1}^n \Polyb_{(s;\mu),(t,x_i)}(X \in B_i).
\end{align*}
That is, $\Polyb_{(s,y_1, \dots , y_n),(t;\zeta)}$ is the law of $n$ independent point-to-measure polymers. The polymer paths start from the points $y_1,\dots, y_n$ at time $s$ and run until time $t$, where they share a boundary condition given by the measure $\zeta$. We view this as a measure on on the product space $\sC([s,t],\bbR)^n$. There is a similar interpretation for $\Polyb_{(s;\mu),(t,x_1,\dots,x_n)}$. As before, we replace $(s;\mu)$ with $(s,y)$ if $\mu = \delta_y$, with similar notation for $(t,x)$. Call $\cfil_{s:t}^{(n)} =\sigma(X^{1}_u, \dots X^{n}_u : s \leq u \leq t)$ the associated natural filtration. For each $s<t$ and each $n \in \bbN$, introduce the $\cfil_{s:t}^{(n)}$-stopping time
\begin{align*}
\tau_{s:t}^{(n)} &= \inf\{ u \in [s,t] \text{ such that there exist } i,j \in [n] \text{ with } i \neq j \text{ and }X^i_u = X^j_u\}.
\end{align*}
We say that $(B_1,\dots, B_n) \in \sB(\bbR)^n$ are coordinate-wise ordered if $x \in B_i$ and $y \in B_j$ with $i < j$ implies $x < y$.
\begin{proposition}\label{prop:KMG}
There exists an event $\Omega_0$ with $\bbP(\Omega_0)=1$ so that on $\Omega_0$, the following hold:
\begin{enumerate} [label={\rm(\roman*)}, ref={\rm\roman*}]   \itemsep=3pt 
\item \label{prop:KMG:det}
For all   $s<t$, all $r \in (s,t)$, all $\beta \in \bbR$, all $(B_1,\dots, B_n) \in \sB(\bbR)^n$ which are coordinate-wise ordered, all $\mu,\zeta \in \ICM$, and all $(x_1,\dots,x_n),(y_1,\dots,y_n) \in \bbW_n$,
\begin{align*}
&\Polyb_{(s,y_1,\dots,y_n),(t;\zeta)}(X^{1}_r \in B_1, \dots, X^{n}_r \in B_n, \tau_{s:t}^{(n)}>r)\\ 
&= \int\limits_{B_1 \times \dots \times B_n} \det_{1 \leq i,j \leq n}\bigg[\frac{\Sheb(t;\zeta|r,z_j)\Sheb(r,z_j\viiva s,y_i)}{\Sheb(t;\zeta\viiva s,y_i)} \bigg] dz_{1:n}
\end{align*}
and
\begin{align*}
&\Polyb_{(s;\mu),(t,x_1,\dots,x_n)}(X^{1}_r \in B_1, \dots, X^{n}_r \in B_n, \tau_{s:t}^{(n)}>r)\\ 
&= \int\limits_{B_1 \times \dots \times B_n} \det_{1\leq i,j \leq n}\bigg[\frac{\Sheb(t,x_i|r,z_j)\Sheb(r,z_j|s;\mu)}{\Sheb(t,x_i|s;\mu)} \bigg]dz_{1:n}
\end{align*}
\item\label{prop:KMG:pos} For all $s<t$, all $r \in (s,t)$, all $\beta \in \bbR$, all $(x_1,\dots x_n),(y_1,\dots,y_n),(z_1,\dots,z_n)\in\bbW_n$ and all $\mu,\zeta\in\ICM$, 
\begin{align*}
&\det_{1 \leq i,j \leq n}\bigg[\frac{\Sheb(t;\zeta|r,z_j)\Sheb(r,z_j\viiva s,y_i)}{\Sheb(t;\zeta\viiva s,y_i)} \bigg]> 0 \ \text{ and } \  \det_{1 \leq i,j \leq n}\bigg[\frac{\Sheb(t,x_i|r,z_j)\Sheb(r,z_j|s;\mu)}{\Sheb(t,x_i|s;\mu)} \bigg]  > 0.
\end{align*}
\item\label{prop:KMG:pos2}  For all $s<t$, all $\beta \in \bbR$, and all  $(y_1,\dots,y_n),(x_1,\dots,x_n)\in\bbW_n$,
\begin{align*}
\det_{1 \leq i,j \leq n}\big[\Sheb(t,x_j\viiva s,y_i) \big] > 0.
\end{align*} 
\end{enumerate}
\end{proposition}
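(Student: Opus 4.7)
The plan is to adapt the classical Karlin--McGregor reflection argument \cite{Kar-McG-59} to the polymer setting, using the continuity of polymer paths from Theorem \ref{prop:polyexist}, the Markov property from Theorem \ref{prop:m2mbasic}\eqref{prop:m2mbasic:Markov}, and the product structure of the finite-dimensional densities \eqref{eq:p2pdef} and \eqref{eq:m2mpoly} in terms of the strictly positive kernel $\Sheb$. All three parts of the proposition will follow from the KMG identity for a product of $n$ independent point-to-point polymer bridges between ordered endpoints $y_{1:n}, x_{1:n} \in \bbW_n$:
\[
\Bigl[\bigotimes_{i=1}^n \Polyb_{(s,y_i),(t,x_i)}\Bigr]\bigl(\tau^{(n)}_{s:t}>t-\bigr) \;=\; \frac{\det_{i,j}\bigl[\Sheb(t,x_j\viiva s,y_i)\bigr]}{\prod_i \Sheb(t,x_i\viiva s,y_i)},
\]
where $\{\tau>t-\}$ denotes the event of no two paths meeting on $[s,t]$. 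I will prove this identity at the level of finite-dimensional marginals via the swap involution: for a partition $s = t_0 < \dotsm < t_{k+1} = t$, swapping the portions of particles $i,j$ after the first partition time $t_m$ at which $w^i_m = w^j_m$ preserves the unnormalized density $\prod_i\prod_l \Sheb(t_{l+1},w^i_{l+1}|t_l,w^i_l)$ while pairing configurations across different signed permutations of the endpoints, so that the alternating sum over permutations collapses to the identity permutation restricted to the non-crossing event. Path continuity from Theorem \ref{prop:polyexist} then permits passing to the continuous-time limit along increasingly fine partitions.

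\textbf{Part (i).} Apply the Markov property (Theorem \ref{prop:m2mbasic}\eqref{prop:m2mbasic:Markov}) to each of the $n$ independent polymers in $\bigotimes_i \Polyb_{(s,y_i),(t;\zeta)}$ and condition on $(X^1_r,\dotsc,X^n_r) = (z_{1:n})$. The past segments are conditionally independent polymer bridges $\Polyb_{(s,y_i),(r,z_i)}$; the future segments are independent polymers $\Polyb_{(r,z_i),(t;\zeta)}$; and the event $\{\tau^{(n)}_{s:t}>r\}$ depends only on the past. The product joint density at time $r$ is $\prod_i \Sheb(r,z_i\viiva s,y_i)\Sheb(t;\zeta|r,z_i)/\Sheb(t;\zeta\viiva s,y_i)$, and the identity above applied to the interval $[s,r]$ with endpoints $(y_{1:n},z_{1:n})$ gives $\det[\Sheb(r,z_j\viiva s,y_i)]/\prod_i \Sheb(r,z_i\viiva s,y_i)$ for the conditional non-crossing probability on the past. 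Multiplying these and using multilinearity of determinants to fold $\Sheb(t;\zeta|r,z_j)$ into column $j$ and $1/\Sheb(t;\zeta\viiva s,y_i)$ into row $i$ produces the first formula; the second follows by symmetric reasoning with the time-reversed decomposition at time $r$.

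\textbf{Parts (ii) and (iii).} Part (iii) is the core positivity statement and will be proved first. By the KMG identity above,
\[
\det_{i,j}[\Sheb(t,x_j\viiva s,y_i)] = \prod_i \Sheb(t,x_i\viiva s,y_i) \cdot \Bigl[\bigotimes_i \Polyb_{(s,y_i),(t,x_i)}\Bigr]\bigl(\tau^{(n)}_{s:t}>t-\bigr),
\]
with the product factor strictly positive by Proposition \ref{prop:rncont}\eqref{prop:rncont:invHolder}. To show the non-crossing probability is positive, I restrict to the event on which all $n$ paths lie in a common interval $[-R,R]$ with controlled H\"older norm (possible by Theorem \ref{prop:polyexist} and Lemma \ref{lem:Qbridge}), on which the Radon-Nikodym bound of Lemma \ref{lem:rndbd} between the polymer bridge and Brownian bridge product measures becomes uniform. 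The classical Karlin--McGregor identity for the Brownian bridge product measure assigns strictly positive probability to the non-crossing event for $y_{1:n}, x_{1:n} \in \bbW_n$ (equivalent to $\det[\heat(t-s,x_j-y_i)]>0$). Choosing a partition fine enough that the H\"older bound forces non-crossing between consecutive partition times then transfers this positivity to the polymer bridges. For part (ii), multilinearity factors the two determinants as the strictly positive prefactors $\prod_j \Sheb(t;\zeta|r,z_j)/\prod_i \Sheb(t;\zeta\viiva s,y_i)$ (positive by Theorem \ref{prop:IC}\eqref{prop:IC:1}) multiplying $\det[\Sheb(r,z_j\viiva s,y_i)]$, which is positive by part (iii) applied to the interval $[s,r]$ with data $(y_{1:n},z_{1:n})$.

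\textbf{Main obstacle.} The principal technical difficulty is making the swap/reflection argument rigorous at the continuous-path level: I must check that swapping at the first crossing stopping time defines a measurable bijection on the product path space that preserves the unnormalized polymer bridge measure, and that the finite-dimensional signed-permutation identity passes to the continuous-time limit without loss. A subsidiary challenge is converting the finite-dimensional Radon-Nikodym comparison of Lemma \ref{lem:rndbd} into a strictly positive lower bound on a genuine path-space event; this step hinges on controlling H\"older norms uniformly on an event large enough to capture the Brownian non-crossing probability.
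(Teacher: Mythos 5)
There is a genuine gap in your route to the Karlin--McGregor identity. The discrete swap involution you describe --- exchanging the tails of particles $i,j$ after ``the first partition time $t_m$ at which $w^i_m = w^j_m$'' --- is the Lindstr\"om--Gessel--Viennot argument, and it fails for continuous state spaces: under the absolutely continuous finite-dimensional distributions \eqref{eq:p2pdef}, the event $\{w^i_m = w^j_m\}$ has Lebesgue measure zero for every fixed partition, so the involution cancels nothing. What survives in the alternating sum are precisely the configurations in which two paths cross \emph{between} partition times without coinciding at any of them, and showing these vanish in the mesh-zero limit is essentially the continuous-time problem all over again. Your fallback of swapping at the continuous first-meeting time would require a strong Markov property of the polymer at a random time and a verification that the swap preserves the product bridge measure; neither is available from Theorem \ref{prop:polyexist}, which only gives the Markov property at deterministic times. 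The paper sidesteps both issues with an optional-stopping argument: for each permutation $\sigma$ one forms the bounded continuous martingale $M^\sigma(u) = \prod_i \oE^{\Polyb}_{(u,X^{\sigma(i)}_u),(t;\zeta)}[\varphi_i(X_r)]$ (continuity coming from Theorem \ref{thm:polyreg}\eqref{thm:polyreg:vbr}), takes the signed sum $M(u)=\sum_\sigma (-1)^{\sgn\sigma}M^\sigma(u)$, and stops at $\tau^{(n)}_{s:t}\wedge r$; the determinant vanishes identically at the meeting time because two rows coincide, and only the identity permutation survives on $\{\tau^{(n)}_{s:t}>r\}$ by path continuity and the ordering of the supports. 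This uses only the deterministic-time Markov property and is the step your proposal is missing.

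The second gap is in part (iii). Your tube argument (which is in the same spirit as the paper's) yields positivity of the non-crossing probability when the terminal data are spread over sets of positive Lebesgue measure, hence positivity of $\int_{B_1\times\dots\times B_n}\det[\cdots]\,dz_{1:n}$, i.e.\ strict positivity of the determinant only for Lebesgue-a.e.\ configuration $(z_1,\dots,z_n)\in\bbW_n$. You do not explain how to reach \emph{every} exact configuration $(x_{1:n},y_{1:n})\in\bbW_n\times\bbW_n$, and Lemma \ref{lem:rndbd} by itself controls only finite-dimensional marginals, not the path-space tube events with pinned endpoints. The paper closes this with an extra ingredient you omit: the Chapman--Kolmogorov identity (Lemma \ref{lem:ChaKol}) combined with the Cauchy--Binet--Andr\'eief formula writes $\det[\Sheb(t,x_j\viiva s,y_i)]$ as $\int_{\bbW_n}\det[\Sheb(t,x_j\viiva r,z_i)]\det[\Sheb(r,z_i\viiva s,y_j)]\,dz_{1:n}$, an integral of a product of two a.e.-positive, everywhere-nonnegative factors, which is therefore strictly positive for all ordered endpoints. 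Without this step (or a substantially more delicate exact-endpoint tube construction), your argument proves (iii) only off a null set, which is not enough for (ii) or for Theorem \ref{thm:strpos}.
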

Note that Theorem \ref{thm:strpos} is Proposition \ref{prop:KMG}\eqref{prop:KMG:pos2}. The form of the first part of the proof is adapted from an argument due to Varadhan, which is sketched in the discrete case in Exercise 4.3.5 of \cite{Hou-etal-09}.
\begin{proof}[Proof of Proposition \ref{prop:KMG}]
Consider $\beta \in \bbR$ $(y_1, y_2. \dots, y_n)\in\bbW_n$, $\zeta \in \ICM$, and $\varphi_1,\dots \varphi_n \in \sC_c(\R,\R)$ where the sets $B_i = \supp \varphi_i$ are coordinate-wise ordered as in the statement. In particular, note that this condition enforces that $\supp \varphi_i \cap \supp \varphi_j = \emptyset$ if $i \neq j$. Call $\oE^{\Polyb}_{(s,y_1,\dots,y_n),(t;\zeta)}$ the expectation under $\Polyb_{(s,y_1,\dots,y_n),(t;\zeta)}$. Denote by $S_n$ the group of permutations on $[n]=\{1,\dots,n\}$ and fix $\sigma \in S_n$. We consider the $\cfil_{s:u}^{(n)}$ martingale defined by
\begin{align*}
M^{\sigma}(u) = \oE^{\Polyb}_{(s,y_1,\dots,y_n),(t;\zeta)}\bigg[\prod_{i=1}^n \varphi_{i}(X_r^{\sigma(i)})\bigg| \cfil_{s:u}^{(n)} \bigg].
\end{align*}
Independence of the coordinate random variables and the Markov property of $\Polyb_{(s,y_i),(t;\zeta)}(\aabullet)$ imply that for $u \leq r$, $\Polyb_{(s,y_1,\dots,y_n),(t;\zeta)}$ almost surely,
\be\label{eq:KMGmgsig}\begin{aligned} 
M^{\sigma}(u) &= \prod_{i=1}^n \oE^{\Polyb}_{(u,X^{\sigma(i)}_u),(t;\zeta)}\big[\varphi_{i}(X_r)\big]  \\&= \int_{\bbR^n} \prod_{i=1}^n \varphi_{i}(z_{i}) \prod_{i=1}^n \frac{\Sheb(t;\zeta|r,z_{i})\Sheb(r,z_{i}|u,X^{\sigma(i)}_u)}{\Sheb(t;\zeta|u,X^{\sigma(i)}_u)}dz_{1:n}.
\end{aligned}\ee
Note that the path continuity of $X_u$ under $\Polyb_{(s,y_i),(t;\zeta)}$ combined with Theorem \ref{thm:polyreg}\eqref{thm:polyreg:vbr} implies that $u \mapsto M_u^{\sigma}$ is then a bounded and continuous $\cfil_{s:u}^{(n)}$ martingale on $[s,r]$ under $\Polyb_{(s,y_1,\dots,y_n),(t;\zeta)}$. Therefore, all of these properties also hold for
\begin{align}
M(u) &= \sum_{\sigma \in S_n} (-1)^{\sgn(\sigma)} M^{\sigma}(u) = \det_{1 \leq i,j \leq n}\big[\oE^{\Polyb}_{(u,X_u^i),(t;\zeta)}[\varphi_j(X_r)]\big]\label{eq:KMGmg1}\\
&= \int_{\bbR^n} \prod_{i=1}^n \varphi_{i}(z_{i}) \det\bigg[\frac{\Sheb(t;\zeta|r,z_{i})\Sheb(r,z_{i}|u,X^{j}_u)}{\Sheb(t;\zeta|u,X^{j}_u)} \bigg]_{i.j=1}^ndz_{1:n}. \label{eq:KMGmg2}
\end{align}
By the optional stopping theorem,
\begin{align*}
M(s) &= \oE^{\Polyb}_{(s,y_1,\dots,y_n),(t;\zeta)}[M(\tau_{s:t}^{(n)} \wedge r)] = \oE^{\Polyb}_{(s,y_1,\dots,y_n),(t;\zeta)}[M(\tau_{s:t}^{(n)}) \ind_{\{\tau_{s:t}^{(n)} \leq r\}} + M(r) \ind_{\{\tau_{s:t}^{(n)} > r\}}]
\end{align*}
Note that on the event $\{\tau_{s:t}^{(n)} \leq r\}, M(\tau_{s:t}^{(n)}) = 0$ because two rows in the matrix in the determinant in \eqref{eq:KMGmg1} are equal. On the other hand, because the $y_i$ are ordered and the supports of the test functions $\varphi_i$ are also ordered, if $\sigma$ is not the identity, then path continuity forces
\begin{align*}
M^{\sigma}(r) = \prod_{i=1}^n \oE^{\Polyb}_{(r,X^{\sigma(i)}_r),(t;\zeta)}\big[\varphi_{i}(X_r)\big] \ind_{\{\tau_{s:t}^{(n)}>r\}} = 0, \qquad \Polyb_{(s,y_1,\dots,y_n),(t;\zeta)}\text{-almost surely}.
\end{align*}
When $\sigma$ is the identity, by Theorem \ref{prop:polyexist}, $M^{\sigma}(r) = \prod_{i=1}^n \varphi_i(X_r)$ almost surely under $\Polyb_{(s,y_1,\dots,y_n),(t;\zeta)}$. Consequently, we have
\begin{align*}
&\int_{\bbR^n} \prod_{i=1}^n \varphi_{i}(z_{i}) \det_{1 \leq i,j \leq n}\bigg[\frac{\Sheb(t;\zeta|r,z_{i})\Sheb(r,z_{i} \viiva s,y_j)}{\Sheb(t;\zeta\viiva s,y_j)} \bigg]dz_{1:n} \\
&\qquad\qquad= \oE^{\Polyb}_{(s,y_1,\dots,y_n),(t;\zeta)}\bigg[ \prod_{i=1}^n \varphi_i(X_r^i)\ind_{\{\tau_{s:t}^{(n)}>r\}}\bigg].
\end{align*}
By Urysohn's lemma and the monotone convergence theorem, this extends to functions $\varphi_i = \ind_{O_i}$ where $(O_1,\dots, O_n)$ are coordinate-wise ordered open sets. Such sets forms a $\pi$ system which generates the Borel $\sigma$-algebra of the Weyl chamber $\bbW_n = \{(x_1,\dots,x_n) : x_1 < \dots < x_n\}$. A standard monotone class theorem \cite[Appendix Theorem 4.3]{Eth-Kur-86} argument then implies that the previous identity holds with $\prod_{i=1}^n \varphi_i$ replaced by $f \in \sB_b(\bbW_n)$. In particular, if $(B_1,\dots,B_n)$ are coordinate-wise ordered, then
\begin{align*}
&\int\limits_{B_1\times\dots\times B_n} \det_{1\leq i,j \leq n}\bigg[\frac{\Sheb(t;\zeta|r,z_{i})\Sheb(r,z_{i}\viiva s,y_j)}{\Sheb(t;\zeta\viiva s,y_j)} \bigg]dz_{1:n}= \Polyb_{(s,y_1,\dots,y_n),(t;\zeta)}(X^i_r \in B_i, i \in [n],\tau_{s:t}^{(n)}>r).
\end{align*}
The result for $\Polyb_{(s;\mu),(t,x_1,\dots,x_n)}$ is similar.  This completes the proof of part \eqref{prop:KMG:det}.

\smallskip 

Turning to the proof of parts \eqref{prop:KMG:pos} and \eqref{prop:KMG:pos2}, note that non-negativity of the determinant in part \eqref{prop:KMG:pos} follows immediately from part \eqref{prop:KMG:det}. The claim is that this inequality is everywhere strict. Note that by multilinearity of the determinant, we have
\begin{align}
\det_{1 \leq i,j \leq n}\bigg[\frac{\Sheb(t;\zeta|r,z_j)\Sheb(r,z_j\viiva s,y_i)}{\Sheb(t;\zeta\viiva s,y_i)} \bigg]&= \frac{\prod_{j=1}^n \Sheb(t;\zeta|r,z_j)}{\prod_{i=1}^n\Sheb(t;\zeta\viiva s,y_i) }\det_{1\leq i,j \leq n}\bigg[\Sheb(r,z_j\viiva s,y_i)\bigg]\label{eq:detid}
\end{align}
with a similar identity for the other term in part \eqref{prop:KMG:pos}. By strict positivity of the first term on the right-hand side of this equality, we see that parts \eqref{prop:KMG:pos} and \eqref{prop:KMG:pos2} are equivalent.

Fix $s <t$, $r \in (s,t)$, $(y_1,\dots,y_n)\in\bbW_n$, $\zeta \in \ICM$, and $\delta \in (0,1)$. Let $B_1,\dots,B_n \in \sB(\R)$ be bounded Borel sets of positive Lebesgue measure with the property that if $x \in B_i$ and $y \in B_j$ with $i<j$, then $y-x > \delta$. Our first claim is that for any sets satisfying these conditions,
\begin{align*}
\Polyb_{(s,y_1,\dots,y_n),(t;\zeta)}(X^i_r \in B_i \text{ for } i \in [n],\tau_{s:t}^{(n)}>r) >0.
\end{align*}
For each $i$, let $[a_i,b_i] = \bigcap_{B_i \subset [a,b]} [a,b]$ be the smallest closed interval containing $B_i$. By hypothesis, we have for $i \in [n-1]$, $b_i + \delta \leq a_{i+1}$. Call $c_i = (a_i+b_i)/2$.  In order to show that this event has positive probability, we first show that with positive probability, for some  $v$  slightly smaller than $r$,  each of the $n$ paths remains in a narrow symmetric cylinder of radius $\epsilon$ around the straight line segment connecting $(s,y_i)$ and $(v,c_i)$ for the entire time interval $[s,v]$. These cylinders will be chosen to be narrow enough that the paths cannot intersect before time $v$ as long as they remain in the cylinders. Then the path is required to end in $B_i$ at time $r$, without exiting the interval $(a_i-\delta/4,b_i+\delta/4)$ on $[v,r]$. See Figure \ref{fig:polypf} for an illustration.

\begin{figure}
\includegraphics{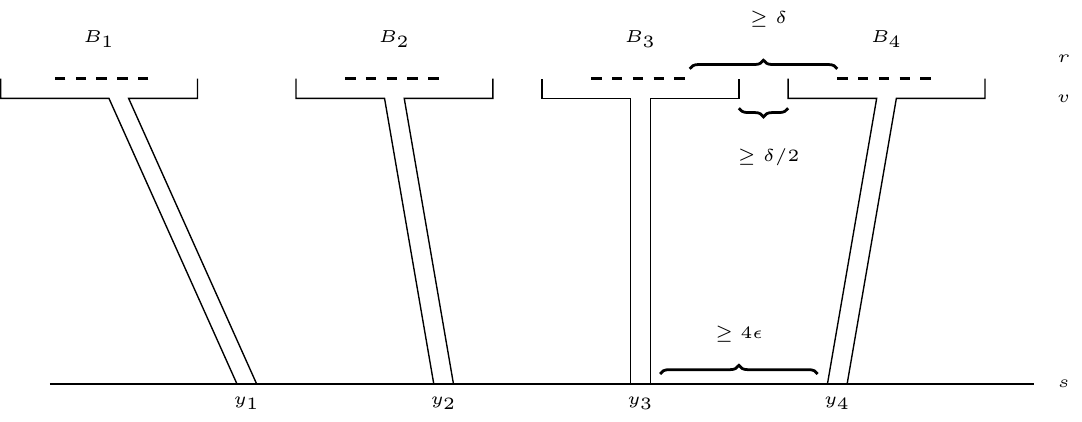}
\caption{\small The path $X^i$ starts at $y_i$ at time $s$ and reaches $B_i$ (dashed, at level $r$) at time $r$, while remaining inside the tube connecting $y_i$ and $B_i$ (drawn in solid lines) between times $s$ and $r$. Before time $v$, the tube is a symmetric cylinder with diameter $4\epsilon$ around the straight line segment connecting $y_i$ and $c_i$. The tube expands at a time $v$ (which is slightly smaller than $r$) in order to allow the path enough space to reach any point in $B_i$, while still not allowing for intersections with any other path.}\label{fig:polypf}
\end{figure}

Recall that for $a<b$, we defined the process $\tilde{X}$ on $\sC([a,b],\bbR)$ by
\begin{align*}
\widetilde{X}_u &= X_u -\bigg(\frac{b-u}{b-a}X_a + \frac{u-a}{b-a}X_b\bigg),
\end{align*}
Note that the definition of $\tilde{X}$ depends on the space on which it is defined through $a$ and $b$, which we suppress. For $v \in (s,r)$ denote by $\ell_{i,v}(u)$ the straight line segment connecting $(s,y_i)$ with $(v,c_i)$, where $c_i = (a_i+b_i)/2$, i.e., for $u \in [s,v]$,
\[\ell_{i,v}(u) = \frac{v-u}{v-s}\cdot y_i + \frac{u-s}{v-s} \cdot \frac{a_i+b_i}{2}.\]

Because $\zeta \in \ICM$, there exists $K>0$ so that $\zeta[-K,K]>0$. Without loss of generality, we may assume that $K$ is sufficiently large that $a_1-1,b_n+1, y_1-1,y_n+1 \in [-K,K]$.  By Lemma \ref{lem:Qbridge} , there exists $C=C(t,s,K,\omega)>0$ so that for all $x,y \in [-K,K]$ and all $u,v \in [s,t]$ with $u<v$,
\begin{align*}
\oE_{(u,y)(v,x)}^{\Polyb}[|\widetilde{X}|_{\sC^{1/4}_{[u,v]}}] < C.
\end{align*}

Let $\epsilon\in (0,\delta/4)$ be sufficiently small that for $i \in [n-1]$, $y_{i+1}-y_i > 8\epsilon$ and for $i \in [n]$, $b_i-a_i>8\epsilon$. In particular, $(c_i-\epsilon, c_i + \epsilon) \subset [a_i,b_i]$ for all $i \in [n]$. Take $m \in \bbN$ sufficiently large that $C(r-s)^{1/4}/m^{1/4} < \epsilon^2$. Let $u_{j} = s + (r-s)\frac{j}{m},$ $j \in \{0,\dots,m\}$,  be a uniform partition of $[s,r]$ with mesh size $m^{-1}(r-s)$. In particular, we have for all $x,y \in [-K,K]$ and all $j \in [m]$,
\begin{align*}
\Polyb_{(u_{j-1},x),(u_j,y)}\bigg(\sup_{u,v \in [u_{j-1},u_j]}|\widetilde{X}_u - \widetilde{X}_v| \geq \epsilon\bigg) \leq \Polyb_{(u_{j-1},x),(u_j,y)}\bigg(|\widetilde{X}|_{\sC_{[u_{j-1},u_j]}^{1/4}} \geq \frac{m^{1/4}\epsilon}{(r-s)^{1/4}}\bigg) \leq \epsilon,
\end{align*}
by Markov's inequality and the choice of $m$. The next part of the argument is illustrated in Figures \ref{fig:polypf2} and \ref{fig:polypf3}. Call
\begin{align*}
E_i &= \bigg\{\sup_{u \in [s,u_{m-1}]} |X_{u}^i - \ell_{i,u_{m-1}}(u)| < 2\epsilon,\, X_r^i \in B_i, \,\forall u \in [u_{m-1},r]: X_u^i \in \bigg(a_i-\frac{\delta}{4},b_i+\frac{\delta}{4}\bigg)\bigg\}.
\end{align*}
Momentarily viewing $E_i$ as an event on $\sC_{[s,t]}$ instead of the product space, for any $x \in [-K,K]$ and $i \in [n]$, we have
\begin{align*}
&\Polyb_{(s,y_i),(t,x)}(E_i) 
\geq \Polyb_{(s,y_i),(t,x)}\bigg( X_r \in B_i; \, \forall j \in [m-1]: |X_{u_{j}} - \ell_{i,u_{m-1}}(u_{j})| < \epsilon;\\
&\qquad\qquad\qquad\qquad\qquad\qquad\qquad\qquad\qquad  \forall j \in [m]: \sup_{u,v \in [u_{j-1},u_j]}|\widetilde{X}_u - \widetilde{X}_v| < \epsilon\bigg) \\
&= \oE_{(s,y_i),(t,x)}^{\Polyb}\bigg[ \prod_{j=1}^{m-1} \Polyb_{(u_{j-1},X_{u_{j-1}}),(u_{j},X_{u_{j}})}\bigg(\sup_{u,v \in [u_{j-1},u_j]}|\widetilde{X}_u - \widetilde{X}_v| < \epsilon\bigg)\ind_{\{|X_{u_{j}} - \ell_{i}(u_{j})| < \epsilon\}}\one_{\{X_r^i \in B_i\}}  \bigg] \\
&\geq (1-\epsilon)^m\tspb \Polyb_{(s,y_i),(t,x)}\bigg(X_r^i \in B_i; \, \forall j \in [m-1]: |X_{u_{j}} - \ell_{i}(u_{j})| < \epsilon\bigg),
\end{align*} 
where in the second-to-last step, we have repeatedly applied the Markov property and in the last step that $\epsilon < \delta/4$. Finally, by Lemma \ref{lem:rndbd}, there is a constant $C'=C'(T,m,\omega)$ so that
\begin{align*}
&\Polyb_{(s,y_i),(t,x)}\bigg(X_r^i \in B_i; \, \forall j \in [m-1]: |X_{u_{j}} - \ell_{i}(u_{j})| < \epsilon\bigg) \\
&\qquad
\geq \frac{C'}{(1+2K)^m} \inf_{-K \leq x \leq K}\bfP^{BB}_{(s,y_i),(t,x)}\bigg[X_r^i \in B_i;\,  \forall j \in [m]: |X_{u_{j}} - \ell_{i}(u_{j})| < \epsilon\bigg]>0.
\end{align*}
\begin{figure}[t]
\includegraphics{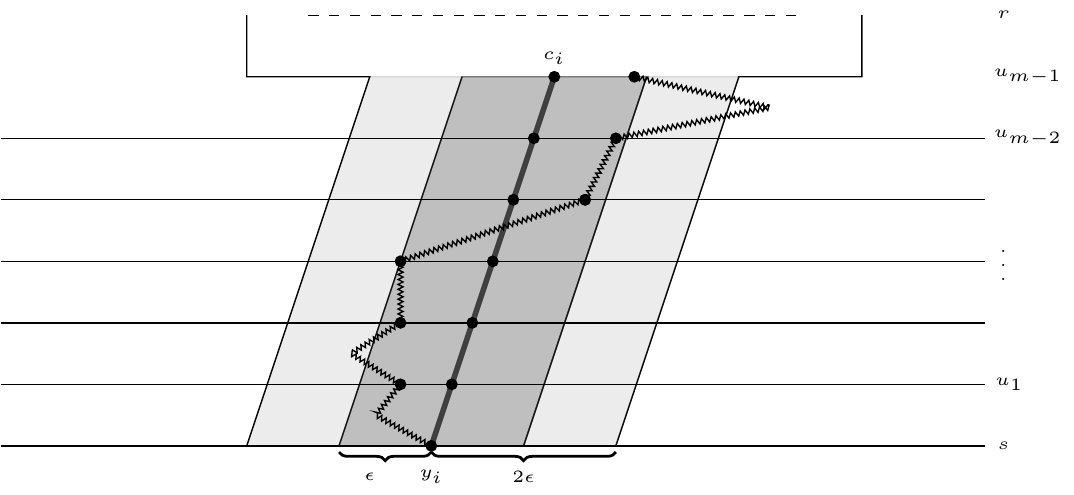}
\caption{\small On $E_i$, the $i^{th}$ path is required to lie within $\epsilon$ of $\ell_{i,u_{m-1}}(u_j)$ at the times $\{u_j : j \in [m-1]\}$. Between these times, the path is required to remain inside the larger cylinder of radius $2\epsilon$ (the union of the light and dark grey regions) around $\ell_{i,v}(\cdot)$ (thick). The points $\ell_{i,u_{m-1}}(u_j)$ and some admissible values of $X_{u_j}^i$ are marked as bullets. An inadmissible path between these values (due to exiting the cylinder between $u_{m-2}$ and $u_{m-1}$)  is drawn as a zigzag. In order to exit in this way, the path between $X_{u_{j-1}}^i$ and $X_{u_{j}}^i$ deviates from the straight line between those points by more than $\epsilon$.}\label{fig:polypf2}
\end{figure}
\begin{figure}[h]
\includegraphics{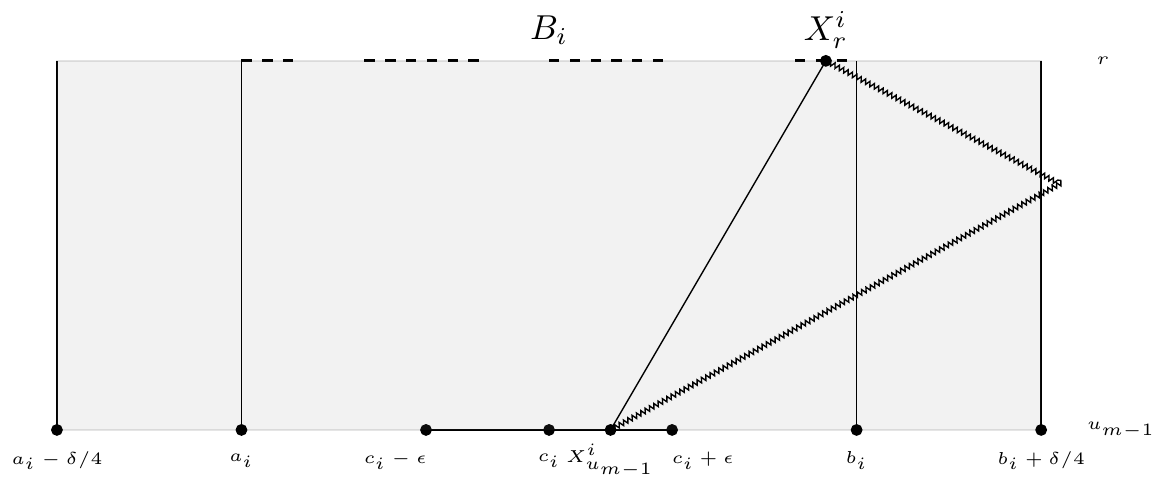}
\caption{\small By Lemma \ref{lem:rndbd}, $X_{u_{m-1}}^i \in (c_i - \epsilon, c_i + \epsilon)$ (solid at level $u_{m-1}$)  and $X_r^i \in B_i$ (dashed, at level $r$) with positive probability. If $\epsilon$ is such that $(c_i - \epsilon, c_i + \epsilon) \subset [a_i,b_i]$, then in order to exit the interval $[a_i-\delta/4,b_i+\delta/4]$, the path must deviate by more than $\delta/4$ from the straight line connecting $X_{u_{m-1}}^i$ and $X_r^i$ on the time interval $[u_{m-1},r]$.}\label{fig:polypf3}
\end{figure}
The strict positivity of the infimum follows because the probability being minimized is positive for each $x$ and continuous in $x$. Continuity can be seen either by applying dominated convergence to the integral form of the probability or abstractly as a consequence of the strong Feller property of Brownian bridge. 
We have
\begin{align*}
&\Polyb_{(s,y_i),(t;\zeta)}(E_i) =  \frac1{\Sheb(t;\zeta\viiva s,y_i)}{ \int_{\bbR}\Sheb(t,x\viiva s,y_i)\Polyb_{(s,y),(t,x)}(E_i) \zeta(dx)}\\[5pt]
&\qquad\geq    \frac{C'\zeta[-K,K]}{(1+2K)^m\Sheb(t;\zeta\viiva s,y_i)}   \\[3pt] 
&\qquad\qquad\quad \times \;  \inf_{x\in[-K,K]}\Big\{\Sheb(t,x\viiva s,y_i) \bfP^{BB}_{(s,y_i),(t,x) }\big[X_r^i \in B_i\,; \forall j \in [m]:  |X_{u_{j}} - \ell_{i}(u_{j})| < \epsilon\big]\Big\} . 
\end{align*}
Now, notice that by construction, we have $|y_i - y_j| > 8 \epsilon$ and $|c_i-c_j|>\delta+ 4\epsilon > 8\epsilon$. Therefore $|\ell_{i,u_{m-1}}(u) - \ell_{j,u_{m-1}}(u)|>8\epsilon$ for all $u \in [s,u_{m-1}]$ and so the tubes in Figure \ref{fig:polypf}, which the event $E_i$ forces the path to remain in, do not intersect up to time $u_{m-1}$. By hypothesis, we also have $a_{i} - b_{i-1} \geq \delta$ and so the tubes also do not intersect on $[u_{m-1},r]$. Consequently, by independence,
\begin{align*}
&\Polyb_{(s,y_1,\dots,y_n),(t;\zeta)}(X^i_r \in B_i, i \in [n],\tau_{s:t}^{(n)}>r)\\ 
&\qquad\geq \Polyb_{(s,y_1,\dots,y_n),(t;\zeta)}(E_1,\dots,E_n)= \prod_{i=1}^n \Polyb_{(s,y_i),(t;\zeta)}\big(E_i\big)>0.
\end{align*}
Now, consider $(a_1,\dots,a_n),(b_1,\dots,b_n)\in\bbW_n$ with $a_{i-1}<b_{i-1}<a_{i}$ for $i \in \{2,\dots,n\}$ and call $B_i = (a_i,b_i)$. The previous result implies that
\begin{align*}
\bigg\{(z_1,\dots,z_n) \in \bbW_n : \det_{1\leq i,j \leq n}\bigg[\frac{\Sheb(t;\zeta|r,z_{i})\Sheb(r,z_{i}\viiva s,y_j)}{\Sheb(t;\zeta\viiva s,y_j)} \bigg] = 0 \bigg\} \bigcap\, (B_1\times\dots,\times B_n)
\end{align*}
has Lebesgue measure zero, with a similar result for the other determinant in \eqref{prop:KMG:pos}. By \eqref{eq:detid}, it follows that for all $(x_1,\dots,x_n)$, $(y_1,\dots,y_n)\in\bbW_n$, the Lebesgue measure of the sets
\begin{align*}
&\bigg\{(z_1,\dots,z_n)\in\bbW_n : \det_{1\leq i,j \leq n}\big[\Sheb(r,z_j\viiva s,y_i)\big] = 0 \bigg\} \text{ and }\\
&\bigg\{(z_1,\dots,z_n)\in\bbW_n : \det_{1\leq i,j \leq n}\big[\Sheb(t,x_j|r,z_i)\big] = 0 \bigg\}
\end{align*}
are both zero. 

For a permutation $\sigma \in S_n$, set  
$\bbW_n^\sigma=\{z_{1:n}\in\R^n:  z_{\sigma(1)}<\dotsm<z_{\sigma(n)}\}$. By the Chapman-Kolmogorov identity in Lemma \ref{lem:ChaKol} and the Cauchy-Binet-Andr\'eief  identity, \cite[Lemma 3.2.3]{And-Gui-Zei-10}), for any $(y_1,\dots,y_n)$, $(x_1,\dots,x_n)\in\bbW_n$ and any $s < r < t$,
\begin{align*}
&\det_{1\leq i,j \leq n} \bigl[ \Sheb(t,y_j\viiva s, x_i)\bigr]
=   \det_{1 \leq i,j \leq n}\biggl[   \int_\R \Sheb(t,y_j\viiva r,z) \tspb  \Sheb(r,z\viiva s, x_i)\tspb dz \biggr] \\[6pt] 
&=  \frac1{n!} \int_{\R^n}   \det_{1\leq i,j \leq n} \bigl[ \Sheb(t,y_j\viiva r, z_i)\bigr]   \tspb \det_{1\leq i,j \leq n} \bigl[ \Sheb(r, z_i\viiva s, x_j)\bigr]  \tspb dz_{1:n}   \\[6pt] 
&=  \frac1{n!} \sum_{\sigma\in S_n}  \int_{\bbW_n^\sigma}   \det_{1\leq i,j \leq n} \bigl[ \Sheb(t,y_j\viiva r, z_i)\bigr]  \tspb \det_{1\leq i,j \leq n} \bigl[ \Sheb(r, z_i\viiva s, x_j)\bigr]  \tspb dz_{1:n}   \\[6pt] 
&=  \frac1{n!} \sum_{\sigma\in S_n}  \int_{\bbW_n^\sigma}   \det_{1 \leq i,j \leq n} \bigl[ \Sheb(t,y_j\viiva r, z_{\sigma(i)})\bigr]   \tspb \det \bigl[ \Sheb(r, z_{\sigma(i)}\viiva s, x_j)\bigr]  \tspb dz_{1:n}    \\[6pt] 
&=  \frac1{n!} \sum_{\sigma\in S_n}  \int_{\bbW_n}   \det_{1 \leq i,j \leq n} \bigl[ \Sheb(t,y_j\viiva r, z_{i})\bigr]   \tspb \det_{1\leq i,j \leq n} \bigl[ \Sheb(r, z_{i}\viiva s, x_j)\bigr] \tspb dz_{1:n}   \\[6pt] 
& =   \int_{\bbW_n}   \det_{1\leq i,j \leq n} \bigl[ \Sheb(t,y_j\viiva r, z_{i})\bigr]   \tspb \det_{1\leq i,j \leq n} \bigl[ \Sheb(r, z_{i}\viiva s, x_j)\bigr] \tspb dz_{1:n} >0,
\end{align*}
by the previous observation.
\end{proof}

\begin{proof}[Proof of Proposition \ref{prop:stochmon}]
It suffices to check stochastic monotonicity for the finite dimensional distributions. By Proposition \ref{prop:KMG} \eqref{prop:KMG:pos}, for all $s<r<t$, all $\beta \in\bbR$, all $\zeta \in \ICM$, all $y_1<y_2$ and all $z_1<z_2$, we have
\begin{align*}
&\frac{\Sheb(t;\zeta|r,z_1)\Sheb(r,z_1\viiva s,y_1)}{\Sheb(t;\zeta\viiva s,y_1)} \frac{\Sheb(t;\zeta|r,z_2)\Sheb(r,z_2\viiva s,y_2)}{\Sheb(t;\zeta\viiva s,y_2)} \\
&\qquad\qquad> \frac{\Sheb(t;\zeta|r,z_1)\Sheb(r,z_1\viiva s,y_2)}{\Sheb(t;\zeta\viiva s,y_2)}\frac{\Sheb(t;\zeta|r,z_2)\Sheb(r,z_2\viiva s,y_1)}{\Sheb(t;\zeta\viiva s,y_1)}
\end{align*}
For any $a \in \bbR$, integrating both sides of this expression on $(-\infty,a)$ with respect to $z_1$ and on $(a,\infty)$ with respect to $z_2$, we have
\begin{align*}
\Polyb_{(s,y_1),(t;\zeta)}\big(X_r < a\big)\Polyb_{(s,y_2),(t;\zeta)}\big(X_r \geq a\big) > \Polyb_{(s,y_2),(t;\zeta)}(X_r < a) \Polyb_{(s,y_1),(t;\zeta)}(X_r\geq a)
\end{align*}
All of the probabilities above here are strictly positive, so we may re-write this as
\begin{align*}
\frac{\Polyb_{(s,y_2),(t;\zeta)}(X_r \geq a) }{1- \Polyb_{(s,y_2),(t;\zeta)}(X_r \geq a)} > \frac{\Polyb_{(s,y_1),(t;\zeta)}(X_r \geq a)}{1-\Polyb_{(s,y_1),(t;\zeta)}(X_r \geq a)},
\end{align*}
which holds if and only if
\begin{align*}
\Polyb_{(s,y_2),(t;\zeta)}(X_r \geq a)>\Polyb_{(s,y_1),(t;\zeta)}(X_r \geq a).
\end{align*}
We prove stochastic monotonicity of the $n$-point distributions by induction. Assume that for all $s<t$, all $\zeta \in \ICM$, all $y_1 < y_2$, all $a_{1:n}\in\R^n$, and all $r_{1:n}\in\bbR^n$ satisfying $s < r_1 < \dots < r_n < t$, we have
\begin{align*}
\Polyb_{(s,y_2),(t;\zeta)}(X_{r_1} \geq a_1,\dots,X_{r_n}>a_{n}) >\Polyb_{(s,y_1),(t;\zeta)}(X_{r_1} \geq a_1,\dots,X_{r_n}\geq a_{n})
\end{align*}
Now, fix $a_{1:n+1}\in\bbR^{n+1}$ and $r_{1:n+1}$ with $r_1 < \dots < r_n < r_{n+1}$. The induction hypothesis implies that for each $y \in \bbR$,
\begin{align*}
z \mapsto \Polyb_{(s,y),(r_{n+1},z)}(X_{r_1} \geq a_1, \dots, X_{r_n} \geq a_n)\one_{\{z \geq a_{n+1}\}}
\end{align*} 
is non-decreasing. We have
\begin{align*}
&\Polyb_{(s,y_2),(t;\zeta)}(X_{r_1} \geq a_1,\dots,X_{r_n}\geq a_{n}, X_{r_{n+1}}\geq a_{n+1}) \\
&= \oE_{(s,y_2),(t;\zeta)}^{\Polyb}[\Polyb_{(s,y_2),(r_{n+1},X_{r_{n+1}})}(X_{r_1} \geq a_1, \dots, X_{r_n} \geq a_n)\ind_{\{X_{r_{n+1}}\geq a_{n+1} \}} ] \\
&\geq \oE_{(s,y_2),(t;\zeta)}^{\Polyb}[\Polyb_{(s,y_1),(r_{n+1},X_{r_{n+1}})}(X_{r_1} \geq a_1, \dots, X_{r_n} \geq a_n)\ind_{\{X_{r_{n+1}}\geq a_{n+1} \}} ] \\
&\geq \oE_{(s,y_1),(t;\zeta)}^{\Polyb}[\Polyb_{(s,y_1),(r_{n+1},X_{r_{n+1}})}(X_{r_1} \geq a_1, \dots, X_{r_n} \geq a_n)\ind_{\{X_{r_{n+1}}\geq a_{n+1} \}} ] \\
&= \Polyb_{(s,y_1),(t;\zeta)}(X_{r_1} \geq a_1,\dots,X_{r_n}\geq a_{n}, X_{r_{n+1}}\geq a_{n+1}),
\end{align*}
where in the first inequality, we apply the induction hypothesis and in the second, we applied the base case of the induction with $r=r_{n+1}$. This implies the first inequality in \eqref{eq:stochmon:1}. The second is similar.
\end{proof}
\appendix

\section{Mild solutions and uniqueness}\label{app:mild}
In this section we discuss some details and partially survey the literature concerning existence and uniqueness of mild solutions to \eqref{eq:SHEf} with possibly random initial conditions. We then show that under the most general  hypotheses for uniqueness that we identified in the literature, the superposition formulation of $\Sheb(t,x\viiva s;\mu)$ is, up to indistinguishability, the usual mild solution to \eqref{eq:SHEf}.  

Let $X$ be a random variable taking values in the space of positive Borel measures satisfying $\bbP(X \in \ICM)=1$ and which is independent of the white noise after some initial time $s$, which we will typically take to be zero. For such $s$, define $\filt{W,X}_{s,t} =  \wedge_{a < s \leq t < b} \tspc\sigma(\filt{W,0}_{a,b},X)$ to be the augmentation of the natural filtration of the white noise enlarged by $\sigma(X)$. Setting $W_0(g) = 0$ for all $g \in L^2(\bbR)$ and $W_t(g) - W_s(g) = W(f_{s,t,g})$, where $f_{s,t,g}(r,x) = \ind_{[s,t]}(r)g(x)$  and $-\infty < s \leq t < \infty$, it is straightforward to check that $W_t(\aabullet)$ defines an orthogonal martingale measure for $t \geq 0$ in the sense of \cite{Wal-86} with respect to either $\fil_{0,t}$ or $\filt{W,X}_{0,t}$. See the discussion in Chapter 2 of \cite{Wal-86}.

For each $s \in \bbR$, the mild formulation of \eqref{eq:SHEf} seeks fixed points $U$ to the Duhamel equation
\begin{align}
U(t,x) &= \int_{\bbR} \heat(t,x-z) X(dz) + \beta \int_s^t \int_{\bbR} \heat(t-r,x-z) U(r,z)W(dz\,dr)  \label{eq:genmild}
\end{align}
which take values in an appropriate class of functions on $\{(t,x)  \in \bbR^2: t > s\}$, where the stochastic integral is understood in the sense of Walsh \cite{Wal-86}. Naturally, one needs to impose measurability and integrability conditions on the functions being considered in order to make sense of that stochastic integral. We are only considering processes which have already been shown to exist and to have unique continuous and adapted modifications. We wish to show that this process is our $\Sheb(t,x\viiva s;X)$, up to indistinguishability. Because our candidate solutions are continuous and adapted by Theorem \ref{prop:IC}, we restrict to this class. This condition is much stricter than necessary, but suffices for our purposes and simplifies the discussion.

Various hypotheses for existence and conditions for uniqueness of solutions have appeared in the literature. For non-random initial data, the minimal assumption that has been studied is that of \cite{Che-Dal-14,Che-Dal-15}, who assume that
\begin{align}
X \text{ is a non-random measure in }\ICM  .  \label{eq:CDcond}
\end{align}
The first paper to allow for random initial data was \cite{Ber-Gia-97}, who assume that there exists a random variable $X_0 \in \sC(\bbR,\bbR)$ so that for each $p>0$, there exists $a_p>0$ for which
\begin{align}
X(dx) = X_0(x)dx \text{ and } \sup_{r \in \bbR} e^{-a_p|r|}\bbE[|X_0(r)|^p] <\infty.\label{eq:BGcond}
\end{align}
The first paper we are aware of to systematically study mild solutions of \eqref{eq:genmild} was  \cite{Ber-Can-95}. The results in \cite{Ber-Can-95} are stated and proven only under the assumption that for all $t>0$,
\begin{align}
X\text{ is non-random and }\sup_{r \in (0,t]} \sup_{x \in \bbR} \sqrt{r}\bigg(\int_{\bbR} \heat(r,x-z)X(dz)\bigg)^2<\infty.  \label{eq:BCcond1}
\end{align}
We begin by recalling this original result. Note that in all of the following statements, we are appealing to translation invariance of the model to extend these results from the case of $s=0$ to $s \in \bbR$.
\begin{theorem}{\rm(\cite[Theorem 3.1]{Ber-Can-95})\label{thm:appexist:BC}}
Under Condition \eqref{eq:BCcond1}, for each $s \in \R$ there exists an $(\fil_{s,t} : s \leq t)$ adapted solution $U \in \sC((s,\infty)\times\bbR,\bbR)$ to \eqref{eq:genmild} satisfying  for all $T>s$,
\begin{align}
\sup_{t \in (s,T]} \sup_{x \in \bbR} \int_s^t \int_s^r \int_{\bbR} \int_{\bbR} \heat(t-r,x-y)^2\heat(r-v,y-z)^2\bbE[U(v,z)^2]dzdydvdr <\infty.\label{cond:BC}
\end{align}
Moreover, under \eqref{eq:BCcond1}, if $U$ and $V$ are any two solutions satisfying \eqref{eq:genmild} and \eqref{cond:BC}, then $U$ and $V$ are indistinguishable.
\end{theorem}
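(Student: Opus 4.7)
The plan is to establish existence through Picard iteration in $L^2$ followed by a Kolmogorov--Chentsov regularization, and to prove uniqueness by iterating the Walsh isometry against the singular kernel $\int_\R \heat(u,z)^2 \tspa dz = (4\pi u)^{-1/2}$. By the shift invariance of white noise in time (Proposition \ref{prop:cov}), it suffices to treat $s=0$.

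For existence, set $U_0(t,x) = \int_\R \heat(t,x-z) \tspa X(dz)$, which lies in $\sC((0,\infty)\times\R,\R)$ and satisfies $\sup_{r\in(0,t]}\sup_x \sqrt{r}\,U_0(r,x)^2 < \infty$ by hypothesis \eqref{eq:BCcond1}. Iteratively define
\begin{align*}
U_{n+1}(t,x) = U_0(t,x) + \beta \int_0^t \!\!\int_\R \heat(t-r,x-z) \tspa U_n(r,z)\tspa W(dz\,dr).
\end{align*}
The Walsh isometry combined with the identity $\int_\R \heat(u,z)^2\tspa dz = (4\pi u)^{-1/2}$ yields, for the successive differences $D_n = U_{n+1}-U_n$, the recursion $\bbE[D_n(t,x)^2] = \beta^2 \int_0^t \int_\R \heat(t-r,x-z)^2 \bbE[D_{n-1}(r,z)^2]\tspa dz\,dr$. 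Starting from the initial bound $\bbE[D_0(t,x)^2] \leq C \beta^2 \int_0^t (t-r)^{-1/2} r^{-1/2}\tspa dr = C'\beta^2$, the Beta-function convolution produces, inductively, $\sup_x \bbE[D_n(t,x)^2] \leq (C'' \beta^2)^{n+1} t^{n/2}/\Gamma(n/2+1)$, which is summable in $n$ locally uniformly on $(0,\infty)\times\R$. The limit $U = \sum_n D_n + U_0$ solves \eqref{eq:genmild} and one verifies \eqref{cond:BC} from the same estimates. To upgrade to continuous paths I would apply Burkholder--Davis--Gundy with an exponent $p>2$ to the stochastic integral increments, producing bounds of the form $\bbE[|U(t_1,x_1) - U(t_2,x_2)|^p] \leq C_K(|t_1-t_2|^{\alpha_p} + |x_1-x_2|^{\gamma_p})$ on each compact set $K \subset (0,\infty)\times\R$, exactly as in Lemmas \ref{lem:xymoment} and \ref{lem:thmoment}; Kolmogorov--Chentsov (Theorem \ref{thm:KC}) then supplies a continuous modification, and adaptedness is preserved under $L^2$ limits.

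For uniqueness, suppose $U,V$ are two adapted, continuous solutions satisfying \eqref{cond:BC}, and set $D = U-V$. Then $D$ satisfies the homogeneous equation
\begin{align*}
D(t,x) = \beta \int_0^t \!\!\int_\R \heat(t-r,x-z)\tspa D(r,z) \tspa W(dz\,dr),
\end{align*}
and the Walsh isometry gives $\bbE[D(t,x)^2] = \beta^2 \int_0^t \int_\R \heat(t-r,x-z)^2 \bbE[D(r,z)^2]\tspa dz\,dr$. Hypothesis \eqref{cond:BC} (applied to both $U$ and $V$) guarantees $\bbE[D(r,z)^2]$ is finite enough to let this identity be inserted into itself; after $n$ iterations the right side is controlled by
\begin{align*}
\beta^{2n}\!\! \int_{0<r_n<\dotsm<r_1<t}\!\!\int_{\R^n}\prod_{i=1}^n \heat(r_{i-1}-r_i, z_i-z_{i-1})^2\, \bbE[D(r_n,z_n)^2]\tspa dz_{1:n}\,dr_{1:n},
\end{align*}
with $r_0=t$, $z_0=x$. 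Using \eqref{cond:BC} to dominate the innermost expectation uniformly on compacts and performing the spatial integrals one dimension at a time (each one contributing a factor $(4\pi(r_{i-1}-r_i))^{-1/2}$), the remaining time integral is an iterated Beta-function convolution which decays like $C^n t^{n/2}/\Gamma(n/2+1)$. Sending $n\to\infty$ forces $\bbE[D(t,x)^2] = 0$ for all $(t,x) \in (0,\infty)\times\R$, and path continuity of $U$ and $V$ promotes this to indistinguishability.

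The main obstacle in both halves of the argument is the half-singular kernel $(4\pi u)^{-1/2}$: a naive single application of Gronwall's inequality fails because this kernel is not integrable against bounded functions in a way that yields contraction, and near $t=0$ the inhomogeneity $U_0$ itself blows up according to \eqref{eq:BCcond1}. The resolution in both cases is to iterate the Walsh isometry enough times that the composition of the kernels is square-integrable and produces $\Gamma$-function decay in $n$, and to handle the $t\searrow 0$ behaviour by exploiting the precise rate $\sqrt{r}\,U_0(r,x)^2 = O(1)$ provided by \eqref{eq:BCcond1}. This bookkeeping of the Beta-function constants and of the near-boundary singularity is the only nontrivial analytic step; everything else is a standard application of Walsh's stochastic integration framework.
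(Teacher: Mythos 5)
This statement is not proved in the paper at all: it is quoted verbatim from the literature as \cite[Theorem 3.1]{Ber-Can-95}, so there is no in-paper argument to compare against. Your sketch --- Picard iteration with the Beta-function/$\Gamma$-decay bookkeeping for existence, Kolmogorov--Chentsov for continuity, and the $n$-fold iterated Walsh isometry (with \eqref{cond:BC} licensing the first insertions of the identity into itself) for uniqueness --- is exactly the standard Bertini--Cancrini argument and is sound under hypothesis \eqref{eq:BCcond1}.
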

As mentioned in Remark 1 of \cite{Ber-Can-95}, the methods employed there can be used to prove a result similar to Theorem \ref{thm:appexist:BC} for certain random initial conditions. It is recorded, for example, in Proposition 2.5 in the survey \cite{Cor-12}, that one can use similar methods to obtain existence and uniqueness of continuous and adapted solutions under the following moment hypothesis on a random initial condition $X$ satisfying the hypotheses above: for all $t>0$,
\begin{align}
\sup_{r \in (0,t]} \sup_{x \in \bbR} \bbE\bigg[\sqrt{r}\bigg(\int_{\bbR} \heat(r,x-z)X(dz)\bigg)^2\bigg]<\infty.
\end{align}
The resulting uniqueness of solutions to \eqref{eq:genmild} then also holds among the class of processes satisfying \eqref{cond:BC}. We could not find a full proof of this result in the literature, however. 
We also note that there is a vast literature studying generalizations of \eqref{eq:genmild} which imply existence and uniqueness of mild solutions to \eqref{eq:genmild} under varying conditions.

The most general existence and uniqueness result we identified in the literature for non-random initial data is the following result, which comes from combining results in \cite{Che-Dal-14} and \cite{Che-Dal-15}.
\begin{theorem}{\rm(\cite[Theorem 2.4]{Che-Dal-15} with \cite[Theorem 3.1]{Che-Dal-14})} \label{thm:appexist:CD}
Under Condition \eqref{eq:CDcond}, for each $s \in \R$, there exists an $(\fil_{s,t} : s \leq t)$ adapted solution $U \in \sC((s,\infty)\times\bbR,\bbR)$ to \eqref{eq:genmild} satisfying 
\begin{enumerate} [label={\rm(\roman*)}, ref={\rm\roman*}]   \itemsep=3pt 
\item\label{cond:CD1} For all $t>s$ and $x \in \bbR$, 
\[\int_s^t \int_{\bbR} \heat(t-r,x-z) \bbE[U(r,z)^2]dzdr <\infty\]
\item\label{cond:CD2}  For all $t>s$,
\begin{align*}
&\lim_{\substack{(u,v) \to (t,x) \\ u > s}} \bbE\bigg[\bigg( \int_s^u \int_{\bbR} \heat(u-r,v-z) U(r,z)W(dz\,dr) \\
&\qquad\qquad\qquad- \int_s^t \int_{\bbR} \heat(t-r,x-z) U(r,z)W(dz\,dr) \bigg)^2\bigg] = 0
\end{align*}
\end{enumerate}
Moreover, if $U$ and $V$ are any two solutions satisfying these properties, then $U$ and $V$ are indistinguishable.
\end{theorem}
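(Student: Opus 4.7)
The plan is to take our superposition solution $U(t,x) := \int_{\R} \Sheb(t,x\viiva s,y)\,X(dy)$, already constructed in Theorem \ref{prop:IC}, as the candidate solution, then check that it satisfies the mild equation \eqref{eq:genmild} and the moment/continuity properties \eqref{cond:CD1}--\eqref{cond:CD2}. Continuity and adaptedness on $\{(t,x) : t > s\}$ come for free from Theorems \ref{thm:rnreg} and \ref{prop:IC}. For \eqref{cond:CD1}, I would combine the normalized chaos moment bound of Lemma \ref{lem:mombd}, the shear invariance from Proposition \ref{prop:cov}\eqref{lem:cov:shear} (which lets me reduce $\bbE[\rnSheb(r,z\viiva s,y)^2]$ to a constant depending only on $r-s$ and $\beta$), and Minkowski's integral inequality to obtain $\bbE[U(r,z)^2]^{1/2} \le C\int_{\R}\heat(r-s,z-y)\,X(dy)$. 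The condition $X \in \ICM$ and the semi-group property of $\heat$ then bound $\int_s^t \int_{\R} \heat(t-r,x-z)\bbE[U(r,z)^2]\,dz\,dr$ by a convolution that is finite for all $t > s$.

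To verify the mild equation, I would invoke the fact that for each fixed $(s,y)$, Lemma \ref{lem:fixGreen} gives us that $\Sheb(t,x\viiva s,y)$ satisfies
\begin{equation*}
\Sheb(t,x\viiva s,y) = \heat(t-s,x-y) + \beta\int_s^t\!\!\int_{\R} \heat(t-r,x-z)\,\Sheb(r,z\viiva s,y)\,W(dz\,dr),
\end{equation*}
integrate both sides against $X(dy)$, and interchange the $y$-integration with the stochastic integral using the stochastic Fubini theorem of Walsh \cite[Theorem 2.6]{Wal-86}. The hypothesis for stochastic Fubini is precisely the joint integrability provided by Step 1 combined with the growth estimates of Corollary \ref{cor:growth}. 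For the $L^2$-continuity condition \eqref{cond:CD2}, I would use Walsh's isometry to rewrite the $L^2$-norm of the difference as a deterministic integral, then invoke dominated convergence --- the integrand converges pointwise as $(u,v) \to (t,x)$ from inside $\{u > s\}$, and the moment control from Step 1 furnishes the dominating function.

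For uniqueness, let $U$ and $V$ be any two continuous adapted solutions of \eqref{eq:genmild} satisfying \eqref{cond:CD1} and \eqref{cond:CD2}, and set $D := U - V$. Subtracting the two mild equations, the deterministic term cancels and $D$ satisfies the purely stochastic identity
\begin{equation*}
D(t,x) = \beta \int_s^t \!\!\int_{\R} \heat(t-r,x-z)\, D(r,z)\, W(dz\,dr).
\end{equation*}
Applying Walsh's isometry yields
\begin{equation*}
\bbE[D(t,x)^2] = \beta^2\int_s^t\!\!\int_{\R} \heat(t-r,x-z)^2\, \bbE[D(r,z)^2]\,dz\,dr,
\end{equation*}
and an iteration of this inequality, combined with the explicit formula $\int_{\R}\heat(t-r,x-z)^2\,dz = (2\sqrt{\pi(t-r)})^{-1}$, leads to a Picard-type self-improving estimate that forces $\bbE[D(t,x)^2]=0$ on any time interval where the $L^2$-norm is finite, hence indistinguishability by continuity.

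The main obstacle I anticipate is controlling the Gronwall argument in uniqueness. Because $\bbE[D(r,z)^2]$ has no uniform-in-$z$ bound for general $X \in \ICM$ (the $\ICM$ condition only controls $\mu$-weighted Gaussian integrals, not uniform suprema), applying Gronwall to $\sup_x \bbE[D(t,x)^2]$ is not available. The fix is to work with the spatially weighted quantity $\varphi(t) := \sup_x e^{-a x^2}\bbE[D(t,x)^2]$ for a suitable $a > 0$, use the identity $\int \heat(t-r,x-z)^2 e^{a z^2}\,dz$ (finite provided $a$ is small relative to $1/(t-r)$) to close the estimate, and iterate on successive small subintervals of $(s,t]$ so that the weight remains admissible throughout; finiteness of the starting $\varphi(t)$ follows from condition \eqref{cond:CD1} together with Hypothesis \eqref{eq:CDcond}.
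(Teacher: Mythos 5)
First, a point of orientation: the paper does not prove this theorem at all --- it is imported verbatim from Chen and Dalang (\cite{Che-Dal-14,Che-Dal-15}) as part of the literature survey in Appendix \ref{app:mild}. What the paper \emph{does} prove is the adjacent Lemma \ref{lem:uniq}, namely that the superposition process $\Sheb(\aabullet,\aabullet\viiva s;X)$ satisfies the mild equation and conditions \eqref{cond:CD1}--\eqref{cond:CD2}, so that the cited uniqueness applies to it. Your existence half is essentially that argument: Minkowski plus Lemma \ref{lem:mombd} to get $\bbE[U(r,z)^2]^{1/2}\le C\int\heat(r-s,z-y)X(dy)$, stochastic Fubini to verify the mild equation, and dominated convergence for \eqref{cond:CD2} --- this matches the paper's Lemma \ref{lem:uniq} step for step and is fine (the verification of \eqref{cond:CD2} requires a generalized dominated convergence argument with some explicit Gaussian bookkeeping, but your plan is the right one).

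The uniqueness half is where you are on your own, and your proposed fix does not close. The quantity $\varphi(t)=\sup_x e^{-ax^2}\bbE[D(t,x)^2]$ is not controlled by condition \eqref{cond:CD1}: that condition is a space--time \emph{averaged} bound ($\heat$ to the first power, integrated in $z$ and $r$) and gives no pointwise-in-$(t,x)$ bound on second moments, let alone a supremum over $x$. Worse, for genuinely measure-valued data the weighted sup is singular at the initial time: already for $X=\delta_y$ one has $\bbE[U(r,z)^2]\asymp \heat(r-s,z-y)^2\sim (r-s)^{-1}$ at $z=y$, so $\varphi(r)\sim(r-s)^{-1}$, and the Gronwall inequality $\varphi(t)\le C\int_s^t (t-r)^{-1/2}\varphi(r)\,dr$ has a divergent right-hand side; no choice of weight $e^{-ax^2}$ or subdivision into small subintervals repairs this, because the singularity sits at $r=s$ in the time variable, not in $x$. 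The argument that actually works (and is what Chen--Dalang do) iterates the \emph{integrated} quantity of condition \eqref{cond:CD1} directly: using $\heat(\tau,w)^2=(4\pi\tau)^{-1/2}\heat(\tau/2,w)$ and $\heat(\tau/2,w)\le\sqrt2\,\heat(\tau,w)$, the isometry identity becomes a Volterra inequality for $r\mapsto\int_\R\heat(t-r,x-z)\bbE[D(r,z)^2]\,dz$ whose $n$-fold iteration produces a product of Beta-function factors decaying like $1/\Gamma(n/2)$, forcing $D\equiv0$; condition \eqref{cond:CD1} is exactly the a priori finiteness needed to start that iteration. (A related subtlety: even writing the isometry identity $\bbE[D(t,x)^2]=\beta^2\int_s^t\int\heat(t-r,x-z)^2\bbE[D(r,z)^2]\,dz\,dr$ presupposes the right-hand side is finite, which again follows from \eqref{cond:CD1} only after the $\heat^2\to\heat$ reduction above, not from any supremum bound.) If you intend to reprove the Chen--Dalang uniqueness rather than cite it, you should replace the weighted-supremum Gronwall scheme with this iterated-kernel argument.
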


Both the results in \cite{Che-Dal-14,Che-Dal-15} and \cite{Ber-Can-95} apply to show the existence and uniqueness of a solution to \eqref{eq:Greens} for fixed initial conditions $s,y$, which we record in the following lemma below. The claim about the solution being represented by \eqref{eq:SHEchaos} is sketched in Section 3.2 of \cite{Alb-Kha-Qua-14-jsp}. This essentially follows from Picard iteration; a pedagogical proof appears in the lecture notes \cite[Theorem 2.2]{Cor-18}.
\begin{lemma}\label{lem:fixGreen}
For each $s,y \in \bbR$, there exists a unique (up to indistinguishability) $(\filt{W,X}_{s,t} : s \leq t)$-adapted process $\Sheb(\aabullet,\aabullet\viiva s,y)$ taking values in $\sC((s,\infty),\bbR)$ which satisfies the mild equation \eqref{eq:Greens} and the conditions in either Theorem \ref{thm:appexist:BC} or  \ref{thm:appexist:CD}. Moreover, this process is a modification of the process defined for fixed $(s,y,t,x,\beta) \in \varsets\times\bbR$ by \eqref{eq:SHEchaos}.
\end{lemma}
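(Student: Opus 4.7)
The plan is to handle each of the two claims (existence/uniqueness of a continuous adapted mild solution, and its agreement with the chaos series) separately, since they are essentially already established in the references cited in the excerpt, and the remaining task is to check that the point-mass initial condition $\delta_y$ satisfies the hypotheses of one of Theorems \ref{thm:appexist:BC} or \ref{thm:appexist:CD}. Observe that $\delta_y$ does \emph{not} satisfy \eqref{eq:BCcond1}, since $\sqrt{r}\,\heat(r,x-y)^2 = \frac{1}{2\pi\sqrt{r}}e^{-(x-y)^2/r}$ blows up along $x=y$ as $r\searrow 0$. However $\delta_y \in \ICM$ trivially, so Condition \eqref{eq:CDcond} holds, and existence and uniqueness of an $(\filt{W,X}_{s,t})$-adapted $\sC((s,\infty),\bbR)$-valued process $\Sheb(\aabullet,\aabullet|s,y)$ solving \eqref{eq:Greens} and satisfying \eqref{cond:CD1}--\eqref{cond:CD2} follow directly from Theorem \ref{thm:appexist:CD} applied to $X=\delta_y$.

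For the second claim, the standard approach is Picard iteration. Define $\She_\beta^{(0)}(t,x|s,y) = \heat(t-s,x-y)$ and recursively set
\[
\She_\beta^{(n+1)}(t,x|s,y) = \heat(t-s,x-y) + \beta \int_s^t \int_{\bbR} \heat(t-u,x-z)\, \She_\beta^{(n)}(u,z|s,y)\, W(du\,dz).
\]
Iterating and using linearity of the Wiener--It\^o integral, $\She_\beta^{(n)}$ expands as the partial sum through level $n$ of the chaos series \eqref{eq:SHEchaos}. The $L^2(\bbP)$ norm of the $k$-th chaos term equals $|\beta|^k$ times the $L^2((\bbR^2)^k)$ norm of $\heat_k(t_{1:k},x_{1:k}|s,y;t,x)$ (restricted to ordered times), and a routine computation using the semigroup property of $\heat$ and the Chapman--Kolmogorov bound (of the type carried out in Appendix \ref{app:comp}) shows that this $L^2$ norm is summable in $k$ for every fixed $(s,y,t,x,\beta)$ with $t>s$. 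Hence \eqref{eq:SHEchaos} converges in $L^2(\bbP)$ and the Picard iterates converge to its sum in the same norm. The limit process then satisfies the mild equation \eqref{eq:Greens} by passing to the limit in the stochastic integral (using the $L^2$ isometry together with the moment bound in Lemma \ref{lem:mombd}).

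To conclude, I would verify that the chaos-series limit satisfies \eqref{cond:CD1}--\eqref{cond:CD2}, which are again straightforward from Lemma \ref{lem:mombd} (which bounds $\bbE[\rnSheb(r,z|s,y)^2]$, hence $\bbE[\Sheb(r,z|s,y)^2]$, uniformly on compact sets). Uniqueness in Theorem \ref{thm:appexist:CD} then forces this limit to coincide, up to indistinguishability, with the unique continuous and adapted mild solution. Finally, the chaos-series expression is defined pointwise in $\omega$ (in the $L^2$ sense) for each $(s,y,t,x,\beta)$ with $t>s$, so the mild solution is a modification of \eqref{eq:SHEchaos} in the required pointwise almost sure sense. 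The main technical point requiring care is the identification of the Picard iterates with truncations of the chaos series, which is a combinatorial expansion of the nested stochastic integrals; this is explicitly carried out in \cite[Theorem 2.2]{Cor-18}, so I would simply cite that argument rather than reproduce it.
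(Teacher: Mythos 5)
Your proposal is correct and follows essentially the same route as the paper, which offers no separate proof of this lemma beyond the surrounding paragraph: existence and uniqueness are delegated to the cited results of Chen--Dalang (and Bertini--Cancrini), and the chaos-series identification is delegated to the Picard-iteration argument sketched in \cite{Alb-Kha-Qua-14-jsp} and carried out in \cite[Theorem 2.2]{Cor-18}. Your observation that $\delta_y$ fails \eqref{eq:BCcond1} as transcribed is accurate --- $\sqrt{r}\,\heat(r,0)^2=(2\pi\sqrt{r})^{-1}$ blows up as $r\searrow 0$ --- and is mildly at odds with the paper's claim that both sets of hypotheses apply; this looks like a misplacement of $\sqrt{r}$ relative to the square in the original Bertini--Cancrini condition, and in any case routing the entire argument through Theorem \ref{thm:appexist:CD} via $\delta_y\in\ICM$, as you do, is sound and yields the stated conclusion.
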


We next turn to the assumption in \eqref{eq:BGcond}, which allows for a class of random initial data which is rich enough to include the exponential of Brownian Motion with drift. These correspond to the (increment-) stationary distributions of the KPZ equation. See \cite{Fun-Qua-15}.
\begin{theorem}{\rm\cite[Theorem 3.1]{Ber-Gia-97}\label{thm:appexist:BG}}
 Under Condition \eqref{eq:BGcond}, for each $s \in \R$, there exists an $(\filt{W,X}_{s,t} : s \leq t)$ adapted process $U(t,x)$ taking values in $\sC([s,\infty),\bbR)$ which satisfies the mild equation \eqref{eq:genmild}. For each $T>s$, there exists a constant $a>0$ so that resulting process satisfies
\begin{align}
\sup_{t\in [s,T]} \sup_{x\in\bbR} e^{-a|x|}\bbE[|U(t,x)|^2] <\infty. \label{eq:BGgrowth}
\end{align}
Moreover, if $U$ and $V$ are two such continuous and adapted solutions satisfying \eqref{eq:genmild} and \eqref{eq:BGgrowth}, then $\bbP(U(t,x) = V(t,x) \text{ for all }t>s, x\in \bbR)=1.$
\end{theorem}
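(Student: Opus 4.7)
\textbf{Proof proposal for Theorem \ref{thm:appexist:BG}.} The plan is to construct $U$ by Picard iteration, track second-moment bounds with exponential spatial weights to accommodate the growth of $X_0$, and then deduce uniqueness from an energy estimate applied to the difference of two candidate solutions. The principal obstacle is propagating exponential spatial growth through the iteration without inflating the exponential rate; this forces me to use weighted rather than uniform moment bounds, and to exploit the parabolic smoothing of the square of the heat kernel.

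Define $U_0(t,x) = \int_{\bbR} \heat(t-s,x-z)X_0(z)dz$ and recursively
\begin{align*}
U_{n+1}(t,x) = U_0(t,x) + \beta \int_s^t \int_{\bbR}\heat(t-r,x-z)U_n(r,z)W(dz\,dr).
\end{align*}
Using the hypothesis \eqref{eq:BGcond} together with the Gaussian concentration of $\heat(t-s,x-\cdot)$ near $x$, a Jensen/Cauchy-Schwarz argument shows that for each $p \geq 2$ there exists $a_p' > a_p$ so that $\sup_{t \in [s,T]}\sup_{x}e^{-a_p'|x|}\bbE[|U_0(t,x)|^p] < \infty$; the exponential rate is essentially preserved because the Gaussian has thinner tails than $e^{a_p|z|}$. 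Applying Walsh's isometry (for $p=2$) and BDG (for larger $p$) and setting $f_n(r,z) = \bbE[(U_n(r,z)-U_{n-1}(r,z))^2]$, I would obtain
\begin{align*}
f_{n+1}(t,x) \leq \beta^2 \int_s^t\int_{\bbR} \heat(t-r,x-z)^2 f_n(r,z)\,dz\,dr.
\end{align*}
The key computation is that $\heat(t-r,x-z)^2 = (4\pi(t-r))^{-1/2}\heat((t-r)/2,x-z)$, so convolving $e^{a|z|}$ against $\heat(t-r,x-z)^2$ yields $C(t-r)^{-1/2}e^{a|x|}e^{a^2(t-r)/4}$. Thus the exponential weight $e^{a|x|}$ is preserved at the cost of a factor $C(t-r)^{-1/2}e^{a^2(t-r)/4}$, and iterating produces Gamma-function-type factors $1/\Gamma(n/2)$ that are summable. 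This simultaneously gives Cauchy convergence of $U_n$ in the weighted $L^2$ norm (hence existence of a limit $U$) and the a priori bound \eqref{eq:BGgrowth} uniformly in $n$. Adaptedness and the mild equation pass to the limit routinely.

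For path continuity in $(t,x)$, once the weighted $L^p$ bounds are established for $p$ large enough, I would prove Hölder-type estimates of the form
\begin{align*}
\bbE[|U(t,x)-U(t',x')|^p] \leq C e^{a|x|}(|t-t'|^{p/4-\epsilon} + |x-x'|^{p/2-\epsilon})
\end{align*}
by the same BDG-and-heat-kernel manipulations that underpin our Lemmas \ref{lem:xymoment} and \ref{lem:thmoment}, and then apply Kolmogorov-Chentsov. For uniqueness, let $D = U-V$; then $D$ satisfies a linear mild equation with zero data:
\begin{align*}
D(t,x) = \beta \int_s^t\int_{\bbR}\heat(t-r,x-z)D(r,z)W(dz\,dr).
\end{align*}
Setting $g(t,x) = \bbE[D(t,x)^2]$ and using \eqref{eq:BGgrowth} to bound $g(t,x) \leq 4Ce^{a|x|}$ uniformly, Walsh's isometry yields the integral inequality $g(t,x) \leq \beta^2\int_s^t\int_{\bbR}\heat(t-r,x-z)^2 g(r,z)dz\,dr$. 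Iterating $n$ times and using the convolution identity above, I obtain $g(t,x) \leq C^n e^{a|x|}(t-s)^{n/2}/\Gamma(n/2)$, which forces $g \equiv 0$. Path continuity of $U$ and $V$ then upgrades the pointwise almost-sure equality to the joint statement $\bbP(U(t,x)=V(t,x)\ \forall t>s, x\in\bbR)=1$. The hard part throughout is arranging the weighted norms so that neither the exponential rate $a$ nor the constant $C_n$ in the iteration blows up; once that bookkeeping is done, the remaining steps are standard.
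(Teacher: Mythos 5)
This statement is not proven in the paper at all: it is imported verbatim as \cite[Theorem 3.1]{Ber-Gia-97}, so there is no internal proof to compare against. What you have written is, in essence, a reconstruction of the standard argument from that reference (and from \cite{Ber-Can-95} in the bounded case): Picard iteration for the mild equation, second-moment bounds weighted by $e^{-a|x|}$ to absorb the exponential growth permitted by \eqref{eq:BGcond}, the identity $\heat(\tau,w)^2=(4\pi\tau)^{-1/2}\heat(\tau/2,w)$ to control the quadratic variation integrals while preserving the exponential weight, Gamma-function factors for summability, Kolmogorov--Chentsov for a continuous modification, and an iterated Gronwall bound on $\bbE[(U-V)^2]$ for uniqueness. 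All of these steps check out; in particular the convolution bound $\int\heat(\tau/2,x-z)e^{a|z|}dz\le 2e^{a|x|}e^{a^2\tau/4}$ and the Beta-function bookkeeping in the iteration are correct as you describe them.

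Two points are glossed over and deserve a sentence each if you were to write this out. First, the theorem asserts continuity on the closed time interval $[s,\infty)$, i.e., attainment of the initial condition; your H\"older estimates degenerate as $t\searrow s$, so continuity at the boundary needs a separate argument (the deterministic term converges to $X_0(x)$ by continuity of $X_0$, and the stochastic convolution vanishes in $L^2$ uniformly on compacts as $t\searrow s$ by the moment bounds you already have). Second, since $X_0$ is random, adaptedness and the Walsh isometry must be taken with respect to the enlarged filtration $\filt{W,X}_{s,t}$; this is legitimate precisely because $X$ is assumed independent of the noise after time $s$ (as set up at the start of Appendix \ref{app:mild}), but it should be said explicitly, since otherwise the isometry $\bbE[(\int\heat\,U_n\,dW)^2]=\int\heat^2\,\bbE[U_n^2]$ used in your iteration is not justified. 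With those two additions your sketch is a complete proof of the cited result.
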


With the previous results in mind, we now show that $\Sheb(t,x\viiva s;X)$ defined through \eqref{eq:superpos} agrees with the mild solution in \eqref{eq:genmild} in the previous results.

\begin{lemma}\label{lem:uniq}
Assume that $X$ satisfies either \eqref{eq:CDcond} or \eqref{eq:BGcond}.
For each $s \in \R$, up to indistinguishability, $\Sheb(\aabullet,\aabullet|s;X)$ (as defined by \eqref{eq:superpos}) is the unique continuous and adapted solution to \eqref{eq:genmild} in Theorems \ref{thm:appexist:CD} and \ref{thm:appexist:BG} respectively.
\begin{proof}
Because mild solutions are formulated for fixed initial conditions and fixed $\beta$, scaling and translation invariance implies that it is without loss of generality to consider the case of $\beta=1$ and $s =0$. By construction, $\She_1(t,x|0;X)$ is adapted and continuous, so we just need to check that it solves the mild equation and the moment conditions of the two theorems. To check that $\She_1(t,x|0;X)$ satisfies \eqref{eq:genmild}, we will apply the stochastic Fubini theorem; see \cite[Theorem 4.33]{DaP-Zab-14} or \cite[Theorem 2.6]{Wal-86}. This result shows that whenever

\begin{align*}
&\int_{0}^t\int_{\bbR} \int_{\bbR}  \heat(t-r,x-z)^2\bbE[\She_1(r,z|0,y)^2] X(dy) dzdr< \infty, \text{ if }\eqref{eq:CDcond} \text{ holds or} \\
&\int_{0}^t\int_{\bbR} \int_{\bbR}  \heat(t-r,x-z)^2\bbE[\She_1(r,z|0,y)^2] \bbE[X_0(y)^2] dy dzdr < \infty \text{ if }\eqref{eq:BGcond} \text{ holds,}
\end{align*}
we have
\begin{align*}
\She_1(t,x|0;X) &= \int_{\bbR}\She_1(t,x|0,y)X(dy) \\
&= \int_{\bbR}\heat(t,x-y)X(dy) + \int_0^t \int_{\bbR}  \heat(t-r,x-z) \int_{\bbR} \She_1(r,z|0,y)X(dy) W(dzdr) \\
&=\int_{\bbR}\heat(t,x-y)X(dy) + \int_0^t \int_{\bbR}\heat(t-r,x-z) \She_1(r,z|0;X)W(dzdr).
\end{align*}
Recall that we have $\bbE[\She_1(r,z|0,y)^2] \leq \Mom{2}{t} \heat(t,z-y)^2$. Considering that $\bbE[X_0(y)^2] dy \in \ICM$, it suffices to consider the first integral under \eqref{eq:CDcond}. By Lemma \ref{lem:int1},
\begin{align*}
\int_{\bbR}\int_{0}^t \int_{\bbR}  \heat(t-r,x-z)^2\heat(r,z-y)^2 dzdr X(dy) &= \frac{\sqrt{\pi t}}{2}\int_{\bbR} \heat(t,x-y)
X(dy),
\end{align*}
which is finite by hypothesis. It remains to check the moment hypotheses. Call $V(t,x) = \She_1(t,x|0,X)$, so that $V(t,x)$ solves \eqref{eq:genmild}. Under \eqref{eq:BGcond}, by Cauchy-Schwarz applied twice
\begin{align*}
&\bbE[V(t,x)^2] = \int_{\bbR}\int_{\bbR}\bbE\bigg[\She_1(t,x|0,z)\She_1(t,x|0,w)\bigg]\bbE[X_0(w)X_0(z)] dzdw \\
&\leq C \Mom{2}{t}\int_{\bbR}\int_{\bbR} \heat(t,x-z)\heat(t,x-w)e^{\frac{a_2}{2}(|w|+|z|)}dwdz \\
&\leq C \Mom{2}{t}\bigg(\int_{\bbR}\heat(t,x-z)(e^{\frac{a_2}{2}z} + e^{-\frac{a_2}{2} z})dz\bigg)^2 =C  \Mom{2}{t} \bigg(e^{\frac{a_2}{2}x + \frac{a_2^2}{8}t} +e^{-\frac{a_2}{2}x + \frac{a_2^2}{8}t}\bigg)^2,
\end{align*} 
where $C$ is the value of the supremum appearing in \eqref{eq:BGcond}. This verifies \eqref{eq:BGgrowth}. 

Next, we turn to showing conditions \eqref{cond:CD1} and \eqref{cond:CD2} in Theorem \ref{thm:appexist:CD} under \eqref{eq:BCcond1}. We have
\begin{align*}
&\int_0^t \int_{\bbR} \heat(t-r,x-z) \bbE[V(r,z)^2]dzdr \\
&\leq \Mom{2}{t}\int_0^{t} \int_{\bbR}  \heat(t-r,x-z)\bigg( \int_{\bbR}\heat(r,z-y)X(dy)\bigg)^2dzdr \\
&=\int_{\bbR}\int_{\bbR} \int_0^t \frac{ \Mom{2}{t}}{(2\pi)^{3/2}r\sqrt{t-r}} \int_{\bbR}e^{-\bigg[\frac{(x-z)^2}{2(t-r)}+\frac{(z-v)^2+(z-w)^2}{2r}\bigg]}dzdr X(dv)X(dw)\\
&=\int_{\bbR}\int_{\bbR} \int_0^t \frac{\Mom{2}{t}}{(2\pi)^{3/2}r\sqrt{t-r}} \int_{\bbR}e^{-\frac{z^2}{2}\big(\frac{1}{t-r}+\frac{2}{r}\big)}e^{z\big(\frac{x}{t-r} + \frac{v+w}{r}\big)}dz e^{-\big(\frac{x^2}{2(t-r)} + \frac{v^2+w^2}{2r}\big)}dr X(dv)X(dw)\\
&= \int_{\bbR}\int_{\bbR} \int_0^t \frac{\Mom{2}{t}}{(2\pi)r\sqrt{t-r}\sqrt{\frac{1}{t-r} + \frac{2}{r}}} e^{\frac{\big(\frac{x}{t-r} + \frac{v+w}{r}\big)^2}{2\big(\frac{1}{t-r} + \frac{2}{r}\big)} -\big(\frac{x^2}{2(t-r)} + \frac{v^2+w^2}{2r}\big)}dr X(dv)X(dw) \\
&=\int_{\bbR}\int_{\bbR} \int_0^t\frac{\Mom{2}{t}}{2\pi\sqrt{r(r+2(t-r))}}e^{-\frac{x^2}{2t-r} + \frac{x(v+w)}{2t-r} - \frac{(t-r)(v-w)^2 + r (v^2+w^2)}{2r(2t-r)} } drX(dv)X(dw)\\
&\leq e^{-\frac{x^2}{t}} \int_0^t\frac{\Mom{2}{t}}{2\pi\sqrt{r(r+2(t-r))}}dr \bigg(\int_{\bbR} e^{\frac{2|x| |v|- v^2}{2t}}X(dv)\bigg)^2 < \infty.
\end{align*}
In the last step, we bounded $|x(v+w)| \leq |x|(|v|+|w|)$, $-(t-r)(v-w)^2 \leq 0$, and then $t \leq 2t-r$. \eqref{cond:CD1} follows.

Still considering the case of \eqref{eq:CDcond}, we now verify condition \eqref{cond:CD2}. For simplicity, we take the case of $u=t+h$, $h>0$, $h \to 0$. The case of $u=t-h$ follows similarly.
\begin{align*}
&\bbE\bigg[\bigg( \int_0^{u} \int_{\bbR} \heat(u-r,v-z) V(r,z)W(dz\,dr) - \int_0^t \int_{\bbR} \heat(t-r,x-z) V(r,z)W(dz\,dr) \bigg)^2\bigg]\\
&= \bbE\bigg[\bigg( \int_0^{t} \int_{\bbR} \big(\heat(t+h-r,v-z) - \heat(t-r,x-z)\big) V(r,z)W(dz\,dr) \bigg)^2\bigg]\\
&\qquad+\bbE\bigg[\bigg( \int_t^{t+h} \int_{\bbR} \heat(t+h-r,v-z) V(r,z)W(dz\,dr)\bigg)^2\bigg] \\
&\leq \Mom{2}{2t}\int_{\bbR} \int_{\bbR} \big(\heat(t+h-r,v-z) - \heat(t-r,x-z)\big)^2 \bigg(\int_{\bbR}\heat(r,z-w)X(dw)\bigg)^2\one_{(0,t)}(r)dz\,dr   \\
&\qquad\qquad+ \Mom{2}{2t} \int_{\bbR}\int_{\bbR}\heat(t+h-r,v-z)^2 \bigg(\int_{\bbR}\heat(r,z-w)X(dw)\bigg)^2\one_{(t,t+h)}(r)dz\,dr
\end{align*}
Note that the integrands in both integrals above converge to zero pointwise.We bound
\begin{align}
&\big(\heat(t+h-r,v-z) - \heat(t-r,x-z)\big)^2 \bigg(\int_{\bbR}\heat(r,z-w)X(dw)\bigg)^2\one_{(0,t)}(r)\notag\\
& \leq 2 \heat(t+h-r,v-z)^2  \bigg(\int_{\bbR}\heat(r,z-w)X(dw)\bigg)^2\one_{(0,t)}(r)\label{eq:gdc1}\\
&+ 2 \heat(t-r,x-z)^2 \bigg(\int_{\bbR}\heat(r,z-w)X(dw)\bigg)^2\one_{(0,t)}(r) \label{eq:gdc2}
\end{align}
By the generalized dominated convergence theorem \cite[Theorem 4.17]{Roy-88}, to show that the first integral converges to zero, it suffices to show that
\begin{align*}
&\lim_{\substack{h \searrow 0 \\ v\to x}} \int_{\bbR}\int_{\bbR}\heat(t+h-r,v-z)^2  \bigg(\int_{\bbR}\heat(r,z-w)X(dw)\bigg)^2\one_{(0,t)}(r) dz\,dr \\
&= \int_{\bbR}\int_{\bbR}\heat(t-r,x-z)^2  \bigg(\int_{\bbR}\heat(r,z-w)X(dw)\bigg)^2\one_{(0,t)}(r) dz\,dr < \infty
\end{align*}
Note that there are two claims here: the  integral in \eqref{eq:gdc2} does not depend on $h$ or $v$ and is the limit of the integral in \eqref{eq:gdc1}  as $h \searrow 0$ and $v \to x$. We must show that the integral in \eqref{eq:gdc2} is finite and that the integral in \eqref{eq:gdc1} converges to it. Arguing as above, we compute for $h \geq 0$ (now, including the case of \eqref{eq:gdc2}),
\begin{align*}
&\int_{\bbR}\int_{\bbR}\heat(t+h-r,v-z)^2  \bigg(\int_{\bbR}\heat(r,z-w)X(dw)\bigg)^2\one_{(0,t)}(r) dz\,dr \\
&= \int_{\bbR}\int_{\bbR} \int_0^t \frac{(2\pi)^{-2}}{(t+h-r)r}\int_{\bbR} e^{-\frac{z^2}{2}\big(\frac{2}{t+h-r}+\frac{2}{r}\big)}e^{z\big(\frac{2v}{t+h-r} + \frac{y+w}{r}\big)}dz e^{-\big(\frac{2v^2}{t+h-r} + \frac{y^2+w^2}{2r}\big)}dr\, X(dy)\,X(dw)\\
&= \int_{\bbR}\int_{\bbR} \int_0^t \frac{(2\pi)^{-{3/2}}}{(t+h-r)r\sqrt{\frac{2}{t+h-r} + \frac{2}{r}}} e^{\frac{\big(\frac{2v}{t+h-r} + \frac{y+w}{r}\big)^2}{2\big(\frac{2}{t+h-r}+ \frac{2}{r}\big)} } e^{-\big(\frac{2v^2}{t+h-r} + \frac{y^2+w^2}{2r}\big)}dr\, X(dy)\,X(dw)\\
&= \int_{\bbR}\int_{\bbR} \int_0^t \frac{(2\pi)^{-{3/2}}}{\sqrt{2(t+h)r(t+h-r)} } e^{-v^2 \big(\frac{2}{t+h} + \frac{r}{(t+h)(t+h-r)}\big) + \frac{v(y+w)}{t+h}-\frac{(w-y)^2}{4r}-\frac{(w+y)^2}{4(t+h)}}dr\, X(dy)\,X(dw)
\end{align*}
The integrand is continuous in $h$ and $v$, so to show that this integral converges, we may use the ordinary dominated convergence theorem. For any $h \in [0,t]$ and $v \in [-K,K]$, $r \in (0,t)$ and $y,w \in \bbR$, we have
\begin{align*}
& \frac{(2\pi)^{-{3/2}}}{\sqrt{2(t+h)r(t+h-r)} } e^{-v^2 \big(\frac{2}{t+h} + \frac{r}{(t+h)(t+h-r)}\big) + \frac{v(y+w)}{t+h}-\frac{(y-w)^2(t+h-r)}{4r(t+h)} - \frac{y^2+w^2}{4(t+h)}} \\
&\leq  \frac{1}{\sqrt{t\cdot r\cdot(t-r)} } e^{\frac{|K|(|y|+|w|)}{t}- \frac{y^2+w^2}{8t}},
\end{align*}
and 
\begin{align*}
\int_{\bbR}\int_{\bbR}\int_0^t \frac{1}{\sqrt{t\cdot r\cdot(t-r)} } e^{\frac{|K|(|y|+|w|)}{t}- \frac{y^2+w^2}{8t}} dr\,X(dy)\,X(dw) < \infty.
\end{align*}
The result follows. It remains to show that
\begin{align*}
\lim_{\substack{h \searrow 0\\ v \to x}}\int_{\bbR}\int_{\bbR}\heat(t+h-r,v-z)^2 \bigg(\int_{\bbR}\heat(r,z-w)X(dw)\bigg)^2\one_{(t,t+h)}(r)dz\,dr = 0,
\end{align*}
which follows from essentially the same estimates as in the previous case. \eqref{cond:CD2} now follows.
\end{proof}
\end{lemma}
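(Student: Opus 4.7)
The plan is to verify that the superposition field $\Sheb(t,x\viiva s;X) = \int_\R \Sheb(t,x\viiva s,y)X(dy)$ satisfies the mild equation \eqref{eq:genmild} and the moment/integrability hypotheses of Theorem \ref{thm:appexist:CD} (under \eqref{eq:CDcond}) or Theorem \ref{thm:appexist:BG} (under \eqref{eq:BGcond}); uniqueness is then inherited from those theorems. By the shift invariance in Proposition \ref{prop:cov}(i), we may assume $s=0$, and after the scaling in Proposition \ref{prop:cov}(iv) the dependence on $\beta$ enters only through constants, so it suffices to treat $\beta=1$ throughout. Joint continuity and adaptedness of $\Sheb(\aabullet,\aabullet\viiva 0;X)$ on $(0,\infty)\times\R$ are already provided by Theorem \ref{prop:IC}\eqref{prop:IC:Holder} together with the measurability of $X$.

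The first step will be to derive \eqref{eq:genmild} from \eqref{eq:Greens} via the stochastic Fubini theorem (\cite[Thm.~4.33]{DaP-Zab-14} or \cite[Thm.~2.6]{Wal-86}) applied to the kernel $H(r,z,y) = \heat(t-r,x-z)\,\Sheb(r,z\viiva 0,y)$, integrated in $y$ against $X(dy)$ and stochastically against $W(dz\,dr)$ on $[0,t]\times\R$. The hypothesis to verify is
\begin{equation*}
\int_\R \int_0^t\!\!\int_\R \heat(t-r,x-z)^2\,\bbE\bigl[\Sheb(r,z\viiva 0,y)^2\bigr]\,dz\,dr\,\Cmeas(dy) < \infty,
\end{equation*}
where $\Cmeas = X$ in case \eqref{eq:CDcond} and $\Cmeas(dy) = \bbE[X_0(y)^2]\,dy$ in case \eqref{eq:BGcond} (in the latter, the independence of $X$ from the noise lets us treat $X_0(y)^2$ as a deterministic density for the Fubini argument after conditioning). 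Using $\bbE[\Sheb(r,z\viiva 0,y)^2] = \heat(r,z-y)^2\,\bbE[\rnSheb(r,z\viiva 0,y)^2] \leq \Mom{2}{t}\,\heat(r,z-y)^2$ from Lemma \ref{lem:mombd}, the inner double integral reduces by Lemma \ref{lem:int1} to a constant multiple of $\heat(t,x-y)$. Finiteness then follows from $\Cmeas \in \ICM$ (using $\sup_y e^{-a_2|y|}\bbE[X_0(y)^2]<\infty$ from \eqref{eq:BGcond} in the second case). Applying stochastic Fubini and recognizing $\int_\R \Sheb(r,z\viiva 0,y)X(dy) = \Sheb(r,z\viiva 0;X)$ gives \eqref{eq:genmild}.

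The second step will be to verify the moment/integrability conditions in the respective uniqueness theorems. Under \eqref{eq:CDcond}, condition \eqref{cond:CD1} requires $\int_0^t\!\int_\R \heat(t-r,x-z)\,\bbE[\Sheb(r,z\viiva 0;X)^2]\,dz\,dr <\infty$; expanding the square via Tonelli and using $\bbE[\Sheb(r,z\viiva 0,y)\Sheb(r,z\viiva 0,w)] \leq \Mom{2}{t}\,\heat(r,z-y)\heat(r,z-w)$ (Cauchy--Schwarz plus Lemma \ref{lem:mombd}) reduces the bound to a triple Gaussian convolution, which is finite as in the computation sketched in the existing proof. For the continuity condition \eqref{cond:CD2}, after splitting the increment into a ``past'' integral on $[0,t\wedge u]$ with a kernel difference and a ``collar'' integral on the symmetric difference $[t\wedge u,t\vee u]$, the isometry for Walsh integrals reduces both pieces to deterministic Gaussian integrals with integrable envelope $\heat(t-r,x-z)^2 \heat(r,z-y)^2$; the generalized dominated convergence theorem then gives the required $L^2$ continuity, using the explicit Gaussian convolution identities already performed at the end of the current proof. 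Under \eqref{eq:BGcond}, we instead check \eqref{eq:BGgrowth} directly by writing $\bbE[\Sheb(t,x\viiva 0;X)^2]$ as a double integral in $y,w$, applying Cauchy--Schwarz both in the noise and in $X_0$, and bounding $\bbE[X_0(y)X_0(w)] \leq C e^{(a_2/2)(|y|+|w|)}$; the resulting Gaussian integral is elementary and yields a bound of the form $C e^{a|x|}$ uniform in $t \in [0,T]$.

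Once both pieces are in place, uniqueness in Theorem \ref{thm:appexist:CD} (respectively Theorem \ref{thm:appexist:BG}) implies that $\Sheb(\aabullet,\aabullet\viiva 0;X)$ is indistinguishable from the mild solution produced there. The step I expect to be the main obstacle is the $L^2$-continuity of the stochastic integral in condition \eqref{cond:CD2}: one must control the collar term $\int_t^{t+h}\!\!\int_\R \heat(t+h-r,v-z)^2\,\bbE[\Sheb(r,z\viiva 0;X)^2]\,dz\,dr$ uniformly in $(v,h)$ near $(x,0)$, and find a single dominating function that absorbs both $X$-integrations and the short-time singularity of the squared heat kernel at $r \nearrow t+h$. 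The estimate $\bbE[\Sheb(r,z\viiva 0;X)^2] \leq \Mom{2}{T}(\int_\R \heat(r,z-y)X(dy))^2$ combined with the integrability of $X$ in $\ICM$ (which controls the squared heat semigroup applied to $X$ pointwise and locally uniformly for $r$ bounded below) and a direct $h$-uniform computation of the resulting triple Gaussian integral (as in the computation already carried out in the excerpt) should suffice; the minor subtlety is that the $r$-integral of $1/\sqrt{r(r+2(t-r))}$ near the collar is integrable, so generalized dominated convergence applies.
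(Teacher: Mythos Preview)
Your proposal is correct and follows essentially the same approach as the paper's proof: reduce to $\beta=1$, $s=0$, invoke stochastic Fubini with the same moment bound $\bbE[\Sheb(r,z\viiva 0,y)^2]\le \Mom{2}{t}\heat(r,z-y)^2$ and Lemma \ref{lem:int1}, then verify \eqref{eq:BGgrowth} by Cauchy--Schwarz and \eqref{cond:CD1}--\eqref{cond:CD2} by explicit Gaussian integration with the generalized dominated convergence theorem applied to the past/collar decomposition. The obstacle you flag for \eqref{cond:CD2} is exactly the one the paper works through, and your proposed resolution (uniform Gaussian envelope plus integrability of $1/\sqrt{r(2t-r)}$) matches theirs.
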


\section{Continuity of stochastic processes}\label{app:KC}

This appendix presents a version of the Kolmogorov-Chentsov theorem  sufficient for our purposes. For $d \geq 2$, we consider a process $X$ with  values in a  complete separable  metric space $(S, \varrho)$ and 
indexed by   $d-2$ copies of the  unit interval $[0,1]$ and one copy of the time-ordered unit triangle $\bfT_\delta = \{(s^1,s^2) : 0 \leq s^1 \leq s^1+\delta\le  s^2 \leq 1\}$ with a gap $\delta\in[0,1)$.  The case $\delta=0$  corresponds to enforcing only the ordering $s^1\le  s^2$.  Generic points of $[0,1]^{d-2} \times \bfT_\delta$ are denoted by $r=(r^1,\dots,r^{d-2},r^{d-1},r^{d})$, with superscripts for coordinates. The last two coordinates satisfy $r^{d-1} \leq r^{d}-\delta$. Subscripts are reserved for indexing sequences  in $[0,1]^{d-2}\times \bfT_\delta$.

 For $n\in\Z_+$ let 
\[  D_{\delta,n}=\bigl\{ (k^1,\dotsc, k^d)2^{-n}\in [0,1]^{d-2} \times \bfT_\delta:   k^1,\dotsc, k^d \in \lzb0,2^n\rzb
 \bigr\} \]
and then   $D_\delta=\bigcup_{n\in\Z_+} D_{\delta,n}$, the set 
of {\sl dyadic rational points}  in $[0,1]^{d-2} \times \bfT_\delta$.

   
\begin{theorem}\label{thm:KC}   Fix $d\ge 2$ and $\delta\in[0,1)$  as above and let $(S, \varrho)$ be a complete separable metric space. 

\smallskip 

{\rm (a)}   
Suppose $\{X_{r}:r\in D_\delta\}$ is an $S$-valued  stochastic 
process defined on a complete probability space
$(\Omega, \cF, \bbP)$ 
 with the following property: there exist constants
$\constkc<\infty$ 
and $\alpha_{1},\dotsc, \alpha_d,  \nu>0$ such that 
\be
\bbE\bigl[ \varrho(X_s,X_r)^\nu\,\bigr]\le \constkc\sum_{i=1}^d \abs{s^i-r^i}^{d+\alpha_i}
\quad
\text{for all $r,s\in D_\delta$.}
\label{kc-hyp1}
\ee
Then there exists an $S$-valued process $\{Y_s: s\in[0,1]^{d-2}\times \bfT_\delta\}$ on $(\Omega,\sF,\bbP)$ such that 
the path $s\mapsto Y_s(\w)$ is continuous for each $\w\in\Omega$ and 
$P\{Y_s=X_s\}=1$  for each $s\in D_\delta$.   Furthermore, 
 for all choices of 
  $\aconstkc_i\in(0,\alpha_i/\nu)$ for $i\in[d]$, 
\be \label{kolm-c-4}
E\biggl[ \tspb\sup_{r\ne s \; \text{\rm in } [0,1]^{d-2}\times \bfT_\delta} \,  \biggl\lvert \frac{\varrho(Y_s(\omega),Y_r(\omega))}{\sum_{i=1}^d\abs{s^i-r^i}^{\aconstkc_i}} \biggr\rvert^\nu \,\biggr] \le \constkc  \left(\sum_{i=1}^d  \frac{2^{\aconstkc_i+1}}{(1- 2^{\aconstkc_i-\alpha_i/\nu}) (1-2^{-\alpha_i/\nu})}\right)^\nu<\infty.  
\ee
\medskip 

{\rm (b)}  Suppose $\{X_s: s\in [0,1]^{d-2}\times \bfT_\delta\}$ is an $S$-valued  stochastic 
process that satisfies the moment bound \eqref{kc-hyp1} for all $r,s\in[0,1]^{d-2}\times\bfT_\delta$.
Then the process $Y$ of part {\rm(a)} is  a version of  $X$.  If $X$ is almost surely continuous to begin with, then $P\{\tspa Y_s=X_s\, \forall s\in [0,1]^{d-2}\times\bfT_\delta\tspa \}=1$.  
\end{theorem}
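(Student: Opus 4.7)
The plan is to execute the standard Kolmogorov--Chentsov chaining argument, adapted to handle three features: anisotropic H\"older exponents (one for each coordinate), the triangular constraint $r^{d-1}\le r^d-\delta$ in the last two coordinates, and the requirement that $Y$ actually be defined for every $\omega$. The triangular constraint poses no real difficulty since $D_\delta$ is dense in $[0,1]^{d-2}\times \bfT_\delta$ and the number of dyadic points of level $n$ is still $O(2^{nd})$. First, call two points $r,s\in D_{\delta,n}$ neighbors if they differ in exactly one coordinate by $2^{-n}$; the number of such pairs is bounded by $C_d\,2^{nd}$. Markov's inequality applied to \eqref{kc-hyp1} gives, for a pair differing in coordinate $i$,
\[
\bbP\bigl(\varrho(X_r,X_s)\ge 2^{-n\aconstkc_i}\bigr)\le \constkc\,2^{-n(d+\alpha_i)}\cdot 2^{n\nu\aconstkc_i}.
\]
Summing over all such pairs and over $i$, the event
\[
E_n=\bigcup_{i=1}^d\bigcup_{\substack{r,s\in D_{\delta,n}\\ \text{neighbors in coord. }i}}\bigl\{\varrho(X_r,X_s)\ge 2^{-n\aconstkc_i}\bigr\}
\]
satisfies $\bbP(E_n)\le C\constkc\sum_{i=1}^d 2^{-n(\alpha_i-\nu\aconstkc_i)}$, which is summable because $\aconstkc_i<\alpha_i/\nu$ for every $i$.

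By the Borel--Cantelli lemma there is an event $\Omega_0$ with $\bbP(\Omega_0)=1$ on which only finitely many $E_n$ occur. A telescoping dyadic-chaining estimate then shows that on $\Omega_0$ there is a finite random constant $C(\omega)$ such that
\[
\varrho(X_r(\omega),X_s(\omega))\le C(\omega)\sum_{i=1}^d \abs{s^i-r^i}^{\aconstkc_i}\quad\text{for all }r,s\in D_\delta.
\]
The chaining is executed coordinate by coordinate: connect $r$ to $s$ by a path in $D_\delta$ that changes only one coordinate at each step using dyadic expansions of $|s^i-r^i|$, and use the neighbor estimate at the appropriate dyadic level for each coordinate. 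Because $D_\delta$ is dense in $[0,1]^{d-2}\times \bfT_\delta$ and $(S,\varrho)$ is complete, this uniform continuity lets us extend $X$ to a continuous map $Y_s(\omega)=\lim_{D_\delta\ni r\to s}X_r(\omega)$ on the whole index set for $\omega\in\Omega_0$; off $\Omega_0$ set $Y_s$ equal to a fixed element of $S$. By construction $Y$ is continuous and $Y_s=X_s$ on $D_\delta$ almost surely. For the quantitative bound \eqref{kolm-c-4}, one raises the pointwise bound to the $\nu$-th power and computes the expectation of the resulting series: the dyadic-level sum produces the factor $(1-2^{\aconstkc_i-\alpha_i/\nu})^{-1}$, while the geometric series arising from the dyadic expansion of $|s^i-r^i|$ contributes the factor $(1-2^{-\alpha_i/\nu})^{-1}$, and the extra $2^{\aconstkc_i+1}$ comes from enlarging a neighbor to the nearest dyadic grid point on either side.

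For part (b), if $X$ is defined on the full index set and satisfies \eqref{kc-hyp1} everywhere, then for any $s\in[0,1]^{d-2}\times \bfT_\delta$ and any sequence $r_n\in D_\delta$ with $r_n\to s$, the moment bound gives $X_{r_n}\to X_s$ in $L^\nu(\varrho)$ and thus in probability, while $X_{r_n}\to Y_s$ almost surely by construction; hence $Y_s=X_s$ almost surely. If in addition $X$ is almost surely continuous, then $X$ and $Y$ are two continuous $S$-valued processes that agree on the dense countable set $D_\delta$ off a single null set, so they agree on the entire index set off that same null set.

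The main obstacle is the bookkeeping in the anisotropic chaining step, in particular keeping track of which coordinate is being adjusted at which dyadic level so that the estimate telescopes correctly and produces the exact constants displayed in \eqref{kolm-c-4}. The triangular constraint imposed by $\bfT_\delta$ is handled by noting that if $r,s\in D_\delta$ then all intermediate dyadic points used in the chaining can be chosen in $D_\delta$, so the constraint never causes a step of the chain to leave the admissible index set.
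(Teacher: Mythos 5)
Your proposal is correct and is exactly the argument the paper has in mind: the paper states that Theorem \ref{thm:KC} follows from ``a standard chaining argument, which we omit,'' and your neighbor-pair/Markov/Borel--Cantelli setup, coordinate-by-coordinate anisotropic chaining, extension by completeness of $(S,\varrho)$, and the $L^\nu$ computation of the H\"older seminorm bound constitute precisely that standard argument, with the constraint from $\bfT_\delta$ handled correctly by ordering the coordinate moves so the chain stays in $D_\delta$.
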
 
The proof is a standard chaining argument, which we omit.

\medskip

\section{Computations}\label{app:comp}
The following sequence of lemmas present elementary computations that  go into our H\"older regularity estimates. 

\begin{lemma}\label{lem:int2}
For $t>0$ and $x \in \bbR$,
\begin{align*}
\int_{\bbR} \frac{\heat(t-r,x-z )^2\heat(r,z)^2}{\heat^2(t,x)} dz = \frac{\sqrt{t}}{2\sqrt{\pi(t-r)r}} \ind_{(0,t)}(r)
\end{align*}
\end{lemma}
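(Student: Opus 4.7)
The plan is to recognize the integrand as the square of a Brownian bridge density, for which the $L^2$ norm is an elementary Gaussian computation. The key observation is the standard identity
\begin{equation*}
\frac{\heat(t-r,x-z)\heat(r,z)}{\heat(t,x)} = \heat_{BB}(r,z),
\end{equation*}
valid for $0<r<t$, where the right-hand side is the one-dimensional marginal at time $r$ of the Brownian bridge from $(0,0)$ to $(t,x)$. A direct algebraic check (completing the square in the exponent and verifying the normalization $\sqrt{(2\pi(t-r))(2\pi r)/(2\pi t)}$) shows that this is the Gaussian density in $z$ with mean $xr/t$ and variance $\sigma^2 = r(t-r)/t$.

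With that identity in hand, the integral in question becomes $\int_{\bbR} \heat_{BB}(r,z)^2\,dz$, i.e.~the squared $L^2$-norm of a Gaussian density with variance $\sigma^2$. A one-line computation (write out $p(z)^2 = (2\pi\sigma^2)^{-1}\exp\{-(z-\mu)^2/\sigma^2\}$ and integrate, using that $\int e^{-u^2/\sigma^2}du = \sigma\sqrt{\pi}$) gives $1/(2\sqrt{\pi\sigma^2})$. Substituting $\sigma^2 = r(t-r)/t$ yields exactly $\sqrt{t}/(2\sqrt{\pi(t-r)r})$, which is the claimed right-hand side. The indicator $\ind_{(0,t)}(r)$ arises because the factor $\heat(t-r,\cdot)\heat(r,\cdot)$ vanishes identically (by the support conventions on $\heat$) unless $0<r<t$.

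There is essentially no obstacle here; the only thing to be careful about is the bookkeeping of the Gaussian normalization constants, which is best handled by the Chapman-Kolmogorov/Brownian bridge factorization above rather than by directly multiplying out exponentials. I would present the argument in two displays: one for the bridge-density identity, and one for the $L^2$-norm of a Gaussian density.
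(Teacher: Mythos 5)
Your proof is correct and amounts to the same computation as the paper's: the paper completes the square in the exponent of $\heat(t-r,x-z)^2\heat(r,z)^2$ directly and evaluates the resulting Gaussian integral, which is exactly the content of your two steps (identifying the bridge marginal as the $N(xr/t,\,r(t-r)/t)$ density and computing the $L^2$ norm of a Gaussian density). Your packaging via the Brownian bridge identity is a slightly cleaner way to organize the identical normalization bookkeeping, and the handling of the indicator via the support convention on $\heat$ matches the paper's restriction to $0<r<t$.
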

\begin{proof}
For $0<r<t$, 
\begin{align*}
\int_{\bbR}  \heat(t-r,x-z )^2\heat(r,z)^2dz &=\int_{\bbR} \frac{1}{(2\pi)^2 (t-r) r} e^{- \left[\frac{(x-z)^2}{t-r} + \frac{z^2}{r}\right]} dz  \\
&=\frac{1}{(2\pi)^2 (t-r)r} \int_{\bbR}  e^{-\left[\frac{t}{(t-r)r}\left(z - \frac{\frac{x}{t-r}}{\frac{1}{t-r}+\frac{1}{r}}\right)^2 + \frac{x^2}{t}\right]}dz \\
&= \frac{1}{(2\pi)^2(t-r)r} \sqrt{\frac{\pi(t-r)r}{t}} e^{-\frac{x^2}{t}} = \frac{\sqrt{t}}{2\sqrt{\pi(t-r)r}} \heat(t,x)^2.\qedhere
\end{align*}
\end{proof}

\begin{lemma}\label{lem:int1}
For $0 < t$ and $x \in \bbR$,
\begin{align*}
\int_{0}^t  \int_{\bbR} \frac{\heat(t-r,x-z )^2\heat(r,z)^2}{\heat(t,x)^2} dz\,dr &=  \frac{\sqrt{t\pi}}{2}
\end{align*}
\end{lemma}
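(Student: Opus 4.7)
The plan is to simply invoke Lemma \ref{lem:int2} to reduce the double integral to a one-dimensional integral and then evaluate a standard Beta-function type integral. Specifically, by Lemma \ref{lem:int2} and Fubini's theorem,
\[
\int_0^t \int_{\bbR} \frac{\heat(t-r,x-z)^2 \heat(r,z)^2}{\heat(t,x)^2}\, dz\, dr = \int_0^t \frac{\sqrt{t}}{2\sqrt{\pi(t-r)r}}\, dr.
\]

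Next, I would evaluate the remaining integral with the substitution $r = t\sin^2\theta$, under which $dr = 2t\sin\theta\cos\theta\, d\theta$ and $\sqrt{(t-r)r} = t\sin\theta\cos\theta$. This gives
\[
\int_0^t \frac{dr}{\sqrt{(t-r)r}} = \int_0^{\pi/2} 2\, d\theta = \pi,
\]
and combining with the prefactor yields $\frac{\sqrt{t}}{2\sqrt{\pi}} \cdot \pi = \frac{\sqrt{t\pi}}{2}$, as desired.

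There is no substantive obstacle here; the work is entirely computational and follows immediately from the preceding lemma together with the well-known evaluation $\int_0^t (r(t-r))^{-1/2}dr = \pi$ (which is also just $B(1/2,1/2)$).
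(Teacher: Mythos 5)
Your proof is correct and follows the same route as the paper: the paper's proof also consists of applying Lemma \ref{lem:int2} to reduce to $\frac{1}{2\sqrt{\pi}}\int_0^t \sqrt{t/(r(t-r))}\,dr$ and then evaluating that integral to get $\sqrt{t\pi}/2$. Your explicit substitution $r=t\sin^2\theta$ just supplies the detail the paper leaves implicit.
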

\begin{proof}
\begin{align*}
\int_{0 }^t  \int_{\bbR}\frac{\heat(t-r,x-z )^2\heat(r,z)^2}{\heat^2(t,x)}  dz\,dr &= \frac{1}{2\sqrt{\pi}} \int_0^t  \sqrt{\frac{t}{r(t-r)}} dr = \frac{\sqrt{t \pi}}{2}.\qedhere 
\end{align*}
\end{proof}

\begin{lemma}\label{lem:int3}
For $t,h>0$ and $x \in \bbR$,
\begin{align*}
\int_0^t  \int_{\bbR} \bigg[ \frac{\heat(t+h-r,x-z )^2\heat(r,z)^2}{\heat^2(t+h,x)} \bigg] dz\,dr=\frac{\sqrt{t+h}}{2\sqrt{\pi}} \bigg[\arcsin\left(1-\frac{2h}{t+h}\right) + \frac{\pi}{2} \bigg] 
\end{align*}
\end{lemma}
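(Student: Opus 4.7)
The plan is to reduce this double integral to a single one by applying Lemma \ref{lem:int2} with $t$ replaced by $t+h$ to evaluate the inner $dz$ integral in closed form. Since we only integrate $r$ over $(0,t) \subset (0,t+h)$, the indicator is identically one on the range of integration, so Lemma \ref{lem:int2} gives
\begin{align*}
\int_0^t  \int_{\bbR} \frac{\heat(t+h-r,x-z )^2\heat(r,z)^2}{\heat^2(t+h,x)}  dz\,dr = \frac{\sqrt{t+h}}{2\sqrt{\pi}} \int_0^t \frac{dr}{\sqrt{r(t+h-r)}}.
\end{align*}

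The remaining step is to compute $\int_0^t \frac{dr}{\sqrt{r(t+h-r)}}$ by a trigonometric substitution $r = \frac{t+h}{2}(1+\sin\theta)$, under which $dr = \frac{t+h}{2}\cos\theta\, d\theta$ and $r(t+h-r) = \frac{(t+h)^2}{4}\cos^2\theta$. The endpoints $r=0$ and $r=t$ correspond to $\theta = -\pi/2$ and $\theta = \arcsin\!\big(1-\frac{2h}{t+h}\big)$, respectively. After cancellation the integrand becomes simply $d\theta$, yielding
\begin{align*}
\int_0^t \frac{dr}{\sqrt{r(t+h-r)}} = \arcsin\!\left(1-\frac{2h}{t+h}\right) + \frac{\pi}{2},
\end{align*}
which gives the claimed identity upon multiplication by $\frac{\sqrt{t+h}}{2\sqrt{\pi}}$.

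There is no real obstacle here; the only thing to track carefully is the upper limit of $\theta$, which is why the answer features an arcsine of $1-\frac{2h}{t+h}$ rather than a clean constant. Note as a sanity check that in the limit $h\searrow 0$ one has $\arcsin(1) + \pi/2 = \pi$, and the formula recovers $\frac{\sqrt{t\pi}}{2}$, in agreement with Lemma \ref{lem:int1}.
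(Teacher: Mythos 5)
Your proposal is correct and follows essentially the same route as the paper: both first evaluate the inner $dz$ integral via Lemma \ref{lem:int2} applied with $t$ replaced by $t+h$ (the paper's citation of Lemma \ref{lem:int1} at that step is really invoking the computation of Lemma \ref{lem:int2}), and then evaluate $\int_0^t \frac{dr}{\sqrt{r(t+h-r)}}$ in closed form, noting that $\frac{t-h}{t+h}=1-\frac{2h}{t+h}$. Your trigonometric substitution simply supplies the details the paper leaves implicit, and your limiting sanity check against Lemma \ref{lem:int1} is a nice touch.
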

\begin{proof}
By Lemma \ref{lem:int1},
\begin{align*}
\int_0^t  \int_{\bbR} \bigg[ \frac{\heat(t+h-r,x-z )^2\heat(r,z)^2}{\heat^2(t+h,x)} \bigg] dz\,dr &= \int_0^t  \frac{\sqrt{t+h}}{2\sqrt{\pi(t+h-r)r}}dr \\
&= \frac{\sqrt{t+h}}{2\sqrt{\pi}} \bigg[\arcsin\left(\frac{t-h}{t+h}\right) + \frac{\pi}{2} \bigg]. \qedhere 
\end{align*}
\end{proof}

\begin{lemma}\label{lem:int4bd}
For $t,h>0$ and $x \in \bbR$,
\begin{align*}
\int_t^{t+h}  \int_{\bbR}  \bigg[ \frac{\heat(t+h-r,x-z )^2\heat(r,z)^2}{\heat^2(t+h,x)} \bigg] dz\,dr &= \frac{\sqrt{t+h}}{2\sqrt{\pi}} \bigg[ \frac{\pi}{2} - \arcsin\left( 1-\frac{2h}{t+h} \right) \bigg] \leq 4 \sqrt{h}.
\end{align*}
\end{lemma}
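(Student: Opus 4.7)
The plan is to reduce the double integral to a one-dimensional integral in $r$ by first applying Lemma~\ref{lem:int2}, which gives, for $r\in(0,t+h)$,
\[
\int_{\bbR}\frac{\heat(t+h-r,x-z)^2\,\heat(r,z)^2}{\heat^2(t+h,x)}\,dz \;=\; \frac{\sqrt{t+h}}{2\sqrt{\pi(t+h-r)\,r}}.
\]
Integrating this over $r\in[t,t+h]$ reduces the equality in the lemma to the classical computation of $\int_t^{t+h} dr/\sqrt{r(t+h-r)}$. I would evaluate this with the substitution $r=\tfrac{t+h}{2}(1+\sin\theta)$, under which $\sqrt{r(t+h-r)}=\tfrac{t+h}{2}\cos\theta$ and $dr=\tfrac{t+h}{2}\cos\theta\,d\theta$, so the integrand becomes simply $d\theta$. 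The endpoints transform to $\theta=\arcsin\!\bigl(1-\tfrac{2h}{t+h}\bigr)$ at $r=t$ and $\theta=\pi/2$ at $r=t+h$, which delivers exactly the claimed equality.

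For the inequality, the plan is to rewrite the bracket in a form in which the $h\to 0$ behavior is transparent. Using $\tfrac{\pi}{2}-\arcsin(u)=\arccos(u)$ together with the identity $\arccos(1-2v)=2\arcsin(\sqrt{v})$ for $v\in[0,1]$, applied with $v=h/(t+h)$, gives
\[
\frac{\sqrt{t+h}}{2\sqrt{\pi}}\!\left[\tfrac{\pi}{2}-\arcsin\!\Bigl(1-\tfrac{2h}{t+h}\Bigr)\right] \;=\; \frac{\sqrt{t+h}}{\sqrt{\pi}}\,\arcsin\!\left(\sqrt{\tfrac{h}{t+h}}\right).
\]
The last step is to note that $x\mapsto \arcsin(x)/x$ is increasing on $(0,1]$ with value $\pi/2$ at $x=1$, so $\arcsin(x)\le (\pi/2)\,x$ for $x\in[0,1]$. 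Applying this with $x=\sqrt{h/(t+h)}$ yields the upper bound $\tfrac{\sqrt{\pi}}{2}\sqrt{h}$, which is comfortably below $4\sqrt{h}$.

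No step presents a real obstacle; the argument is entirely elementary once Lemma~\ref{lem:int2} is in hand, and the constant $4$ in the stated bound is highly non-sharp (the actual constant produced by this argument is $\sqrt{\pi}/2$). The only mild care point is verifying monotonicity of $\arcsin(x)/x$ on $(0,1]$, which follows from checking that $x/\sqrt{1-x^2}-\arcsin(x)\ge 0$ on $[0,1)$ by evaluating at $x=0$ and differentiating.
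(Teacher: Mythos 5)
Your proof is correct and, for the main equality, follows the same route as the paper: reduce the $z$-integral via Lemma \ref{lem:int2} and evaluate $\int_t^{t+h} dr/\sqrt{r(t+h-r)}$ by a substitution (your single trigonometric substitution is the composition of the paper's two algebraic ones). The only genuine divergence is in the final inequality: the paper bounds $\tfrac{\pi}{2}-\arcsin(1-x)\le 8\sqrt{x}$ by comparing the two functions' values at $0$ and their derivatives on $(0,2)$, and then chases constants to land under $4\sqrt{h}$; you instead use the exact identity $\tfrac{\pi}{2}-\arcsin(1-2v)=2\arcsin(\sqrt{v})$ together with $\arcsin(u)\le \tfrac{\pi}{2}u$ on $[0,1]$. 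Your version is cleaner, avoids the derivative comparison, and yields the sharper constant $\sqrt{\pi}/2$ in place of $4\sqrt{2}/\sqrt{\pi}$; since the lemma is only used as a crude bound downstream, either constant suffices.
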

\begin{proof}
By Lemma \ref{lem:int3},
\begin{align*}
&\int_t^{t+h} \int_{\bbR} \frac{\heat(t+h-r,x-z )^2\heat(r,z)^2}{\heat^2(t+h,x)} dz\,dr = \int_t^{t+h}  \frac{\sqrt{t+h}}{2\sqrt{\pi}\sqrt{(t+h-r)r}} dr\\
&= \frac{1}{2\sqrt{\pi}} \int_t^{t+h}dr \frac{\sqrt{t+h}}{ \sqrt{(t+h-r)r}} = \frac{\sqrt{t+h}}{2\sqrt{\pi}} \int_{\frac{t}{t+h}}^1 du\frac{1}{\sqrt{u(1-u)}} \\
&= \frac{\sqrt{t+h}}{\sqrt{\pi}} \int_{\frac{t}{t+h}}^1 du \frac{1}{\sqrt{1- (2u-1)^2}} = \frac{\sqrt{t+h}}{2\sqrt{\pi}} \int_{\frac{t-h}{t+h} }^1 dv \frac{1}{\sqrt{1- v^2}} \\
&= \frac{\sqrt{t+h}}{2\sqrt{\pi}} \bigg[ \frac{\pi}{2} - \arcsin\left(\frac{t-h}{t+h} \right) \bigg]
\end{align*}
For the inequality, we observe that for $x \in (0,2)$, $\frac{\pi}{2} - \arcsin\left(1-x\right) < 8 \sqrt{x}$. This can be verified by observing that the two functions are equal at $0$ and the derivatives remain ordered on $(0,2)$. Writing $(t-h)/(t+h) = 1-2h/(t+h)$, we have
\begin{align*}
\frac{\sqrt{t+h}}{2\sqrt{\pi}} \bigg[ \frac{\pi}{2} - \arcsin\left(1 - \frac{2h}{t+h} \right) \bigg] \leq  \frac{4\sqrt{2 h}}{\sqrt{\pi}} \leq 4 \sqrt{h}
\end{align*}
\end{proof}

\begin{lemma}\label{lem:xybd}
For $0  < t$ and $x,y \in \bbR$,
\begin{align*}
 \frac{\sqrt{\pi t}}{2} - \frac{|x-y|}{2}\leq  \int_0^t  \int_{\bbR}  \bigg[ \frac{\heat(t-r,y-z )\heat(r,z)}{\heat(t,y)} \frac{\heat(t-r,x-z) \heat(r,z)}{\heat(t,x)} \bigg] dr dz \leq  \frac{\sqrt{\pi t}}{2}
 \end{align*}
\end{lemma}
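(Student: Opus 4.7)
The plan is to reduce the double integral to a clean one-dimensional integral via a change of variables, and then to extract both bounds from elementary analysis. First I would recognize that for each fixed $r \in (0,t)$, the integrand is a product of two Gaussian densities in $z$ (namely, the Brownian bridge transition densities from $(0,0)$ to $(t,x)$ and to $(t,y)$ at time $r$). Completing the square in the $z$-exponent, the inner integral evaluates to
\[
\int_\R \frac{\heat(t-r,x-z)\heat(r,z)}{\heat(t,x)}\cdot\frac{\heat(t-r,y-z)\heat(r,z)}{\heat(t,y)}\,dz = \frac{1}{2}\sqrt{\frac{t}{\pi r(t-r)}}\exp\!\left(-\frac{r(x-y)^2}{4t(t-r)}\right).
\]

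Next I would apply the substitution $u=\sqrt{r/(t-r)}$, which maps $(0,t)$ bijectively onto $(0,\infty)$ with $r(t-r) = t^2u^2/(1+u^2)^2$, $r/(t-r) = u^2$, and $dr = 2tu(1+u^2)^{-2}\,du$. A short calculation converts the double integral in the statement into
\[
I(x,y) = \frac{\sqrt{t}}{\sqrt{\pi}}\int_0^\infty \frac{e^{-a^2u^2}}{1+u^2}\,du, \qquad a:=\frac{|x-y|}{2\sqrt{t}}.
\]
The upper bound $I(x,y)\le \sqrt{\pi t}/2$ is then immediate from $e^{-a^2u^2}\le 1$ combined with $\int_0^\infty(1+u^2)^{-1}\,du=\pi/2$ (alternatively it follows from Cauchy-Schwarz applied to Lemma \ref{lem:int1}).

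For the lower bound, I would reformulate the desired inequality as $g(a):=\int_0^\infty (1-e^{-a^2u^2})(1+u^2)^{-1}\,du \le \sqrt{\pi}\,a$ for all $a \ge 0$, which rearranges to $\sqrt{\pi t}/2 - I(x,y)\le |x-y|/2$. Since $g(0)=0$, it suffices to show $g'(a)\le\sqrt{\pi}$ for all $a>0$. Differentiating under the integral and substituting $v=au$ gives
\[
g'(a) = 2a\int_0^\infty \frac{u^2 e^{-a^2u^2}}{1+u^2}\,du = 2\int_0^\infty \frac{v^2}{a^2+v^2}\,e^{-v^2}\,dv \le 2\int_0^\infty e^{-v^2}\,dv = \sqrt{\pi},
\]
where the inequality uses $v^2/(a^2+v^2)\le 1$. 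Integrating from $0$ to $a$ yields $g(a)\le\sqrt{\pi}\,a$.

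The main obstacle is identifying the substitution $u=\sqrt{r/(t-r)}$, which collapses the problem into the canonical one-dimensional integral $\int_0^\infty e^{-a^2u^2}(1+u^2)^{-1}\,du$; once this reduction is in place, the upper bound is trivial and the lower bound follows from the one-line derivative estimate above.
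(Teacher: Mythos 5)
Your proof is correct and follows essentially the same route as the paper's: complete the square to evaluate the $z$-integral, change variables to reduce the $r$-integral to a one-dimensional Laplace-type integral in a single parameter measuring $|x-y|$, and obtain the lower bound by differentiating in that parameter and dropping the $a^2$ (resp.\ $\alpha$) term in the denominator. Your substitution $u=\sqrt{r/(t-r)}$ is just a reparametrization of the paper's $s=r/(t(t-r))$ (namely $u=\sqrt{ts}$), and your bound $g'(a)\le\sqrt{\pi}$ is the same estimate as the paper's $I'(\alpha)\ge -1/(2\sqrt{\alpha})$ after integration.
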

\begin{proof}
Notice that for $r \in (0,t)$,
\begin{align*}
\frac{(y-z)^2 + (z-x)^2}{2(t-r)} + \frac{z^2}{r} &= \left(\frac{t}{r(t-r)}\right) \left(z - \frac{r(x+y)}{2t}\right)^2 + \frac{x^2}{2t} +  \frac{y^2}{2t} + \frac{r(x-y)^2}{4t(t-r)}.
\end{align*}
Therefore, after changing variables, in the second equality,
\begin{align*}
& \int_{\bbR} \bigg[ \frac{\heat(t-r,y-z )\heat(r,z)}{ \heat(t,y)} \frac{\heat(t-r,x-z) \heat(r,z)}{\heat(t,x)} \bigg]dz \\
&= \frac{1}{(2\pi)} \frac{t}{r(t-r)}  \int_{\bbR} e^{ \frac{x^2+y^2}{2t} - \bigg[\frac{(y-z)^2 + (z-x)^2}{2(t-r)} + \frac{z^2}{r}\bigg] } dz \\
&= \frac{1}{(2\pi)} \frac{t}{r(t-r)} e^{- \frac{(x-y)^2r}{4t(t-r)}}  \int_{\bbR} e^{ - \frac{t}{r(t-r)}z^2 }dz  = \frac{1}{2\sqrt{\pi}}\sqrt{\frac{t}{r(t-r)}} e^{- \frac{(x-y)^2r}{4t(t-r)}} .
\end{align*}
To compute the $dr$ integral, we now substitute $s = \frac{r}{t(t-r)}$, which satisfies $r = \frac{st^2}{1+st}$. 
\begin{align*}
dr=\frac{t^2}{(1+st)^2}ds, \qquad \frac{t}{t(t-r)} = \frac{(1+st)^2}{st^2}, \qquad dr \sqrt{\frac{t}{r(t-r)}} = ds \frac{t}{\sqrt{s}(1+st)}
\end{align*}
\begin{align*}
\int_0^t   \frac{1}{2\sqrt{\pi}}\sqrt{\frac{t}{r(t-r)}} e^{- \frac{(x-y)^2r}{4t(t-r)}} dr&=  \frac{t}{2\sqrt{\pi}} \int_0^\infty  \frac{1}{\sqrt{s}(1+st)} e^{-\frac{(x-y)^2}{4}s} ds \\
\end{align*}
For $\alpha \geq 0$, call
\begin{align*}
I(\alpha) &= \frac{t}{2\sqrt{\pi}} \int_0^\infty ds \frac{1}{\sqrt{s}(1+st)} e^{- \alpha s}.
\end{align*}
We have $I(0) = \frac{\sqrt{\pi t}}{2}$ and
\begin{align*}
I'(\alpha) &= \frac{-t}{2\sqrt{\pi}}\int_0^\infty ds \frac{\sqrt{s}}{1+st} e^{- \alpha s}
\end{align*}
Substitute $u= \alpha s$ so that 
\begin{align*}
\frac{du}{\alpha} = ds, \qquad \frac{\sqrt{s}}{1+st} = \frac{\sqrt{\alpha} \sqrt{u}}{\alpha + ut}, \qquad ds \frac{\sqrt{s}}{1+st} = du\frac{1}{\sqrt{\alpha}} \frac{\sqrt{u}}{\alpha+ u t}.
\end{align*}
Then
\begin{align*}
I'(\alpha) &= \frac{-t}{2\sqrt{\pi}}\int_0^\infty du\frac{1}{\sqrt{\alpha}} \frac{\sqrt{u}}{\alpha+ u t}e^{- u} \geq \frac{-1}{2\sqrt{\pi}} \frac{1}{\sqrt{\alpha}}\int_0^\infty du \frac{1}{\sqrt{u}}e^{- u} = -\frac{1}{2\sqrt{\alpha}}
\end{align*}
It follows that
\begin{align*}
\frac{t}{2\sqrt{\pi}} \int_0^\infty ds \frac{1}{\sqrt{s}(1+st)} e^{-\frac{(x-y)^2}{4}s}  = I\left(\frac{(x-y)^2}{4}\right) &\geq \frac{\sqrt{\pi t}}{2} - \frac{|x-y|}{2}. \qedhere
\end{align*}
\end{proof}
The next result follows from the previous result by expanding out the square.
\begin{corollary}
For $0 < t$ and $x,y \in \bbR$,
\begin{align*}
\int_0^t  \int_{\bbR} \bigg[ \frac{\heat(t-r,y-z )\heat(r,z)}{\heat(t,y)}  -  \frac{\heat(t-r,x-z) \heat(r,z)}{\heat(t,x)} \bigg]^2dz dr \leq  |x-y|.
\end{align*}
\end{corollary}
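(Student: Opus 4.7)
The plan is entirely algebraic: expand the square and apply the two preceding integral identities. Writing $A_r(z) = \heat(t-r,y-z)\heat(r,z)/\heat(t,y)$ and $B_r(z) = \heat(t-r,x-z)\heat(r,z)/\heat(t,x)$, we have
\[
\int_0^t\!\!\int_{\bbR}(A_r(z)-B_r(z))^2\,dz\,dr = \int_0^t\!\!\int_{\bbR}A_r(z)^2\,dz\,dr + \int_0^t\!\!\int_{\bbR}B_r(z)^2\,dz\,dr - 2\int_0^t\!\!\int_{\bbR}A_r(z)B_r(z)\,dz\,dr.
\]

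First, I would apply Lemma \ref{lem:int1} to each of the squared terms (with $x$ replaced by $y$ in the first), each of which equals $\sqrt{\pi t}/2$. Next, I would apply the lower bound in Lemma \ref{lem:xybd} to the cross term, yielding $\int_0^t\!\!\int_{\bbR}A_r(z)B_r(z)\,dz\,dr \ge \sqrt{\pi t}/2 - |x-y|/2$. Substituting these into the expansion gives
\[
\int_0^t\!\!\int_{\bbR}(A_r(z)-B_r(z))^2\,dz\,dr \le \sqrt{\pi t} - 2\Big(\tfrac{\sqrt{\pi t}}{2} - \tfrac{|x-y|}{2}\Big) = |x-y|,
\]
which is the desired bound.

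There is no real obstacle here, since all the computational work has already been carried out in Lemmas \ref{lem:int1} and \ref{lem:xybd}; the only thing to verify is that the diagonal terms cancel exactly against the leading $\sqrt{\pi t}/2$ from Lemma \ref{lem:xybd}, which they do by design.
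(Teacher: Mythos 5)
Your proof is correct and matches the paper's intended argument exactly: the paper notes the corollary "follows from the previous result by expanding out the square," which is precisely your expansion combined with Lemma \ref{lem:int1} for the diagonal terms and the lower bound of Lemma \ref{lem:xybd} for the cross term. Nothing further is needed.
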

Next, we turn to the more difficult case where the heat kernels have different time coordinates, but the same space coordinate. We begin by computing the space integral of the cross-term that will appear when we expand out the square:
\begin{lemma}\label{lem:diffhspace}
For $0 < t$, $h>0$, and $x \in \bbR$,
\begin{align*}
&\int_{\bbR} \bigg[ \frac{\heat(t+h-r,x-z )\heat(r,z)}{\heat(t+h,x)} \frac{\heat(t-r,x-z) \heat(r,z)}{\heat(t,x)} \bigg] dz =  \\
& \frac{\sqrt{t(t+h)}}{\sqrt{2\pi r((t+h)(t-r) + t(t+h-r))}} \bigg[ e^{- \frac{x^2}{2t}\frac{h^2 r}{(t+h)((t+h)(t-r) + t(t+h-r)))} }\bigg]\ind_{(0,t)}(r)
\end{align*}
\end{lemma}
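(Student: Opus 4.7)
The plan is to reduce the claim to a single Gaussian integral in $z$, then to compute that integral by completing the square and, finally, to simplify the resulting prefactor and exponent by direct algebra. For $r \in (0,t)$, both heat kernels involving $t-r$ and $t+h-r$ are non-degenerate and strictly positive, so the indicator $\ind_{(0,t)}(r)$ arises automatically because the integrand is identically zero for $r \notin (0,t)$ (the factor $\heat(t-r,x-z)$ vanishes outside that interval).

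The first step is purely mechanical. Using $\heat(u,y) = (2\pi u)^{-1/2}e^{-y^2/(2u)}$, the prefactor coming from all four heat kernels and the two reciprocals $\heat(t,x)^{-1}$, $\heat(t+h,x)^{-1}$ collapses to
\[
\frac{\sqrt{t(t+h)}}{2\pi r \sqrt{(t-r)(t+h-r)}},
\]
and the full exponent in the integrand equals
\[
-\tfrac{A}{2}(x-z)^2 - \tfrac{z^2}{r} + \tfrac{x^2}{2t} + \tfrac{x^2}{2(t+h)}, \qquad A := \tfrac{1}{t-r} + \tfrac{1}{t+h-r}.
\]
Expanding $(x-z)^2$ and grouping, the coefficient of $-z^2/2$ becomes $B := A + 2/r$, which after a short computation equals $M/[r(t-r)(t+h-r)]$ with
\[
M := (t+h)(t-r) + t(t+h-r).
\]

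The second step is to complete the square in $z$ and integrate. Writing $-\frac{B}{2}z^2 + A x z = -\frac{B}{2}(z - Ax/B)^2 + \frac{A^2 x^2}{2B}$ and performing the one-dimensional Gaussian integral in $z$ contributes $\sqrt{2\pi/B}$. Multiplying by the prefactor above and using the explicit formula for $B$ yields the prefactor $\sqrt{t(t+h)}/\sqrt{2\pi r M}$ claimed in the statement.

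The last step is to check that the leftover $x^2$-term simplifies to $-\frac{x^2}{2t}\cdot \frac{h^2 r}{(t+h)M}$. The coefficient of $x^2/2$ is
\[
\frac{A^2}{B} - A + \frac{1}{t} + \frac{1}{t+h} = -\frac{2A}{rB} + \frac{2t+h}{t(t+h)} = -\frac{2(2t+h-2r)}{M} + \frac{2t+h}{t(t+h)},
\]
where the first equality uses $B-A = 2/r$ and the second substitutes the formulas for $A$ and $B$. Putting both terms over the common denominator $t(t+h)M$ and expanding $M = 2t^2 + 2ht - 2tr - hr$, the numerator becomes $(2t+h)M - 2(2t+h-2r)t(t+h) = -h^2 r$ after straightforward cancellation. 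This is the sole place where a nontrivial (though purely elementary) algebraic identity is needed, and it will be the main thing to verify carefully; every other step is formula substitution. Combining everything produces the stated identity.
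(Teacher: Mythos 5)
Your proof is correct and follows essentially the same route as the paper's: write out the prefactor, complete the square in $z$, integrate the Gaussian, and then simplify the leftover $x^2$-coefficient by elementary algebra (the paper stops at "tedious but easy algebra"; your identity $(2t+h)M - 2(2t+h-2r)t(t+h) = -h^2 r$ checks out and supplies exactly the omitted computation).
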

\begin{proof}
Write
\begin{align*}
&\int_{\bbR} \bigg[ \frac{\heat(t+h-r,x-z )\heat(r,z)}{\heat(t+h,x)} \frac{\heat(t-r,x-z) \heat(r,z)}{\heat(t,x)} \bigg] dz \\
&=  \frac{1}{(2\pi)} \sqrt{\frac{t+h}{(t+h-r)r}} \sqrt{\frac{t}{(t-r)r}} \int_{\bbR}  \bigg[ e^{\frac{x^2}{2(t+h)} + \frac{x^2}{2t} - \big(\frac{(x-z)^2}{2(t+h-r)} + \frac{(x-z)^2}{2(t-r)} + \frac{z^2}{r}  \big)}\bigg]dz\,\ind_{(0,t)}(r)
\end{align*}
We note that
\begin{align*}
&\bigg[\frac{1}{2(t+h-r)} + \frac{1}{2(t-r)} \bigg](x-z)^2 + \frac{1}{r} z^2 \\
&= \left( \frac{1}{2(t-r)} + \frac{1}{2(t+h-r)} + \frac{1}{r}\right)\left(z - \frac{\left(\frac{1}{2(t-r)} + \frac{1}{2(t+h-r)} \right) x}{ \frac{1}{2(t-r)} + \frac{1}{2(t+h-r)} + \frac{1}{r}}\right)^2 + \frac{ \left(\frac{1}{2(t-r)} + \frac{1}{2(t+h-r)}\right)\frac{1}{r}} {\frac{1}{2(t-r)} + \frac{1}{2(t+h-r)} + \frac{1}{r}} x^2.
\end{align*}
We have
\begin{align*}
&\frac{1}{2(t+h-r)} +  \frac{1}{2(t-r)} + \frac{1}{r}  
= \frac{(t+h)(t-r) + t(t+h-r)}{2(t+h-r)(t-r)r},\\
\end{align*}

It follows that 
\begin{align*}
&\int_{\bbR} \bigg[ \frac{\heat(t+h-r,x-z )\heat(r,z)}{\heat(t+h,x)} \frac{\heat(t-r,x-z) \heat(r,z)}{\heat(t,x)} \bigg] dz \\
&= \frac{1}{2\pi} \sqrt{\frac{t+h}{(t+h-r)r}} \sqrt{\frac{t}{(t-r)r}}  \sqrt{\frac{2\pi (t+h-r)(t-r)r}{(t+h)(t-r) + t(t+h-r)}} \times \\
&\qquad\qquad\qquad\bigg[ e^{\frac{x^2}{2(t+h)} + \frac{x^2}{2t} -   \frac{ \left(\frac{1}{2(t-r)} + \frac{1}{2(t+h-r)}\right)\frac{1}{r}} {\frac{1}{2(t-r)} + \frac{1}{2(t+h-r)} + \frac{1}{r}} x^2 }\bigg] \ind_{(0,t)}(r). 
\end{align*}
The remainder of the claim is tedious but easy algebra.
\end{proof}
The next result is the point where our results become suboptimal. A more refined analysis is likely possible to improve this estimate if one is interested in optimal H\"older regularity at the boundary, but it suffices for our purposes.
\begin{lemma}\label{lem:withgap}
For $\delta < 1$, $0 < \delta \leq t \leq t + h \leq T$ with $T>1$, and $x\in \bbR$,
\begin{align*}
&\frac{\sqrt{\pi t}}{2} - \frac{T}{\delta \sqrt{\pi}} \sqrt{h} - h \frac{\sqrt{\pi} T^{3/2}}{8\delta^3}x^2 \\
&\qquad \qquad\leq \int_0^t \int_{\bbR}\frac{\heat(t+h-r,x-z )\heat(r,z)}{\heat(t+h,x)} \frac{\heat(t-r,x-z) \heat(r,z)}{\heat(t,x)} dz dr \leq \frac{\sqrt{\pi t}}{2}.
\end{align*}
\end{lemma}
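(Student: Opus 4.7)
My plan is to treat the upper and lower bounds separately. Throughout, abbreviate the two factors in the integrand as $A = \heat(t+h-r,x-z)\heat(r,z)/\heat(t+h,x)$ and $B = \heat(t-r,x-z)\heat(r,z)/\heat(t,x)$.

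For the upper bound, I would use the AM--GM inequality $AB \le (A^2+B^2)/2$ together with the pointwise observation that $\int_\R A^2\,dz \le \int_\R B^2\,dz$. The latter follows from Lemma~\ref{lem:int2}, which evaluates both integrals explicitly, combined with the elementary inequality $(t+h)(t-r) \le t(t+h-r)$ whose difference equals $hr \ge 0$. Integrating in $r$ and applying Lemma~\ref{lem:int1} to the dominant side then yields $\int_0^t\!\int_\R AB\,dz\,dr \le \int_0^t\!\int_\R B^2\,dz\,dr = \sqrt{\pi t}/2$.

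For the lower bound, the starting point is the explicit formula from Lemma~\ref{lem:diffhspace}: $\int_\R AB\,dz = F(r)\exp(-x^2 G(r)/2)$, where $F(r) = \sqrt{t(t+h)/(2\pi r M(r))}$, $G(r) = h^2 r/(t(t+h)M(r))$, and $M(r) = 2t(t-r) + h(2t-r)$. Using $e^{-a} \ge 1-a$ for $a \ge 0$ reduces the problem to showing the two sub-estimates $I_1 := \int_0^t F(r)\,dr \ge \sqrt{\pi t}/2 - (T/\delta\sqrt{\pi})\sqrt{h}$ and $I_2 := \int_0^t F(r)G(r)\,dr \le h\sqrt{\pi}\,T^{3/2}/(4\delta^3)$.

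For $I_1$, the substitution $u = r(2t+h)/(2t(t+h))$ (analogous to the proofs of Lemmas~\ref{lem:int1} and \ref{lem:int4bd}) evaluates $I_1$ in closed form as $\sqrt{2t(t+h)/(\pi(2t+h))}\,\arcsin\sqrt{\beta}$ where $\beta := (2t+h)/(2(t+h))$. Writing $\sqrt{\beta} = \sin\theta$ recasts the deviation as $\sqrt{\pi t}/2 - I_1 = \sqrt{t/\pi}(\pi/2-\theta/\sin\theta)$. Since $\sin\theta \le 1$, this is at most $\sqrt{t/\pi}(\pi/2-\theta)$, and $\pi/2-\theta = \arcsin\sqrt{1-\beta} \le (\pi/2)\sqrt{1-\beta}$ by the standard concavity bound $\arcsin x \le (\pi/2)x$. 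Substituting $1-\beta = h/(2(t+h))$ and using the crude estimates $t \le T$, $t+h \ge \delta$ produces the claim.

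The main obstacle is the estimate on $I_2$, since $M(t) = ht$ means no single pointwise bound on $M$ can deliver the correct scaling in $h$. I would split at $r = t - h/2$, using $M(r) \ge 2t(t-r)$ on $[0, t-h/2]$ and $M(r) \ge ht$ on $[t-h/2, t]$. Both pieces integrate in closed form and each contributes $O(1/(t\sqrt{h}))$, so altogether $I_2 \lesssim h^{3/2}/(t\sqrt{t(t+h)})$. Substituting $t \ge \delta$, $t+h \ge \delta$, and $h^{3/2} \le hT^{3/2}$ (valid since $T \ge 1$ and $h \le T$) produces the claimed bound after routine constant-chasing. The essential balancing of the two regions where $M$ has different dominant behavior is the one nontrivial step; everything else is bookkeeping.
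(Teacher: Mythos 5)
Your architecture is genuinely different from the paper's and is structurally sound. The paper works entirely with the single $r$-integral produced by Lemma \ref{lem:diffhspace}: for the upper bound it shows the integrand is decreasing in $h$ (so $I(h)\le I(0)=\sqrt{\pi t}/2$), and for the lower bound it replaces the exponent by $e^{-x^2hr/(2\delta^3)}$ and the prefactor by $\sqrt{t+h}\,(2\sqrt{\pi r(t+h-r)})^{-1}$ to get a minorant $J(h)$ with $J(0)=\sqrt{\pi t}/2$, bounds $J'(h)\ge-\tfrac{1}{2\sqrt\pi}\bigl(T\delta^{-1}h^{-1/2}+\pi x^2T^{3/2}/(4\delta^3)\bigr)$, and integrates back from $0$ to $h$ -- the lemma's constants are exactly what that integration produces. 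Your AM--GM upper bound (with Lemmas \ref{lem:int2} and \ref{lem:int1} and $(t+h)(t-r)\le t(t+h-r)$) is correct and arguably cleaner, and your closed form $I_1=\sqrt{t/\pi}\,\theta/\sin\theta$ with $\sin\theta=\sqrt\beta$ checks out.

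The gap is that the constant-chasing is not routine: the lemma's constants are tight against your estimates when $\delta$ and $T$ are both near $1$, and the specific chains you commit to fail there. For $I_1$, your bound gives $\sqrt{\pi t}/2-I_1\le\tfrac{\sqrt{\pi t}}{2}\sqrt{h/(2(t+h))}$, and the target $\tfrac{T}{\delta\sqrt\pi}\sqrt h$ then requires $T/\delta\ge\tfrac{\pi}{2\sqrt2}\sqrt{t/(t+h)}$; with $T=1.05$, $\delta=t=0.99$, $h=0.01$ the right side is about $1.105$ while $T/\delta\approx1.06$, so the step fails. The fix is to exploit $\beta\ge1/2$ and use $\arcsin x\le x/\sqrt{1-x^2}$ instead of $\arcsin x\le\tfrac\pi2x$ (incidentally, $\arcsin$ is convex on $[0,1]$, not concave -- the chord bound is what you are invoking): this gives $\pi/2-\theta\le\sqrt{(1-\beta)/\beta}\le\sqrt{h/(2t)}$, hence $\sqrt{\pi t}/2-I_1\le\sqrt{h/(2\pi)}\le\tfrac{T}{\delta\sqrt\pi}\sqrt h$. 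Similarly for $I_2$: your splitting yields $I_2\le\tfrac{3}{2\sqrt{2\pi}}h^{3/2}t^{-3/2}(t+h)^{-1/2}$, and the substitutions you list ($t\ge\delta$, $t+h\ge\delta$, $h^{3/2}\le hT^{3/2}$) then require $\tfrac{3}{2\sqrt{2\pi}}\le\tfrac{\sqrt\pi}{4\delta}$, which holds only for $\delta\le\pi\sqrt2/6\approx0.74$. To close it for $\delta$ near $1$ you must additionally use $h\le T-\delta$ (then $\delta\sqrt{T-\delta}\le\tfrac{2}{3\sqrt3}T^{3/2}$ suffices), or better, evaluate $\int_0^t\sqrt r\,M(r)^{-3/2}dr=(2t+h)^{-3/2}\int_0^\beta\sqrt u\,(1-u)^{-3/2}du\le2(2t+h)^{-3/2}(1-\beta)^{-1/2}$ exactly, which gives $I_2\le h^{3/2}/(\sqrt{2\pi}\,\delta^2)$ and closes with room to spare. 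With those two repairs your proof is complete.
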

\begin{proof}
Applying Lemma \ref{lem:diffhspace}, we define for $t,x$ as in the statement and $h \in [0,T-t]$,
\begin{align*}
I(h) &= \int_0^t \int_{\bbR}  \bigg[\frac{\heat(t+h-r,x-z )\heat(r,z)}{\heat(t+h,x)} \frac{\heat(t-r,x-z) \heat(r,z)}{\heat(t,x)}\bigg] dz dr  \\
&=  \frac{\sqrt{t}}{\sqrt{2\pi}} \int_0^t \frac{\sqrt{t+h}}{\sqrt{r((t+h)(t-r) + t(t+h-r))}} \bigg[ e^{- \frac{x^2}{2t}\frac{h^2 r}{(t+h)((t+h)(t-r) + t(t+h-r))) } }\bigg] dr
\end{align*}
Without loss of generality, take $T>t$. Notice that for all $h>0$,
\begin{align*}
I(h) \leq I(0) =  \frac{\sqrt{t}}{2\sqrt{\pi}} \int_0^t  \frac{1}{\sqrt{r(t-r)}} dr = \frac{\sqrt{\pi t}}{2}.
\end{align*}
To see this, notice that the term in the exponential is negative if $h>0$ and that
\begin{align*}
\frac{\frac{d}{dh} \sqrt{\frac{t+h}{r ((t+h)(t-r) + t(t+h-r))}}}{\sqrt{\frac{t+h}{r((t+h)(t-r) + t(t+h-r))}}} = - \frac{rt}{2(t+h)((t+h)(t-r) + t(t+h-r) ) } <0
\end{align*}

Notice that $0 < r < t$ implies that $(t+h)((t+h)(t-r) + t(t+h-r))) \geq t(t+h)h \geq \delta^2 h$ Therefore,
\begin{align*}
I(h) \geq  \frac{1}{2\sqrt{\pi}} \int_0^{ t}  \frac{\sqrt{t+h}}{ \sqrt{r(t+h-r)}} e^{-\frac{x^2 h}{2\delta^3}  r} dr = J(h).
\end{align*}
By dominated convergence, $J(0) = I(0) = \frac{\sqrt{\pi t}}{2}$. Differentiating under the integral, for $h>0$,
\begin{align*}
J'(h) &=  -  \frac{1}{2\sqrt{\pi}}  \int_0^{t}  \bigg[\frac{r}{2(t+h)(t+h-r)} + \frac{x^2 r}{2\delta^3} \bigg] \sqrt{\frac{t+h}{r(t+h-r)}} e^{-\frac{x^2 h}{2\delta^3}  r} dr\\
&\geq-  \frac{1}{2\sqrt{\pi}}  \int_0^{t} \bigg[\frac{r}{2 \delta(t+h-r)} + \frac{x^2 r}{2\delta^3} \bigg] \sqrt{\frac{T}{r(t+h-r)}}dr.
\end{align*}
We have
\begin{align*}
\int_0^tdr \frac{\sqrt{r}}{(t+h-r)^{3/2}} &= 2\sqrt{\frac{t}{h}} - 2 \arcsin\left(\sqrt{\frac{t}{t+h}}\right),\\
 \int_0^t dr \sqrt{\frac{r}{t+h-r}} &= - \sqrt{ht} + (t+h) \arcsin \left( \sqrt{\frac{t}{t+h}}\right)
\end{align*}
Recognizing that $\sqrt{ht}>0$, $\arcsin(\sqrt{\frac{t}{t+h}}) >0$, and $0 < (t+h)\arcsin(\sqrt{\frac{t}{t+h}}) \leq \frac{\pi }{2} T$, we see that
\begin{align*}
J'(h) \geq - \frac{1}{2\sqrt{\pi}}\bigg[ \frac{T}{\delta} \frac{1}{\sqrt{h}}  + \frac{x^2}{4 \delta^3} T^{3/2} \pi \bigg]
\end{align*}
and therefore
\begin{align*}
J(h) &\geq \frac{\sqrt{\pi t}}{2} - \frac{T}{\delta \sqrt{\pi}} \sqrt{h} - h \frac{\sqrt{\pi} T^{3/2}}{8\delta^3}x^2. \qedhere
\end{align*}
\end{proof}

\begin{lemma}\label{lem:near0}
For $h \in [0,1]$, $\alpha \in (0,1]$, $t \in [0,h^\alpha]$, and $x \in \bbR$,
\begin{align*}
\int_0^t \int_{\bbR} \left( \frac{\heat(t+h-r,x-z)\heat(r,z)}{\heat(t+h,x)} -  \frac{\heat(t-r,x-z) \heat(r,z)}{\heat(t,x)}\right)^{2}dz dr \leq 10 h^{\alpha/2}
\end{align*}
\end{lemma}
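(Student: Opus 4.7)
My plan is to avoid computing the cross term entirely by invoking the elementary inequality $(A-B)^2 \le 2A^2 + 2B^2$. Writing $A = \heat(t+h-r,x-z)\heat(r,z)/\heat(t+h,x)$ and $B = \heat(t-r,x-z)\heat(r,z)/\heat(t,x)$, the integral to be bounded is at most
\[
2\int_0^t\!\!\int_{\bbR} A^2\,dz\,dr + 2\int_0^t\!\!\int_{\bbR} B^2\,dz\,dr.
\]
The second term is exactly $\sqrt{\pi t}$ by Lemma \ref{lem:int1}. For the first term, I would enlarge the $r$-interval from $[0,t]$ to $[0,t+h]$ (the integrand is nonnegative) and again apply Lemma \ref{lem:int1} with $t$ replaced by $t+h$ to obtain the bound $\sqrt{\pi(t+h)}$.

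The remaining step is to convert these bounds into powers of $h^{\alpha/2}$ using the hypothesis $t \le h^\alpha$. Since $\alpha \in (0,1]$ and $h \in [0,1]$, one has $h \le h^\alpha$, hence $t + h \le 2 h^\alpha$ and $t \le h^\alpha$. Therefore $\sqrt{\pi t} \le \sqrt{\pi}\, h^{\alpha/2}$ and $\sqrt{\pi(t+h)} \le \sqrt{2\pi}\, h^{\alpha/2}$, which together give the overall bound $(\sqrt{\pi} + \sqrt{2\pi})\, h^{\alpha/2} < 10\, h^{\alpha/2}$.

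There is no genuine obstacle here: the previous lemmas already produce sharp constants for the diagonal integrals $\int A^2$ and $\int B^2$, and the crude $(A-B)^2 \le 2A^2 + 2B^2$ estimate loses at most a factor of two relative to what the refined cross-term analysis of Lemma \ref{lem:withgap} would give. The looseness of the constant $10$ in the statement reflects the fact that this lemma is only applied in a regime (near the boundary $t=s$) where sharp constants are not needed; this is consistent with the suboptimality discussed in the remark following Theorem \ref{prop:IC}.
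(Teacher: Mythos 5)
Your proof is correct and follows essentially the same route as the paper: both arguments discard the cross term and bound the two diagonal integrals by the Gaussian computations of the earlier lemmas, with the hypothesis $t\le h^\alpha$ (and $h\le h^\alpha$) converting the result into a multiple of $h^{\alpha/2}$. The only differences are cosmetic — the paper expands the square and drops the nonpositive cross term (saving your factor of $2$) and evaluates $\int_0^t\int_{\bbR}A^2$ exactly via Lemma \ref{lem:int3} rather than by enlarging the $r$-domain and citing Lemma \ref{lem:int1}; either way the constant lands comfortably below $10$.
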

\begin{proof}
By Lemmas \ref{lem:int1} and \ref{lem:int3},
\begin{align*}
&\int_0^t  \int_{\bbR} \left( \frac{\heat(t+h-r,x-z)\heat(r,z)}{\heat(t+h,x)} -  \frac{\heat(t-r,x-z) \heat(r,z)}{\heat(t,x)}\right)^{2} dz dr \\
&=\int_0^t \int_{\bbR}  \frac{\heat(t+h-r,x-z)^2\heat(r,z)^2}{\heat(t+h,x)^2} + \int_0^t dr \int_{\bbR}dz\, \frac{\heat(t-r,x-z)^2 \heat(r,z)^2}{\heat(t,x)^2}dz dr\\
&- 2 \int_0^t  \int_{\bbR}\frac{\heat(t+h-r,x-z)\heat(r,z)}{\heat(t+h,x)}\frac{\heat(t-r,x-z) \heat(r,z)}{\heat(t,x)}dz dr ,\\
&\leq\int_0^t  \int_{\bbR} \frac{\heat(t+h-r,x-z)^2\heat(r,z)^2}{\heat(t+h,x)^2} + \int_0^t dr \int_{\bbR}dz\, \frac{\heat(t-r,x-z)^2 \heat(r,z)^2}{\heat(t,x)^2} dz dr\\
&= \frac{\sqrt{t+h}}{2\sqrt{\pi}}\left[\arcsin\left(1-\frac{2h}{t+h}\right) + \frac{\pi}{2}\right] +\frac{\sqrt{t\pi}}{2} \leq 10 h^{\alpha/2}.
\end{align*}
In the last step, we bounded $\arcsin(\cdot) \leq \pi/2$, $\sqrt{t+h} \leq \sqrt{2}h^{\alpha/2}$, and used a crude bound on the numerical prefactors.
\end{proof}
Finally, we combine our estimates to obtain the last bound needed for our H\"older estimates.
\begin{proposition}\label{prop:nogap}
For $T,K>1$, $t \in [0,T]$, $x \in [-K,K]$, and $h \in [0,1]$,
\begin{align*}
\int_0^t  \int_{\bbR}\left( \frac{\heat(t+h-r,x-z)\heat(r,z)}{\heat(t+h,x)} -  \frac{\heat(t-r,x-z) \heat(r,z)}{\heat(t,x)}\right)^{2} dz dr   \leq 10T^{3/2}K^2 h^{1/7}.
\end{align*}
If, in addition, for $\delta>0$, we have $t,t+h \in [\delta,T]$, then
\begin{align*}
\int_0^t \int_{\bbR} \left( \frac{\heat(t+h-r,x-z)\heat(r,z)}{\heat(t+h,x)} -  \frac{\heat(t-r,x-z) \heat(r,z)}{\heat(t,x)}\right)^{2} dz dr \leq   \frac{10}{\delta^{3}} T^{3/2}K^2\sqrt{h}.
\end{align*}
\end{proposition}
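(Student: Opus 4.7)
The plan is to expand the square inside the integrand, writing the integral as $I = A + B - 2C$, where
\begin{align*}
A &= \int_0^t \!\int_{\bbR} \frac{\heat(t+h-r,x-z)^2\heat(r,z)^2}{\heat(t+h,x)^2}\,dz\,dr,\quad B = \int_0^t \!\int_{\bbR} \frac{\heat(t-r,x-z)^2\heat(r,z)^2}{\heat(t,x)^2}\,dz\,dr,
\end{align*}
and $C$ is the cross term.  Lemma \ref{lem:int1} gives $B=\sqrt{\pi t}/2$ and Lemma \ref{lem:int3}, together with $\arcsin\le\pi/2$, gives $A\le\sqrt{\pi(t+h)}/2$.

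\textbf{Gap case ($t,t+h\in[\delta,T]$).} Here I apply Lemma \ref{lem:withgap} to bound $C$ from below by $\sqrt{\pi t}/2 - (T/(\delta\sqrt{\pi}))\sqrt{h} - (\sqrt{\pi}T^{3/2}/(8\delta^3))\,h\,x^2$.  Assembling $A+B-2C$ and using the elementary inequality $\sqrt{t+h}-\sqrt{t}\le\sqrt{h}$ and $|x|\le K$ yields three contributions of sizes $\sqrt{h}$, $\sqrt{h}/\delta$, and $h/\delta^3$, respectively.  Since $\delta\le 1\le T^{3/2}K^2$, the $h/\delta^3$ term dominates, producing the bound $10\,T^{3/2}K^2 \delta^{-3}\sqrt{h}$ claimed in the second display.

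\textbf{General case.} I split based on whether $t$ is small or large relative to a threshold $h^{\alpha}$ for some $\alpha\in(0,1)$ to be chosen.  If $t\le h^{\alpha}$, Lemma \ref{lem:near0} directly gives $I\le 10\, h^{\alpha/2}$.  If $t> h^{\alpha}$ then both $t$ and $t+h$ exceed $h^{\alpha}$, so I may apply the gap bound just established with $\delta=h^{\alpha}$; the three contributions acquire exponents $1/2$, $1/2-\alpha$, and $1-3\alpha$ respectively.  To balance Case~1 against the Case~2 dominant term I set $\alpha/2 = 1-3\alpha$, giving $\alpha=2/7$ and common exponent $1/7$.  With this choice, the subdominant Case~2 exponents are $1/2$ and $3/14$, both exceeding $2/14=1/7$, so (using $h\le 1$) those terms are also $\le h^{1/7}$.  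Summing the numerical prefactors and using $T,K\ge 1$ keeps the total constant below~$10$.

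The main obstacle is simply the bookkeeping around choosing $\alpha=2/7$: one must verify that this value balances the correct pair of exponents (namely the Case~1 exponent $\alpha/2$ and the Case~2 exponent $1-3\alpha$ coming from the $x^2$ term in Lemma \ref{lem:withgap}), rather than the $\sqrt{h}/\delta$ term, and then check that the leftover exponents $1/2$ and $3/14$ do not degrade the bound.  Once $\alpha=2/7$ is identified, each step reduces to the computational lemmas already assembled in Appendix \ref{app:comp}, so no new estimates are required.
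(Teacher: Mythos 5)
Your proposal is correct and follows essentially the same route as the paper's proof: expand the square, use Lemmas \ref{lem:int1} and \ref{lem:int3} for the diagonal terms, Lemma \ref{lem:near0} when $t\le h^{2/7}$, and Lemma \ref{lem:withgap} with $\delta=h^{2/7}$ (resp.\ the fixed $\delta$) otherwise. The only difference is presentational — you derive the threshold $\alpha=2/7$ by explicitly balancing the exponents $\alpha/2$ and $1-3\alpha$, whereas the paper simply plugs in $h^{2/7}$ — and your bookkeeping of the leftover exponents $1/2$ and $3/14$ is accurate.
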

\begin{proof}
For $h=0$, there is nothing to show, so take $h \in (0,1]$. The first claim holds by Lemma \ref{lem:near0} if $t \in [0,h^{2/7}].$  By Lemmas \ref{lem:int1}, \ref{lem:int3}, and \ref{lem:withgap} (with $\delta = h^{2/7}$), we have for $t \in [h^{2/7},T]$,
\begin{align*}
&\int_0^t  \int_{\bbR} \left( \frac{\heat(t+h-r,x-z)\heat(r,z)}{\heat(t+h,x)} -  \frac{\heat(t-r,x-z) \heat(r,z)}{\heat(t,x)}\right)^{2}dz dr \\
&=\int_0^t \int_{\bbR}\frac{\heat(t+h-r,x-z)^2\heat(r,z)^2}{\heat(t+h,x)^2} dz  dr  + \int_0^t \int_{\bbR} \frac{\heat(t-r,x-z)^2 \heat(r,z)^2}{\heat(t,x)^2} dz  dr \\
&- 2 \int_0^t \int_{\bbR} \frac{\heat(t+h-r,x-z)\heat(r,z)}{\heat(t+h,x)}\frac{\heat(t-r,x-z) \heat(r,z)}{\heat(t,x)}dz  dr  \\
&\leq \frac{\sqrt{t+h}}{2\sqrt{\pi}}\left[\arcsin\left(1-\frac{2h}{t+h}\right) + \frac{\pi}{2}\right] + \frac{\sqrt{t\pi}}{2} - \sqrt{\pi t} + \frac{T}{\sqrt{\pi}}h^{3/14} + \frac{\sqrt{\pi}}{8}T^{3/2}K^2 h^{1/7} \\
&\leq \frac{\sqrt{\pi h}}{2} + \frac{T}{\sqrt{\pi}}h^{3/14} + \frac{\sqrt{\pi}}{8}T^{3/2}K^2 h^{1/7} \leq 10 T^{3/2}K^2 h^{1/7}.
\end{align*}
In the last step, we bounded $\arcsin(\cdot) \leq \pi/2$ and $\sqrt{t+h} \leq \sqrt{t}+\sqrt{h}$. 

If, instead, $t,t+h \in [\delta,T]$ for some fixed $\delta>0$, the same argument gives
\begin{align*}
&\int_0^t  \int_{\bbR} \left( \frac{\heat(t+h-r,x-z)\heat(r,z)}{\heat(t+h,x)} -  \frac{\heat(t-r,x-z) \heat(r,z)}{\heat(t,x)}\right)^{2}dz dr \\
&\leq \frac{\sqrt{t+h}}{2\sqrt{\pi}}\left[\arcsin\left(1-\frac{2h}{t+h}\right) + \frac{\pi}{2}\right] + \frac{\sqrt{t\pi}}{2} - \sqrt{\pi t} + 2\frac{T}{\delta \sqrt{\pi}} \sqrt{h} + 2 h \frac{\sqrt{\pi} T^{3/2}}{8\delta^3}K^2 \\
&\leq  \frac{10}{\delta^{3}} T^{3/2}K^2\sqrt{h}. \qedhere
\end{align*}
\end{proof}

\section{Notation, terminology, and topological conventions}\label{app:top}
\subsubsection*{Constants in proofs}
Constants are typically denoted $C,C',C'',\dots$ or $c,c',c'',\dots$. In the statements of results, constants reset between results. Within proofs, constants reset for the proof of each claim of a result, unless otherwise indicated.

\subsubsection*{Notation}
The integers are $\bbZ$, the non-negative integers are $\bbZ_+=\{0,1,2,\dots\}$, the natural numbers are $\bbN=\{1,2,\dots\}$, the real numbers in $d$ dimensions are $\bbR^d$, the rational numbers are $\bbQ^d$, and the dyadic rationals are $\bbD^d = \{(\frac{k_1}{2^{n_1}}, \dots, \frac{k_d}{2^{n_d} }) : k_1,\dots,k_d,n_1,\dots,n_d \in \bbZ\}$. The standard coordinate basis vectors in $\bbR^d$ are denoted by $\evec_i, i = 1,2,\dots,d$. For $n \in \bbN$, we denote $[n] = \{1,\dots,n\}$. We denote tuples with subscripts. For $m<n$ with $m,n\in \bbN$, $x_{m:n} = (x_m,x_{m+1},\dots,x_n)$ and $x_{n:m} = (x_n, x_{n-1},\dots,x_m)$. The Weyl chamber in $\R^n$ is denoted by $\bbW_n = \{(x_1,\dots,x_n)\in\R^n : x_1 < \dots < x_n\}$. The maximum of two real numbers $a,b\in\R$ is sometimes denoted by $a \vee b$ and the minimum is sometimes denoted by $a \wedge b$.

\subsubsection*{H\"{o}lder Seminorm on Functions}
For $T,K,\delta>0$, we introduce the following notation for the domains of our various fields of solutions:
\begin{align*}
\varset &= \{(s,y,t,x) \in \bbR^4 : s \leq t\}, \qquad \varsets = \{(s,y,t,x)\in \bbR^4 : s<t\}, \\
\varsetth &= \{(s,t,x) \in \bbR^3 : s<t\}\\ 
\varsett{T}{K} &= \{(s,y,t,x) \in \varset: -T \leq s,t \leq T, -K \leq x,y \leq K\}, \\
\varsetsg{T}{K}{\delta} &= \{(s,y,t,x) \in \varsets:  -T \leq s,t \leq T, -K \leq x,y \leq K, t-s\geq\delta\},\\
\varsetthg{T}{K}{\delta} &= \{(s,t,x) \in \varsetth:  -T \leq s,t \leq T, -K \leq x \leq K, t-s\geq\delta\}.
\end{align*}
Given $\sK \subset \bbR^d$ and $\alpha \in (0,1]$ and $f \in \sC(\sK,\bbR)$ the $\alpha$-H\"older semi-norm is defined by
\be\label{app-H7}   
|f|_{\sC^\alpha(\sK)} = \sup_{\substack{x_{1:d}, y_{1:d} \,\in \,\sK \\ x_{1:d}\neq y_{1:d}}} \frac{|f(x_1,\dots,x_d) -f(y_1,\dots,y_d)|}{\sum_{i=1}^d|x_i-y_i|^\alpha}
\ee
We define time-space H\"older semi-norms for $\sK \subset \bbR^4$, $f \in \sC(\sK,\bbR)$, and $\alpha,\nu \in (0,1]$ by
\begin{align*}
|f|_{\sC^{\alpha,\nu}(\sK)} &= \sup_{\substack{(t_1,x_1,s_1,y_1)\neq (t_2,x_2,s_2,y_2)\\ (t_i,x_i,s_i,y_i) \in \sK, i\, \in \{1,2\}} } \frac{|f(t_1,x_1,s_1,y_1) - f(t_2,x_2,s_2,y_2)|}{|t_1-t_2|^{\alpha} + |s_1-s_2|^{\alpha} + |x_1-x_2|^\nu + |y_1-y_2|^\nu},
\end{align*}
and, similarly, time-space-inverse temperature H\"older semi-norms for $\sK \subset \bbR^5$, $f \in \sC(\sK,\bbR)$, and $\alpha,\nu,\gamma \in (0,1]$ by
\begin{align*}
|f|_{\sC^{\alpha,\nu,\gamma}(\sK)} &= \sup_{\substack{(t_1,x_1,s_1,y_1,\beta_1)\neq \\
(t_2,x_2,s_2,y_2,\beta_2)\\ (t_i,x_i,s_i,y_i,\beta_i) \in \sK, i\, \in \{1,2\}} } \frac{|f(t_1,x_1,s_1,y_1,\beta_1) - f(t_2,x_2,s_2,y_2,\beta_2)|}{|t_1-t_2|^{\alpha} + |s_1-s_2|^{\alpha} + |x_1-x_2|^\nu + |y_1-y_2|^\nu + |\beta_1-\beta_2|^\gamma}.
\end{align*}

\subsubsection*{Topological Conventions}
Given a Hausdorff topological space $X$, we equip the space  $\sC(X,\bbR)$ of continuous functions from $X$ to $\bbR$  with the topology of uniform convergence on compact sets. The space $\sC_b(X,\bbR)$ of bounded continuous functions  is equipped with the supremum norm. If $X$ is metrizable, we denote by $\sM_1(X)$ the space of probability measures on $X$, which we equip with the usual topology of weak convergence; see \cite[Definition 8.1.2]{Bog-07}.

We denote by $\sB(X)$ the Borel $\sigma$-algebra of $X$. The bounded Borel measurable functions on $X$ are denoted by $\sB_b(X)$. A measure $\mu$ on $(X,\sB(X))$ is said to be positive if $\mu(B) \in [0,\infty]$ for all $B \in \sB(X)$ and signed if $\mu(B) \in \bbR$ for all $B \in \sB(X)$. The zero measure $\zeromeas$ assigns measure $0$ to all $B \in \sB(X)$. A measure is non-zero if it is not the zero measure.  A positive measure is finite if $\mu(X)<\infty$ and locally finite if $\mu(K)<\infty$ for all compact sets $K$. It is Radon if for every $B \in \sB(X)$ and $\epsilon>0$, there exists a compact set $K_\epsilon \subset B$ such that $\mu(B\backslash K_\epsilon)<\epsilon.$ A topological space is Polish if it is separable and completely metrizable. Every finite positive measure on a Polish space is Radon \cite[Theorem 7.1.7]{Bog-07}. 

The support of a continuous function $\varphi \in \sC(X,\R)$ $\varphi$ is $\supp \varphi$ $= \overline{\{x \in X : \varphi(x)\neq 0\}}$, where the overline is notation for the topological closure. We denote by $\sC_c(X,\bbR)$ the space of compactly supported continuous functions from $X$ to $\bbR$, equipped with the supremum norm. 
We use similar notation for the space $\sC_c(X,\bbR_+)$ of such functions which are also non-negative.
We denote the positive and locally finite Borel measures on $\bbR^d$ by $\sM_+(\bbR^d,\sB(\bbR))$ and say that $\mu_n \in \sM_+(\bbR^d,\sB(\bbR))$ converges to $\mu \in \sM_+(\bbR^d,\sB(\bbR))$ vaguely if $\int_{\bbR^d} \varphi(x) \mu_n(dx) \to \int_{\bbR^d} \varphi(x)\mu(x)$ for all $\varphi \in \sC_c(\bbR^d,\bbR)$. This is convergence in the vague topology on locally finite positive measures, which, as we note in Lemma \ref{lem:ICM}, is easily seen to be Polish using the separability of $\sC_c(\R^d,\R)$. When restricting attention to finite positive measures, we also use the weak topology, where the test functions come from $\sC_b(\bbR^d,\bbR)$. See \cite[Definition 8.1.2]{Bog-07}.

The space $\ICM\cup\{\zeromeas\}$ defined in equation \eqref{eq:ICM} admits a natural Polish topology, which we metrize as follows. Let $\{\varphi_j : j \in \bbR\} \in \sC_c(\R,\R_+)$ be a countable dense subset of $\sC_c(\R,\R_+)$. Define for $\zeta,\eta\in\ICM$,
\begin{align}
d_{\ICM}(\zeta,\eta) &= \sum_{j=1}^\infty 2^{-j} \biggl( 1\wedge \bigg\{\biggl\lvert\int_{\R}\varphi_j d\zeta - \int_{\R}\varphi_j d\eta \biggr\rvert \bigg\} \biggr) \label{eq:ICMm} \\
&\qquad +  \sum_{m=1}^\infty  2^{-m} \biggl( 1\wedge \biggl\lvert\int_{\R} e^{-\tspb\frac1my^2} \zeta(dy)  - \int_{\R}e^{-\tspb\frac1my^2} \eta(dy)  \biggr\rvert \biggr).  \notag
\end{align}
\begin{lemma}\label{lem:ICM}
$(\ICM\cup\{\zeromeas\},d_{\ICM})$ is a complete separable metric space.
\end{lemma}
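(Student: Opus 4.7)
The plan is to verify the three defining properties of a complete separable metric space in order. First, that $d_{\ICM}$ is a metric. Symmetry, non-negativity, and the triangle inequality are immediate from the structure of the defining sums (each summand has the form $1 \wedge |a_j - b_j|$, which itself satisfies the triangle inequality). For separation of points, suppose $d_{\ICM}(\zeta,\eta) = 0$. Then $\int \varphi_j \, d\zeta = \int \varphi_j \, d\eta$ for every $j$, and by density of $\{\varphi_j\}$ in $\sC_c(\R,\R_+)$ combined with linearity, $\int \varphi\,d\zeta = \int \varphi\,d\eta$ for every $\varphi \in \sC_c(\R,\R)$. The Riesz representation theorem then forces $\zeta = \eta$.

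For separability, I would exhibit the countable set $\sD$ consisting of $\zeromeas$ together with all measures of the form $\sum_{i=1}^k q_i \delta_{r_i}$ with $k \in \N$, $q_i \in \bbQ \cap (0,\infty)$, $r_i \in \bbQ$. To show density, fix $\mu \in \ICM$ and $\epsilon>0$. Using the defining integrability condition, choose $K_m$ so that $\int_{|y|>K_m} e^{-y^2/m}\,d\mu < \epsilon$; this controls the contribution of the second sum in $d_{\ICM}$ from outside $[-K,K]$ for $K = \max_{m \leq N} K_m$ with $N$ large enough that the $m > N$ tail of the second sum is small. Likewise only finitely many $j$ matter up to error $\epsilon$ in the first sum, and those $\varphi_j$ are all supported in a common compact set. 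It then suffices to approximate the finite Borel measure $\mu|_{[-K,K]}$ (or $\mu$ restricted to a slightly larger compact) in the weak topology by a rational atomic measure, which is a standard consequence of the separability of $\sC([-K,K],\R)$ and weak compactness of finite measures.

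For completeness, let $(\mu_n)$ be a Cauchy sequence. Each real sequence $(\int \varphi_j\,d\mu_n)_n$ and $(\int e^{-y^2/m}\,d\mu_n)_n$ is Cauchy, hence convergent; denote the limits by $L(\varphi_j)$ and $c_m$ respectively. By density of $\{\varphi_j\}$ in $\sC_c(\R,\R_+)$ and a routine $3\epsilon$ argument (using uniform convergence on compacts plus the uniform boundedness $\sup_n \mu_n(K)$, which follows from Cauchyness evaluated against a fixed $\varphi_j \geq \one_K$), we extend $L$ to a positive linear functional on $\sC_c(\R,\R)$. By Riesz representation, there exists a Radon measure $\mu$ on $\R$ with $L(\varphi) = \int \varphi\,d\mu$. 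Fatou's lemma gives $\int e^{-y^2/m}\,d\mu \leq c_m < \infty$ for every $m$, so $\mu \in \ICM \cup\{\zeromeas\}$.

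The main obstacle is verifying that $d_{\ICM}(\mu_n,\mu) \to 0$, i.e., that the integrals of the non-compactly-supported functions $e^{-y^2/m}$ actually converge along the sequence. The first sum poses no difficulty since its terms tend to zero by construction and dominated convergence applied to the series. For the second sum, I would establish a uniform tightness estimate of the form
\begin{align*}
\int_{|y|>K} e^{-y^2/m}\,d\mu_n \;\leq\; e^{-K^2(1/m - 1/m')}\int e^{-y^2/m'}\,d\mu_n \;\leq\; C\, e^{-K^2(1/m - 1/m')}
\end{align*}
for any $m' > m$, using $\sup_n \int e^{-y^2/m'}\,d\mu_n < \infty$. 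Combining this with vague convergence of $\mu_n$ to $\mu$ applied to $\psi_K e^{-y^2/m}$ (for a cutoff $\psi_K \in \sC_c(\R,[0,1])$ equal to $1$ on $[-K,K]$), and a parallel Fatou bound on $\mu$, gives $\int e^{-y^2/m}\,d\mu_n \to \int e^{-y^2/m}\,d\mu$ for each $m$. Dominated convergence on the summation index $m$ then yields $d_{\ICM}(\mu_n,\mu) \to 0$, completing the proof.
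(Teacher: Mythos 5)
Your proposal is correct and follows essentially the same route as the paper: identify the vague limit via Riesz representation, prove membership in $\ICM$ by Fatou, and recover convergence of the Gaussian integrals from the uniform tail bound $\int_{|y|>K}e^{-y^2/m}\,d\mu_n\le e^{-K^2(1/m-1/m')}\sup_n\int e^{-y^2/m'}\,d\mu_n$, which is exactly the paper's key estimate (with $m'=2m$ there); the separability argument via rational atomic measures is also the same.
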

\begin{proof}
Note that the sum over $j$ in the definition of $d_{\ICM}$ metrizes the vague topology on $\sM_+(\R)$. Let $\{\zeta_n\}$ be a Cauchy sequence in $(\ICM,d_{\ICM})$. Then there is a vague limit $\zeta_n\to\zeta$.  By completeness of $\R$, there exist $\{a_m : m \in \bbN \}$ with 
\[   a_m = \lim_{n\to\infty}   \int_{\R} e^{-\tspb\frac1my^2} \zeta_n(dy) \qquad \text{ and }\qquad A_m = \sup_n    \int_{\R} e^{-\tspb\frac1my^2} \zeta_n(dy)  <\infty. \]  
To conclude completeness, we need to show that  
\be\label{gg62}  a_m=  \int_{\R} e^{-\tspb\frac1my^2} \zeta(dy). \ee

Fix$\sC_c(\R,\R_+)$-functions  $\{\psi_k: k\in\N\}$ that satisfy $\ind_{[-k,k]} \le\psi_k\le \ind_{[-k-1,k+1]}$.  Vague convergence implies that 
\begin{align*}
a_m \ge  \lim_{n\to\infty}   \int_{\R} e^{-\tspb\frac1my^2} \tspb \psi_k(y) \tspb \zeta_n(dy) 
=   \int_{\R} e^{-\tspb\frac1my^2} \tspb \psi_k(y) \tspb \zeta(dy)   
\nearrow  \int_{\R} e^{-\tspb\frac1my^2}  \zeta(dy)  \quad\text{ as } k\nearrow\infty. 
\end{align*}  

Now, let $m<\ell$ and $k\in\N$. 
\be\label{gg67} \begin{aligned}
\int_{[k,\infty)} e^{-\tspb\frac1my^2}   \tspb \zeta_n(dy)  &= \int_{[k,\infty)} e^{ y^2(\frac1\ell-\frac1m)}  e^{-\tspb\frac1\ell y^2}   \tspb \zeta_n(dy)   \le e^{ k^2(\frac1\ell-\frac1m)}   \int_{[k,\infty)}  e^{-\tspb\frac1\ell y^2}   \tspb \zeta_n(dy) \\
&   \le e^{ k^2(\frac1\ell-\frac1m)}  A_\ell. 
\end{aligned}\ee   

Fix $m\in\N$. 
 Since $ \int_{\R} e^{-\tspb\frac1my^2} \zeta(dy)<\infty$, 
\be\label{gg71}   \lim_{k\to\infty}   \int_{\R} e^{-\tspb\frac1my^2} \tspb \bigl(1- \psi_k(y)\bigr)  \tspb \zeta(dy)  =0. \ee 
 Take $\ell=2m$ in \eqref{gg67} and let $k\in\N$.   Let   $o_k(1)$ denote a quantity that depends on $(k,n)$ and  vanishes  as $n\to\infty$ when $k$ is fixed. 
 \begin{align*}
&\biggl\lvert   \int_{\R} e^{-\tspb\frac1my^2} \zeta(dy) -  a_m \biggr\rvert  
\le   \biggl\lvert   \int_{\R} e^{-\tspb\frac1my^2}  \tspb \psi_k(y) \tspb   \zeta(dy) -   \int_{\R} e^{-\tspb\frac1my^2} \tspb \psi_k(y) \tspb \zeta_n(dy)   \biggr\rvert  \\
&\qquad  \qquad 
  +  \int_{\R} e^{-\tspb\frac1my^2} \tspb \bigl(1- \psi_k(y)\bigr)  \tspb \zeta(dy) + \int_{[k,\infty)} e^{-\tspb\frac1my^2}   \tspb \zeta_n(dy)      + \biggl\lvert     \int_{\R} e^{-\tspb\frac1my^2}  \tspb \zeta_n(dy)  -  a_m \biggr\rvert    \\
  &\qquad  \le o_k(1) +    \int_{\R} e^{-\tspb\frac1my^2} \tspb \bigl(1- \psi_k(y)\bigr)  \tspb \zeta(dy)   +  e^{ -\tspb\frac1{2m} k^2}  A_{2m} +o(1) .  
   \end{align*} 
First keep $k$ fixed and  let $n\to\infty$ to remove $o_k(1)+o(1)$. Then let $k\to\infty$.   \eqref{gg62}  has been verified.  It remains to show separability. 

We claim that measures of the form $\sum_{i=1}^n a_i \delta_{b_i}$ where $a_i \in \bbQ\cap(0,\infty)$ and $b_i \in \bbQ$ are dense. It suffices to show that for each $M,J\in \bbN$, each $\epsilon \in(0,1)$, and each $\zeta \in \ICM$, there exists $n \in \bbN$ and $a_{1:n},b_{1:n}$ as above so that for $\eta = \sum_{i=1}^n a_i \delta_{b_i}$ and all $j \in [J]$ and $m \in [M]$,
\begin{align*}
\biggl\lvert\int_{\R}\varphi_j d\zeta - \int_{\R}\varphi_j d\eta \biggr\rvert<\epsilon \qquad\text{ and } \qquad \biggl\lvert\int_{\R} e^{-\tspb\frac1my^2} \zeta(dy)  - \int_{\R}e^{-\tspb\frac1my^2} \eta(dy)  \biggr\rvert<\epsilon.
\end{align*}
Fix $K>0$ so that for all $m \in [M]$, 
\begin{align*}
\int_{\R\backslash[-K,K]} e^{-\tspb\frac1my^2} \zeta(dy)  <\epsilon/2
\end{align*}
and $\supp \varphi_j \subset[-K,K]$. The result now follows from density of measures of the form $\sum_{i=1}^n a_i \delta_{b_i}$ in the space of finite positive measures on $[-K,K]$.
\end{proof}

We also introduce a metric on the space of strictly positive continuous functions representing measures in $\ICM$, which we denoted by $\CICM$ in equation \eqref{eq:CICM} above: 
\begin{align*}
\CICM = \biggl\{f \in \sC(\R,(0,\infty)) : \forall a>0, \int_{\bbR} e^{-ax^2}f(x)dx <\infty \biggr\}.
\end{align*}
We equip this space with the metric defined for $f,g \in \CICM$ by
\begin{align}
d_{\CICM}(f,g) &= \sum_{m=1}^\infty 2^{-m} \bigg(1\wedge \sup_{-m \leq x \leq m}\bigg[|f(x)-g(x)| + \bigg|\frac{1}{f(x)} -\frac{1}{g(x)}\bigg|\bigg]\bigg) \label{eq:CICMm} \\
&+\sum_{m=1}^\infty  2^{-m} \biggl( 1\wedge \biggl\lvert\int_{\R} e^{-\tspb\frac1my^2} f(y)dy  - \int_{\R}e^{-\tspb\frac1my^2} g(y)dy \biggr\rvert \biggr) \notag\end{align}
We have the following.
\begin{lemma}
$(\CICM,d_{\CICM})$ is a complete separable metric space.
\end{lemma}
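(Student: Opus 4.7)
The approach closely mirrors the proof of Lemma \ref{lem:ICM}, adapted to account for the pointwise positivity and the $1/f$ term in the metric. First, $d_{\CICM}$ is easily seen to be a metric: symmetry and the triangle inequality follow termwise (the truncation $1 \wedge \cdot$ preserves subadditivity), and non-degeneracy follows because $d_{\CICM}(f,g)=0$ forces $\sup_{[-m,m]} |f-g| = 0$ for every $m \in \N$.

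For completeness, let $\{f_n\} \subset \CICM$ be Cauchy in $d_{\CICM}$. The first sum in \eqref{eq:CICMm} shows that both $\{f_n\}$ and $\{1/f_n\}$ are uniformly Cauchy on each $[-m,m]$, and hence converge uniformly on compacts to continuous limits $f, h$ with $fh \equiv 1$. Thus $f \in \sC(\R, (0,\infty))$, and the corresponding terms in $d_{\CICM}(f_n,f)$ vanish. The second sum yields convergent limits
\[
a_m := \lim_{n\to\infty}\int_{\R} e^{-\tspb\frac{1}{m}y^2} f_n(y)\,dy, \qquad A_m := \sup_{n}\int_{\R} e^{-\tspb\frac{1}{m}y^2} f_n(y)\,dy < \infty.
\]
The key step, exactly as in \eqref{gg67}, is the tail estimate
\[
\int_{|y|\ge k} e^{-\tspb\frac{1}{m}y^2} f_n(y)\,dy \,\le\, e^{k^2(\frac{1}{\ell}-\frac{1}{m})}\, A_\ell \qquad (m < \ell).
\]
Taking $\ell = 2m$, this gives tail control uniform in $n$, which combined with uniform-on-compacts convergence (applied through a cutoff $\psi_k$ as in the proof of Lemma \ref{lem:ICM}) shows $a_m = \int_{\R} e^{-\tspb\frac{1}{m}y^2} f(y)\,dy$. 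Finiteness of these integrals for every $m$ together with the same tail estimate applied to $f$ itself yields $\int_{\R} e^{-ay^2} f(y)\,dy < \infty$ for every $a>0$; hence $f \in \CICM$. Finally, $d_{\CICM}(f_n,f)\to 0$ by dominated convergence applied termwise to the series in \eqref{eq:CICMm}.

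For separability, it suffices to exhibit a countable dense subset of $\CICM$. Consider the countable family $\sD$ of functions $g$ parametrized by $n \in \N$ and by rationals $(q_0,\dots,q_{2n^2}) \in (\Q\cap(0,\infty))^{2n^2+1}$: let $g$ be the piecewise linear function on $[-n,n]$ with breakpoints $-n + k/n$ and values $q_k$, extended to $\R \setminus [-n,n]$ by $g(x) = g(\pm n)\, e^{-(|x|-n)^2}$. Each such $g$ lies in $\CICM$. Given $f \in \CICM$ and $\epsilon>0$, choose $M$ so that $\sum_{m>M} 2^{-m+1} < \epsilon/4$; on $[-M,M]$ both $f$ and $1/f$ are bounded and uniformly continuous, so for $n \gg M$ we can uniformly approximate them on $[-M,M]$ by some $g \in \sD$ to within $\epsilon/(8M)$ on each $[-m,m]$, $m\le M$. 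Because $f \in \CICM$, tails $\int_{|y|\ge n} e^{-y^2/m} f(y)\,dy$ can be made arbitrarily small for $m \le M$ by taking $n$ large, and the Gaussian-like tails of $g$ make the corresponding contribution for $g$ negligible as well. Summing, $d_{\CICM}(f,g) < \epsilon$.

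The main obstacle is the completeness argument, specifically transferring the integrability $\int e^{-ay^2}f\,dy <\infty$ to the limit $f$ for all $a>0$. The approach is exactly the interpolation trick of \eqref{gg67}: boundedness of the $(1/\ell)$-integrals gives exponential tail decay in the spatial cutoff $k$, which when combined with uniform-on-compacts convergence (and a cutoff $\psi_k \in \sC_c(\R)$) allows one to pass the integrals $\int e^{-y^2/m} f_n\,dy$ to the limit and identify them with $\int e^{-y^2/m} f\,dy$. Once this identification is made, all remaining steps are routine.
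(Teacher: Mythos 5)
Your proof is correct and follows essentially the same route as the paper's: locally uniform Cauchyness of $f_n$ and $1/f_n$ gives a strictly positive continuous limit, the Gaussian-integral terms are handled by the interpolation/tail argument, and separability comes from an explicit countable family approximating on compacts with Gaussian-controlled tails. The only difference is presentational: the paper simply cites the completeness of $(\ICM,d_{\ICM})$ from Lemma \ref{lem:ICM} for the integral terms, whereas you re-derive that argument inline.
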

\begin{proof}
The first term in the metric ensures that Cauchy sequences in $(\CICM,d_{\CICM})$ are Cauchy under the supremum norm on any interval $[-m,m]$ for $m \in \bbN$, hence a locally uniform limit taking values in $\sC(\R,\R_+)$ exists.  The second term ensures that the locally uniform limit point is strictly positive.  Locally uniform convergence implies vague convergence of the represented measures. The last term in the metric then ensures that these measures are Cauchy in $\ICM$ and so, by Lemma \ref{lem:ICM}, the integrals against Gaussian kernels converge as well. Therefore $
(\CICM,d_{\CICM})$ is complete. Separability can be seen by taking a dense subset of compactly supported continuous functions in $\sC(\R,\R_+)$ and then adding small rational positive $\epsilon$ times Gaussian kernels to each term to make the functions strictly positive.
\end{proof}

\subsubsection*{Total Variation of (Formal) Signed Measures}
Given a signed measure $\mu$ on $(X,\sB(X))$, the total variation measure $|\mu|$ is given by the sum of the positive and negative parts in its Jordan-Hahn decomposition. See \cite[Definition 3.1.4]{Bog-07}. The total variation norm is $\|\mu\|_{TV} = |\mu|(X)$, which satisfies $\|\mu\|_{TV}\leq 2\sup\{|\mu(A)| : A \in \sB(X)\} \leq 2 \|\mu\|_{TV}$.  Given two finite positive Borel measures $\mu,\zeta$ on $\bbR^d$, we denote by $|\mu-\zeta|$ the total variation measure assigned to to the signed measure given by their difference. Given two locally finite positive Borel measures $\mu,\zeta$ on $\bbR^d$, the difference $\mu-\zeta$ may not define a signed measure, but we \textit{define} the positive measure corresponding to the total variation of the difference by $|\mu-\zeta|(A) = \lim_{M\to\infty}|\mu|_{[-M,M]^d} - \zeta|_{[-M,M]^d}|(A) = \lim_{M\to\infty}|\mu - \zeta|(A \cap [-M,M]^d)$, where the measures $\mu|_{[-M,M]^d}$ and $\zeta|_{[-M,M]^d}$ are the finite measures obtained by restricting $\mu$ and $\zeta$ to $[-M,M]^d$. The limit exists by monotonicity.

\subsubsection*{Stochastic Processes}
We will call $\sC = \sC(\bbR,\bbR)$ and denote by $X= (X_t : t \in \bbR)$ the canonical process on $(\sC,\sB(\sC)).$ For $-\infty \leq s < t \leq \infty$, we call $\cfil_{s:t} = \sigma(X_u : s < u < t)$ and for $t\in \bbR$, $\cfil_{t:t}=\sigma(X_t)$. For $-\infty < s \leq t < \infty$, the spaces $\sC_{[s,t]}=\sC([s,t],\bbR)$ of real-valued continuous functions on $[s,t]$, equipped with the uniform topology and Borel $\sigma$-algebra $\sB(\sC_{[s,t]})$, are naturally embedded into $(\sC,\sB(\sC))$ by restriction. These spaces are all Polish. On these spaces, we will abuse notation and at times continue to use the notation $X$ to denote $X|_{[s,t]} = (X_u : s \leq u \leq t)$ when doing so will not cause confusion. A standard argument in elementary measure theory shows that because $[s,t]$ and $\bbR$ are separable, $\sB(\sC) = \sigma(X_u : -\infty < u < \infty)$ and $\sB(\sC_{[s,t]}) = \sigma(X_u : s \leq u \leq t)$. 

If $\sA$ is an index set and $F$ and $G$ are stochastic processes on a complete probability space $(\Omega,\sF,\bbP)$ which are indexed by $\sA$, then we say that $F$ and $G$ are \textit{modifications} of one another if for all $\alpha \in \sA$,
\begin{align*}
\bbP(F(\alpha) = G(\alpha))=1. 
\end{align*}
We say that $F$ and $G$ are \textit{indistinguishable} if
\begin{align*}
\bbP(\text{for all }\alpha \in \sA, F(\alpha) = G(\alpha))=1.
\end{align*}

\subsubsection*{Stochastic Ordering}
We say that a function $F$:\,$\sC(\R,\R) \to\bbR$ (resp.~$F$:\,$\sC([s,t],\R) \to \R$) is increasing if $F(X) \leq F(Y)$ whenever $X_t \leq Y_t$ for all $t$.  Given two probability measures $\bfP$ and $\bfQ$ on $\sC(\R,\R)$ (resp.~$\sC([s,t],\R)$), $\bfQ$ stochastically dominates $\bfP$, denoted $\bfP \std \bfQ$, if $\int F(X) \bfP(dX) \leq \int F(X) \bfQ(dX)$ for all increasing $F \in \sC_b(\sC(\R,\R),\R)$ (resp.~$F \in \sC_b(\sC([s,t],\R),\R)$).


\small

\bibliographystyle{plain}
\bibliography{PAM}
\end{document}